\def\R{\mathrm{I\kern-0.21emR}}
\def\N{\mathrm{I\kern-0.21emN}}
\renewcommand{\div}{\mathrm{div}}
\newcommand{\Lip}{\operatorname{Lip}}
\newcommand{\Hol}{\operatorname{Hol}}
\newcommand{\supp}{\mathrm{supp}}
\newcommand{\card}{\mathrm{card}}
\newcommand{\diam}{\mathrm{diam}}
\renewcommand{\geq}{\geqslant}
\renewcommand{\leq}{\leqslant}
\newtheorem{theorem}{Theorem}[section]
\newtheorem{proposition}{Proposition}[section]
\newtheorem{corollary}{Corollary}[section]
\newtheorem{lemma}{Lemma}[section]
\newtheorem{example}{Example}[section]
\theoremstyle{definition}\newtheorem{remark}{Remark}[section]
\title{Mean field, hydrodynamic and graph limits for deterministic interacting particle systems: a survey with quantitative estimates}
\author{
Thierry Paul\footnote{CNRS Laboratoire Ypatia des Sciences Math\'ematiques LYSM, Rome, Italy (\texttt{thierry.paul@sorbonne-universite.fr}).}
\and
Emmanuel Tr\'elat\footnote{Sorbonne Universit\'e, CNRS, Universit\'e Paris Cit\'e, Laboratoire Jacques-Louis Lions (LJLL), F-75005 Paris, France (\texttt{emmanuel.trelat@sorbonne-universite.fr}).}
}
\date{}
\begin{document}

\maketitle

\begin{abstract}
We present a unified framework, with quantitative estimates, for deterministic interacting particle systems whose pairwise interactions may depend on heterogeneous labels. Heterogeneity is kept at every level by adding a frozen label variable $x\in\Omega$ to the state. Within this framework we compare several limiting procedures: the direct continuum / graph limit, the mean field limit yielding a Vlasov equation on the extended space of labels and states, the Liouville lift of the particle system together with propagation of chaos through marginals of arbitrary order, and the hydrodynamic moment closures. We give a common language for these limits and identify precisely where the various passages commute and where they do not; in particular, we separate the continuum / graph limit equation from the classical hydrodynamic Euler equations and characterize when the former arises as a moment closure of the latter (linearity in $(\xi,\xi')$ or monokinetic ansatz). Along the way, we prove quantitative convergence estimates for the graph limit and for the passages from particles or Liouville to Vlasov, and we discuss the limitations of the framework, in particular concerning singular kernels and stochastic dynamics. The paper is written as a survey with original contributions, with an emphasis on estimates, examples, and a clear delineation of scope.
\end{abstract}

\tableofcontents

\section{Introduction}\label{sec_intro}

\subsection{Purpose and first examples}\label{sec_purpose}
This paper is a survey, with original quantitative contributions, on the passage from finite deterministic particle systems to continuum equations. We focus on systems in which agents may be heterogeneous: the interaction between agents $i$ and $j$ is allowed to depend on their labels. A central point of the paper is that this heterogeneity can be kept in the limiting equations by adding a label variable $x\in\Omega$ whose dynamics is frozen, thereby restoring a form of exchangeability in the extended state space.

The systematic study of large-$N$ limits of particle systems has a long history, going back to Hartree~\cite{hartree} for quantum systems and to Vlasov~\cite{vlasov} for plasmas, and continuing through the modern theory of mean field limits and propagation of chaos (see, e.g., \cite{Gallagher_BAMS2019, Golse_2016, Jabin_KRM2014, MischlerMouhot_IM2013, Spohn_book1991, Sznitman}).
At the microscopic scale, the basic model considered here is
\begin{equation}\label{eqagent}
\dot\xi_i(t) = \frac{1}{N} \sum_{j=1}^N G_{ij}^N(t,\xi_i(t),\xi_j(t)) ,
\qquad i\in\{1,\ldots,N\},
\end{equation}
where $\xi_i(t)\in\R^d$ stands for various parameters describing the behavior of the $i^\textrm{th}$ agent and $G_{ij}^N:\R\times\R^d\times\R^d\rightarrow\R^d$ models the interaction between agents $i$ and $j$. Throughout the paper, all systems are deterministic. We assume that the dependence on the labels is continuously embedded in a kernel $G(t,x,x',\xi,\xi')$ so that 
$$
G(t,x_i^N,x_j^N,\xi,\xi') = G_{ij}^N(t,\xi,\xi')
$$
for suitable labels $x_i^N\in\Omega$. This assumption is made precise in Assumption \ref{G}.

Dynamics of the form \eqref{eqagent} are used in a wide range of problems, ranging from flocking and swarming in biology, traffic flows and social dynamics, to fluid mechanics and quantum systems (see, e.g., \cite{AlbiPareschi_AML2013, BellomoBellouquidNietoSoler_DCDS-B2014, CarrilloChoi_ARMA2021, HaTadmor_KRM2008, HegselmannKrause, MotschTadmor_SIREV2014}, among many others). Three families of examples will guide our presentation:

\smallskip
\textbullet\ The linear \emph{Hegselmann--Krause} opinion model~\cite{HegselmannKrause}
\begin{equation*}
\dot\xi_i(t) = \frac{1}{N} \sum_{j=1}^N\sigma_{ij}(\xi_j(t)-\xi_i(t)),
\end{equation*}
where $(\sigma_{ij}^N)$ is an $N$-by-$N$ matrix of weights. Setting $\sigma_{ij}^N=\sigma(x_i^N,x_j^N)$ for some graphon $\sigma:\Omega^2\to\R$, this leads to the continuum / graph limit equation
\begin{equation*}
\partial_t y(t,x) = \int_\Omega \sigma(x,x') \big(y(t,x')-y(t,x)\big) \, dx' .
\end{equation*}
This is a paradigmatic example of a system whose interaction depends explicitly on labels.

\smallskip
\textbullet\ The \emph{Kuramoto} oscillator model on networks, in which each agent carries its own intrinsic frequency; this is the canonical example of synchronization dynamics.

\smallskip
\textbullet\ Second-order models such as \emph{Cucker--Smale} or Hamiltonian systems, in which $\xi=(q,p)\in\R^r\times\R^r$ and the interaction kernel reads, respectively,
\begin{equation*}
G(\xi,\xi')=\begin{pmatrix} p \\ a(\Vert q-q'\Vert )(p'-p) \end{pmatrix} ,
\qquad
G(\xi,\xi')=\begin{pmatrix} p \\ -\nabla V(q-q')\end{pmatrix} ,
\end{equation*}
for some influence function $a$ or potential $V$. For these systems, the kinetic and hydrodynamic interpretations are classical~\cite{HaTadmor_KRM2008,FigalliKang_APDE2019}.

\smallskip
The terminology used in this paper is the following. We call
\begin{equation*}
\partial_t y(t,x) = \int_\Omega G(t,x,x',y(t,x),y(t,x'))\, d\nu(x')
\end{equation*}
the \emph{continuum / graph limit equation}, abbreviated as the \emph{CGL equation}. We use the abbreviation CGL throughout for brevity.
We reserve the name \emph{Euler equation} for classical hydrodynamic equations, such as the pressureless Euler systems obtained by taking moments of a kinetic equation. The two are sometimes equal and sometimes not: one of the aims of the paper is to clarify under which assumptions the CGL equation can also be interpreted as a hydrodynamic closure of a Vlasov equation. 

\subsection{Main contributions}\label{sec_main_contributions}
The paper is meant to be read as a structured map of the micro--meso--macro passages, with several quantitative statements. Its main contributions are the following.
\begin{enumerate}[leftmargin=*,label=$\bf (\arabic*)$]
\item \textbf{A common framework for distinguishable agents.} We formulate a unified setting for deterministic systems of heterogeneous agents by adjoining a label variable $x\in\Omega$ with frozen dynamics. In the extended space $\Omega\times\R^d$, empirical measures recover a form of exchangeability even when the original particle dynamics is not invariant under permutations of the variables $\xi_i$ alone. This is the key device making mean-field-type tools available in the heterogeneous setting.
\item \textbf{Quantitative particle-to-CGL passage.} We give a direct passage from \eqref{eqagent} to the CGL equation based on Riemann sums and tagged partitions, with $L^\infty$ estimates (Theorems \ref{thm_estim_graph} and \ref{thm_estim_graph_2}). This recovers and refines previous graph-limit results for network dynamics.
\item \textbf{Vlasov equation on $\Omega\times\R^d$.} We derive a Vlasov equation on the extended space and prove existence, uniqueness and Dobrushin-type stability estimates in Wasserstein distance (Theorem \ref{thm_vlasov}). This provides a mean field description that preserves the label distribution.
\item \textbf{Liouville lift and quantitative propagation of chaos.} We lift the particle dynamics to the Liouville equation and obtain quantitative propagation of chaos for marginals of arbitrary order, with two natural choices of initial data, empirical and semi-empirical (Theorems \ref{thm_CV_liouville_empirical} and \ref{thm_CV_liouville}).
\item \textbf{Identification of the Vlasov-to-CGL closure.} We identify two cases in which the passage from Vlasov to CGL is closed: when $G$ is linear in $(\xi,\xi')$, and when one restricts to monokinetic measures $\mu=\nu\otimes\delta_{y(\cdot)}$. In the general case, the moments form an open hierarchy and no closed CGL equation can be obtained from Vlasov alone. This delineates the genuine difference between the graph limit and the hydrodynamic limit.
\end{enumerate}

\subsection{A map of the limiting procedures}\label{sec_bilan}
The article is organized around the following three levels. Here and in the sequel, the interaction mapping $G$ satisfies Assumption \ref{G}, stated below.
\begin{itemize}
\item The microscopic model, which is the particle system
\begin{equation}\label{micro_model}
\dot\xi_i^N(t) = \frac{1}{N} \sum_{j=1}^N G(t,x_i^N,x_j^N,\xi_i^N(t),\xi_j^N(t)),\qquad i\in\{1,\ldots,N\}.
\end{equation}
When extending this system by setting $\dot x_i^N(t)=0$, in some sense we perform an extension of the particle system to the phase space.
\item The mesoscopic model, which is the (kinetic) Vlasov equation
\begin{equation}\label{meso_model}
\partial_t\mu + \div_\xi(\mathcal{X}[\mu]\mu) = 0
\end{equation}
where 
$$
\mathcal{X}[\mu](t,x,\xi) = \int_{\Omega\times\R^d} G(t,x,x',\xi,\xi') \, d\mu(x',\xi')
$$ 
for all $(t,x,\xi) \in\R\times\Omega\times\R^d $, obtained by mean field limit.
\item The macroscopic model, which is the continuum / graph limit equation
\begin{equation}\label{macro_model}
\partial_t y(t,x) = (A(t,y(t)))(x) = \int_{\Omega} G(t,x,x',y(t,x),y(t,x')) \, d\nu(x')
\end{equation}
where $\nu\in\mathcal{P}(\Omega)$, obtained by graph limit.
\end{itemize}
Additionally, we have also considered the Liouville equation, 
\begin{equation}\label{liouville_model}
\partial_t\rho^N + \div_\Xi(Y^N\rho^N) = 0
\end{equation}
where $Y^N$ is the vector field on $\R^{dN}$ representing the system of all particles.

Figure \ref{fig_embeddings} illustrates the relationships investigated in the paper.

\begin{figure}[h]
\begin{center}
\resizebox{14.5cm}{!}{\input 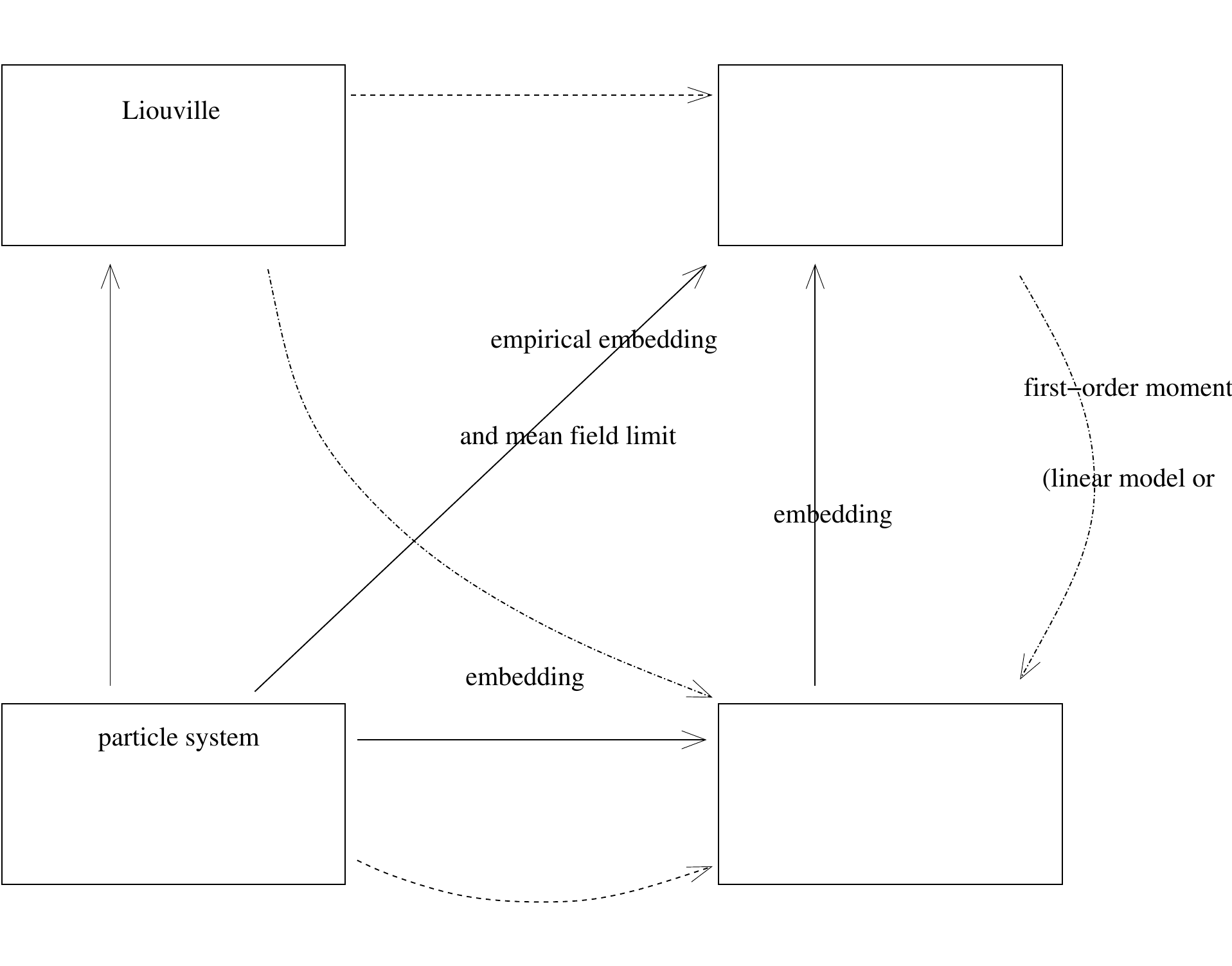_t}
\end{center}
\caption{Relationships between particle (microscopic) system, Liouville (probabilistic) equation, Vlasov (mesoscopic, mean field) equation, CGL (macroscopic, graph limit) equation.
We do not write the superscript $N$ in the various formulas for better readability.}
\label{fig_embeddings}
\end{figure}

\paragraph{Particle to Liouville.}
Any solution $\Xi^N(\cdot)$ of the particle system \eqref{micro_model} can be embedded as a Dirac measure $\rho^N(\cdot)=\delta_{X^N}\otimes\delta_{\Xi^N(\cdot)}$ that is a solution of the Liouville equation \eqref{liouville_model}.

\paragraph{Particle to Vlasov.}
By Proposition \ref{prop_empirical_x}, any solution $\Xi^N(\cdot)$ of the particle system \eqref{micro_model} can be embedded to an empirical measure 
$$
\mu(\cdot)=\mu^e_{(X^N,\Xi^N(\cdot))}=\frac{1}{N}\sum_{i=1}^N\delta_{x_i^N}\otimes\delta_{\xi_i^N(\cdot)}
$$
that is a solution of the Vlasov equation \eqref{meso_model}. Conversely if an empirical measure $\mu(\cdot)=\mu^e_{(X^N,\Xi^N(\cdot))}$ (with distinct points) is a solution of the Vlasov equation \eqref{meso_model} then $\Xi^N(\cdot)$ must be a solution of \eqref{micro_model}. 

In this context, the mean field limit consists of taking the limit $N\rightarrow+\infty$.

\paragraph{Particle to CGL.}
Any solution $\Xi^N(\cdot)$ of the particle system \eqref{micro_model} can be embedded to a solution of the general nonlinear CGL equation \eqref{macro_model} by using an empirical measure $\nu$ (see Remark \ref{rem_embedding_particle_Euler}).

Alternatively and more substantively, to pass from the microscopic to the macroscopic scale, by Theorems \ref{thm_estim_graph} and \ref{thm_estim_graph_2}, one can take the graph limit of the particle system (Riemann sum theorem) and thus obtain the CGL equation, with estimates of convergence as $N\rightarrow+\infty$. 

\paragraph{Liouville to Vlasov.}
By Theorems \ref{thm_CV_liouville_empirical} or \ref{thm_CV_liouville}, one can recover the solutions of the Vlasov equation \eqref{meso_model} from those of the Liouville equation \eqref{liouville_model}, for some appropriate initial conditions $\rho(0)$, by taking marginals and taking the limit $N\rightarrow+\infty$.

\paragraph{CGL to Vlasov.}
By Proposition \ref{prop_monokinetic} in Section \ref{sec_nu-monokinetic}, given any $\nu\in\mathcal{P}(\Omega)$ and any solution $t\mapsto y(t,\cdot)$ of the CGL equation \eqref{macro_model}, the $\nu$-monokinetic measure mapping $t\mapsto\mu(t) = \mu^\nu_{y(t,\cdot)} = \nu\otimes \delta_{y(t,\cdot)}$ defined by \eqref{def_monokinetic_measure} is a solution of the Vlasov equation \eqref{meso_model}.
This embedding from the macroscopic to the mesoscopic scale is general and is valid for the mean field $\mathcal{X}[\mu]$ defined by \eqref{def_mean_field} and for the nonlinear operator $A$ defined by \eqref{def_A_general}.

\paragraph{Vlasov equation to CGL equation.}
Here, and only here, we assume, first, that $G$ is linear with respect to $(\xi,\xi')$ (as is the case for the Hegselmann--Krause model).
Proposition \ref{prop1} says that, given any solution $t\mapsto\mu(t)$ of the Vlasov equation \eqref{meso_model}, defining $\nu=\pi_*\mu(t)$ (marginal of $\mu(t)$, which does not depend on $t$), the moment mapping $t\mapsto y(t,\cdot)$ of order $1$, defined by $y(t,x)=\int_{\R^d}\xi\, d\mu_{t,x}(\xi)$, is a solution of the CGL equation \eqref{macro_model} (which is linear in this case).

As discussed in Section \ref{sec_nu-monokinetic}, there is a second way, still not general, to go from Vlasov to CGL, by assuming that the solution $\mu(\cdot)$ of the Vlasov equation is $\nu$-monokinetic. In this case, its moment $y$ of order $1$ is a solution of the nonlinear CGL equation \eqref{Euler_general}.

This projection from the mesoscopic to the macroscopic scale is not general because, in general, $y$ does not satisfy a closed equation.

\paragraph{Liouville equation to CGL equation.}
Proposition \ref{prop_liouville_to_cgl_moment} and its Corollary in Section \ref{sec_liouville_to_euler} show how to derive CGL from Liouville, for specific initial conditions $\rho^N(0)$, by taking an adequate moment of $\rho^N(t)$ (in a suitable bounded Lipschitz dual norm) and then passing to the limit $N\rightarrow+\infty$.

\medskip

Finally, all relationships displayed above are general (i.e., valid for a general interaction mapping $G$) except the transition from the mesoscopic (kinetic, mean field) model to the macroscopic (CGL) model, which is valid if $G$ is linear with respect to $(\xi,\xi')$ but does not hold in general.
The graph limit procedure is of a different nature and relies on the Riemann sum theorem (see Section \ref{sec_graphlimit}). 

Notably, the above relationships suggest that the mesoscopic level should not be viewed as strictly intermediate between the microscopic and macroscopic ones. 

\medskip

We emphasize the following important novelty of our article:
\vskip 0.3cm
\begin{center}\boxed{\begin{array}{c}
\mbox{\bf a Vlasov-type mesoscopic equation for heterogeneous agent systems,}\\
\mbox{and the identification of the continuum / graph limit equation}\\
\mbox{\bf as a hydrodynamic closure in the linear or monokinetic regimes.}
\end{array}
}
\end{center}

\subsection{Microscopic viewpoint: family of particle systems}\label{sec_micro}
Let $d\in\N^*$ be fixed. Throughout the paper, we consider an arbitrary norm $\Vert\cdot\Vert$ on $\R^d$.
At the microscopic level, given any $N\in\N^*$, we consider a system of $N$ interacting ``particles" or ``agents" $\xi^N_i(t)\in\R^d$, called the \emph{particle system} (or \emph{multiagent system}), of dynamics
\begin{equation}\label{system_particles_ij}
\boxed{
\dot\xi^N_i(t) = \frac{1}{N} \sum_{j=1}^N G_{ij}^N \left( t,\xi^N_i(t),\xi^N_j(t) \right) ,
\qquad i\in\{1,\ldots,N\}
}
\end{equation}
where $G_{ij}^N:\R\times\R^d\times\R^d\rightarrow\R^d$ stands for the interaction between the particles $i$ and $j$. The dot stands for the time derivative.
The most usual case, widely treated in the existing literature, is when $G_{ij}^N=G$:
in this case, the particles are indistinguishable (or, exchangeable in the probabilistic language), reflecting the fact that the dynamics are invariant under permutations of the $\xi^N_i$.
We show here that there is no difficulty to treat the more general situation where the particles are distinguishable and the interactions depend on the agents. In \eqref{system_particles_ij}, $G_{ij}^N$ depends on $i,j,N$.

\smallskip

Throughout the paper, we make the following crucial assumption: 
\begin{enumerate}[label=$\bf(G)$]
\item\label{G}
There exist a complete metric space $(\Omega,\mathrm{d}_\Omega)$ and a \emph{continuous} mapping 
$$
\begin{array}{rcl}
G:\R\times\Omega\times\Omega\times\R^d\times\R^d & \rightarrow & \R^d \\
(t,x,x',\xi,\xi') & \mapsto & G(t,x,x',\xi,\xi') 
\end{array}
$$
locally Lipschitz with respect to $(\xi,\xi')$ uniformly with respect to $(t,x,x')$ on any compact subset of $\R\times\Omega\times\Omega$, 
such that, for every $N\in\N^*$, there exist $x^N_1,\ldots,x^N_N$ in $\Omega$ such that
\begin{equation}\label{interpolation_G}
G \left( t,x^N_i,x^N_j,\xi,\xi' \right) = G_{ij}^N(t,\xi,\xi') \qquad \forall t\in\R\qquad \forall \xi,\xi'\in\R^d\qquad \forall i,j\in\{1,\ldots,N\} .
\end{equation}
\end{enumerate}
Under Assumption \ref{G}, for every $N\in\N^*$ the particle system \eqref{system_particles_ij} is equivalently written as
\begin{equation}\label{particle_system}
\boxed{
\begin{aligned}
\dot x^N_i(t) &= 0 \\
\dot\xi^N_i(t) &= \frac{1}{N} \sum_{j=1}^N G \left( t,x^N_i,x^N_j,\xi^N_i(t),\xi^N_j(t) \right) ,
\qquad i\in\{1,\ldots,N\}
\end{aligned}
}
\end{equation}
The variables $x^N_i\in\Omega$ are parameters, and a usual way to treat parameters in differential equations is to treat them as state variables whose dynamics are zero, whence the dynamics $\dot x^N_i(t)=0$ above.
For each index $i$, the variable $x^N_i$ can be seen as the ``label" (type, name, color) of the agent $i$, used to distinguish it from the others.

\smallskip

Assumption \ref{G} (in particular, \eqref{interpolation_G}) amounts to a continuous interpolation of the mappings $G_{ij}^N$. 
The continuity assumption includes the idea of the existence of a limit system as $N\rightarrow+\infty$. In some sense, this assumption is unavoidable: indeed, if $G$ were not required to be continuous, then completely different systems \eqref{system_particles_ij} could be considered as $N$ varies and then no limit (at least, in a strong sense) for large $N$ could exist. Note anyway that, interestingly, the authors of \cite{JabinPoyatoSoler} do not assume \ref{G}, but in order to pass to the mean field limit they make another assumption of uniform boundedness on their dynamics in order to have a weak star limit. However at the limit the distinguishability of particles is lost. In contrast, in our paper we want to obtain strong (mean field, hydrodynamic, graph) limits and to preserve distinguishability at the limit.

Note that Assumption \ref{G} implies that the Lipschitz constants of the mappings $G_{ij}^N$ are uniformly bounded (with respect to $i,j,N$) on any compact.

In Assumption \ref{G}, the complete metric space $\Omega$ used for the parameters $x^N_i$ is arbitrary. 
For instance we can take $\Omega=[0,1]$, but we allow for more general sets, in view of deriving on $\Omega$ some interesting classes of PDEs (see Section \ref{sec_approx_PDE}).

The choice of the possible values of the $x^N_i$ is not imposed in Assumption \ref{G}. If one wishes moreover to fix some precise points $x^N_i$, such as the natural ones $x^N_i=\frac{i}{N}$ when $\Omega=[0,1]$, often used in numerical analysis, then having \eqref{interpolation_G} satisfied requires some compatibility conditions on the mappings $G_{ij}^N$. 

\smallskip

In the above framework, the classical case studied in the existing literature, where particles are indistinguishable, is when the mapping $G$ does not depend on $(x,x')$. 

\medskip

Setting $X^N=(x^N_1,\ldots,x^N_N)\in\Omega^N$, the system \eqref{particle_system} can also be written in the form
\begin{equation}\label{system_Y}
\boxed{
\dot\Xi^N(t) = Y^N\big(t,X^N,\Xi^N(t)\big)
}
\end{equation}
where $\Xi^N(t)=(\xi^N_1(t),\ldots,\xi^N_N(t))$.
Here and in what follows, the time-dependent vector field $Y^N(t,X,\cdot)$ on $(\R^d)^N$, depending on the parameter $X\in\Omega^N$, is defined by
\begin{equation}\label{def_Y}
Y^N(t,X,\cdot)=\big(Y^N_1(t,X,\cdot),\ldots,Y^N_N(t,X,\cdot)\big)
\end{equation}
with 
\begin{equation}\label{def_Yi}
Y^N_i(t,X,\Xi) = \frac{1}{N} \sum_{j=1}^N G(t,x_i,x_j,\xi_i,\xi_j) \qquad \forall i\in\{1,\ldots,N\}
\end{equation}
for all $t\in\R$, $X=(x_1,\ldots,x_N)\in\Omega^N$ and $\Xi=(\xi_1,\ldots,\xi_N)\in(\R^d)^N$.
We denote by $(\Phi^N(t,X,\cdot))_{t\in I}$ ($I\subset\R$) the local-in-time flow of diffeomorphisms of $\R^{dN}$ generated by the time-dependent vector field $Y^N(t,X,\cdot)$: this flow, called the \emph{particle flow}, is parametrized by $X\in\Omega^N$. 
We have $\Xi^N(t) = \Phi^N(t,X^N,\Xi^N(0))$ for every $t\in I$. 

\begin{lemma}[Uniform maximal time]\label{lem_Tmax}
For any compact subset $K$ of $\Omega\times\R^d$, there exists $T_{\max}(K)\in(0,+\infty]$ such that, for any $N\in\N^*$, for any $(X,\Xi(0))\in K^N$,\footnote{With a slight abuse of notation, $(X^N,\Xi^N(0))\in K^N$ means that $(x_i^N,\xi_i^N(0))\in K$ for every $i\in\{1,\ldots,N\}$.} there exists a unique solution $t\mapsto \Xi^N(t) = \Phi^N(t,X^N,\Xi^N(0))$ of \eqref{system_Y} on $[0,T_{\max}(K))$, of parameter $X^N$ and of initial condition $\Xi^N(0)$ at $t=0$, and of class $\mathscr{C}^1$ with respect to $t$.
Moreover, for any $T\in[0,T_{\max}(K))$, there exists a compact subset $K_T\subset\R^d$, depending on $T$ but not on $N$, such that $\xi_i^N(t)\in K_T$ for every $i\in\{1,\ldots,N\}$, every $t\in[0,T]$, and every $N\in\N^*$.
\end{lemma}

Lemma \ref{lem_Tmax} shows that, given a compact set $K$ of initial conditions, the time $T_{\max}(K)$ is uniform with respect to $N\in\N^*$, and that, given any $T\in(0,T_{\max}(K))$, any solution of \eqref{system_Y} on $[0,T]$, starting in $K$ at $t=0$, is contained in a compact set that depends on $T$ but not on $N$. 

Lemma \ref{lem_Tmax} follows directly from the usual proof of the Picard-Lindel\"of (Cauchy-Lipschitz) theorem by a fixed point argument (see \cite[Chapter II]{Hartman}), using Assumption \ref{G}, noting that, for every $T>0$, on $[0,T]\times K^N$ the vector field $Y^N$ is uniformly bounded with respect to $N$ and is Lipschitz with respect to $\Xi$ uniformly with respect to $(t,X)$ on any compact, with a Lipschitz constant that is uniform with respect to $N$. Note that, for a given $N\in\N^*$, the maximal time of definition of the solution $t\mapsto \Phi^N(t,X,\Xi(0))$ may be larger than $T_{\max}(K)$; what is important in the lemma is the uniform bound below with respect to $N$. 

Of course, if $G$ is globally Lipschitz with respect to $(\xi,\xi')\in\R^d\times\R^d$, uniformly with respect to $(t,x,x')$ on any compact subset of $[0,+\infty)\times\Omega\times\Omega$, then $T_{\max}(K)=+\infty$ for any compact $K\subset\Omega\times\R^d$.
But our framework is more general and allows for superlinearities.

We next give some examples covered by this general framework.

\subsection{Examples}\label{sec_ex_particle}
\paragraph{First-order systems.}
General first-order systems of the form
\begin{equation}\label{first-order_particle}
\dot\xi^N_i(t) = F_i^N(t,\xi^N_i(t)) + \frac{1}{N} \sum_{j=1}^N K_{ij}^N(t,\xi^N_i(t),\xi^N_j(t)), \qquad i\in\{1,\ldots,N\},
\end{equation}
can be written as \eqref{system_particles_ij} with $G_{ij}^N(t,\xi,\xi') = F_i^N(t,\xi)+K_{ij}^N(t,\xi,\xi')$. Assumption \ref{G} is satisfied if there exist a set $\Omega$ and sufficiently regular mappings $F$ and $K$ such that $F( t,x^N_i,x^N_j,\xi,\xi' ) = F_i^N(t,\xi)$ and $K( t,x^N_i,x^N_j,\xi,\xi' ) = K_{ij}^N(t,\xi,\xi')$ as in \eqref{interpolation_G}. 

\medskip\noindent
-- A first meaningful example is the linear Hegselmann--Krause first-order consensus system (see \cite{HegselmannKrause}), modeling for instance the propagation of opinions (studied in \cite{BoudinSalvaraniTrelat_SIMA2022}), of dynamics
\begin{equation}\label{HK_particle}
\dot\xi^N_i(t) = \frac{1}{N} \sum_{j=1}^N \sigma_{ij}^N \big(\xi^N_j(t)-\xi^N_i(t)\big), \qquad i\in\{1,\ldots,N\},
\end{equation}
with constant interaction coefficients $\sigma_{ij}^N\geq 0$ (not necessarily symmetric). 
Assumption \ref{G} requires that there exist a set $\Omega$ (for example, but not necessarily, $\Omega=[0,1]$) and a continuous function $\sigma$ on $\Omega^2$ such that, for every $N\in\N^*$, there exist distinct points $x^N_1,\ldots,x^N_N$ in $\Omega$ such that $\sigma(x^N_i,x^N_j)=\sigma_{ij}^N$. 
The graph interpretation, which is particularly relevant here, will be commented in Section \ref{sec_graphlimit}.
We have then $G(t,x,x',\xi,\xi')=\sigma(x,x')(\xi'-\xi)$ for all $(t,x,x',\xi,\xi') \in\R\times\Omega^2\times(\R^d)^2$.

More general models can be considered, with interaction coefficients $\sigma_{ij}$ depending on $t$ and on the $\xi_i$ (see the survey \cite{MotschTadmor_SIREV2014} and see the recent Transformers particle model studied in \cite{GeshkovskiLetrouitPolyanskiyRigollet}).

\medskip\noindent
-- A second interesting example is the Kuramoto model
\begin{equation}\label{kuramoto_particle}
\dot\xi^N_i(t) = \alpha_i^N + \frac{1}{N}\sum_{j=1}^N \sigma_{ij}^N\sin(\xi^N_j(t)-\xi^N_i(t)), \qquad i\in\{1,\ldots,N\},
\end{equation}
where $d=1$, $\xi_i^N(t)\in\R$ is the phase of the oscillator $i$, $\alpha_i^N\in\R$ is its frequency and $\sigma_{ij}^N\in\R$ is an interaction coefficient between oscillators $i$ and $j$. 
This system was introduced in \cite{Kuramoto_1975} in view of studying synchronization of interacting oscillators. 
To write the particle system \eqref{kuramoto_particle} in the form \eqref{particle_system}, now two parameters (labels) are required for each particle, one standing for the frequency and the other for the interaction as in the previous example. 
We set $\Omega = \R\times[0,1]$ and for every $x\in\Omega$ we denote by $x=(\alpha,\beta)\in\Omega$ the two coordinates of $x$. 
Assumption \ref{G} is satisfied if there exists a continuous function $\sigma$ on $[0,1]^2$ satisfying $\sigma(\beta^N_i,\beta^N_j)=\sigma_{ij}$ as in \eqref{interpolation_G}, and we have then $G(t,x,x',\xi,\xi') = \alpha + \sigma(\beta,\beta')\sin(\xi'-\xi)$ (where $x=(\alpha,\beta)$ and $x'=(\alpha',\beta')$).

\medskip\noindent
-- Consider again the general system \eqref{first-order_particle}, but where now $F_i^N=F$ and $K_{ij}^N=K$ do not depend on $i,j,N$, with $F,K\in\mathscr{C}^1(\R^d,\R^d)$. In this case \eqref{first-order_particle} becomes
\begin{equation}\label{FK_particle}
\dot\xi^N_i(t) = F(\xi^N_i(t)) + \frac{1}{N} \sum_{j=1}^N K(\xi^N_i(t)-\xi^N_j(t)), \qquad i\in\{1,\ldots,N\},
\end{equation}
which is a much used particle model (see \cite{Jabin_KRM2014}).
Assumption \ref{G} is satisfied and $G(t,x,x',\xi,\xi') = F(\xi)+K(\xi-\xi')$, not depending on $(t,x,x')$: this is an indistinguishable case.
Often, $K=-\nabla V$ where $V$ is an interaction potential (that we consider here to be regular).

\paragraph{Second-order systems.} 
Setting $d=2r$ and denoting $\xi=(q,p)\in\R^r\times\R^r$, general second-order systems of the form
\begin{equation}\label{second-order_particle}
\dot q^N_i(t) = p^N_i(t), \qquad
\dot p^N_i(t) = \frac{1}{N} \sum_{j=1}^N b_{ij}^N(t,q^N_i(t),p^N_i(t),q^N_j(t),p^N_j(t)), \qquad i\in\{1,\ldots,N\},
\end{equation}
can be written as \eqref{system_particles_ij} with $G_{ij}^N(t,\xi,\xi') = (p, b_{ij}^N(t,\xi,\xi'))$. Assumption \ref{G} is satisfied if there exist a set $\Omega$ and a sufficiently regular mapping $b$ interpolating all mapping $b_{ij}^N$ as stated in \eqref{interpolation_G}, i.e., $b( t,x^N_i,x^N_j,\xi,\xi' ) = b_{ij}^N(t,\xi,\xi')$.

Here, $q$ is a position and $p$ is a speed or a momentum.
It is important to note that the variable $q$ should not be confused with the variable $x\in\Omega$ that is used here to designate the label of a particle.

\medskip\noindent
-- A famous example of second-order dynamics is the Cucker--Smale model (see \cite{CuckerSmale})
\begin{equation}\label{CS_particle}
\dot q^N_i(t) = p^N_i(t), \qquad
\dot p^N_i(t) = \frac{1}{N} \sum_{j=1}^N a(\Vert q^N_j(t)-q^N_i(t)\Vert) (p^N_j(t)-p^N_i(t)), \qquad i\in\{1,\ldots,N\},
\end{equation}
where $a\in\mathscr{C}^1(\R)$.
Assumption \ref{G} is satisfied with $G=(G_q,G_p)$ where $G_q(t,x,x',\xi,\xi')=p$ and $G_p(t,x,x',\xi,\xi')=a(\Vert q'-q\Vert) (p'-p)$, not depending on $(t,x,x')$: this is an indistinguishable case.

Many variants of that model are covered by our framework, for instance the potential $a$ may depend on $i$ and $j$, and other terms can be added to the dynamics of $p_i$, for instance self-propulsion and attraction-repulsion forces (like in \cite{CarrilloChoi_ARMA2021}); in this case, defining a set $\Omega$ is required. 

\medskip\noindent
-- Many second-order particle systems studied in the literature, modeling Newtonian dynamics of $N$ particles interacting through a pairwise force $K$ (typically derived from a potential), are of the form \eqref{second-order_particle} with $b_{ij}^N(t,\xi,\xi')=K(\xi,\xi')$, not depending on $i,j,N$, with $K\in\mathscr{C}^1(\R^d,\R^d)$, yielding
\begin{equation}\label{second_order_K}
\dot q^N_i(t) = p^N_i(t), \qquad
\dot p^N_i(t) = \frac{1}{N} \sum_{j=1}^N K(q^N_i(t),q^N_j(t)), \qquad i\in\{1,\ldots,N\} .
\end{equation}
Assumption \ref{G} is satisfied with $G(t,x,x',\xi,\xi') = (p,K(q,q'))$, not depending on $(t,x,x')$: this is an indistinguishable case.
Note that, when $K=-\nabla V$ for some potential function, the above particle system stands for the classical $N-$body problem in Hamiltonian form (see next for more general Hamiltonian cases), with the Hamiltonian function given by 
$H(q_1,p_1,\ldots,q_N,p_N) = \frac{1}{2} \sum_{i=1}^N \Vert p_i\Vert^2 + \frac{1}{2N} \sum_{i,j=1}^N V(q_i,q_j)$ (with $V$ symmetric).

\paragraph{Hamiltonian systems.}
Still with $d=2r$ and $\xi=(q,p)\in\R^r\times\R^r$, given any $N\in\N^*$, consider the Hamiltonian function
\begin{equation}\label{ham_particle}
H^N(q_1,p_1,\ldots,q_N,p_N) = \sum_{j=1}^N h_j^N(q_j,p_j) + \frac{1}{N} \sum_{j,k=1}^N h_{jk}^N(q_j,p_j,q_k,p_k)
\end{equation}
for some $\mathscr{C}^1$ functions $h_j^N$ and $h_{jk}^N$. The Hamiltonian system of $N$ particles, given by $\dot q_i = \frac{\partial H}{\partial p_i}$, $\dot p_i = - \frac{\partial H}{\partial q_i}$ for $i\in\{1,\ldots,N\}$, can be written as \eqref{system_particles_ij} with
$$
G_{ij}^N(t,\xi,\xi') = 
\begin{pmatrix}
\phantom{-}\partial_2 h_i^N(q,p) + \partial_2 h_{ij}^N(q,p,q',p') + \partial_4 h_{ji}^N(q',p',q,p) \\[1mm]
-\partial_1 h_i^N(q,p) - \partial_1 h_{ij}^N(q,p,q',p') - \partial_3 h_{ji}^N(q',p',q,p) 
\end{pmatrix}
$$
where $\partial_k$ denotes the partial derivative with respect to the $k^\textrm{th}$-variable. 

Having Assumption \ref{G} satisfied requires at least that the Hamiltonians $h_j^N$ and $h_{jk}^N$ be uniformly (wrt $j,k,N$) locally Lipschitz. 
Note that the Hamiltonian $H^N$ defined by \eqref{ham_particle} involves sums of ``single" (noninteracting) and of ``pairwise" Hamiltonians, but not of ``triple-wise" or more.

Many classical Hamiltonian systems of $N$ particles are written as above with Hamiltonians not depending on $j,k,N$, for instance in quantum mechanics (see \cite{Golse_2016}) or in geometric mechanics (e.g., point-vortex systems on Riemannian manifolds, geodesic flows of $N$-body type).
An example, where Assumption \ref{G} is satisfied, 
used to model systems of fermions confined in a magnetic field, is when 
$h_j^N(q_j,p_j) = V(q_j) + \frac{1}{2}\Vert p_j-A(q_j)\Vert^2$ for some confining potential $V\in\mathscr{C}^1(\R^d)$ and some magnetic potential vector $A\in\mathscr{C}^1(\R^d,\R^d)$,
and $h_{jk}(q_j,p_j,q_k,p_k) = W(\Vert q_j-q_k\Vert)$ for some pairwise interaction potential $W\in\mathscr{C}^1(\R^d)$.
In this case, we have 
\begin{equation}\label{fermion_particle}
G(t,x,x',\xi,\xi') = \Big( p-A(q) , -\nabla V(q)+dA(q).(p-A(q))-\partial_1 W(q,q')-\partial_2 W(q',q) \Big) .
\end{equation}

\begin{remark}[On the wording ``indistinguishability"]
In the literature, a dynamical system $\dot z(t) = X(t,z(t))$ in $\R^n$ is said to be ``indistinguishable", or ``exchangeable" in the probabilistic wording, if it is invariant under permutations in the following sense: given any $z_0\in\R^n$, denoting by $t\mapsto z(t,z_0)$ the unique solution on some interval $I$ of the system such that $z(0,z_0)=z_0$, we have $z(t,\sigma(z_0))=\sigma(z(t,z_0))$ for every $t\in I$, for every permutation $\sigma\in\mathfrak{S}_n$. Equivalently, the vector field $X$ is invariant under the action of the permutation, i.e., $\sigma_*X(t,\cdot)=X(t,\cdot)$ for every $t$.

\smallskip
{\bf (A)} General particle systems in $\R^{dN}$ of the form \eqref{system_particles_ij} are not indistinguishable in general because the interaction mapping $G_{ij}^N$ depends on $i$ and $j$ (but they are indistinguishable if $G_{ij}^N=G$): the dynamics are not invariant under permutations $\sigma\in\mathfrak{S}_{dN}$ acting on $\Xi=(\xi_1,\ldots,\xi_N)$.
This is the standard wording used in the literature to describe the distinguishability or indistinguishability of systems of particles, and we will follow this wording throughout the article.

\smallskip
{\bf (B)} In Section \ref{sec_micro} we have introduced a set of labels $x\in\Omega$, distinguishing particles, and we have done the fundamental assumption \ref{G}. In this context, the particle system \eqref{system_particles_ij} (which is, in general, distinguishable) has been rewritten as \eqref{particle_system} or equivalently as \eqref{system_Y}, by augmenting the state space to $\Omega^N\times\R^{dN}$. But then, in this augmented form, the system \eqref{system_Y} is \emph{always} indistinguishable in the sense that it is invariant under permutations $\sigma\in\mathfrak{S}_{dN}$ acting simultaneously on $X=(x_1,\ldots,x_N)$ and on $\Xi=(\xi_1,\ldots,\xi_N)$.
Hence, in some way, we recover indistinguishability in the new state space $\Omega^N\times\R^{dN}$. 

Despite the slight ambiguity, throughout the paper, we will continue to use the wording described in (A).
\end{remark}

\subsection{Reader's guide and assumptions at a glance}\label{sec_objectives}
Section \ref{sec_micro} introduces the labelled particle system and explains how the usual indistinguishable setting is recovered when $G$ does not depend on $(x,x')$. Section \ref{sec_ex_particle} gives the main examples. Section \ref{sec_graphlimit} proves the direct particle-to-CGL convergence. Section \ref{sec_vlasov} derives the Vlasov equation by empirical measures. Section \ref{sec_liouville} studies the Liouville lift and its marginals. Section \ref{sec_hydro} studies the hydrodynamic moment viewpoint and explains why the Vlasov-to-CGL passage is not automatic. Section \ref{sec_further_comments} provides further comments and perspectives, and Appendix \ref{sec_appendix} collects technical tools.

The following table summarizes the assumptions most often used in the paper and the corresponding outputs.
\begin{center}
\begin{tabular}{p{3.6cm}p{4.6cm}p{6.1cm}}
\hline
Assumption & Where it is used & Output \\
\hline
$G$ continuous in $(x,x')$ and locally Lipschitz in $(\xi,\xi')$ & Basic framework, Assumption \ref{G} & Well-posed particle systems on a uniform time interval; topological particle-to-CGL and particle-to-Vlasov convergence. \\
$G$ locally H\"older or Lipschitz in $(x,x',\xi,\xi')$ & Quantitative graph and mean field estimates & Rates for Riemann-sum errors and Wasserstein stability estimates. \\
Tagged partitions of $(\Omega,\nu)$ & Direct CGL limit and semi-empirical approximations & Explicit approximation of $\nu$ and rates depending on the mesh exponent $r$. \\
Linearity of $G$ in $(\xi,\xi')$ & Moment equation of order $1$ & Closure of the first moment and identification of the resulting equation with the CGL equation. \\
Monokinetic ansatz $\mu=\nu\otimes\delta_{y(\cdot)}$ & Vlasov-to-CGL passage & Equivalence between the Vlasov equation restricted to monokinetic measures and the nonlinear CGL equation. \\
\hline
\end{tabular}
\end{center}

\subsection{Scope, limitations and possible extensions}\label{sec_scope_limitations}
The paper is deliberately restricted to deterministic finite systems and regular interaction kernels. This already covers many network and collective dynamics models, but it excludes several important directions.

First, singular kernels, such as Coulomb, Poisson or point-vortex type interactions, are outside the scope of the present analysis. They require compactness, modulated energy, cut-off or stability arguments that are very different from the ODE and Wasserstein estimates used here (see, e.g., \cite{HaurayJabin_ARMA2007, JabinWang_IM2018, Serfaty}).

Second, stochastic particle systems are not treated. Mean field limits with noise often lead to McKean-Vlasov or kinetic Fokker-Planck equations, while the compatibility between stochasticity and graph limits is more delicate. The deterministic diagram of Figure \ref{fig_embeddings} should therefore be viewed as a reference map, not as a stochastic result.

Third, Assumption \ref{G} is a structural interpolation assumption on the heterogeneous interactions. It is strong enough to preserve labels and to obtain strong limits. It is not the only possible way to treat non-exchangeable systems; weaker compactness approaches may exist, but they generally lose part of the label information or give weaker convergence.

Finally, the Vlasov-to-CGL passage is not a general theorem. In the absence of linearity or a monokinetic ansatz, the first moment does not satisfy a closed equation. This obstruction is part of the message of the paper: the direct graph limit and the hydrodynamic limit agree only under specific closure mechanisms.

\subsection{General notations}\label{sec_general_notations}
Let $(E,\mathrm{d}_E)$ be a Polish space.

\paragraph{H\"older and Lipschitz mappings.}
Let $U$ be a subset of $E$. Let $k\in\N^*$ and let $\Vert\cdot\Vert$ be a norm on $\R^k$.
Given any $\alpha\in(0,1]$, we denote by $\mathscr{C}^{0,\alpha}(U,\R^k)$ the set of all continuous mappings $g\in \mathscr{C}^0(U,\R^k)$ that are \emph{$\alpha$-H\"older continuous} (with respect to the norm $\Vert\ \Vert$), meaning that
$$
\Hol_\alpha(g) = \sup_{\substack{y,y'\in U \\ y\neq y'}} \frac{\Vert g(y)-g(y')\Vert}{\mathrm{d}_E(y,y')^\alpha} < +\infty. 
$$
When $\alpha=1$, we speak of a Lipschitz mapping and we denote $\Lip(g)=\Hol_1(g)$. 
When $U$ is compact, $\mathscr{C}^{0,\alpha}(U,\R^k)$ is a Banach space endowed with the norm
$$
\Vert g\Vert_{\mathscr{C}^{0,\alpha}(U,\R^k)} = \max_{y\in U}\Vert g(y)\Vert + \Hol_\alpha(g) .
$$
When $k=1$ and $\alpha=1$, we denote $\Lip(U) = \mathscr{C}^{0,1}(U,\R)$.

\paragraph{Probability Radon measures.}
We denote by $\mathcal{P}(E)$ the set of probability Radon measures on $E$. 
We also consider $\mathcal{P}_c(E)$, $\mathcal{P}^{ac}(E)$, where the subscript $c$ means ``with compact support" and the superscript $ac$ means ``absolutely continuous with respect to a Lebesgue measure" (in the case where $E$ is equipped with a Lebesgue measure), and for every $p\geq 1$ the set $\mathcal{P}_p(E)$ stands for the set of all $\mu\in\mathcal{P}(E)$ that have a finite moment of order $p$, i.e., $\int_E \mathrm{d}_E(y_0,y)^p\, d\mu(y) < +\infty$ where $y_0\in E$ is arbitrary.
Given any Borel mapping $\phi:E\rightarrow F$ where $F$ is another Polish space and given any $\mu\in\mathcal{P}(E)$, the image (or pushforward) of $\mu$ under $\phi$ is $\phi_*\mu=\mu\circ\phi^{-1}$.

We denote by $\mathscr{C}^0(E)$ the set of continuous functions on $E$ and by $\mathscr{C}^0_c(E)$ the set of continuous functions of compact support on $E$.
When $E$ is a smooth manifold, we adopt similar notations for the set $\mathscr{C}^\infty(E)$ of smooth functions on $E$.
We recall that the topological dual $(\mathscr{C}^0_c(E))'$ (resp., $(\mathscr{C}^0(E))'$) is the set of all Radon measures on $E$ (resp., with compact support). Endowed with the total variation norm $\Vert\ \Vert_{TV}$ which is the dual norm, it is a Banach space.

Throughout the paper, $\delta_\star$ is the Dirac measure at $\star$.

\paragraph{Wasserstein distance.}
Given any $p\geq 1$, the Wasserstein distance $W_p(\mu_1,\mu_2)$ of order $p$ between two probability measures $\mu_1,\mu_2\in\mathcal{P}(E)$, with respect to the distance $\mathrm{d}_E$, is defined as the infimum of the Monge-Kantorovich cost $\int_{E^2} \mathrm{d}_E(y_1,y_2)^p \, d\Pi(y_1,y_2)$ over the set of probability measures $\Pi\in\mathcal{P}(E^2)$ coupling $\mu_1$ with $\mu_2$, i.e., whose marginals on the two copies of $E$ are $\mu_1$ and $\mu_2$:
\begin{equation}\label{def_Wp}
W_p(\mu_1,\mu_2) = \inf\left\{ \left( \int_{E^2} \mathrm{d}_E(y_1,y_2)^p \, d\Pi(y_1,y_2) \right)^{1/p} \ \mid\ \Pi\in\mathcal{P}(E^2),\ (\pi_1)_*\Pi=\mu_1, \ (\pi_2)_*\Pi=\mu_2 \right\}
\end{equation}
where $\pi_1:E^2\rightarrow E$ and $\pi_2:E^2\rightarrow E$ are the canonical projections defined by $\pi_1(y_1,y_2)=y_1$ and $\pi_2(y_1,y_2)=y_2$ for all $(y_1,y_2)\in E\times E$.
Equivalently,
\begin{equation}\label{def_Wp_randomlaws}
W_p(\mu_1,\mu_2) = \inf\left\{ \Big( \mathbb{E} \, \mathrm{d}_E(Y_1,Y_2)^p \Big)^{1/p}   \ \mid\ \textrm{law}(Y_1)=\mu_1,\ \textrm{law}(Y_2)=\mu_2 \right\}
\end{equation}
where the infimum is taken over all possible random variables $Y_1$ and $Y_2$ (defined on a same probability space, with values in $E$) having the laws $\mu_1$ and $\mu_2$ respectively.
Then, $W_p$ is a distance on $\mathcal{P}_p(E)$, which metrizes the weak convergence in $\mathcal{P}_p(E)$ in the following sense: given $\mu\in\mathcal{P}_p(E)$ and given a sequence $(\mu_j)_{j\in\N^*}$ in $\mathcal{P}_p(E)$, we have $W_p(\mu_j,\mu)\rightarrow 0$ as $j\rightarrow+\infty$ if and only if $\int_{E} f\, d\mu_j\rightarrow\int_{E} f\, d\mu$ for every continuous bounded function $f$ on $E$ and $\int_{E} \mathrm{d}_E(y_0,y)^p\, d\mu_j(y)\rightarrow \int_{E} \mathrm{d}_E(y_0,y)^p\, d\mu(y)$ as $j\rightarrow+\infty$ for some (and thus any) $y_0\in E$ (see \cite[Chapter 5, Section 5.2]{Santambrogio_2015} or \cite[Theorem 6.9]{Villani_2009}), if and only if $\int_{E} f\, d\mu_j\rightarrow\int_{E} f\, d\mu$ for every continuous function $f$ on $E$ such that $\vert f(y)\vert\leq C(1+\mathrm{d}_E(y_0,y)^p)$ for every $y\in E$, for some $C>0$ and some (and thus any) $y_0\in E$ (see \cite[Theorem 7.12]{Villani_2003}).
It can be noted that, given any subset $K\subset E$ of finite diameter, we have 
\begin{equation}\label{inegWp1Wp2}
1\leq p_1\leq p_2 \ \Rightarrow\ 
W_{p_1}(\mu_1,\mu_2) \leq W_{p_2}(\mu_1,\mu_2) \leq \mathrm{diam}_E(K)^{1-p_1/p_2} W_{p_1}(\mu_1,\mu_2)^{p_1/p_2}
\end{equation}
for all $\mu_1,\mu_2\in\mathcal{P}_c(E)$ of compact support contained in $K$ (see \cite[Chapter 5]{Santambrogio_2015}), where $\diam_E(K)$ is the supremum of all $\mathrm{d}_E(y,y')$ over all possible $y,y'\in K$.

For $p=1$, the duality formula for the Kantorovich-Rubinstein distance (see \cite[Chapter 5]{Villani_2009}) gives the equivalent definition
\begin{equation}\label{def_W1}
W_1(\mu_1,\mu_2)=\sup\left\{\int_{E}f\,d(\mu_1-\mu_2)\ \mid\ f\in\Lip(E),\ \Lip(f)\leq 1\right\} ,
\end{equation}
valid for all $\mu_1,\mu_2\in\mathcal{P}_1(E)$.

For $p=+\infty$, we set $W_\infty(\mu_1,\mu_2) = \lim_{p\rightarrow+\infty} W_p(\mu_1,\mu_2)$ (see \cite[Chapter 5, Section 5.5.1]{Santambrogio_2015}).

Note that the infimum in \eqref{def_Wp}, as well as in \eqref{def_Wp_randomlaws}, is a minimum (i.e., there exists an optimal coupling) and that the supremum in \eqref{def_W1} is a maximum (see \cite[Chapters 4 and 5]{Villani_2009} or \cite[Chapter 3, Section 3.1.1]{Santambrogio_2015}).


\paragraph{Disintegration.}
In this paper, we are going to consider measures on $\Omega\times\R^d$, for $d\in\N^*$ (and on $\Omega^k\times(\R^d)^k$ for $k\in\N^*$), where $(\Omega,\mathrm{d}_\Omega)$ is a complete metric space and $\R^d$ is endowed with an arbitrary norm $\Vert\cdot\Vert$. 
We endow $\Omega\times\R^d$ with the distance $\mathrm{d}_{\Omega\times\R^d} = \mathrm{d}_\Omega + \mathrm{d}_{\R^d}$ where $\mathrm{d}_{\R^d}$ is the distance on $\R^d$ induced by the norm $\Vert\cdot\Vert$.

Denoting by $\pi:\Omega\times\R^d\rightarrow\Omega$ the canonical projection, given any $\mu\in\mathcal{P}(\Omega\times\R^d)$, in the sequel we will always denote by $\nu$ the nonnegative probability Radon measure on $\Omega$ defined as the image (pushforward) of $\mu$ under $\pi$, 
\begin{equation}\label{def_nu}
\nu = \pi_*\mu = \mu\circ\pi^{-1} ,
\end{equation}
that is also the marginal of $\mu$ on $\Omega$. 
Note 
that, since $\pi$ is continuous, $\supp(\nu)=\overline{\pi(\supp(\mu))}$.
By disintegration of $\mu$ with respect to $\nu$, there exists a family $(\mu_x)_{x\in\Omega}$ of probability Radon measures on $\R^d$ (uniquely defined $\nu$-almost everywhere) such that $\mu=\int_{\Omega}\mu_x\, d\nu(x)$, i.e., 
$$
\int_{\Omega\times\R^d} h(x,\xi)\, d\mu(x,\xi) = \int_{\Omega} \int_{\R^d} h(x,\xi)\, d\mu_x(\xi)\, d\nu(x)
$$
for every Borel measurable function $h:\Omega\times\R^d\rightarrow[0,+\infty)$ (see, e.g., \cite{Bogachev}).
Moreover, we set $\mu_x=0$ whenever $x\in\Omega\setminus\supp(\nu)$.

When $\Omega$ is a smooth manifold, if $\mu\in\mathcal{P}^{ac}(\Omega\times\R^d)$ with a density $f\in L^1(\Omega\times\R^d)$, i.e., $\frac{d\mu}{dx\, d\xi}(x,\xi) = f(x,\xi)$, then $\nu$ is absolutely continuous, of density $\frac{d\nu}{dx}(x)=\int_{\R^d}f(x,\xi)\, d\xi$, and for $\nu$-almost every $x\in\Omega$ the probability measure $\mu_x$ has the density $\frac{d\mu_x}{d\xi}(\xi) = \frac{f(x,\xi)}{\int_{\R^d}f(x,\xi')\, d\xi'}$.

Given any $\mu^1,\mu^2\in\mathcal{P}_1(\Omega\times\R^d)$ having the same marginal $\nu$ on $\Omega$, we define
\begin{equation}\label{def_L1Wp}
L^1_\nu W_p(\mu^1,\mu^2) = \int_{\Omega} W_p(\mu^1_x,\mu^2_x)\, d\nu(x) .
\end{equation}
Obviously, $L^1_\nu W_p$ is a distance on the subset denoted $\mathcal{P}_p^\nu(\Omega\times\R^d)$ of elements of $\mathcal{P}_p(\Omega\times\R^d)$ having the same marginal $\nu$. Note that $W_1(\mu^1,\mu^2)\leq L^1_\nu W_1(\mu^1,\mu^2)$ for all $\mu_1,\mu_2\in\mathcal{P}_1^\nu(\Omega\times\R^d)$.\footnote{Indeed, $\int_{\Omega\times\R^d}f\, d(\mu^1-\mu^2) = \int_{\Omega} \int_{\R^d} f(x,\xi)\, d(\mu^1_x-\mu^2_x)\, d\nu(x) \leq \int_{\Omega} \Lip(f(x,\cdot))\, W_1(\mu^1_x,\mu^2_x)\, d\nu(x)$ for every $f\in\Lip(\Omega\times\R^d)$, and if $\Lip(f)\leq 1$ then $\Lip(f(x,\cdot))\leq 1$ for every $x\in\Omega$. Then, take the supremum over all $f$.}

\paragraph{Tagged partitions.}
Let $\nu\in\mathcal{P}(\Omega)$. 
We say that $(\mathcal{A}^N,X^N)_{N\in\N^*}$ is a family of \emph{tagged partitions} of $\Omega$ associated with $\nu$ if 
$\mathcal{A}^N=(\Omega^N_1,\ldots,\Omega^N_N)$ is a $N$-tuple of disjoint subsets $\Omega^N_i\subset\Omega$ such that \begin{equation}\label{def_tagged}
\Omega = \bigcup_{i=1}^N\Omega^N_i\qquad \textrm{with}\qquad
\nu(\Omega^N_i)=\frac{1}{N}
\quad \textrm{and}\quad
\diam_\Omega(\Omega^N_i)\leq \frac{C_\Omega}{N^r}\qquad \forall i\in\{1,\ldots,N\},
\end{equation}
for some $C_\Omega>0$ and $r>0$ not depending on $N$,
and $X^N=(x^N_1,\ldots,x^N_N)$ is a $N$-tuple of points $x^N_i\in\Omega^N_i$. 
Here, $\diam_\Omega(\Omega^N_i)$ is the supremum of all $\mathrm{d}_\Omega(x,x')$ over all possible $x,x'\in\Omega^N_i$.

Families of tagged partitions always exist when $\Omega$ is a compact $n$-dimensional smooth manifold with or without boundary and $\nu$ is a Lebesgue measure on $\Omega$, with $r=1/n$. 
For instance, when $\Omega=[0,1]$, we take $\Omega^N_i=[a^N_i,a^N_{i+1})$ for some subdivision $0=a^N_1<a^N_2<\cdots<a^N_{N+1}=1$ satisfying \eqref{def_tagged}; when $d\nu(x)=dx$, a natural choice is $a^N_i=\frac{i-1}{N}$, and $x^N_i=a^N_i$ or $\frac{a^N_i+a^N_{i+1}}{2}$, for every $i\in\{1,\ldots,N\}$ (and then $C_\Omega=1$ and $r=1$ in this case). When $\Omega$ is a compact domain of $\R^n$, a family of tagged partitions is obtained by considering a family of meshes, as classically done in numerical analysis, with $r=1/n$.

The concept of tagged partition is used in Riemann (and more generally, Henstock-Kurzweil) integration theory. 
We refer to \cite{Fremlin} for (much more) general results.
A real-valued function $f$ on $\Omega$, of compact support, is said to be $\nu$-Riemann integrable if it is bounded, $\nu$-measurable, and if, for any family $(\mathcal{A}^N,X^N)_{N\in\N^*}$ of tagged partitions, we have
\begin{equation}\label{CV_Riemann_sum_1}
\sum_{i=1}^N \int_{\Omega_i^N} \vert f(x)-f(x^N_i) \vert\, d\nu(x) = \mathrm{o}(1) 
\end{equation}
and thus
\begin{equation}\label{CV_Riemann_sum_2}
\int_{\Omega} f\, d\nu = \frac{1}{N} \sum_{i=1}^N f(x^N_i) + \mathrm{o}(1)
\end{equation}
as $N\rightarrow +\infty$.
A function $f$ of essential compact support on $\Omega$ is $\nu$-Riemann integrable if and only if $f$ is bounded and continuous $\nu$-almost everywhere on $\Omega$.

\section{From microscopic to macroscopic scale: the continuum / graph limit}\label{sec_graphlimit}
In this section we explore the point of view of Riemann sums, in order to derive error estimates mainly resulting from the discrepancy between an integral and a Riemann sum, building on the concept of graph limit introduced in \cite{Medvedev_SIMA2014}.

\subsection{Continuum / graph limit equation}\label{sec_euler_equation}
Given any $\nu\in\mathcal{P}(\Omega)$, we define the nonlinear operator
$A:\R\times L^\infty_\nu(\Omega,\R^d)\rightarrow L^\infty_\nu(\Omega,\R^d)$ (depending on $\nu$) by
\begin{equation}\label{def_A_general}
\boxed{
A(t,y)(x) = \int_{\Omega} G(t,x,x',y(x),y(x'))\, d\nu(x')
}
\end{equation}
(recall that $G$ satisfies Assumption \ref{G})
for every $t\in\R$ and for every $y\in L^\infty_\nu(\Omega,\R^d)$.
We consider the continuum / graph limit equation
\begin{equation}\label{Euler_general}
\boxed{
\partial_t y(t,\cdot) = A(t,y(t,\cdot))
}
\end{equation}
It is a nonlinear (nonlocal) integro-differential equation. 

We will see in Section \ref{sec_hydro} the interpretation of $y(t,x)$ as a ``velocity field" (moment of order $1$ of the solution of the Vlasov equation).

\begin{theorem}[Existence and uniqueness for the continuum / graph limit equation \eqref{Euler_general}]\label{thm_euler}
Assume that $\Omega$ is compact. Let $\nu\in\mathcal{P}(\Omega)$ and let $y^0\in L^\infty_\nu(\Omega,\R^d)$.
We denote by $K'=\mathrm{ess.im}(y^0)$ its essential range (it is a compact subset of $\R^d$) and we set $K=\Omega\times K'$ (compact).
There exists a unique solution $t\mapsto y(t,\cdot)\in L^\infty_\nu(\Omega,\R^d)$ on $[0,T_{\max}(K))$ (where $T_{\max}(K)$ is given by Lemma \ref{lem_Tmax}) of the nonlinear continuum / graph limit equation \eqref{Euler_general} such that $y(0,\cdot)=y^0(\cdot)$, of class $\mathscr{C}^1$ with respect to $t$.

Moreover, if $y^0\in \mathscr{C}^0(\Omega,\R^d)$ then $y(t,\cdot)\in \mathscr{C}^0(\Omega,\R^d)$ for every $t\in [0,T_{\max}(K))$.
\end{theorem}

Local-in-time existence and uniqueness for the continuum / graph limit equation \eqref{Euler_general} follow directly from the Picard-Lindel\"of (Cauchy-Lipschitz) theorem applied in the Banach space $L^\infty_\nu(\Omega,\R^d)$, but we prefer to see Theorem \ref{thm_euler} as a consequence of Theorem \ref{thm_vlasov} (existence and uniqueness for Vlasov equations) and of Proposition \ref{prop_monokinetic} (monokinetic measures), as it will be made precise in Remark \ref{rem_Tmax_Euler} in Section \ref{sec_nu-monokinetic}. For the last statement of Theorem \ref{thm_euler}, we also refer to Theorem \ref{thm_estim_graph} and to its proof (see Appendix \ref{app_proof_thm_estim_graph}) for possible variants. 

\begin{remark}\label{rem_Omega_not_compact}
When $\Omega$ is not compact, the above result remains true provided that there exists a compact subset $\Omega_1$ of $\Omega$ such that $G(t,x,x',\xi,\xi')=0$ for every $x\in\Omega\setminus\Omega_1$ and all $(t,x',\xi,\xi')\in\R\times\Omega\times\R^d\times\R^d$, and the initial condition $y^0$ for the CGL equation satisfies $y^0(x)=0$ for $\nu$-almost every $x\in\Omega\setminus\Omega_1$. Indeed, in this case the solution of the continuum / graph limit equation is supported in $\Omega_1$. Alternatively, we can also assume that $\supp(\nu)\subset\Omega_1$.
\end{remark}

\begin{remark}\label{rem_euler_general_indistinguishable}
When $G$ does not depend on $(x,x')$ (and thus, particles are indistinguishable), it makes sense anyway to consider the continuum limit equation \eqref{Euler_general}, with a solution $y(t,x)$ depending on $x\in\Omega$. Although the particles are indistinguishable, the set of labels $\Omega$ may be seen as a way to ``enforce" distinguishability at the level of the CGL equation, by assigning to each particle a label that is an element of $\Omega$. As we will see in Section \ref{sec_ex_euler}, such continuum limit equations do not seem to have been studied in the literature in the indistinguishable case.
Note that distinguishability is made possible because we take an initial condition $y^0(\cdot)$ depending on $x\in\Omega$ in a nontrivial way. In contrast, if $y^0(\cdot)\equiv y^0\in\R^d$ is constant, then $y(t,\cdot)\equiv y(t)\in\R^d$ does not depend on $x$ (this follows from Remark \ref{rem_lem_xxprime} in Appendix \ref{app_proof_thm_estim_graph}) and the continuum limit equation becomes the differential equation $\dot y(t) = G(t,y(t),y(t))$ in $\R^d$, which is much less meaningful.
\end{remark}

\begin{remark}[``Empirically embedding" the particle system to the continuum / graph limit equation]\label{rem_embedding_particle_Euler}
In this remark, we assume that $\nu = \nu^e_{X^N}=\frac{1}{N}\sum_{j=1}^N \delta_{x^N_j}$. The operator $A$ defined by \eqref{def_A_general} is then given by $A(t,y)(x) = \frac{1}{N}\sum_{j=1}^N G(t,x,x_j^N,y(x),y(x_j^N))$ for every $t\in\R$ and every $y\in L^\infty_\nu(\Omega,\R^d)$. Consequently:
\begin{itemize}[leftmargin=3.5mm,parsep=1mm,itemsep=1mm,topsep=1mm]
\item If $t\mapsto\Xi^N(t)=(\xi^N_1(t),\ldots,\xi^N_N(t))$ is a solution of the particle system \eqref{particle_system} then, defining $y(t,x)=\xi^N_i(t)$ if $x=x^N_i$ for $i\in\{1,\ldots,N\}$ and $0$ otherwise, $t\mapsto y(t,\cdot)$ is a solution of the continuum / graph limit equation \eqref{Euler_general}. 
\item Conversely, if $t\mapsto y(t,\cdot)$ is a solution of the continuum / graph limit equation \eqref{Euler_general} then, defining $\xi^N_i(t)=y(t,x^N_i)$ for $i\in\{1,\ldots,N\}$, $t\mapsto\Xi^N(t)=(\xi^N_1(t),\ldots,\xi^N_N(t))$ is a solution of the particle system \eqref{particle_system}. Note however that we may have $y(t,x)\neq 0$ for $x\notin\{x^N_1,\ldots,x^N_N\}$.
\end{itemize}
\end{remark}

The above empirical embedding is rather tautological. It becomes much more meaningful to fix a probability measure $\nu\in\mathcal{P}(\Omega)$ and to approximate the solutions of the continuum / graph limit equation, in a quantitative sense, by solutions of the particle system. The rough idea is to approximate the integral in the continuum / graph limit equation
$$
\partial_t y(t,x) = \int_{\Omega} G(t,x,x',y(t,x),y(t,x'))\, d\nu(x')
$$
by a Riemann sum, so that
$$
\partial_t y(t,x_i^N) \simeq \frac{1}{N} \sum_{j=1}^N G(t,x_i^N,x_j^N,y(t,x_i^N),y(t,x_j^N))
$$
for $N$ sufficiently large, and then, comparing with \eqref{particle_system}, it is expected that $\xi_i^N(t) \simeq y(t,x_i^N)$ for every $i\in\{1,\ldots,N\}$, where $t\mapsto\Xi^N(t)=(\xi^N_1(t),\ldots,\xi^N_N(t))$ is a solution of the particle system \eqref{particle_system}, for appropriate initial conditions.
This is done in detail in Section \ref{sec_estimates_graph_limit} hereafter. 

The application of the Riemann sum theorem is actually at the core of the notion of \emph{graph limit} used in \cite{Medvedev_SIMA2014} to pass to the continuum limit in nonlocally coupled dynamical networks (see also the recent papers \cite{AyiPouradierDuteil_JDE2021, BiccariKoZuazua_M3AS2019, BonnetPouradierDuteilSigalotti_M3AS2022, BoudinSalvaraniTrelat_SIMA2022, EspositoPatacchiniSchlichtingSlepcev_ARMA2021, JabinPoyatoSoler}). Obtaining error estimates is then quite easy by developing standard numerical analysis arguments, which consist of estimating the discrepancy between an integral and approximating Riemann sums. This is the contents of the proofs of Theorems \ref{thm_estim_graph} and \ref{thm_estim_graph_2} hereafter.

The terminology ``\emph{graph limit}" refers to the graph interpretation of some classes of particle systems, like, very typically, the opinion propagation model given in Example \ref{HK_particle} (see Section \ref{sec_ex_euler} for its graph limit): in this example, for any $N\in\N^*$, to the matrix of coefficients $\sigma_{ij}^N$ is associated a directed graph whose vertices are the indices $i\in\{1,\ldots,N\}$ and which has an edge from $i$ to $j$ if $\sigma_{ij}^N>0$. In this context, under Assumption \ref{G}, the function $\sigma$ which satisfies $\sigma(x_i^N,x_j^N)=\sigma_{ij}^N$ is referred to as a \emph{graphon} and is the ``continuum limit" of the graph as $N\rightarrow+\infty$. This is why the continuum / graph limit equation can also be called the graph limit of the system of particles. In \cite{Medvedev_ARMA2014, Medvedev_SIMA2014}, for appropriate choices of interaction coefficients, the system \eqref{HK_particle} is interpreted as a nonlinear heat equation on a graph. The graph interpretation may be particularly relevant when wanting to prove, for instance, consensus results by exploiting the connectivity properties of the graph, as in \cite{BoudinSalvaraniTrelat_SIMA2022}; we also mention \cite{EspositoPatacchiniSchlichtingSlepcev_ARMA2021} for exploiting the graph structure and \cite{JabinPoyatoSoler} for the related mean field context. 
We stress anyway that, as said above, from the analysis point of view, taking the graph limit mainly consists of taking the limit in a Riemann sum, as in \eqref{CV_Riemann_sum_2}. This is thanks to this ``numerical analysis" viewpoint that we can easily derive general error estimates, as shown hereafter.

\subsection{Convergence estimates for the particle-to-CGL passage}\label{sec_estimates_graph_limit}
Throughout this section, we assume that $\Omega$ is compact.
Let $\nu\in\mathcal{P}(\Omega)$.
We consider the general nonlinear continuum / graph limit equation \eqref{Euler_general}, with the nonlinear operator $A$ defined by \eqref{def_A_general}. 
Recall that $G$ satisfies Assumption \ref{G}.

We also assume that there exists a family $(\mathcal{A}^N,X^N)_{N\in\N^*}$ of tagged partitions associated with $\nu$ satisfying \eqref{def_tagged} (see Section \ref{sec_general_notations}), with $\mathcal{A}^N=(\Omega^N_1,\ldots,\Omega^N_N)$ and $X^N=(x^N_1,\ldots,x^N_N)$. 

We state two theorems whose roles are complementary and whose distinction is worth emphasizing. Theorem \ref{thm_estim_graph} compares the empirical reconstruction $y^N(t,\cdot)$ obtained from the particle solution \emph{starting from sampled data} $\xi_i^N(0)=y^0(x_i^N)$, with the solution $y(t,\cdot)$ of the CGL equation \emph{starting from the continuous datum} $y^0$. It thus quantifies the discrepancy between the discrete and the continuum dynamics when both are launched from the ``same" continuous initial profile.
Theorem \ref{thm_estim_graph_2}, by contrast, takes \emph{any} discrete initial data $\Xi^N(0)$ and compares the resulting empirical reconstruction with the solution $y_N(t,\cdot)$ of the CGL equation initialized from the corresponding piecewise-constant function $y^N(0,\cdot)$. The two theorems thus answer two different questions: ``how well does the particle system approximate a smooth limiting profile?" (Theorem \ref{thm_estim_graph}) and ``how stable is the particle-to-CGL passage when the initial data are themselves piecewise constant?" (Theorem \ref{thm_estim_graph_2}). The latter is what is needed in order to validate semi-empirical approximation procedures used later in the paper.

\begin{theorem}\label{thm_estim_graph}
Let $y^0$ be a bounded and $\nu$-almost everywhere continuous function on $\Omega$ (thus, $\nu$-Riemann integrable), with values in $\R^d$.
\\
On the one part, we consider the unique solution $t\mapsto y(t,\cdot)\in L^\infty(\Omega,\R^d)$ on $[0,T_{\max}(K))$ of the (nonlinear) continuum / graph limit equation \eqref{Euler_general} such that $y(0,\cdot) = y^0(\cdot)$, where $K=\Omega\times\mathrm{ess.im}(y^0)$ (compact) and $\mathrm{ess.im}(y^0)\subset\R^d$ is the essential range of $y^0$.
\\
On the other, for any $N\in\N^*$, we consider the unique solution $t\mapsto\Xi^N(t)=(\xi^N_1(t),\ldots,\xi^N_N(t))\in\R^{dN}$ on $[0,T_{\max}(K))$ of the particle system \eqref{particle_system} such that $\xi^N_i(0)=y^0(x^N_i)$ for every $i\in\{1,\ldots,N\}$,
and we set
\begin{equation}\label{def_y_Xi}
y^N(t,x) = \sum_{i=1}^N \xi^N_i(t) \, \mathds{1}_{\Omega^N_i}(x) \qquad\forall (t,x)\in\R\times\Omega
\end{equation}
where $\mathds{1}_{\Omega^N_i}$ is the characteristic function of $\Omega^N_i$, defined by $\mathds{1}_{\Omega^N_i}(x)=1$ if $x\in\Omega^N_i$ and $0$ otherwise.
\begin{itemize}[leftmargin=3.5mm]
\item For every $t\in[0,T_{\max}(K))$, $y(t,\cdot)$ is bounded and continuous $\nu$-almost everywhere on $\Omega$, with the same continuity set as $y^0$, 
and
\begin{equation}\label{estim_graph_1gen}
\Vert y(t,\cdot) - y^N(t,\cdot) \Vert_{L^\infty(\Omega,\R^d)} = \mathrm{o}(1)
\end{equation}
as $N\rightarrow+\infty$, where the remainder term $\mathrm{o}(1)$ is uniform with respect to $t$ on any compact interval of $[0,T_{\max}(K))$. In particular,
\begin{equation}\label{estim_graph_1}
\max_{i\in\{1,\ldots,N\}} \Vert y(t,x^N_i) - \xi^N_i(t) \Vert = \mathrm{o}(1) . 
\end{equation}
\item Assume that there exists $\alpha\in(0,1]$ such that $y^0\in \mathscr{C}^{0,\alpha}(\Omega,\R^d)$ 
and $G$ is locally $\alpha$-H\"older continuous with respect to $(x,x',\xi,\xi')$ (uniformly with respect to $t$ on any compact).
Then, for every $t\in[0,T_{\max}(K))$, we have $y(t,\cdot)\in \mathscr{C}^{0,\alpha}(\Omega,\R^d)$ with 
\begin{equation}\label{holalpha}
\Hol_\alpha(y(t,\cdot)) \leq e^{tL_y(t)}\left( 1+\Hol_\alpha(y(0,\cdot)) \right) 
\end{equation}
and, for every $N\in\N^*$,
\begin{equation}\label{estim_graph_2}
\max_{i\in\{1,\ldots,N\}} \Vert y(t,x^N_i) - \xi^N_i(t) \Vert \leq  \frac{C_\Omega^\alpha}{N^{r\alpha}} \left(1+\Hol_\alpha(y^0) \right) e^{2tL_y^N(t)} 
\end{equation}
and actually,
\begin{equation}\label{estim_graph_3}
\Vert y(t,\cdot) - y^N(t,\cdot) \Vert_{L^\infty(\Omega,\R^d)} \leq 2\frac{C_\Omega^\alpha}{N^{r\alpha}} \left(1+\Hol_\alpha(y^0) \right) e^{2tL_y^N(t)} 
\end{equation}
\end{itemize}
where $C_\Omega$ is given by \eqref{def_tagged}. The constant $L_y^N(t)$ in \eqref{estim_graph_2} and \eqref{estim_graph_3} is defined by
\begin{equation}\label{hypGL1}
L_y^N(t) = \max_{0\leq\tau\leq t} \Hol_\alpha (G(\tau,\cdot,\cdot,\cdot,\cdot)_{\vert \Omega^2\times S_y^N(\tau)^2})
+ \max_{\substack{x,x'\in\Omega \\ 0\leq\tau\leq t}}\Lip(G(\tau,x,x',\cdot,\cdot)_{\vert S_y^N(\tau)^2})  
\end{equation}
where $S_y^N(\tau)\subset\R^d$ is the (compact) convex closure of all $y(\tau,x)$ for $x\in\Omega$ 
and all $\xi^N_i(\tau)$ for $i\in\{1,\ldots,N\}$.
The constant $L_y(t)$ in \eqref{holalpha} is defined as $L_y^N(t)$ but with $S_y^N(\tau)$ replaced by $S_y(\tau)$ that is the convex closure of all $y(\tau,x)$ for $x\in\Omega$, i.e., like $S_y^N(\tau)$ but without the $\xi^N_i(\tau)$. 
We have $L_y(t)\leq L_y^N(t)$.
\end{theorem}

Theorem \ref{thm_estim_graph} is proved in Appendix \ref{app_proof_thm_estim_graph}.
Note that, by Lemma \ref{lem_Tmax}, given any $T\in[0,T_{\max}(K))$, the sets $S_y^N(t)$ and thus the scalars $L_y^N(t)$ are uniformly bounded with respect to $t\in[0,T]$ and to $N\in\N^*$.

\begin{remark}\label{rem_thm_estim_graph_noncompact}
Having in mind Remark \ref{rem_Omega_not_compact}, 
Theorem \ref{thm_estim_graph} can be extended to the case where $\Omega$ is not compact, under the following additional assumptions:
\begin{itemize}
\item the family of tagged partitions is such that the points $x^N_i$ remain in a compact subset of $\Omega$;
\item the initial condition $y^0$ is of compact essential support;
\item the set $S_y^N(\tau)\subset\Omega\times\R^d$ is defined as the compact closure of all $(x,y(\tau,x))$ for $x\in\mathrm{ess\,supp}(y(\tau,\cdot))$ (essential support) and all $(x^N_i,\xi^N_i(\tau))$ for $i\in\{1,\ldots,N\}$.
\end{itemize}
The above assumptions imply that $y(t,\cdot)$ is of compact essential support, for every $t\geq 0$, and that $L_y^N(t)$ is well defined. 
\end{remark}

\begin{theorem}\label{thm_estim_graph_2}
Let $K'$ be a compact subset of $\R^d$.
Given any $N\in\N^*$, let $\Xi^N_0\in (K')^N$. We set $K=\Omega\times K'$.

On the one part, we consider the unique solution $t\mapsto\Xi^N(t)=(\xi^N_1(t),\ldots,\xi^N_N(t))\in\R^{dN}$ on $[0,T_{\max}(K))$ of the particle system \eqref{particle_system} such that $\Xi^N(0)=\Xi^N_0$, and we define $y^N(t,x) $ by \eqref{def_y_Xi}.

On the other part, we consider the unique solution $t\mapsto y_N(t,\cdot)\in L^\infty(\Omega,\R^d)$ on $[0,T_{\max}(K))$ of the continuum / graph limit equation \eqref{Euler_general} such that $y_N(0,\cdot) = y^N(0,\cdot)$ (i.e., $y_N(0,x)=\xi^N_i(0)$ if $x\in\Omega^N_i$).
Then, for every $t\in[0,T_{\max}(K))$,
\begin{equation}\label{estim_graph_1gen_2}
\Vert y_N(t,\cdot) - y^N(t,\cdot) \Vert_{L^\infty(\Omega,\R^d)} = \mathrm{o}(1)
\end{equation}
as $N\rightarrow+\infty$, where the remainder term $\mathrm{o}(1)$ is uniform with respect to $t$ on any compact interval of $[0,T_{\max}(K))$.

If moreover $G$ is locally $\alpha$-H\"older continuous with respect to $(x,x',\xi,\xi')$ (uniformly with respect to $t$ on any compact), then, for every $N\in\N^*$ and every $t\in[0,T_{\max}(K))$,
\begin{equation}\label{estim_graph_2_2}
\Vert y_N(t,\cdot) - y^N(t,\cdot) \Vert_{L^\infty(\Omega,\R^d)} \leq 2\frac{C_\Omega^\alpha}{N^{r\alpha}} e^{2tL_{y_N}^N(t)}  \qquad \forall t\geq 0, 
\end{equation}
where $L_{y_N}^N(t)$ is defined by \eqref{hypGL1} (with $y$ replaced by $y_N$).
\end{theorem}

Theorem \ref{thm_estim_graph_2} is proved in Appendix \ref{app_proof_thm_estim_graph_2}.
Note that, by Lemma \ref{lem_Tmax}, given any $T\in[0,T_{\max}(K))$, the scalars $L_{y_N}^N(t)$ are uniformly bounded with respect to $t\in[0,T]$ and to $N\in\N^*$.

Note that, in particular, taking $x=x_i^N$ in \eqref{estim_graph_2_2}, we have
$$
\max_{i\in\{1,\ldots,N\}} \Vert y_N(t,x^N_i) - \xi^N_i(t) \Vert\leq 2\frac{C_\Omega^\alpha}{N^{r\alpha}} e^{2tL_{y_N}^N(t)}  ,
$$
which improves the estimates obtained in \cite{AyiPouradierDuteil_JDE2021}.

\begin{remark}
The trivial case where $y^0(\cdot)\equiv y^0\in\R^d$, mentioned in Remark \ref{rem_euler_general_indistinguishable}, corresponds in the framework of Theorems \ref{thm_estim_graph} and \ref{thm_estim_graph_2} to taking $\xi_i^N(0)$ not depending on $i$ (equivalently, $\xi_i^N(t)$ not depending on $i$, for every $t$), which is the case where all particles coincide. 
\end{remark}

\begin{remark}
In Theorems \ref{thm_estim_graph} and \ref{thm_estim_graph_2}, we have first established the convergence results \eqref{estim_graph_1gen} and \eqref{estim_graph_1gen_2}, under the sole Assumption \ref{G}, i.e., $G$ is continuous with respect to $(x,x',\xi,\xi')$ and locally Lipschitz with respect to $(\xi,\xi')$. Under the additional assumption that $G$ is locally H\"older with respect to $(x,x',\xi,\xi')$, we derive the convergence estimates \eqref{estim_graph_2} and \eqref{estim_graph_2_2}. As mentioned in Section \ref{sec_objectives}, Assumption \ref{G} leads to topological convergence results, and reinforcing Assumption \ref{G} is compelling to obtain convergence rates.
\end{remark}

\subsection{Examples}\label{sec_ex_euler}
Following the examples of Section \ref{sec_ex_particle}, we now give the corresponding continuum / graph limit equation.

\medskip\noindent
-- For the Hegselmann--Krause (opinion propagation) model \eqref{HK_particle}, under Assumption \ref{G} we have $G(t,x,x',\xi,\xi')=\sigma(x,x')(\xi'-\xi)$ and the continuum / graph limit equation is
\begin{equation}\label{HK_euler}
\partial_t y(t,x) = \int_{\Omega} \sigma(x,x') (y(t,x')-y(t,x)) \, d\nu(x').
\end{equation}
In this case, the operator $A$ (defined by \eqref{def_A_general}) is linear and is given by
$$
(Ay)(x) = \int_{\Omega} \sigma(x,x') (y(x')-y(x))\, d\nu(x')
\qquad\forall y\in L^\infty_\nu(\Omega,\R^d)
$$
(see \cite{BiccariKoZuazua_M3AS2019}). 
Extended to $L^2_\nu(\Omega,\R^d)$, it is a Hilbert-Schmidt operator. The spectral study of $A$ has been done in \cite{BoudinSalvaraniTrelat_SIMA2022} (with $\nu$ the Lebesgue measure) in view of deriving consensus results.
The graph interpretation is particularly meaningful in this example, and the function $\sigma$ (assumed to exist) is called a graphon (see \cite{AyiPouradierDuteil_2023} for a recent study and see \cite{BonnetPouradierDuteilSigalotti_M3AS2022} for a time-varying case).

\medskip\noindent
-- For the Kuramoto particle system \eqref{kuramoto_particle}, under Assumption \ref{G} we have $\Omega=\R\times[0,1]$ and $G(t,x,x',\xi,\xi') = \alpha + \sigma(\beta,\beta')\sin(\xi'-\xi)$ (where $x=(\alpha,\beta)$ and $x'=(\alpha',\beta')$), and the continuum / graph limit equation is
$$
\partial_t y(t,x) = \alpha + \int_{\Omega} \sigma(\beta,\beta') \sin(y(t,x')-y(t,x)) \, d\nu(x').
$$
The graph limit operator at the right-hand side of the above equation is introduced in \cite{ChibaMedvedev} although, in that reference, the authors focus on the study of the mean field limit (see Section \ref{sec_ex_vlasov}).

\medskip\noindent
-- For the first-order system \eqref{FK_particle}, we have $G(t,x,x',\xi,\xi')=F(\xi)+K(\xi-\xi')$ and the continuum / graph limit equation is
\begin{equation}\label{FK_euler}
\partial_t y(t,x) = F(y(t,x)) + \int_\Omega K(y(t,x)-y(t,x')) \, d\nu(x').
\end{equation}
In this case, the operator $A$ (defined by \eqref{def_A_general}) is nonlinear, nonlocal, and does not depend on $t$. 
To the best of our knowledge, the equation \eqref{FK_euler} has not been studied in the literature.

\medskip\noindent
-- For the Cucker--Smale dynamics \eqref{CS_particle}, setting $y=(y_1,y_2)\in\R^r\times\R^r$, the continuum / graph limit equation is 
\begin{equation}\label{CS_euler}
\begin{split}
\partial_t y_1(t,x) &= y_2(t,x), \\
\partial_t y_2(t,x) &= \int_\Omega a(\Vert y_1(t,x')-y_1(t,x)\Vert) (y_2(t,x')-y_2(t,x))\, d\nu(x') .
\end{split}
\end{equation}
Similarly, the equation \eqref{CS_euler} does not appear to have been considered before.

As discussed in Remark \ref{rem_euler_general_indistinguishable}, thanks to the set $\Omega$ we have in some sense ``enforced" distinguishability. If one takes an initial condition that is constant with respect to $x$ then $y_1(t,x)=y_1(t)$ and $y_2(t,x)=y_2(x)$ do not depend on $x$ and we have $\dot y_1(t)=y_2(t)$ and $\dot y_2(t)=0$, which is the familiar fact that the derivative of the average position is the average velocity, and that, in absence of an external force, the average velocity is conserved. 

The Cucker--Smale dynamics \eqref{CS_particle} is a second-order system. For second-order dynamics, we will see in Section \ref{sec_second_order_hydrodynamics} a different definition of CGL equation which gives rise to interesting 
(and already known and studied)
dynamics.

\medskip\noindent
-- For the (indistinguishable) second-order dynamics \eqref{second_order_K}, similarly to the Cucker--Smale example, the continuum limit equation is
\begin{equation*}
\begin{split}
\partial_t y_1(t,x) &= y_2(t,x), \\
\partial_t y_2(t,x) &= \int_\Omega K(y_1(t,x),y_1(t,x')) \, d\nu(x') .
\end{split}
\end{equation*}

\section{From microscopic to mesoscopic scale I: mean field through empirical measures {\normalsize (from ODEs to Vlasov)}}\label{sec_vlasov}
Within the Lagrangian viewpoint, the $N$ particles at time $t$ are embedded as Dirac masses into the space of Radon measures, and their corresponding average, the empirical measure, converges by the mean field limit procedure, as $N\rightarrow+\infty$, to a probability Radon measure $\mu(t)$ on $\Omega\times\R^d$ satisfying the Vlasov equation. When $\mu(t)$ is absolutely continuous with respect to a Lebesgue measure, its density $f(t,x,\xi)$ represents the density of particles with label $x$ and state $\xi$ at time $t$.

\subsection{Vlasov equation}\label{sec_vlasov_def}
Given any $\mu\in\mathcal{P}_c(\Omega\times\R^d)$, we define $\nu=\pi_*\mu$ by \eqref{def_nu} (marginal of $\mu$ on $\Omega$) and we define the \emph{mean field}, also called \emph{interaction kernel}, as the non-local time-dependent vector field on $\R^d$, parametrized by $x\in\Omega$, given by
\begin{equation}\label{def_mean_field}
\boxed{
\begin{aligned}
\mathcal{X}[\mu](t,x,\xi) &= \int_{\Omega\times\R^d} G(t,x,x',\xi,\xi') \, d\mu(x',\xi')  \\
&= \int_{\Omega} \int_{\R^d} G(t,x,x',\xi,\xi') \, d\mu_{x'}(\xi')\, d\nu(x')  \qquad \forall (t,x,\xi)\in\R\times\Omega\times\R^d 
\end{aligned}
}
\end{equation}
(recall that $G$ satisfies Assumption \ref{G}).
Note that $\mathcal{X}[\mu](t,x,\xi)$ is the expectation of $G(t,x,x',\xi,\xi')$ with respect to the measure $\mu$, performed with respect to $(x',\xi')\in\Omega\times\R^d$ (see Appendix \ref{app_mean_field} for more details and consequences of that definition).

We consider the \emph{Vlasov (or continuity) equation}
\begin{equation}\label{vlasov}
\boxed{
\partial_t\mu + \div_\xi(\mathcal{X}[\mu]\mu) = 0 
}
\end{equation}
where the divergence\footnote{Recall that $\div(\mathcal{X}\mu)=L_\mathcal{X}\mu$ (Lie derivative of the measure $\mu$) is the measure defined by $\langle L_\mathcal{X}\mu,f\rangle = -\langle\mu,L_\mathcal{X}f\rangle = -\int_{\R^d} \mathcal{X}.\nabla f\, d\mu$ for every $f\in \mathscr{C}^\infty_c(\R^d)$.} acts only with respect to $\xi$. 
It is a nonlocal transport equation because the velocity field $\mathcal{X}[\mu]$ defined by \eqref{def_mean_field} is nonlocal. 

\begin{remark}\label{rem_notdependt}
Given any solution $t\mapsto\mu(t)$ on $[0,T]$ of the Vlasov equation \eqref{vlasov} (see further for the rigorous definition of a solution), the total mass $\mu(t)(\Omega\times\R^d)$ is constant with respect to $t$, i.e., $\mu(t)$ is a probability measure for every $t\in[0,T]$. Also, the marginal $\nu = \pi_*\mu(t)$ does not depend on $t$, because the Vlasov equation can be written as $\partial_t\mu + L_{\mathcal{X}[\mu]}\mu=0$ with the Lie derivative acting with respect to the variable $\xi$, and we have $\pi_* L_{\mathcal{X}[\mu]}=0$. 
\end{remark}

Disintegrating $\mu_t=\mu(t)$ as $\mu_t = \int_{\Omega} \mu_{t,x}\, d\nu(x)$ with respect to its marginal $\nu=\pi_*\mu_t$ on $\Omega$ (which does not depend on $t$ by Remark \ref{rem_notdependt}), by uniqueness $\nu$-almost everywhere of the disintegration, \eqref{vlasov} is equivalent to 
\begin{equation}\label{vlasov_xfixed}
\partial_t\mu_{t,x} + \div_\xi(\mathcal{X}[\mu_t](t,x,\cdot)\mu_{t,x}) = 0
\end{equation}
for $\nu$-almost every $x\in\Omega$.
Note that the time evolution of $\mu_{t,x}$ depends on the whole $\mu_0$ and not only on $\mu_{0,x}$, since $\mathcal{X}[\mu_t]$ involves an integral over all possible $x'\in\Omega$.

Therefore, the Vlasov equation \eqref{vlasov} can be thought of as an infinite number (if $\Omega$ has an infinite number of elements) of coupled Vlasov equations \eqref{vlasov_xfixed}. The most standard case studied in the literature corresponds to a measure $\mu$ not depending on $x$. 

\medskip
Given any interval $I\subset\R$, let $\mathscr{C}^0(I,\mathcal{P}_c(\Omega\times\R^d))$ be the Banach space of continuous mappings $t\in I\mapsto\mu(t)\in\mathcal{P}_c(\Omega\times\R^d)$, with $\mathcal{P}_c(\Omega\times\R^d)$ endowed with the weak topology (metrized by the Wasserstein distance $W_p$, for any $p\in[1,+\infty)$, as recalled in Section \ref{sec_general_notations}).

We define $\mathscr{C}^0_{\mathrm{comp}}(I,\mathcal{P}_c(\Omega\times\R^d))$ as the set of all $\mu\in\mathscr{C}^0(I,\mathcal{P}_c(\Omega\times\R^d))$ that are \emph{equi-compactly supported} on any compact interval of $I$, meaning that for any $t_1,t_2\in I$, there exists a compact subset $K\subset\Omega\times\R^d$ such that $\supp(\mu(t))\subset K$ for every $t\in[t_1,t_2]$. There exist elements of $\mathscr{C}^0(I,\mathcal{P}_c(\Omega\times\R^d))$ that are not equi-compactly supported on any compact interval of $I$ (for instance, if $I=[0,T]$, take $\mu(t) = (1-e^{-1/t}) \delta_0 + e^{-1/t} \delta_{1/t}$).

In view of obtaining existence and uniqueness of solutions of the Vlasov equation \eqref{vlasov}, we consider the following concept of solution.
Assuming that $0\in I$, by definition, a solution $t\mapsto\mu(t)$ of \eqref{vlasov} on $I$ such that $\mu(0)=\mu_0\in\mathcal{P}_c(\Omega\times\R^d)$ is an element $\mu\in\mathscr{C}^0_{\mathrm{comp}}(I,\mathcal{P}_c(\Omega\times\R^d))$ such that,
denoting $\mu_t = \mu(t)$,\footnote{Note that, seeing $\mu$ as a measure on $I\times\Omega\times\R^d$, the marginal of $\mu$ on $I$ is the Lebesgue measure and the disintegration of $\mu$ is $\mu=\int_I \mu_t\, dt$.}
for every $g\in \mathscr{C}^\infty_c(\Omega\times\R^d)$, the function $t\mapsto\int g\, d\mu_t$ is absolutely continuous on $I$ and
\begin{multline}\label{vlasov_meaning}
\int_{\Omega\times\R^d} g(x,\xi)\, d\mu_t(x,\xi) = \int_{\Omega\times\R^d} g(x,\xi)\, d\mu_0(x,\xi) \\
+ \int_0^t \int_{\Omega\times\R^d} \int_{\Omega\times\R^d} \left\langle \nabla_\xi g(x,\xi), G(\tau,x,x',\xi,\xi') \right\rangle\, d\mu_\tau(x',\xi')\, d\mu_\tau(x,\xi)\, d\tau 
\end{multline}
for almost every $t\in I$.

\begin{theorem}[Existence, uniqueness and stability properties for the Vlasov equation \eqref{vlasov}]\label{thm_vlasov}
Recalling Assumption \ref{G}, let $p\in[1,+\infty)$ be arbitrary.
\begin{enumerate}[leftmargin=*,label=$\bf (\Alph*)$]
\item\label{A} Given any $\mu_0\in\mathcal{P}_c(\Omega\times\R^d)$, setting $T_0 = T_{\max}(\supp(\mu_0))$ (given by Lemma \ref{lem_Tmax}),
there exists a unique solution $\mu\in \mathscr{C}^0_{\mathrm{comp}}([0,T_0),\mathcal{P}_c(\Omega\times\R^d))$ of the Vlasov equation \eqref{vlasov} (in the sense \eqref{vlasov_meaning}) such that $\mu(0)=\mu_0$. 
Moreover, $t\mapsto\mu(t)$ is locally Lipschitz with respect to $t$ for the distance $W_p$, 
and we have 
\begin{equation}\label{mu_image_varphi}
\mu(t) = \varphi_{\mu_0}(t)_* \mu_0 ,
\end{equation}
which is a notation meaning that $\mu_{t,x} = \varphi_{\mu_0}(t,x,\cdot)_* \mu_{0,x}$ for every $t\in[0,T_0)$ and $\nu$-almost every $x\in\Omega$, and where $t\mapsto\varphi_{\mu_0}(t,x,\cdot)$ is the unique solution (Vlasov flow) of 
\begin{equation}\label{vlasov_flow}
\partial_t\varphi_{\mu_0}(t,x,\cdot) = \mathcal{X}[\mu(t)](t,x,\cdot) \circ \varphi_{\mu_0}(t,x,\cdot) 
\end{equation}
such that $\varphi_{\mu_0}(0,x,\cdot)=\mathrm{id}_{\R^d}$ for $\nu$-almost every $x\in\Omega$.
Moreover, if $\mu_0\in\mathcal{P}_c^{ac}(\Omega\times\R^d)$ then $\mu(t)\in\mathcal{P}_c^{ac}(\Omega\times\R^d)$ for every $t\in[0,T_0)$.
Furthermore:
\begin{enumerate}[label=$\bf (A_{\arabic*})$]
\item\label{A1} Any solution of \eqref{vlasov} depends continuously on its initial condition $\mu(0)\in\mathcal{P}_c(\Omega\times\R^d)$ for the weak topology in the following sense: 
given any compact subset $K\subset\Omega\times\R^d$,
given any $\mu(0)\in\mathcal{P}_c(\Omega\times\R^d)$ such that $\supp(\mu(0))\subset K$,
given any (equi-compactly supported) sequence of measures $\mu^k(0)\in\mathcal{P}_c(\Omega\times\R^d)$ such that $\supp(\mu^k(0))\subset K$ for every $k\in\N^*$, if $\mu^k(0)$ converges weakly to $\mu(0)$ (equivalently, $W_p(\mu^k(0),\mu(0))\rightarrow 0$) as $k\rightarrow+\infty$, then $\mu^k(t)$ converges weakly to $\mu(t)$ (equivalently, $W_p(\mu^k(t),\mu(t))\rightarrow 0$) as $k\rightarrow+\infty$, uniformly on any compact interval of $[0,T_{\max}(K))$.
\item\label{A2} For all solutions $\mu^1,\mu^2\in\mathscr{C}^0_{\mathrm{comp}}([0,T],\mathcal{P}_c(\Omega\times\R^d))$ of \eqref{vlasov} (for some $T>0$) such that $\mu^1(0),\mu^2(0)\in\mathcal{P}_c^\nu(\Omega\times\R^d)$ have the same marginal $\nu$ on $\Omega$, 
setting\footnote{Note that $S_{\mu^1,\mu^2}(t)$ is compact, that $\varphi_{\mu^1_0}(t,\supp(\mu^1_0)) = \supp(\mu^1_t)$ and $\supp(\mu^1(t))\cup\supp(\mu^2(t))\subset S_{\mu^1,\mu^2}(t)$.}
$$
S_{\mu^1,\mu^2}(\tau) = \supp(\nu)\times \big( \varphi_{\mu^1_0}(\tau,\supp(\mu^1_0)\cup\supp(\mu^2_0)) \cup \supp(\mu^2(\tau)) \big)
$$
and defining
\begin{equation}\label{def_C}
C_{\mu^1,\mu^2}(t) = \exp\bigg( 2 \int_0^t \underset{(x,\xi),(x',\xi')\in S_{\mu^1,\mu^2}(\tau)}{\mathrm{ess\,sup}} \Vert (\partial_\xi G,\partial_{\xi'}G)(\tau,x,x',\xi,\xi')\Vert \, d\tau\bigg) ,
\end{equation}
we have
\begin{equation}\label{L1Wpmu1mu2}
L^1_\nu W_p(\mu^1(t),\mu^2(t))\leq C_{\mu^1,\mu^2}(t) \, L^1_\nu W_p(\mu^1(0),\mu^2(0)) \qquad \forall t\in[0,T]
\end{equation}
(where $L^1_\nu W_p$ is defined by \eqref{def_L1Wp}).
\end{enumerate}
\item\label{B} Assume moreover that $G$ is locally Lipschitz with respect to $(x,x',\xi,\xi')$ uniformly with respect to $t$ on any compact interval.
For all solutions $\mu^1(\cdot),\mu^2(\cdot)\in\mathscr{C}^0_{\mathrm{comp}}([0,T],\mathcal{P}_c(\Omega\times\R^d))$ of \eqref{vlasov} (for some $T>0$),
setting
$$
S_{\mu^1,\mu^2}(\tau) = \big(\supp(\nu_1)\cup\supp(\nu_2)\big)\times \big( \varphi_{\mu^1_0}(\tau,\supp(\mu^1_0)\cup\supp(\mu^2_0)) \cup \supp(\mu^2(\tau)) \big)
$$
and defining
\begin{equation}\label{def_C_totalLip}
C_{\mu^1,\mu^2}(t) = \exp\bigg( 2 \int_0^t  \Lip( G(\tau,\cdot,\cdot,\cdot,\cdot)_{\vert S_{\mu^1,\mu^2}(\tau)^2} )  \, d\tau \bigg) ,
\end{equation}
we have
\begin{equation}\label{W1mu1mu2}
W_p(\mu^1(t),\mu^2(t))\leq C_{\mu^1,\mu^2}(t) \, W_p(\mu^1(0),\mu^2(0)) \qquad \forall t\in[0,T] .
\end{equation}
\end{enumerate}
\end{theorem}

Theorem \ref{thm_vlasov} is proved in Appendix \ref{app_proof_thm_vlasov}.
The statement \ref{B} of Theorem \ref{thm_vlasov} is a slight extension, with parameter $x$, of \cite[Theorem 2.3]{PiccoliRossiTrelat_SIMA2015} (see also \cite{PiccoliRossi_ACAP2013, PiccoliRossi_ARMA2014, PiccoliRossi_ARMA2016}) where it is assumed that $G$ is globally Lipschitz. 
Without parameter $x$, we recover the famous stability estimate obtained by Dobrushin in \cite{Dobrushin} (see Corollary \ref{cor_vlasov_indistinguishable} further).
To the best of our knowledge, statement \ref{A} is new. Note that, in \ref{A2}, the initial measures $\mu_1(0)$ and $\mu_2(0)$ are required to have the same marginal (and thus, equivalently, $\mu^1(t)$ and $\mu^2(t)$ have the same marginal for any $t$).
By contrast, in \ref{A1} and in \ref{B}, the measures under consideration are not assumed to have the same marginal. In \ref{A1}, the weak convergence $\mu^k(0)\rightharpoonup\mu(0)$ implies the weak convergence $\nu^k\rightharpoonup\nu$ of marginals but it is wrong in general that $\mu^k_x(0)\rightharpoonup\mu_x(0)$ for $x\in\Omega$.

In the statement \ref{B}, the assumption that $G$ is locally Lipschitz with respect to $(x,x',\xi,\xi')$ is much stronger than  \ref{G}: for the Hegselmann--Krause model \eqref{HK_particle} (resp., the Cucker--Smale model \eqref{CS_particle}), this requires $\sigma$ (resp., $a$) to be locally Lipschitz. In general, requiring that $G$ be locally Lipschitz with respect to $(x,x')$ is not a natural assumption for the particle system \eqref{particle_system}.
Note that, under this stronger assumption, the unique solution $\mu(\cdot)$ in \ref{A} is locally Lipschitz with respect to $t$ for the Wasserstein distance $W_1$.

Finally, in the literature, $G$ is usually assumed to be globally Lipschitz. Here, under the weaker assumption \ref{G}, we have a maximal time of definition of $\mu$ depending on the compact support of $\mu_0$, according to Lemma \ref{lem_Tmax}.
Note that, when $G$ is bounded, we can consider in Theorem \ref{thm_vlasov} measures that are not of compact support.

\paragraph{Particular case where $G$ does not depend on $(x,x')$.}
When $G$ does not depend on $(x,x')$, particles are indistinguishable: this is the classical case that has been much studied in the existing literature. 
We now show how this is recovered from our more general framework. 
Given any measure $\mu\in\mathcal{P}(\Omega\times\R^d)$, we define $\bar\mu\in\mathcal{P}(\R^d)$ as the image of $\mu$ under the projection of $\Omega\times\R^d$ onto $\R^d$, that is,
\begin{equation}\label{def_barmu}
\int_{\R^d} f(\xi)\, d\bar\mu(\xi) = \int_{\Omega\times\R^d} f(\xi)\, d\mu(x,\xi) = \int_\Omega \int_{\R^d} f(\xi)\, d\mu_x(\xi)\, d\nu(x)
\end{equation}
for every Borel measurable function $f:\R^d\rightarrow[0,+\infty)$. 
Since $G$ does not depend on $(x,x')$, the mean field $\mathcal{X}[\mu]$ defined by \eqref{def_mean_field} does not depend on $x$ and we have $\mathcal{X}[\mu](t,x,\xi)=\bar{\mathcal{X}}[\bar\mu](t,\xi)$ where the mean field $\bar{\mathcal{X}}[\bar\mu]$ is defined by
\begin{equation}\label{meanfield_indistinguishable}
\bar{\mathcal{X}}[\bar\mu](t,\xi) = \int_{\R^d} G(t,\xi,\xi')\, d\bar\mu(\xi') \qquad \forall (t,\xi)\in\R\times\R^d .
\end{equation}
Accordingly, since the projection onto $\R^d$ commutes with $\partial_t$ and with the divergence with respect to $\xi$, it follows that, if $t\mapsto\mu(t)$ is a solution of the Vlasov equation \eqref{vlasov} then $t\mapsto\bar\mu(t)$ is a solution of the Vlasov equation (without dependence on $x$)
\begin{equation}\label{vlasov_without_x}
\partial_t\bar\mu + \div(\bar{\mathcal{X}}[\bar\mu]\bar\mu) = 0 .
\end{equation}
We have the following corollary of Theorem \ref{thm_vlasov}, already well known in the existing literature (famous Dobrushin estimate, see \cite{Dobrushin}).

\begin{corollary}\label{cor_vlasov_indistinguishable}
Let $p\in[1,+\infty)$ be arbitrary. 
Given any $\bar\mu_0\in\mathcal{P}_c(\R^d)$, there exists a unique solution $\bar\mu\in \mathscr{C}^0_{\mathrm{comp}}([0,T_{\max}(\supp(\bar\mu_0))),\mathcal{P}_c(\R^d))$ of the Vlasov equation \eqref{vlasov_without_x}, locally Lipschitz with respect to $t$ for the distance $W_p$, such that $\bar\mu(0)=\bar\mu_0$, and we have 
$$
\bar\mu(t) = \varphi_{\bar\mu_0}(t,\cdot)_* \bar\mu_0 
$$
where $t\mapsto\varphi_{\bar\mu_0}(t,\cdot)$ is the unique solution of 
$$
\partial_t\varphi_{\bar\mu_0}(t,\cdot) = \bar{\mathcal{X}}[\mu(t)](t,\cdot) \circ \varphi_{\bar\mu_0}(t,\cdot)
$$
such that $\varphi_{\bar\mu_0}(0,\cdot)=\mathrm{id}_{\R^d}$.
Moreover, if $\bar\mu_0\in\mathcal{P}_c^{ac}(\R^d)$ then $\bar\mu(t)\in\mathcal{P}_c^{ac}(\R^d)$ for every $t\in\R$.
Furthermore, we have 
\begin{equation}\label{W1mu1m2_indistinguishable}
W_p(\bar\mu^1(t),\bar\mu^2(t))\leq C_{\bar\mu^1,\bar\mu^2}(t) \, W_p(\bar\mu^1(0),\bar\mu^2(0)) \qquad \forall t\in[0,T]
\end{equation}
for all solutions $\bar\mu^1(\cdot)$ and $\bar\mu^2(\cdot)$ of \eqref{vlasov_without_x} on $[0,T]$ (for some $T>0$) such that $\bar\mu^1(0),\bar\mu^2(0)\in\mathcal{P}_c(\R^d)$.
Here, $C_{\bar\mu^1,\bar\mu^2}(t)$ is defined by \eqref{def_C} or \eqref{def_C_totalLip} (without dependence on $x$).
\end{corollary}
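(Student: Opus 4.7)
The plan is to deduce this corollary directly from Theorem~\ref{thm_vlasov} by reducing the indistinguishable setting to the parametrized one with a trivial parameter space. Specifically, I would fix an arbitrary point $x_0$ and take $\Omega = \{x_0\}$ endowed with the trivial metric (which is complete). Then any mapping $G = G(t,\xi,\xi')$ satisfying the hypotheses of the corollary extends to a mapping $\tilde G(t,x_0,x_0,\xi,\xi') = G(t,\xi,\xi')$ satisfying Assumptions \ref{G1} and \ref{G2}; moreover $\tilde G$ is vacuously locally Lipschitz with respect to $(x,x')$, so the stronger hypothesis of part $\mathbf{(B)}$ of Theorem~\ref{thm_vlasov} is also met.

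Next, I would use the bijection $\bar\mu \mapsto \mu := \delta_{x_0} \otimes \bar\mu$ between $\mathcal{P}_c(\R^d)$ and $\mathcal{P}_c(\Omega \times \R^d)$, which (since $\Omega$ is a singleton) maps every element to a measure with marginal $\nu = \delta_{x_0}$ on $\Omega$. Under this bijection, the mean field \eqref{def_mean_field} reduces to the indistinguishable mean field of the corollary, the Vlasov equation \eqref{vlasov} reduces to its indistinguishable version, the disintegration is trivial ($\mu_{x_0} = \bar\mu$), and one checks immediately that $L^1_\nu W_1(\mu^1, \mu^2) = W_1(\bar\mu^1, \bar\mu^2) = W_1(\mu^1, \mu^2)$ for any pair of measures in the image.

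Existence, uniqueness, local Lipschitzness in $t$, and the representation via the characteristic flow then follow verbatim from part $\mathbf{(A)}$ of Theorem~\ref{thm_vlasov}. Preservation of absolute continuity is inherited from the analogous statement in part $\mathbf{(A)}$, or equivalently follows from the fact that the characteristic flow on $\R^d$ is a diffeomorphism and pushforwards by diffeomorphisms preserve absolute continuity. The stability estimate \eqref{W1mu1m2_indistinguishable} follows either from part $\mathbf{(A_2)}$ (using $L^1_\nu W_1 = W_1$ in this setting) or equivalently from part $\mathbf{(B)}$. Finally, the equality $c = C$ is immediate: the Lipschitz constant of $G$ with respect to $(x,x')$ vanishes, so the two maxima appearing in \eqref{def_c} and \eqref{def_C} coincide.

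There is no serious obstacle here: the corollary is essentially a transcription of Theorem~\ref{thm_vlasov} through a trivial parameter space, and the only bookkeeping item worth checking is that $\Omega = \{x_0\}$ fits within the framework of the theorem, which it does since the Vlasov statements do not involve tagged partitions or any structure beyond the complete metric space hypothesis.
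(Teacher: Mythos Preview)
Your proposal is correct and follows essentially the same route as the paper: both reduce the indistinguishable case to Theorem~\ref{thm_vlasov} via the embedding $\bar\mu \mapsto \bar\nu\otimes\bar\mu$. The only cosmetic difference is that the paper keeps $\Omega$ general and takes an \emph{arbitrary} probability measure $\bar\nu$ on $\Omega$ (invoking Lemma~\ref{lem_W1_equalitytensor} to get $W_1(\bar\nu\otimes\bar\mu^1,\bar\nu\otimes\bar\mu^2)=W_1(\bar\mu^1,\bar\mu^2)$), whereas you collapse $\Omega$ to a singleton, which amounts to the special choice $\bar\nu=\delta_{x_0}$ and makes that identity trivial. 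Your version is marginally cleaner for this corollary; the paper's retains the freedom in $\bar\nu$, which it exploits in later corollaries (e.g.\ choosing $\bar\nu$ absolutely continuous in Corollary~\ref{cor_CV_liouville_indistinguishable}).
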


\begin{proof}
Let $\bar\nu$ be an arbitrary probability measure on $\Omega$.
Given any $\bar\mu\in\mathcal{P}(\R^d)$, we define $\mu\in\mathcal{P}(\Omega\times\R^d)$ by $\mu=\bar\nu\otimes\bar\mu$: the marginal of $\mu$ on $\Omega$ is $\bar\nu$ and the disintegration of $\mu=\int_{\Omega}\mu_x\, d\bar\nu(x)$ with respect to $\bar\nu$ is given by $\mu_x=\bar\mu$ if $x\in\supp(\bar\nu)$ and $\mu_x=0$ if $x\in\Omega\setminus\supp(\bar\nu)$.

This embedding allows us to recover Corollary \ref{cor_vlasov_indistinguishable} as a consequence of Theorem \ref{thm_vlasov}. Indeed, 
obviously, $\bar\mu(\cdot)$ is a solution of the Vlasov equation \eqref{vlasov_without_x} without dependence on $x$ if and only if $\mu(\cdot)=\bar\nu\otimes\bar\mu(\cdot)$ is a solution of the Vlasov equation \eqref{vlasov}.
This gives the first part of the corollary.

To obtain \eqref{W1mu1m2_indistinguishable}, it suffices to take $\bar\nu=\delta_{\bar x}$ for some $\bar x\in\Omega$ and to note that $W_p(\bar\mu^1,\bar\mu^2) = W_p(\bar\nu\otimes\bar\mu^1,\bar\nu\otimes\bar\mu^2)$. 
Then, \eqref{W1mu1m2_indistinguishable} follows from \eqref{L1Wpmu1mu2} or from \eqref{W1mu1mu2}.
\end{proof}

\subsection{Relationship between the particle system and the Vlasov equation}\label{sec_relationship_particle_vlasov}
For every $N\in\N^*$, given any $X^N=(x^N_1,\ldots,x^N_N)\in\Omega^N$ and any $\Xi^N=(\xi^N_1,\dots,\xi^N_N)\in\R^{dN}$, we define the \emph{empirical measure} $\mu^e_{(X^N,\Xi^N)} \in \mathcal{P}(\Omega\times\R^d)$ corresponding to $(X^N,\Xi^N)$ by
\begin{equation}\label{def_empirical_measure}
\mu^e_{(X^N,\Xi^N)} = \frac{1}{N}\sum_{i=1}^N \delta_{x^N_i}\otimes\delta_{\xi^N_i} .
\end{equation}
The disintegration of $\mu^e_{(X^N,\Xi^N)}$ with respect to its marginal $\nu^e_{\Xi^N} = \pi_* \mu^e_{(X^N,\Xi^N)} = \frac{1}{N}\sum_{i=1}^N\delta_{x^N_i}$ on $\Omega$ (that is itself an empirical measure corresponding to $X^N$) gives the family of conditional measures defined by $(\mu^e_{(X^N,\Xi^N)})_x=\delta_{\xi^N_i}$ if $x=x^N_i$ and $0$ otherwise. 

The relationship between the particle system \eqref{particle_system} and the Vlasov equation \eqref{vlasov} is given by the result below.
To simplify the notation, hereafter we denote $\mu^e_N = \mu^e_{(X^N,\Xi^N)}$ and $\nu^e_N=\nu^e_{\Xi^N}$.

\begin{proposition}\label{prop_empirical_x}
If $t\mapsto\Xi^N(t)=(\xi^N_1(t),\ldots,\xi^N_N(t)) \in \R^{dN}$ is a solution on $[0,T]$ (for some $T>0$) of the particle system \eqref{system_Y} with parameter $X^N=(x^N_1,\ldots,x^N_N)\in\Omega^N$, then 
$$
t\mapsto\mu^e_N(t) = \varphi_{\mu^e_N(0)}(t)_* \, \mu^e_N(0) = \frac{1}{N}\sum_{i=1}^N \delta_{x^N_i}\otimes\delta_{\xi^N_i(t)}
$$
is a solution of the Vlasov equation \eqref{vlasov} on $[0,T]$. The converse is true if all $x^N_i$ are distinct and all $\xi^N_i(t)$ are distinct.

Actually, $t\mapsto\Xi^N(t)$ is solution  on $[0,T]$ of \eqref{system_Y} with parameter $X^N$ if and only if
\begin{equation}\label{link_particle_meanfield}
\xi^N_i(t) = \varphi_{\mu^e_N(0)} \left( t,x^N_i,\xi^N_i(0) \right) \qquad \forall t\in[0,T]\qquad \forall i\in\{1,\ldots,N\}.
\end{equation}
\end{proposition}

\begin{proof}
The Vlasov equation \eqref{vlasov} is written as $\partial_t\mu + L_{\mathcal{X}[\mu]}\mu=0$ with the Lie derivative acting with respect to the variable $\xi$. Hence, setting $X^N(t)=(x^N_1(t),\ldots,x^N_N(t))$ and $\Xi^N(t)=(\xi^N_1(t),\ldots,\xi^N_N(t))$, the mapping $t\mapsto\mu^e_N(t)$ is a solution of the Vlasov equation \eqref{vlasov} if and only if, for any $g\in \mathscr{C}^\infty_c(\Omega\times\R^d)$, we have 
$$
\langle \partial_t\mu^e_N + L_{\mathcal{X}[\mu^e_N]}\mu^e_N, g\rangle=0,
$$
i.e.,
$$
0 = \frac{1}{N} \sum_{i=1}^N \bigg( \frac{d}{dt} g(x^N_i(t),\xi^N_i(t)) - \partial_\xi g(x^N_i(t),\xi^N_i(t)). \frac{1}{N} \sum_{j=1}^N G(t,x^N_i(t),x^N_j(t),\xi^N_i(t),\xi^N_j(t)) \bigg)
$$
which is satisfied if $t\mapsto(X^N,\Xi^N(t))$ is a solution of \eqref{particle_system}. If all $x^N_i$ are distinct and all $\xi^N_i(t)$ are distinct, the converse is obtained by taking $g$ localized around $(x^N_i,\xi^N_i(t))$.

To obtain the second part of the proposition, we note that 
$$
\mathcal{X}[\mu^e_N](t,x,\xi) = \frac{1}{N}\sum_{j=1}^N G(t,x,x^N_j,\xi,\xi^N_j)
$$
and thus $\mathcal{X}[\mu^e_N](t,x^N_i,\xi^N_i) = Y_i(t,X^N,\Xi^N)$ for every $i\in\{1,\ldots,N\}$.
Therefore, \eqref{system_Y} is equivalent to $\dot\xi^N_i(t)=\mathcal{X}[\mu^e_N(t)](t,x^N_i,\xi^N_i(t))$ for every $i\in\{1,\ldots,N\}$.
Besides, by definition of $t\mapsto\varphi_{\mu^e_N(0)}(t,x^N_i,\cdot)$ (given in \ref{A} in Theorem \ref{thm_vlasov}), we have
$$
\partial_t \varphi_{\mu^e_N(0)}(t,x^N_i,\xi^N_i(0)) = \mathcal{X}[\mu^e_N(t)](t,x^N_i,\varphi_{\mu^e_N(0)}(t,x^N_i,\xi^N_i(0)))
$$
with $\varphi_{\mu^e_N(0)}(0,x^N_i,\xi^N_i(0)) = \xi^N_i(0)$.
Then, \eqref{link_particle_meanfield} follows by Cauchy uniqueness.
\end{proof}

As a consequence of the statements \ref{A1} and \ref{B} of Theorem \ref{thm_vlasov} and of Proposition \ref{prop_empirical_x}, we have the following corollary (the last part of which is already well known in the indistinguishable case).

\begin{corollary}\label{cor_empirical_X}
Let $K$ be a compact subset of $\Omega\times\R^d$. Let $p\in[1,+\infty)$ be arbitrary. Let $\mu_0\in\mathcal{P}_c(K)$ 
and let $t\mapsto \mu(t) = \varphi_{\mu_0}(t,\cdot,\cdot)_* \mu_0$ be the solution on $[0,T_{\max}(K))$ of the Vlasov equation \eqref{vlasov} such that $\mu(0)=\mu_0$. Besides, for every $N\in\N^*$, let $(X^N,\Xi^N_0)\in K^N$ 
be such that the empirical measure $\mu^e_N(0) = \frac{1}{N}\sum_{i=1}^N \delta_{x^N_i}\otimes\delta_{\xi^N_i(0)}$ converges weakly (equivalently, in Wasserstein distance $W_p$) to $\mu_0$ as $N\rightarrow+\infty$ (see Appendix \ref{app_kreinmilman} for general results). For every $N\in\N^*$, let $t\mapsto\Xi^N(t)$ be the solution on $[0,T_{\max}(K))$ of the particle system \eqref{system_Y} with parameter $X^N$ such that $\Xi^N(0)=\Xi^N_0$. 

Then, the empirical measure $\mu^e_N(t) = \frac{1}{N}\sum_{i=1}^N \delta_{x^N_i}\otimes\delta_{\xi^N_i(t)}$ converges weakly (equivalently, in Wasserstein distance $W_p$) to $\mu(t)$ as $N\rightarrow+\infty$, uniformly with respect to $t$ on any compact interval of $[0,T_{\max}(K))$.

If moreover $G$ is locally Lipschitz with respect to $(x,x',\xi,\xi')$ (uniformly with respect to $t$ on any compact), then 
$$
W_p(\mu(t),\mu^e_N(t)) \leq C_{\mu,\mu^e_N}(t) \, W_p(\mu_0,\mu^e_N(0))
$$
for every $t\in[0,T_{\max}(K))$ (with $C_{\mu,\mu^e_N}(t)$ defined by \eqref{def_C}).
\end{corollary}

Lemmas \ref{lem_kreinmilman} and \ref{lem_villani} in Appendix \ref{app_kreinmilman} provide general results ensuring that 
$W_p(\mu_0,\mu^e_N(0))\rightarrow 0$ 
as $N\rightarrow+\infty$, and Lemma \ref{lem_rate_CV_empirical} gives an estimate of convergence, at rate $\frac{1}{N^{r/p}}$, within the framework of tagged partitions.

\begin{remark}\label{rem_mesure_SE}
Alternatively, instead of empirical measures, we may also consider \emph{semi-empirical measures}: setting 
$$
(\mu_0)^{se}_{X^N} = \frac{1}{N} \sum_{i=1}^N \delta_{x^N_i}\otimes\mu_{0,x^N_i} ,
$$
the unique solution $t\mapsto\tilde\mu^N(t) = \varphi_{(\mu_0)^{se}_{X^N}}(t,\cdot,\cdot)_* (\mu_0)^{se}_{X^N}$ of the Vlasov equation \eqref{vlasov} such that $\tilde\mu^N(0)=(\mu_0)^{se}_{X^N}$ is of the form
$\tilde\mu^N(t) = \frac{1}{N} \sum_{i=1}^N \delta_{x^N_i}\otimes\tilde\mu^N_{t,x^N_i}$ (it differs from the semi-empirical measure $\mu(t)^{se}_{X^N} = \frac{1}{N} \sum_{i=1}^N \delta_{x^N_i}\otimes\mu_{t,x^N_i}$).
Its marginal on $\Omega$ is the empirical measure $\nu^e_{X^N} = \frac{1}{N} \sum_{i=1}^N \delta_{x^N_i}$.

Lemma \ref{lem_CV_semiempirical} in Appendix \ref{app_semiempirical} provides results on the convergence of $W_p(\mu_0,(\mu_0)^{se}_{X^N})$ to $0$, as well as estimates with a rate of convergence under appropriate assumptions.
\end{remark}

\subsection{Examples}\label{sec_ex_vlasov}
We continue with the examples given in Sections \ref{sec_ex_particle} (particle systems) and \ref{sec_ex_euler} (CGL equation).

\medskip\noindent
-- For the Hegselmann--Krause (opinion propagation) system \eqref{HK_particle}, under Assumption \ref{G} the mean field (not depending on $t$) is 
\begin{equation}\label{HK_meanfield}
\mathcal{X}[\mu](x,\xi) = \int_{\Omega\times\R^d} \sigma(x,x') (\xi'-\xi) \, d\mu(x',\xi')  \qquad \forall (x,\xi) \in \Omega\times\R^d .
\end{equation}
The Vlasov equation has been derived and studied in \cite{BiccariKoZuazua_M3AS2019, PiccoliPouradierDuteilTrelat_SICON2019} (see also \cite[Section 5.2]{BoudinSalvaraniTrelat_SIMA2022}). 

\medskip\noindent
-- For the Kuramoto particle system \eqref{kuramoto_particle}, under Assumption \ref{G} the mean field (not depending on $t$) is given by
$$
\mathcal{X}[\mu](x,\xi) = \alpha + \int_{\Omega\times\R^d} \sigma(x,x') \sin(\xi'-\xi) \, d\mu(x',\xi')  \qquad \forall (x,\xi) \in \Omega\times\R^d .
$$
The corresponding Vlasov equation was proposed in \cite{Sakaguchi} as being a formal mean field limit of \eqref{kuramoto_particle}. The rigorous mean field limit, called the Kuramoto-Sakaguchi equation, was established in \cite{Lancellotti} in the case where $\sigma$ is constant, by following the classical fixed point arguments of \cite{Neunzert, Spohn_book1991}. The general (network) case is treated in \cite{ChibaMedvedev, KaliuzhnyiMedvedev} and the Vlasov equation associated to the above mean field with the general function $\sigma$, is studied in that reference in view of extending the synchronization theory to spatially structured networks. The Vlasov equation \cite[Eq. (16)]{ChibaMedvedev} is of the form \eqref{vlasov_xfixed}, i.e., it consists of an infinite number of coupled Vlasov equations, parametrized (and coupled) by $x=(\alpha,\beta)$.

\medskip\noindent
-- For the first-order system \eqref{FK_particle}, the mean field does not depend on $(t,x)$ and is given by
\begin{equation*}
\begin{split}
\mathcal{X}[\mu](x,\xi) &= F (\xi) + \int_\Omega\int_{\R^d} K(\xi-\xi') \, d\mu_{x'}(\xi')\, d\nu(x') 
= F(\xi) + \int_\Omega K\star\mu_{x'}(\xi)\,d\nu(x')
\\
&= F(\xi) + \int_{\R^d} K(\xi-\xi')\, d\bar\mu(\xi') 
= F(\xi) + K\star\bar\mu(\xi) = \bar{\mathcal{X}}[\bar\mu](\xi)
\qquad \forall (x,\xi) \in \Omega\times\R^d 
\end{split}
\end{equation*}
where $\bar\mu$ is defined by \eqref{def_barmu} and $\bar{\mathcal{X}}[\bar\mu]$ by \eqref{meanfield_indistinguishable}.
The Vlasov equation $\partial_t\bar\mu+\div((F+K\star\bar\mu)\bar\mu)=0$ is used in mathematical biology to model aggregation phenomena (see \cite{CarrilloChoiHauray_2014, CarrilloDiFrancescoFigalliLaurentSlepcev, dif2}), in the study of neural networks (see \cite{RotskoffVanden-Eijnden_CPAM2022}) or, when $K$ is a singular kernel, in fluid mechanics (see \cite{JabinWang_IM2018, Serfaty}). 

\medskip\noindent
-- For the Cucker--Smale model \eqref{CS_particle}, the mean field does not depend on $(t,x)$ and is given by
\begin{equation*}
\begin{split}
\mathcal{X}[\mu](x,\xi) &= \begin{pmatrix} p\\ \int_{\Omega\times\R^r\times\R^r} a(\Vert q-q'\Vert) (p'-p) \, d\mu(x',\xi') \end{pmatrix}  \\
&= \begin{pmatrix} p\\ \int_{\R^r\times\R^r} a(\Vert q-q'\Vert) (p'-p) \, d\bar\mu(\xi') \end{pmatrix}  
= \bar{\mathcal{X}}[\bar\mu](\xi)
\qquad \forall (x,\xi) \in \Omega\times\R^{2r} 
\end{split}
\end{equation*}
where we recall that $\xi=(q,p)$ and $\xi'=(q',p')$.
The kinetic Cucker--Smale equation satisfied by $\bar\mu$ has been derived in \cite{HaTadmor_KRM2008, NataliniPaul_DCDSB_2022}. Convergence to flocking has been studied in \cite{CarrilloFornasierRosadoToscani_2010, HaLiu_CMS2009}.

\medskip\noindent
-- For the second-order model \eqref{second_order_K}, similarly to the Cucker--Smale example, the mean field does not depend on $(t,x)$ and is given by
\begin{equation*}
\begin{split}
\bar{\mathcal{X}}[\bar\mu](\xi) = \begin{pmatrix} p\\ \int_{\R^r\times\R^r} K(q,q') \, d\bar\mu(\xi') \end{pmatrix}  
\qquad \forall \xi \in \R^{2r} .
\end{split}
\end{equation*}

\medskip\noindent
-- As an example of a Hamiltonian system, the mean field associated with the mapping \eqref{fermion_particle} 
is
$$
\bar{\mathcal{X}}[\bar\mu](\xi) = \begin{pmatrix} p-A(q)\\ -\nabla V(q)+dA(q).(p-A(q))-\int_{\R^r\times\R^r} \big(\partial_1 W(q,q')+\partial_2 W(q',q)\big) \, d\bar\mu(\xi') \end{pmatrix}
$$
for every $\xi\in\R^{2r}$.

\section{From microscopic to mesoscopic scale II: mean field  by lifting the particle system {\normalsize (from Liouville to Vlasov)}}\label{sec_liouville}
\subsection{Liouville equation}
The Eulerian viewpoint consists of propagating, for any parameter $X\in\Omega^N$, an initial probability measure in $\R^{dN}$ under the flow of diffeomorphisms 
$\Phi^N(t,X,\cdot)$ of $\R^{dN}$ generated by the time-dependent vector field $Y^N(t,X,\cdot)$ defined by \eqref{def_Y}. 

Given $N\in\N^*$ fixed, we consider the \emph{($N$-body) Liouville equation} associated with the time-dependent vector field $Y^N$ defined by \eqref{def_Y}, depending on the parameter $X^N\in\Omega^N$, given by
\begin{equation}\label{liouville}
\boxed{
\partial_t\rho^N + \mathrm{div}_\Xi(Y^N\rho^N) = 0
}
\end{equation}
This is a usual transport equation on $\R^{dN}$, parametrized by $X^N\in\Omega^N$, where the divergence is considered with respect to $\Xi=(\xi_1,\ldots,\xi_N)$, and we thus have the following standard result. Here, it is understood that $\rho^N(t)$ is a probability Radon measure on $(\Omega\times\R^d)^N\simeq\Omega^N\times\R^{dN}$.

\begin{proposition}\label{prop_existence_liouville}
Let $K$ be a compact subset of $\Omega\times\R^d$.
Let $\rho^N_0\in\mathcal{P}_c(\Omega^N\times\R^{dN})$ be such that all marginals of $\rho^N_0$ on any copy of $\Omega\times\R^d$ are supported in the same compact $K$.
There exists a unique solution $t\mapsto\rho^N(t)$ of the Liouville equation \eqref{liouville} in $\mathscr{C}^0([0,T_{\max}(K)),\mathcal{P}_c(\Omega^N\times\R^{dN}))$, locally Lipschitz with respect to $t$ for the distance $L^1_\theta W_1$ (where $\theta$ is defined below), such that $\rho^N(0)=\rho^N_0$, given by
\begin{equation}\label{Phistar}
\rho^N(t)= \Phi^N(t)_* \rho^N_0 
\end{equation}
i.e., $\rho^N(t)$ is the image (pushforward) of $\rho^N_0$ under the particle flow.
\end{proposition}

The notation \eqref{Phistar} is slightly abusive. To explain it, let us make precise some notations and in particular the disintegration procedure. Given any measure $\rho\in\mathcal{P}(\Omega^N\times\R^{dN})$, denoting by $\pi^{\otimes N}:\Omega^N\times\R^{dN}\rightarrow\Omega^N$ the canonical projection, we will always denote by $\theta$ the probability Radon measure on $\Omega^N$ given by $\theta = (\pi^{\otimes N})_*\rho$ (image of $\rho$ under $\pi^{\otimes N}$), that is the marginal of $\rho$ on $\Omega^N$. By disintegration of $\rho$ with respect to $\theta$, there exists a family $(\rho_X)_{X\in\Omega^N}$ of probability Radon measures on $\R^{dN}$ such that $\rho = \int_{\Omega^N} \rho_X\, d\theta(X)$. 

With these notations, 
$\rho^N_t=\rho^N(t)$ is disintegrated as $\rho^N_t = \int_{\Omega^N} \rho^N_{t,X}\, d\theta^N(X)$ with respect to its marginal $\theta^N = (\pi^{\otimes N})_*\rho^N(t)$ on $\Omega^N$. The marginal $\theta^N$ does not depend on $t$ because \eqref{liouville} can be written as $\partial_t\rho^N+L_{Y^N}\rho^N=0$, with the Lie derivative acting with respect to the variable $\xi$, and we have $(\pi^{\otimes N})_*L_{Y^N}=0$. Finally, \eqref{Phistar} means that
$$
\rho^N_{t,X} = (\Phi^N_{t,X})_*\rho^N_{0,X}
$$
for every $t\in[0,T_{\max}(K))$ and for $\theta^N$-almost every $X\in\Omega^N$.

\begin{remark}\label{rem_Liouville_Dirac}
If $\rho^N_0 = \delta_{X^N}\otimes\delta_{\Xi^N_0}$ for some $(X^N,\Xi^N_0)\in K^N$ then $\rho^N(t)=\delta_{X^N}\otimes\delta_{\Xi^N(t)}$ where $t\mapsto\Xi^N(t)$ is the solution on $[0,T_{\max}(K))$ of the particle system \eqref{system_Y} with parameter $X^N$ such that $\Xi^N(0)=\Xi^N_0$.
In other words, the solutions of the particle system are naturally embedded as Dirac measures solutions of the Liouville system.

Hence, in some sense, the Liouville equation contains all possible solutions of the particle system. But it contains more: considering the particle system \eqref{system_Y}, instead of taking a \emph{deterministic} initial condition $\Xi^N(0)=\Xi^N_0\in\R^{dN}$, one may want to take a \emph{distribution} of initial conditions, for instance one may want to consider all possible initial conditions that are distributed around $\Xi^N_0$ according to a Gaussian law, in order to take into account noise or uncertainties in the initial conditions. In such a way, the Liouville equation \eqref{liouville} has a \emph{probabilistic} interpretation with respect to the particle system \eqref{particle_system}.

If the probability measure $\rho^N(t)$ on $\Omega^N\times\R^{dN}$ has a density $f^N$, then $f^N(t,X,\Xi)$ represents the density of particles with labels $X=(x_1,\dots,x_N)\in\Omega^N$ and respective states $\Xi=(\xi_1,\dots,\xi_N)\in\R^{dN}$.
This is in contrast with the mean field procedure that consists of taking the large $N$ limit of the average over all particles but one. In the next section we show how to derive Vlasov from Liouville by taking marginals.
\end{remark}

\subsection{Deriving Vlasov from Liouville by taking marginals, propagation of chaos}\label{sec_liouville_to_vlasov}
Compared with $\mu(t)$ that is a probability measure on $\Omega\times\R^d$, $\rho^N(t)$ is a probability measure on $(\Omega\times\R^d)^N\simeq\Omega^N\times\R^{dN}$. It is thus tempting to search for a relationship between $\mu(t)$ and $\rho^N(t)$ by taking marginals of $\rho^N(t)$.
This is what has been done in \cite{Sznitman}, in \cite{Jabin_KRM2014, JabinWang_IM2018} or in \cite{Golse_2016, GolseMouhotPaul_CMP2016} in the different context of quantum mechanics.
Adapted to the present situation, the method developed in \cite{GolseMouhotPaul_CMP2016}, which provides an explicit rate of convergence, consists of proving that the marginals of the solutions $\rho^N(t)$ of \eqref{liouville} are close, in Wasserstein topology, to solutions $\mu(t)$ of the Vlasov equation \eqref{vlasov}, as established hereafter. 

As we are going to see, this can be done by taking adequate initial conditions $\rho^N_0$ for the Liouville equation \eqref{liouville}.
We have to perform a symmetrization under permutations for the initial condition $\rho^N_0$ and also for the corresponding solution $\rho^N(t)$, not only with respect to $\Xi$ but also with respect to the parameter variable $X$. 
Note that the symmetrization is not preserved by the flow, so we have to consider the symmetrization $\rho^N(t)^s$ at any time $t$.

Given any $\rho\in\mathcal{P}(\Omega^N\times\R^{dN})$, we define the measure $\rho^s\in\mathcal{P}(\Omega^N\times\R^{dN})$, called the symmetrization under permutations of $\rho$ (see Appendix \ref{app_symm}), by
\begin{equation*}
\int_{\Omega^N\times\R^{dN}} f(X,\Xi)\, d\rho^s(X,\Xi) = 
\frac{1}{N!} \sum_{\sigma\in\mathfrak{S}_N} \int_{\Omega^N\times\R^{dN}} f(\sigma\cdot X,\sigma\cdot\Xi)\, d\rho(X,\Xi)
\end{equation*}
for every $f\in \mathscr{C}^0_c(\Omega^N\times\R^{dN})$,
where $\sigma\cdot X = (x_{\sigma(1)},\dots,x_{\sigma(N)})$ and $\sigma\cdot\Xi = (\xi_{\sigma(1)},\ldots,\xi_{\sigma(N)})$ for all $X\in\Omega^N$ and $\Xi\in\R^{dN}$, and where $\mathfrak{S}_N$ is the group of permutations of $N$ elements.

Now, given any $k\in\{1,\ldots,N\}$, we denote by $\rho^s_{N:k}$ the $k^\textrm{th}$-order marginal of $\rho^s$
(not to be confused with the symmetrization under permutations of the marginal, which we do not use), which is, by definition, the image of $\rho^s$ under the projection of 
$\Omega^N\times\R^{dN}$ onto the product $\Omega^k\times\R^{dk}$ of the $k$ first copies of $\Omega$ with the $k$ first copies of $\R^d$.

Since we are going to compute Wasserstein distances in $(\Omega\times\R^d)^k\simeq\Omega^k\times\R^{dk}$, we have to choose a distance in that space. Recall that $\Omega\times\R^d$ is equipped with the distance $\mathrm{d}_{\Omega\times\R^d} = \mathrm{d}_{\Omega} + \mathrm{d}_{\R^d}$ where $\mathrm{d}_{\R^d}$ is the distance on $\R^d$ induced by the norm $\Vert\cdot\Vert$ (which is arbitrary).
Let $q\in[1,+\infty]$ be arbitrary. 
Given any $k\in\N^*$, we endow $(\Omega\times\R^d)^k$ with the $\ell^q$ distance based on $\mathrm{d}_{\Omega\times\R^d}$, defined by
\begin{equation}\label{def_distq}
\begin{split}
\mathrm{d}^{[q]}_{(\Omega\times\R^d)^k}((X,\Xi),(X',\Xi'))
&= \left\Vert ( \mathrm{d}_{\Omega\times\R^d}((x_1,\xi_1),(x'_1,\xi'_1)), \ldots, \mathrm{d}_{\Omega\times\R^d}((x_k,\xi_k),(x'_k,\xi'_k)) ) \right\Vert_{\ell^q} \\
&= \left\{ \begin{array}{ll}
\displaystyle \bigg( \sum_{i=1}^k \left( \mathrm{d}_\Omega(x_i,x'_i) + \Vert\xi_i-\xi'_i\Vert \right)^q  \bigg)^{1/q} & \textrm{if}\ q\in[1,+\infty) \\[4mm]
\displaystyle \max_{1\leq i\leq k} \left( \mathrm{d}_\Omega(x_i,x'_i) + \Vert\xi_i-\xi'_i\Vert \right) & \textrm{if}\ q=+\infty
\end{array}\right.
\end{split}
\end{equation}
for all $X=(x_1,\ldots,x_k)$ and $X'=(x'_1,\ldots,x'_k)$ in $\Omega^k$
and for all $\Xi=(\xi_1,\ldots,\xi_k)$ and $\Xi'=(\xi'_1,\ldots,\xi'_k)$ in $\R^{dk}$.
Note that, when $k=1$, we have $\mathrm{d}^{[q]}_{\Omega\times\R^d} = \mathrm{d}^{[1]}_{\Omega\times\R^d} = \mathrm{d}_{\Omega} + \mathrm{d}_{\R^d}$. 

Given any $p,q\in[1,+\infty]$, we denote by $W_p^{[q]}$ the Wasserstein distance $W_p$ on $\mathcal{P}(\Omega^k\times(\R^d)^k)$ with respect to the distance $\mathrm{d}^{[q]}_{(\Omega\times\R^d)^k}$.

We refer to the beginning of Appendix \ref{app_useful_lemmas} and in particular to Remark \ref{rem_choicedist} for comments on the importance of choosing a distance on the product space $(\Omega\times\R^d)^k$ and for remarks on the Wasserstein distance $W_p^{[q]}$.
In particular, by \eqref{inegWpq}, we have $W_p^{[q_2]} \leq W_p^{[q_1]} \leq k^{\frac{1}{q_1}-\frac{1}{q_2}} \, W_p^{[q_2]}$ if $1\leq q_1\leq q_2\leq +\infty$
for any $p\in[1,+\infty]$.

\medskip

In this section, we establish two ways of deriving Vlasov from Liouville by taking marginals. 

Let $\mu_0\in\mathcal{P}_c(\Omega\times\R^d)$, disintegrated as $\mu_0=\int_{\Omega}\mu_{0,x}\, d\nu(x)$ with respect to its marginal $\nu=\pi_*\mu_0$ on $\Omega$. Setting $T_0 = T_{\max}(\supp(\mu_0))$ (as given by Lemma \ref{lem_Tmax}), we consider the unique solution $t\mapsto\mu(t)=\varphi_{\mu_0}(t)_*\mu_0$ in $\mathscr{C}^0_{\mathrm{comp}}([0,T_0),\mathcal{P}_c(\Omega\times\R^d))$ of the Vlasov equation \eqref{vlasov} such that $\mu(0)=\mu_0$, as given by Theorem \ref{thm_vlasov}.
Recall that $\mu_{t,x} = \varphi_{\mu_0}(t,x,\cdot)_*\mu_{0,x}$ for $\nu$-almost every $x\in\Omega$.

Hereafter, we propose two possible choices of $\rho^N_0\in\mathcal{P}_c(\Omega^N\times\R^{dN})$, generating by Proposition \ref{prop_existence_liouville} the solution $\rho^N(t)=\Phi^N(t)_*\rho^N_0$ of the Liouville equation \eqref{liouville} from which we recover in the large-$N$ limit the solution $\mu(t)$ of the Vlasov equation \eqref{vlasov} by taking marginals.

In Theorem \ref{thm_CV_liouville_empirical}, we take $\rho^N_0$ Dirac; in Theorem \ref{thm_CV_liouville}, we take $\rho^N_0$ ``semi-Dirac''. In both cases, we prove that $\rho^N(t)^s_{N:k}$ converges to $\mu(t)^{\otimes k}$ as $N\rightarrow +\infty$ and we establish convergence estimates in Wasserstein distance $W_p^{[q]}$. The fact that the $k^\textrm{th}$-order marginal $\rho^N(t)^s_{N:k}$ of the symmetrization of $\rho^N(t)$, which is far from being a tensor product at time $t=0$, becomes nevertheless the tensor product $\mu(t)^{\otimes k}$ at the limit $N\rightarrow +\infty$, is usually referred to as \emph{propagation of chaos} (formalized in the pioneering articles \cite{Kac, McKean}, see also \cite{Golse_2016, MischlerMouhot_IM2013, Spohn_book1991, Sznitman}).

\subsubsection{First way, with $\rho^N_0$ Dirac}
For fixed $N\in\N^*$, let $(X^N,\Xi^N_0)\in \supp(\mu_0)^N\subset\Omega^N\times\R^{dN}$ be arbitrary. 
Typically we may want that the empirical measure $\mu^e_{(X^N,\Xi^N_0)}=\frac{1}{N}\sum_{i=1}^N\delta_{x^N_i}\otimes\delta_{\xi^N_{0,i}}$ 
converges to $\mu_0$ in Wasserstein distance as $N\rightarrow+\infty$ (see Appendix \ref{app_kreinmilman} for such conditions). Let $t\mapsto\Xi^N(t)=(\xi^N_1(t),\ldots,\xi^N_N(t))$ be the solution on $[0,T_0)$ of the particle system \eqref{system_Y} such that $\Xi^N(0)=\Xi^N_0$. If $\mu^e_{(X^N,\Xi^N_0)}$ converges to $\mu_0$ then, by Corollary \ref{cor_empirical_X}, the empirical measure 
$$
\mu^e_{(X^N,\Xi^N(t))}=\frac{1}{N}\sum_{i=1}^N\delta_{x^N_i}\otimes\delta_{\xi^N_i(t)}
$$
converges to $\mu(t)$ in Wasserstein distance as $N\rightarrow+\infty$. 

Defining $\rho^N_0\in\mathcal{P}_c(\Omega^N\times\R^{dN})$ as the Dirac measure $\rho^N_0 = \delta_{X^N}\otimes\delta_{\Xi^N_0}$, by Remark \ref{rem_Liouville_Dirac}, the unique solution of the Liouville equation \eqref{liouville} such that $\rho^N(0) = \rho^N_0$, is given by the Dirac measure 
$$
\rho^N(t)=\Phi^N(t)_*\rho^N_0 = \delta_{X^N}\otimes\delta_{\Xi^N(t)} 
\qquad\forall t\in [0,T_0).
$$
It is then easy to see that $\rho^N(t)^s_{N:1} = \mu^e_{(X^N,\Xi^N(t))}$ (see the proof of the theorem below).
Therefore, if $\mu^e_{(X^N,\Xi^N_0)}$ converges weakly to $\mu_0$ then $\rho^N(t)^s_{N:1}$ converges weakly to $\mu(t)$ as $N\rightarrow+\infty$.
Actually, this first fact re-expresses results seen in Sections \ref{sec_vlasov_def} and \ref{sec_relationship_particle_vlasov}.
The convergence is less obvious for the marginals of order $k\geq 2$. 

Recall that $G$ satisfies Assumption \ref{G}.

\begin{theorem}\label{thm_CV_liouville_empirical}
We have the following statements, for any $p\in[1,+\infty)$ and $q\in[1,+\infty]$.
\begin{enumerate}[leftmargin=*,label=$\bf (\Alph*)$]
\item\label{thm_CV_liouville_empirical_A} 
If $\mu^e_{(X^N,\Xi^N_0)}$ converges weakly (equivalently, in Wasserstein distance $W_p$) to $\mu_0$ as $N\rightarrow+\infty$, then, for every $k\in\N^*$, $\rho^N(t)^s_{N:k}$ converges weakly (equivalently, in Wasserstein distance $W_p^{[q]}$) to $\mu(t)^{\otimes k}$ as $N\rightarrow+\infty$, uniformly with respect to $t$ on any compact interval of $[0,T_0)$.

\item\label{thm_CV_liouville_empirical_B}
Assuming that $G$ is locally Lipschitz with respect to $(x,x',\xi,\xi')$ (uniformly with respect to $t$ on any compact), setting
\begin{equation}\label{def_Smu}
S_\mu^N(\tau)=\supp(\mu(\tau)) \cup \{ (x^N_i,\xi^N_i(\tau))\ \mid\ i\in\{1,\ldots,N\} \}
\end{equation}
and defining
\begin{equation}\label{defCmuN}
C_\mu^N(t) = \exp\bigg( 2 \int_0^t \Lip( G(\tau,\cdot,\cdot,\cdot,\cdot)_{\vert S_\mu^N(\tau)^2} ) \, d\tau \bigg) ,
\end{equation}
for every $N\in\N^*$ and for every $t\in[0,T_0)$ we have 
$$
\rho^N(t)^s_{N:1} = \mu^e_{(X^N,\Xi^N(t))} = \frac{1}{N}\sum_{i=1}^N\delta_{x^N_i}\otimes\delta_{\xi^N_i(t)}
$$
and
\begin{equation}\label{thm_CV_liouville_empirical_1}
W_p\big( \rho^N(t)^s_{N:1} , \mu(t) \big) \leq C_\mu^N(t)\, W_p\big( \mu^e_{(X^N,\Xi^N_0)}, \mu_0 \big)
\end{equation}
and, for every $k\in\N^*$ such that $k^2\leq N\ln\big(1+\frac{1}{2^p}\big)$,
\begin{multline}\label{thm_CV_liouville_empirical_n}
W_p^{[q]}\big( \rho^N(t)^s_{N:k} , \mu(t)^{\otimes k} \big) 
\leq 3 k^{1/q} \left( \frac{k^2}{N} \right)^{1/p} 
\left( \diam_\Omega(\supp(\nu)) + \diam_{\R^d}(\Xi^N(t)) \right) \\
+ k^{1/q} C_\mu^N(t)\, W_p\big( \mu^e_{(X^N,\Xi^N_0)}, \mu_0 \big) 
\end{multline}
where $\displaystyle\diam_{\R^d}(\Xi^N(t)) = \max_{1\leq i,j\leq N}\Vert\xi^N_i(t)-\xi^N_j(t)\Vert$.
\end{enumerate}
\end{theorem}

Theorem \ref{thm_CV_liouville_empirical} is proved in Appendix \ref{app_proof_thm_CV_liouville_empirical}.

In \eqref{thm_CV_liouville_empirical_n}, $\displaystyle\diam_\Omega(\supp(\nu))=\max_{x,x'\in\supp(\nu)}\mathrm{d}_\Omega(x,x')$, and the Wasserstein distance $W_p^{[q]}$ on $\Omega^k\times(\R^d)^k$ is computed with respect to the distance $\mathrm{d}^{[q]}_{\Omega^k\times(\R^d)^k}$ defined by \eqref{def_distq}.
Since $W_p^{[q]} \leq k^{\frac{1}{q}} \, W_p^{[\infty]}$ (by \eqref{inegWpq}), the strongest inequality \eqref{thm_CV_liouville_empirical_n} is obtained when $q=+\infty$.

Lemmas \ref{lem_kreinmilman} and \ref{lem_villani} in Appendix \ref{app_kreinmilman} show that there always exists a sequence of empirical measures $\mu^e_{(X^N,\Xi^N_0)}$ converging weakly to $\mu_0$.
As alluded above, to obtain an interesting convergence estimate from Item \ref{thm_CV_liouville_empirical_B} of this theorem, we apply Lemma \ref{lem_rate_CV_empirical} in Appendix \ref{app_kreinmilman}, which yields the estimate $W_p(\mu^e_{(X^N,\Xi^N_0)},\mu_0) \leq \frac{1}{N^{r/p}} \, C_{\Omega\times\R^d}^{1/p} \, \diam_{\Omega\times\R^d}(\supp(\mu_0))^{1-1/p} $ under the assumption of the existence of a family of tagged partitions.
As noted in this appendix, there exist plenty of results quantifying the convergence of empirical measures to a given measure (see, e.g., \cite{FournierGuillin_PTRF2015}). Lemma \ref{lem_rate_CV_empirical} is a rough result.

\begin{corollary}\label{cor_CV_liouville_empirical}
In the context of Item \ref{thm_CV_liouville_empirical_B} of Theorem \ref{thm_CV_liouville_empirical}, we assume moreover that there exists a family of tagged partitions of $\supp(\mu_0)$ associated with $\mu_0$ (see Section \ref{sec_general_notations}), i.e., for every $N\in\N^*$ there exists a partition of $\supp(\mu_0) = \cup_{i=1}^N F^N_i$ such that all subsets $F^N_i\subset\Omega\times\R^d$ are $\mu_0$-measurable, pairwise disjoint, satisfy $\mu_0(F^N_i)=\frac{1}{N}$ and $\diam_{\Omega\times\R^d}(F^N_i)\leq C_{\Omega\times\R^d}/N^r$ for some $C_{\Omega\times\R^d}>0$ and $r>0$ not depending on $N$, and $N$-tuples $X^N=(x^N_1,\ldots,x^N_N)\in\Omega^N$ and $\Xi^N_0=(\xi^N_{0,1},\ldots,\xi^N_{0,N})\in(\R^d)^N$ such that $(x^N_i,\xi^N_{0,i})\in F^N_i$ for every $i\in\{1,\ldots,N\}$. Then, for every $t\in[0,T_0)$,
$$
W_p\left( \rho^N(t)^s_{N:1} , \mu(t) \right) \leq \frac{1}{N^{r/p}} \, C_{\Omega\times\R^d}^{1/p} \, \diam_E(\supp(\mu_0))^{1-1/p} \,  C_\mu^N(t)
$$
and, for every $k\in\N^*$ such that $k^2\leq 2N\ln\big(1+\frac{1}{2^p}\big)$,
\begin{multline}\label{cor_CV_liouville_empirical_Wpinfty}
W_p^{[\infty]}\left( \rho^N(t)^s_{N:k} , \mu(t)^{\otimes k} \right) 
\leq 2 \left( \frac{k^2}{N} \right)^{1/p} 
\left( \diam_\Omega(\supp(\nu)) + \diam_{\R^d}(\Xi^N(t)) \right) \\
+ \frac{C_{\Omega\times\R^d}^{1/p}}{N^{r/p}} \, \diam_{\Omega\times\R^d}(\supp(\mu_0))^{1-1/p} \, C_\mu^N(t)  .
\end{multline}
\end{corollary}

When $\Omega$ is a $n$-dimensional manifold (thus $\dim(\Omega\times\R^d)=n+d$), we have $r=1/(n+d) < 1$.

According to the estimate \eqref{cor_CV_liouville_empirical_Wpinfty}, $\rho^N(t)^s_{N:k}$ converges to $\mu(t)^{\otimes k}$ in Wasserstein distance $W_p^{[\infty]}$ as $N\rightarrow+\infty$, uniformly with respect to $t$ on compact intervals of $[0,T_0)$, at rate $1/N^{r/p}$ if $k\ll N^{(1-r)/2}$ and at rate $k^{2/p}/N^{1/p}$ if $N^{(1-r)/2} \ll k \ll N^{1/2}$.
The rate of convergence can be improved if one uses better results for convergence of empirical measures.

Note that the assumption of a family of tagged partitions in Corollary \ref{cor_CV_liouville_empirical} essentially entails that $\mu_0$ be absolutely continuous with respect to a Lebesgue measure on $\Omega\times\R^d$.

\paragraph{Particular case where $G$ does not depend on $(x,x')$.}
When $G$ does not depend on $(x,x')$, particles are indistinguishable and the mean field is given by \eqref{meanfield_indistinguishable}.
We have the following corollary of Theorem \ref{thm_CV_liouville_empirical}. 

\begin{corollary}\label{cor_CV_liouville_empirical_indistinguishable}
Let $\bar\mu_0\in\mathcal{P}_c(\R^d)$ and let $t\mapsto\bar\mu(t)$ be the unique solution on $[0,T_0)$, with $T_0=T_{\max}(\supp(\bar\mu_0))$, of the Vlasov equation \eqref{vlasov_without_x} such that $\bar\mu(0)=\bar\mu_0$ (see Corollary \ref{cor_vlasov_indistinguishable}).
Besides, let $\bar\rho^N_0=\delta_{\Xi^N_0}$ and let $t\mapsto\bar\rho^N(t)=\delta_{\Xi^N(t)}$ be the unique solution on $[0,T_0)$ of the Liouville equation \eqref{liouville} (without dependence on $X$) such that $\bar\rho^N(0)=\bar\rho^N_0$.
Then, for every $t\in[0,T_0)$,
\begin{equation}\label{thm_CV_liouville_empirical_indistinguishable_1}
W_p\big( \bar\rho^N(t)^s_{N:1} , \bar\mu(t) \big)
=
W_p\big( \bar\mu^e_{\Xi^N(t)} , \bar\mu(t) \big) \leq C_{\bar\mu}^N(t)\, W_p\big( \bar\mu^e_{\Xi^N_0}, \bar\mu_0 \big)
\end{equation}
and we have $\bar\rho^N(t)^s_{N:1} = \mu^e_{\Xi^N(t)} = \frac{1}{N}\sum_{i=1}^N \delta_{\xi^N_i(t)}$ (empirical measure), where
$C_{\bar\mu}^N(t)$ is defined by \eqref{defCmuN} (without dependence on $x,x'$), and, for every $k\in\{2,\ldots,N\}$, 
\begin{equation}\label{thm_CV_liouville_empirical_indistinguishable_n}
W_p^{[\infty]}\big( \bar\rho^N(t)^s_{N:k} , \bar\mu(t)^{\otimes k} \big) 
\leq 
2 \left( \frac{k^2}{N} \right)^{1/p} 
\diam_{\R^d}(\Xi^N(t)) + C_{\bar\mu}^N(t)\, W_p\big( \bar\mu^e_{\Xi^N_0}, \bar\mu_0 \big) .
\end{equation}
\end{corollary}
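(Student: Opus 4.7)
The plan is to prove the corollary either by the tensor-embedding trick used in Corollary \ref{cor_vlasov_indistinguishable} (picking an auxiliary $\bar\nu \in \mathcal{P}_c(\Omega)$, setting $\mu_0 = \bar\nu \otimes \bar\mu_0$, and invoking Theorem \ref{thm_CV_liouville_empirical}) or, more transparently, by redoing the argument of Theorem \ref{thm_CV_liouville_empirical} directly in the indistinguishable setting, now using Corollary \ref{cor_vlasov_indistinguishable} in place of the stability estimate \ref{B} of Theorem \ref{thm_vlasov}. I sketch the latter route.

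I would first dispose of the case $n=1$. Since $\bar\rho_0 = \delta_{\Xi_0}$ is a Dirac, Remark \ref{rem_Liouville_Dirac} (read without $X$ dependence) gives $\bar\rho(t) = \delta_{\Xi(t)}$, so that $\bar\rho(t)^s = \frac{1}{N!}\sum_{\sigma\in\mathfrak{S}_N}\delta_{\Xi(t)_\sigma}$ and its first marginal on $\R^d$ reduces to $\bar\mu^E_{\Xi(t)} = \frac{1}{N}\sum_i\delta_{\xi_i(t)}$. The indistinguishable analog of Proposition \ref{prop_empirical_x} (whose proof is identical after erasing the $x_i$) then shows that $t\mapsto\bar\mu^E_{\Xi(t)}$ is a locally Lipschitz solution of the Vlasov equation without $x$. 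Applying the stability estimate \eqref{W1mu1m2_indistinguishable} of Corollary \ref{cor_vlasov_indistinguishable} to the two solutions $\bar\mu^E_{\Xi(t)}$ and $\bar\mu(t)$ then yields \eqref{thm_CV_liouville_empirical_indistinguishable_1}.

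For $n\geq 2$ I would split via the triangle inequality
\[
W_1\bigl(\bar\rho(t)^s_{N:n}, \bar\mu(t)^{\otimes n}\bigr) \leq W_1\bigl(\bar\rho(t)^s_{N:n}, (\bar\mu^E_{\Xi(t)})^{\otimes n}\bigr) + W_1\bigl((\bar\mu^E_{\Xi(t)})^{\otimes n}, \bar\mu(t)^{\otimes n}\bigr) .
\]
The second summand is handled by the classical tensorisation bound $W_1(\alpha^{\otimes n},\beta^{\otimes n}) \leq n\, W_1(\alpha,\beta)$ (a standard Wasserstein lemma to be recalled in the appendix) combined with the $n=1$ case, producing $n\, C(\bar\mu(t))\, W_1(\bar\mu^E_{\Xi_0},\bar\mu_0)$. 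The first summand is the main obstacle: it compares sampling $n$ of the points $\xi_1(t),\ldots,\xi_N(t)$ without replacement (which is what the symmetrised marginal $\bar\rho(t)^s_{N:n}$ encodes) with sampling with replacement (the tensor $(\bar\mu^E_{\Xi(t)})^{\otimes n}$). An explicit coupling, realising both laws on the same probability space and bounding the cost on the event that the without-replacement draw produces a repeated index, gives the bound $2(e^{n^2/(2N)}-1)\max(1,\Vert\supp\bar\mu(t)\Vert_\infty)$: the support factor comes from the worst-case transport cost of a mismatched coordinate, and the exponential factor from estimating the combinatorial ratio $\prod_{k=0}^{n-1}(1-k/N)^{-1}$ between the two sampling distributions. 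This is precisely the chaos lemma underlying Theorem \ref{thm_CV_liouville_empirical} itself, and is expected to be one of the auxiliary results of the appendix. Summing the two estimates produces \eqref{thm_CV_liouville_empirical_indistinguishable_n}.
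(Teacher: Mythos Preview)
Your proposal is correct. You explicitly mention two routes and then sketch the second (redoing the proof of Theorem~\ref{thm_CV_liouville_empirical} directly in the indistinguishable setting, invoking Corollary~\ref{cor_vlasov_indistinguishable}, Lemma~\ref{lem_W1_tensor}, and the chaos estimate of Lemma~\ref{lem_symmetrization}); this works exactly as you describe.

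The paper, however, takes your \emph{first} route: it chooses $\bar\nu=\delta_{\bar x}$ for an arbitrary point $\bar x\in\Omega$, sets $X=(\bar x,\ldots,\bar x)$ and $\rho_0=\delta_X\otimes\delta_{\Xi_0}$, and applies Theorem~\ref{thm_CV_liouville_empirical} directly. Since all $x_i$ coincide, $\rho(t)^s_{N:n}=\delta_{\bar x}^{\otimes n}\otimes\bar\rho(t)^s_{N:n}$ and $\mu(t)^{\otimes n}=\delta_{\bar x}^{\otimes n}\otimes\bar\mu(t)^{\otimes n}$, and Lemma~\ref{lem_W1_equalitytensor} strips off the common Dirac factor to recover \eqref{thm_CV_liouville_empirical_indistinguishable_1} and \eqref{thm_CV_liouville_empirical_indistinguishable_n} from \eqref{thm_CV_liouville_empirical_1} and \eqref{thm_CV_liouville_empirical_n}. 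This is shorter (a few lines) because the combinatorial work has already been done in Theorem~\ref{thm_CV_liouville_empirical}; your sketched route is more self-contained and transparent but repeats that work. Both buy the same estimates with the same constants.
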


\begin{proof}
Following the proof of Corollary \ref{cor_vlasov_indistinguishable} and choosing $\bar\nu=\delta_{\bar x}$ for some arbitrary $\bar x\in\Omega$, when $G$ does not depend on $(x,x')$, 
$\bar\mu(\cdot)$ is a solution of the Vlasov equation \eqref{vlasov_without_x} (without dependence on $x$) if and only if $\mu(\cdot)=\delta_{\bar x}\otimes\bar\mu(\cdot)$ is a solution of the Vlasov equation \eqref{vlasov}. We now define $X^N=(\bar x,\ldots,\bar x)\in\Omega^N$, and we take $\rho^N_0=\delta_{X^N}\otimes\delta_{\Xi^N_0}$ as initial condition for the Liouville equation in Theorem \ref{thm_CV_liouville_empirical}, so that $\rho^N(t)=\delta_{X^N}\otimes\bar\rho^N(t)$ where $\bar\rho^N(t) = \delta_{\Xi^N(t)}$. 
With these choices, we obviously have $\rho^N(t)^s_{N:k} = \delta_{\bar x}^{\otimes k}\otimes\bar\rho^N(t)^s_{N:k}$,
and then \eqref{thm_CV_liouville_empirical_indistinguishable_1} and \eqref{thm_CV_liouville_empirical_indistinguishable_n} follow directly from \eqref{thm_CV_liouville_empirical_1} and \eqref{thm_CV_liouville_empirical_n}, by applying Remark \ref{rem_Wp_tensor} in Appendix \ref{app_tensor}. 
\end{proof}

\subsubsection{Second way, with $\rho^N_0$ ``semi-Dirac''}
For fixed $N\in\N^*$, let $X^N=(x^N_1,\ldots,x^N_N)\in\Omega^N$ be arbitrary. 
We set $\delta_{X^N} = \delta_{x^N_1}\otimes\cdots\delta_{x^N_N}$ and $\rho^N_{0,X^N} = \mu_{0,x^N_1}\otimes\cdots\otimes\mu_{0,x^N_N}$. Defining $\rho^N_0\in\mathcal{P}_c(\Omega^N\times\R^{dN})$ as the ``semi-Dirac'' measure $\rho^N_0 = \delta_{X^N}\otimes\rho^N_{0,X^N}$, we consider the unique solution on $[0,T_0)$ of the Liouville equation \eqref{liouville} such that $\rho^N(0) = \rho^N_0$, given by the ``semi-Dirac'' measure
$$
\rho^N(t)=\Phi^N(t)_*\rho^N_0 = \delta_{X^N}\otimes \Phi(t,X^N,\cdot)_*\rho^N_{0,X^N} = \delta_{X^N}\otimes\rho^N_{t,X^N}  .
$$
Note indeed that the marginal $\theta^N=(\pi^{\otimes N})_*\rho^N_t$ of $\rho^N_t=\rho^N(t)$ on $\Omega^N$ is $\theta^N=\delta_{X^N}$, and that $\rho^N_{t,X^N} = \Phi^N(t,X^N,\cdot)_*\rho^N_{0,X^N}$.

As a preliminary remark, we claim that, at $t=0$, we have
\begin{equation}\label{rhoN0}
(\rho^N_0)^s_{N:1} = \frac{1}{N}\sum_{i=1}^N\delta_{x^N_i}\otimes\mu_{0,x^N_i} = (\mu_0)^{se}_{X^N}  
\end{equation}
(semi-empirical measure),
which converges weakly to $\mu_0$ as $N\rightarrow +\infty$ under slight assumptions on $\mu_0$, by Lemma \ref{lem_CV_semiempirical} in Appendix \ref{app_semiempirical}. More generally, $(\rho^N_0)^s_{N:k}$ converges weakly to $\mu_0^{\otimes k}$ (in the proof of the theorem hereafter, we give an explicit expression for $(\rho^N_0)^s_{N:k}$, using \eqref{prhosN:k_eps} in Appendix \ref{app_technical_lemma}).
In the theorem below, we establish that this convergence is propagated in time. 

\begin{theorem}\label{thm_CV_liouville}
We assume that the norm $\Vert\cdot\Vert$ on $\R^d$ is induced by a scalar product on $\R^d$. Let $p\in[1,2]$ and $q\in[1,+\infty]$ be such that $p\leq q$.
\begin{enumerate}[leftmargin=*,label=$\bf (\Alph*)$]
\item\label{thm_CV_liouville_A} 
Assume that $x\mapsto\mu_{0,x}$ is $\nu$-almost everywhere continuous for the Wasserstein distance $W_p$.
Then, for every $k\in\N^*$, $\rho^N(t)^s_{N:k}$ converges weakly to $\mu(t)^{\otimes k}$ (equivalently, in Wasserstein distance $W_p^{[q]}$) as $N\rightarrow+\infty$, uniformly with respect to $t$ on any compact interval of $[0,T_0)$.

\item\label{thm_CV_liouville_B} Assuming that $G$ is locally Lipschitz with respect to $(x,x',\xi,\xi')$ (uniformly with respect to $t$ on any compact), defining $S_\mu^N(\tau)$ by \eqref{def_Smu} and
\begin{equation}\label{def_Cmu}
C_\mu(t) = 11 \Big(1+70\max_{0\leq\tau\leq t}\diam_{\Omega\times\R^d} ( \supp(\mu(t)) ) \Big)^{1/2} \exp\Big( 2 t \max_{0\leq\tau\leq t} \Vert G(\tau,\cdot,\cdot,\cdot,\cdot)_{\vert S_\mu^N(\tau)^2}\Vert_{\mathscr{C}^{0,1}}  \Big) ,
\end{equation}
we have, for every $N\in\N^*$, for every $k\in\{1,\ldots,N\}$ such that $k^2\leq \frac{N}{2}$,
\begin{multline}\label{estim_W1_k}
W_p^{[q]}\left( \rho^N(t)^s_{N:k} , \mu(t)^{\otimes k} \right) \\
\leq 
k^{1/q} C_\mu(t) \max \left( \bigg( \frac{k^2}{N} \bigg)^{1/p} , \frac{1}{N^{\frac{1}{q}-\frac{1}{2}}} , N^{1-\frac{1}{q}} \sqrt{W_1\left( (\mu_0)^{se}_{X^N} , \mu_0 \right)} , W_p\left( (\mu_0)^{se}_{X^N} , \mu_0 \right) \right) 
\end{multline}
for every $t\in[0,T_0)$ (for $k=1$, without the first term in the above parenthesis).
\end{enumerate}
\end{theorem}

Theorem \ref{thm_CV_liouville} is proved in Appendix \ref{app_proof_thm_CV_liouville}.
Note that the $p$-Wasserstein distance at the left-hand side of \eqref{estim_W1_k} is considered with $p\leq 2$, because in the proof we use in an instrumental way a variance-type estimate, measuring the $L^2$ discrepancy between the mean field and the particle vector field (see Appendix \ref{app_mean_field}). Besides, $q\in[1,+\infty]$ is arbitrary, but only the values $q\in[1,2)$ are meaningful, noting that it is also required that $p\leq q$. The strongest estimate inferred from \eqref{estim_W1_k} is when $q=1$ (thus, also $p=1$), i.e., when one takes the $\ell^1$ distance on $\Omega^k\times(\R^d)^k$. This choice has no importance while $k$ is small, but becomes significant if one takes for instance $k=N^{1/4}$.

To obtain an interesting convergence result from this theorem, we apply the second item of Lemma \ref{lem_CV_semiempirical} of Appendix \ref{app_semiempirical}, which yields $W_1((\mu_0)^{se}_{X^N},\mu_0) \leq \frac{(L+1)C_\Omega}{N^r}$
and $W_p((\mu_0)^{se}_{X^N},\mu_0) \leq \diam_{\Omega\times\R^d}(\supp(\mu_0))^{1-1/p}((L+1)C_\Omega / N^r)^{1/p}$ under a regularity assumption on $\mu_0$.

\begin{corollary}\label{cor_CV_liouville}
In the context of Item \ref{thm_CV_liouville_B} of Theorem \ref{thm_CV_liouville}, we assume moreover that, for every $N\in\N^*$, there exists a tagged partition $(\mathcal{A}^N,X^N)$ of $\Omega$ associated with $\nu$ satisfying \eqref{def_tagged} (see Section \ref{sec_general_notations}), 
and that $x\mapsto\mu_{0,x}$ is Lipschitz for the Wasserstein distance $W_1$, i.e., that there exists $L>0$ such that $W_1(\mu_{0,x},\mu_{0,y})\leq L\, \mathrm{d}_\Omega(x,y)$ for $\nu$-almost all $x,y\in\Omega$. 
Then 
\begin{equation}\label{cor_estim_W1_k}
W_p^{[q]}\left( \rho^N(t)^s_{N:k} , \mu(t)^{\otimes k} \right) 
\leq 
k^{1/q} (L+1)C_\Omega C_\mu(t) \max \bigg( \bigg( \frac{k^2}{N} \bigg)^{1/p} , \frac{1}{N^{\frac{1}{q}-\frac{1}{2}}} , \frac{1}{N^{\frac{r}{2}+\frac{1}{q}-1}} , \frac{1}{N^{r/p}} \bigg)  
\end{equation}
for every $t\in[0,T_0)$.
\end{corollary}

When $\Omega$ is a $n$-dimensional manifold, we have $r=1/n$, hence, if we take $q=1$ and $p=1$, the rate of convergence provided by \eqref{cor_estim_W1_k} is $\frac{k}{N^{1/2n}}$.

Note that the assumption of a family of tagged partitions in Corollary \ref{cor_CV_liouville} essentially entails that $\nu$ be absolutely continuous with respect to a Lebesgue measure on $\Omega$.

\paragraph{Particular case where $G$ does not depend on $(x,x')$.}
When $G$ does not depend on $(x,x')$, we have the following corollary of Theorem \ref{thm_CV_liouville} (still assuming that the norm $\Vert\cdot\Vert$ on $\R^d$ is induced by a scalar product on $\R^d$, that $p\in[1,2]$ and that $q\in[1,+\infty]$, with $p\leq q$).

\begin{corollary}\label{cor_CV_liouville_indistinguishable}
Let $\bar\mu_0\in\mathcal{P}_c(\R^d)$ and let $t\mapsto\bar\mu(t)$ be the unique solution on $[0,T_0)$, with $T_0=T_{\max}(\supp(\bar\mu_0))$, of the Vlasov equation \eqref{vlasov_without_x} such that $\bar\mu(0)=\bar\mu_0$ (see Corollary \ref{cor_vlasov_indistinguishable}).
Besides, let $\bar\rho^N_0=\bar\mu_0^{\otimes N}$ and let $t\mapsto\bar\rho^N(t)=\Phi(t,\cdot)_*\bar\rho^N_0$ be the unique solution on $[0,T_0)$ of the Liouville equation \eqref{liouville} (without dependence on $X$) such that $\bar\rho^N(0)=\bar\rho^N_0$.
Then, for every $N\in\N^*$, for every $k\in\{1,\ldots,N\}$, we have
\begin{equation}\label{estim_W1_k_indistinguishable}
W_p^{[q]}\left( \bar\rho^N(t)_{N:k} , \bar\mu(t)^{\otimes k} \right) 
\leq 
k^{1/q} C_{\bar\mu}(t) \max \bigg( \bigg( \frac{k^2}{N} \bigg)^{1/p} , \frac{1}{N^{\frac{1}{q}-\frac{1}{2}}} \bigg) 
\end{equation}
for every $t\in[0,T_0)$, 
where $C_{\bar\mu}(t)$ is defined as in \eqref{def_Cmu} (without dependence on $x$).
\end{corollary}

\begin{proof}
The proof is the same as the one of Corollary \ref{cor_CV_liouville_empirical_indistinguishable}: we take $\bar\nu=\delta_{\bar x}$ for an arbitrary $\bar x\in\Omega$. Then $(\mu_0)^{se}_{X^N}=\delta_{\bar x}\otimes\bar\mu_0$ and thus $W_1((\mu_0)^{se}_{X^N},\mu_0)=W_p((\mu_0)^{se}_{X^N},\mu_0)=0$. We conclude the proof by noticing that, since the particle dynamics are invariant under permutations, we have $\bar\rho^N(t)_{N:k}=\bar\rho^N(t)_{N:k}^s$.
\end{proof}

\begin{remark}
It is interesting to observe that, in \eqref{estim_W1_k_indistinguishable}, we have not taken the symmetrization of the measure $\bar\rho^N(t)$ (in contrast to Corollary \ref{cor_CV_liouville_empirical_indistinguishable}).
\end{remark}

\begin{remark}
Applying Corollary \ref{cor_CV_liouville_indistinguishable} to the kinetic plus potential Hamiltonian case
where we have $G(t,(q_i,p_i),(q_j,p_j))=\begin{pmatrix} p_i , -\nabla V(q_i-q_j)\end{pmatrix}$,
we recover \cite[Theorem 3.1]{GolseMouhotPaul_CMP2016}.
The corollary can also be applied to more general Hamiltonian systems, for example, $G(t,(q_i,p_i),(q_j,p_j))=( p_i , -\nabla (V(q_i-q_j)+(p_i-A(q_i))^2) )$,
where $A:\R^d\to\R^d$ is a vector potential associated to a magnetic field; or to Cucker--Smale systems, 
for which $G(t,(q_i,p_i),(q_j,p_j))=\begin{pmatrix} p_i , F(\vert q_i-q_j\vert)(p_i-p_j) \end{pmatrix}$, 
and generalizations introduced in \cite{NataliniPaul_DCDSB_2022}. 
\end{remark}

\section{From mesoscopic to macroscopic scale: hydrodynamic moments and closures {\normalsize (from Vlasov to CGL when closure holds)}}\label{sec_hydro}
\subsection{Averaged dynamical quantities defined on $\Omega$}\label{sec_ave}

Given any $\mu\in\mathcal{P}(\Omega\times\R^d)$, disintegrated as $\mu=\int_{\Omega}\mu_x\, d\nu(x)$, the three macroscopic quantities traditionally considered in the \emph{hydrodynamic limit} procedure are the three first moments of the measure $\mu$ with respect to $\xi$ (see, e.g., \cite{Spohn_book1991}), leading to define, for $\nu$-almost every $x\in\Omega$:
\begin{itemize}
\item the total mass $\rho(x)\geq 0$ of $\mu_x$ by
$$
\rho(x) = \int_{\R^d}d\mu_x(\xi) = \mu_x(\R^d) ,
$$
(moment of order $0$). In our setting, since $\mu_x$ is a probability measure for $\nu$-almost every $x$, we have $\rho(x)=1$ for $\nu$-almost every $x\in\Omega$, and we will use this normalization throughout this section. We nevertheless keep the symbol $\rho(x)$ in the formul\ae\ below in order to make the underlying mass$\,\times\,$velocity structure visible and to facilitate comparison with the classical hydrodynamic literature;
\item the ``speed" $y(x)\in\R^d$ by
$$
\rho(x)\, y(x) = \int_{\R^d} \xi \, d\mu_x(\xi) ,
$$
(moment of order $1$)
which, since $\rho(x)=1$, is also the expectation of any random variable with law $\mu_x$;
\item and the ``temperature" $T(x)\geq 0$ by
$$
d\, \rho(x)\, T(x) = \int_{\R^d} \Vert \xi - y(x)\Vert^2\, d\mu_x(\xi) 
$$
(moment of order $2$), which is a variance; equivalently, if $\Vert\cdot\Vert$ is the Euclidean norm,
$$
\frac{1}{2}\rho(x) \Vert y(x)\Vert^2 + \frac{d}{2} \rho(x) T(x) = \frac{1}{2} \int_{\R^d} \Vert\xi\Vert^2\, d\mu_x(\xi) .
$$
\end{itemize}

Let $t\mapsto\mu(t)$ be a fixed solution of the Vlasov equation \eqref{vlasov} (recall that the mean field $\mathcal{X}[\mu]$ is defined by \eqref{def_mean_field}). 
According to Remark \ref{rem_notdependt}, its marginal $\nu(t)=\nu$ on $\Omega$ does not depend on $t$. 
Following the hydrodynamic limit procedure recalled above (see also, e.g., \cite{CarrilloChoi_ARMA2021, FigalliKang_APDE2019, NataliniPaul_DCDSB_2022}), for every $t\in\R$ and for $\nu$-almost every $x\in\Omega$, we define the three first moments $\rho(t,x)$, $y(t,x)$ and $T(t,x)$ of $\mu(t)$.
The moment $\rho(t,x)$ of order $0$ does not depend on $t$ and is equal to $1$ for $\nu$-almost every $x\in\Omega$ and $0$ otherwise. We now study the moments of order one and two. 

\subsection{Moment of order $1$ and CGL closure}
Given any solution $t\mapsto\mu(t)$ of the Vlasov equation \eqref{vlasov} on $[0,T]$ (for some $T>0$), of marginal $\nu$ on $\Omega$, using the disintegration of $\mu$ with respect to $\nu$ we define
\begin{equation}\label{ytx}
\boxed{
y(t,x) = \int_{\R^d} \xi \, d\mu_{t,x}(\xi) 
}
\end{equation}
for $\nu$-almost every $x\in\Omega$, and $y(t,x)=0$ for every $x\in\Omega\setminus\supp(\nu)$, for every $t\in[0,T]$.
As a preliminary remark, using \eqref{vlasov} (or, rather, \eqref{vlasov_xfixed}), we have
\begin{equation*}
\partial_t y(t,x)
= \left\langle \partial_t \mu_{t,x} , \xi\mapsto\xi \right\rangle
= \left\langle \mu_{t,x} , L_{\mathcal{X}[\mu_t](t,x,\cdot)} (\xi\mapsto\xi) \right\rangle
= \int_{\R^d} \mathcal{X}[\mu_t](t,x,\xi)\, d\mu_{t,x}(\xi)
\end{equation*}
which is the mean field $\mathcal{X}[\mu_t]$ averaged under the conditional measure $\mu_{t,x}$.
Hence
\begin{equation}\label{calcyt}
\partial_t y(t,x) = \int_{\R^d} \int_{\Omega\times\R^d} G(t,x,x',\xi,\xi') \, d\mu_t(x',\xi')\, d\mu_{t,x}(\xi)  .
\end{equation}
It is remarkable that, for some classes of functions $G$, and for some classes of initial data, the right-hand side of \eqref{calcyt} can be expressed in terms of $y(t,x)$ only: we thus obtain a ``closed" equation in $y$, as seen next.

\subsubsection{Linear CGL closure}\label{sec_op_propag}

\begin{proposition}\label{prop1}
Assume that $G$ is linear with respect to $(\xi,\xi')$, i.e.,
$$
G(t,x,x',\xi,\xi') = a_1(t,x,x')\xi + a_2(t,x,x')\xi' \qquad\forall (t,x,x',\xi,\xi') \in\R\times\Omega\times\Omega\times\R^d\times\R^d.
$$
For any solution $t\mapsto\mu(t)$ of the Vlasov equation \eqref{vlasov}, the mapping $t\mapsto y(t,\cdot)$, where $y(t,x)$ is defined by \eqref{ytx}, is a solution of the continuum / graph limit equation \eqref{Euler_general}, in which the operator $A$ is linear, given by
$$
A(t,y)(x) = \int_\Omega a_1(t,x,x')\, d\nu(x')\, y(x) + \int_\Omega a_2(t,x,x')y(x')\, d\nu(x')\qquad \forall y\in L^\infty_\nu(\Omega,\R^d),
$$
where $\nu$ is the marginal of $\mu(t)$ on $\Omega$ (not depending on $t$).
\end{proposition}

\begin{proof}
Using the disintegration of the measure, we infer from \eqref{calcyt} and from the specific expression of $G$ that
\begin{multline*}
\partial_t y(t,x) = 
\underbrace{\int_{\R^d} \xi\, d\mu_{t,x}(\xi)}_{=y(t,x)} \int_{\Omega} a_1(t,x,x') \underbrace{\int_{\R^d} d\mu_{t,x'}(\xi')}_{=1}\, d\nu(x') \\
+ \underbrace{\int_{\R^d} d\mu_{t,x}(\xi)}_{=1} \int_{\Omega} a_2(t,x,x') \underbrace{\int_{\R^d} \xi'\, d\mu_{t,x'}(\xi')}_{=y(t,x')}\, d\nu(x')
\end{multline*}
and the result follows. 
\end{proof}

\begin{remark}\label{rem_micromacro}
If $\mu(0)=\frac{1}{N}\sum_{i=1}^N \delta_{x^N_i}\otimes\delta_{\xi^N_i(0)}=\mu^e_{(X^N,\Xi^N_0)}$ as in Proposition \ref{prop_empirical_x}, then $\mu(t) = \mu^e_{(X^N,\Xi^N(t))}$ 
whose marginal on $\Omega$ is $\nu=\nu^e_{X^N}=\frac{1}{N}\sum_{i=1}^N\delta_{x^N_i}$ and whose disintegration with respect to $\nu$ is $\mu_{t,x} = \delta_{\xi^N_i(t)}$ if $x=x^N_i$ for $i\in\{1,\ldots,N\}$ and $0$ otherwise. In this case, in the context of Proposition \ref{prop1}, we have then $y(t,x) = \xi^N_i(t)$ if $x=x^N_i$ for $i\in\{1,\ldots,N\}$ and $0$ otherwise, and the differential equation \eqref{HK_euler} exactly coincides with the particle system \eqref{HK_particle}.
\end{remark}

Proposition \ref{prop1} applies, for example, to the Hegselmann--Krause system: under Assumption \ref{G}, if $t\mapsto\mu(t)$ is a solution of the Vlasov equation associated to the mean field \eqref{HK_meanfield} then $t\mapsto y(t,\cdot)$, with $y(t,x)$ defined by \eqref{ytx}, is a solution of the continuum / graph limit equation \eqref{HK_euler}.

However, the conclusion of Proposition \ref{prop1} no longer holds if $G$ is not linear with respect to $(\xi,\xi')$: in that case, $\partial_t y$ cannot be expressed solely in terms of $y$.

\subsubsection{Open issue: how to obtain a closed equation?}
An open question is to characterize the mappings $G$ such that, for any solution $\mu$ of \eqref{vlasov}, the function $y$ defined by \eqref{ytx} satisfies the nonlinear CGL equation \eqref{Euler_general}, $\partial_t y(t,\cdot) = A(t,y(t,\cdot))$.
We face here the classical problem in kinetic theory of closing the moment system: the equations for the three first moments depend a priori on higher-order moments, and suitable closure assumptions are not known in general (see \cite{CarrilloChoi_ARMA2021} for further discussion, see also Section \ref{sec_coupled} below). This motivates the monokinetic ansatz for $\mu$ described next.

\subsubsection{The $\nu$-monokinetic case}\label{sec_nu-monokinetic}
In this section, we assume that $\Omega$ is compact (for the non-compact case, see Remark \ref{rem_Omega_not_compact}).
Let us consider specific solutions $\mu$ of the Vlasov equation \eqref{vlasov}, that are \emph{$\nu$-monokinetic}, meaning that $\mu$ is delta-valued in the $\xi$ variable and has the marginal $\nu$ on $\Omega$.
Given any $\nu\in\mathcal{P}(\Omega)$ and any measurable function $y:\Omega\rightarrow\R^d$, we define the $\nu$-monokinetic measure $\mu^\nu_y$ on $\Omega\times\R^d$ by
\begin{equation}\label{def_monokinetic_measure}
\mu^\nu_y  = \nu\otimes \delta_{y(\cdot)}  .   
\end{equation} 
We have $y(x) = \int_\Omega \xi\, d(\mu^\nu_y)_x(\xi)$ for $\nu$-almost every $x\in\Omega$ (as in \eqref{ytx}), where the disintegration of $\mu^\nu_y$ with respect to its marginal $\nu$ on $\Omega$ is given by the family of conditional measures defined by $(\mu^\nu_y)_x = \delta_{y(x)}$.

\begin{proposition}\label{prop_monokinetic}
Let $\nu\in\mathcal{P}(\Omega)$. Let $T>0$ and let $t\mapsto y(t,\cdot)\in L^\infty_\nu(\Omega,\R^d)$ be a locally Lipschitz mapping on $[0,T]$. 

The mapping $t\mapsto\mu(t)=\mu^\nu_{y(t,\cdot)}\in\mathcal{P}_c(\Omega\times\R^d)$, of marginal $\nu$ on $\Omega$, is a ($\nu$-monokinetic) solution on $[0,T]$ of the Vlasov equation \eqref{vlasov} with the general mean field \eqref{def_mean_field} if and only if the mapping $t\mapsto y(t,\cdot)\in L^\infty_\nu(\Omega,\R^d)$ 
is a solution on $[0,T]$ of the (nonlinear) CGL equation \eqref{Euler_general}.
\end{proposition}

\begin{proof}
When $\mu_t=\mu^\nu_{y(t,\cdot)}$, \eqref{def_mean_field} gives $\mathcal{X}[\mu_t](t,x,\xi) = \int_{\Omega} G(t,x,x',\xi,y(t,x'))\, d\nu(x')$. The proof is straightforward; note that $A(t,y)(x) = \mathcal{X}[\mu^\nu_y](t,x,y(x))$ (where the nonlinear operator $A$ is defined by \eqref{def_A_general}). 
\end{proof}

\begin{remark}\label{rem_Tmax_Euler}
Proposition \ref{prop_monokinetic} implies Theorem \ref{thm_euler} (existence and uniqueness for the CGL equation \eqref{Euler_general}). Indeed, assume that $\Omega$ is compact, let $\nu\in\mathcal{P}(\Omega)$, let $y^0\in L^\infty_\nu(\Omega,\R^d)$, set $K'=\mathrm{ess.im}(y^0)$ its essential range (compact subset of $\R^d$) and $K=\Omega\times K'$ (compact).
Since the unique solution of the Vlasov equation \eqref{vlasov} such that $\mu(0)=\mu^\nu_{y^0}=\nu\otimes\delta_{y^0(\cdot)}$ is well defined on $[0,T_{\max}(K))$ (by Theorem \ref{thm_vlasov}) and is given by $\mu(t)=\mu^\nu_{y(t,\cdot)}$ (by Proposition \ref{prop_monokinetic}), it follows that the nonlinear CGL equation \eqref{Euler_general} has a unique solution on $[0,T_{\max}(K))$ such that $y(0,\cdot)=y^0(\cdot)$.
\end{remark}

When $\mu_t$ is not of the form $\mu^\nu_{y(t,\cdot)}$, $t\mapsto y(t,\cdot)$ fails in general to satisfy a ``closed" equation (i.e., $\partial_t y(t,\cdot)$ may not be expressible only in function of the first moment $y(t,\cdot)$).
Instead, there may be a full hierarchy of equations coupling all moments of $\mu_{t,x}$ (see Section \ref{sec_coupled}).
However, when convergence to consensus holds, one may expect that any solution $\mu$ of \eqref{vlasov} is asymptotically of the form $\mu^\nu_{y(t,\cdot)}$. 

\begin{remark}
In Remark \ref{rem_embedding_particle_Euler} in Section \ref{sec_euler_equation}, we have seen how to embed the solutions of the particle system to solutions of the CGL equation by considering an empirical measure $\nu$.
This embedding works because, when $\nu=\nu^e_{X^N}=\frac{1}{N}\sum_{i=1}^N \delta_{x^N_i}$ and $y(t,x^N_i)=\xi^N_i(t)$, the $\nu$-monokinetic measure $\mu^\nu_{y(t,\cdot)}$ coincides with the empirical measure $\mu^e_{(X^N,\Xi^N(t))}$. 
Indeed, 
$$
\mu^\nu_{y(t,\cdot)} = \frac{1}{N}\sum_{i=1}^N \delta_{x^N_i} \otimes \delta_{y(t,\cdot)} = \frac{1}{N}\sum_{i=1}^N \delta_{x^N_i} \otimes \delta_{y(t,x^N_i)} 
= \frac{1}{N}\sum_{i=1}^N \delta_{x^N_i} \otimes \delta_{\xi^N_i(t)} = \mu^e_{(X^N,\Xi^N(t))} .
$$

\end{remark}

\begin{remark}
In Appendix \ref{app_discrepancy_empirical_monokinetic}, we provide estimates on the discrepancy between empirical measures and $\nu$-monokinetic measures.
Lemma \ref{lem_discrepancy} of that appendix, combined with Theorem \ref{thm_estim_graph} and with the proof of that theorem, yields estimates on the discrepancy of the empirical measure $\mu^e_{(X^N,\Xi^N(t))}$ 
with respect to the $\nu$-monokinetic measures $\mu^\nu_{y(t,\cdot)}$ or $\mu^\nu_{y^N(t,\cdot)}$. 
\end{remark}

\begin{remark}
The proofs of Theorems \ref{thm_estim_graph} and \ref{thm_estim_graph_2} that we provide in Appendices \ref{app_proof_thm_estim_graph} and \ref{app_proof_thm_estim_graph_2} are direct, but actually one can also prove these propositions by applying Corollary \ref{cor_CV_liouville_empirical} with $\mu(t)=\mu^\nu_{y(t,\cdot)}=\nu\otimes\delta_{y(t,\cdot)}$ (the $\nu$-monokinetic measure) and use 
Lemma \ref{lem_discrepancy} of Appendix \ref{app_discrepancy_empirical_monokinetic}.
\end{remark}

\subsection{Moment of order $2$}\label{sec_moment_of_order_2}
In this section, we assume that the norm $\Vert\cdot\Vert$ on $\R^d$ is induced by a scalar product $\langle\ ,\ \rangle_{\R^d}$ on $\R^d$. 
We define
$$
\boxed{
T(t,x) = \frac{1}{d} \int_{\R^d} \Vert \xi - y(t,x)\Vert^2\, d\mu_{t,x}(\xi) 
}
$$
for $\nu$-almost every $x\in\Omega$.
Note that $T(t,x)=0$ for $\nu$-almost every $x\in\Omega\setminus\supp(\nu)$.
Using \eqref{vlasov} (or, rather, \eqref{vlasov_xfixed}) and noting that $\int_{\R^d} \langle \xi-y(t,x),\partial_t y(t,x)\rangle_{\R^d}\, d\mu_{t,x}(\xi) = 0$, we compute
\begin{equation}\label{dtTtx}
\partial_t T(t,x) = \frac{2}{d} \int_{\R^d} \left\langle \xi-y(t,x), \mathcal{X}[\mu_t](t,x,\xi) \right\rangle_{\R^d} \, d\mu_{t,x}(\xi) .
\end{equation}

\begin{proposition}[Cooling rate in the Hegselmann--Krause model]\label{prop_op_consensus}
In the Hegselmann--Krause model \eqref{HK_particle}, we have $G(t,x,x',\xi,\xi')=\sigma(x,x')(\xi'-\xi)$ and
$$
\boxed{
\partial_t T(t,x) = -2 S(x) T(t,x)
}
$$
where $S(x) = \int_{\Omega} \sigma(x,x')\, d\nu(x')$ for $\nu$-almost every $x\in\Omega$. Hence $t\mapsto T(t,x) = T(0,x) e^{-2tS(x)}$ decreases exponentially to $0$ as $t\rightarrow+\infty$ for $\nu$-almost every $x\in\Omega$ such that $S(x)>0$.
\end{proposition}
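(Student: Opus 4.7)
The plan is to plug the opinion-propagation mean field directly into the identity
\[
\partial_t T(t,x) = \frac{2}{d} \int_{\R^d} \left\langle \xi-y(t,x), \mathcal{X}[\mu_t](x,\xi) \right\rangle d\mu_{t,x}(\xi)
\]
already recorded above, and exploit the two defining moment relations $\int d\mu_{t,x}=1$ and $\int \xi\, d\mu_{t,x}(\xi)=y(t,x)$.

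First I would compute the mean field explicitly. Using $G(t,x,x',\xi,\xi')=\sigma(x,x')(\xi'-\xi)$ and the disintegration $\mu_t=\int_\Omega \mu_{t,x'}\, d\nu(x')$, one obtains
\[
\mathcal{X}[\mu_t](x,\xi) = \int_\Omega \sigma(x,x')\, y(t,x')\, d\nu(x') - S(x)\,\xi,
\]
since the inner integral in $\xi'$ produces $y(t,x')$ (first moment) and the mass integral produces $1$. Setting $u(t,x) = \int_\Omega \sigma(x,x')\, y(t,x')\, d\nu(x')$, this rewrites as $\mathcal{X}[\mu_t](x,\xi) = u(t,x) + S(x)(y(t,x)-\xi) + (\text{term independent of }\xi)$; more usefully, recalling from Proposition \ref{prop1} that $\partial_t y(t,x)=u(t,x)-S(x)y(t,x)$, one has
\[
\mathcal{X}[\mu_t](x,\xi) = \partial_t y(t,x) + S(x)\bigl(y(t,x)-\xi\bigr).
\]

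Substituting this decomposition into the formula for $\partial_t T$ splits the integral into two pieces. The cross term
\[
\frac{2}{d}\left\langle \partial_t y(t,x),\, \int_{\R^d}(\xi-y(t,x))\, d\mu_{t,x}(\xi)\right\rangle
\]
vanishes because $\int_{\R^d}\xi\, d\mu_{t,x}(\xi) = y(t,x)$ and $\mu_{t,x}$ is a probability measure, so the bracketed integral is zero. The remaining piece is
\[
-\frac{2 S(x)}{d}\int_{\R^d}\|\xi-y(t,x)\|^2\, d\mu_{t,x}(\xi) = -2 S(x)\, T(t,x)
\]
by definition of $T(t,x)$, which is the claimed ODE. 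Integrating in $t$ for each fixed $x$ yields $T(t,x)=T(0,x) e^{-2tS(x)}$, hence exponential decay at rate $2S(x)$ whenever $S(x)>0$.

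There is no real obstacle here; the only point requiring care is the vanishing of the cross term, which relies crucially on the moment-of-order-one definition of $y(t,x)$ and on $\mu_{t,x}$ having unit mass (equivalently, the fact that the marginal $\nu$ is preserved by the Vlasov flow, as noted in Remark \ref{rem_notdependt}). Compactness of $\supp(\mu_t)$ guaranteed by Theorem \ref{thm_vlasov} ensures finiteness of all moments and legitimacy of the interchange of $\partial_t$ and the $\xi$-integral used to derive the starting identity.
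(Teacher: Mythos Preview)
Your proof is correct and follows essentially the same route as the paper: start from the recorded identity for $\partial_t T$, compute the mean field explicitly for the opinion model, and use that $\int_{\R^d}(\xi-y(t,x))\,d\mu_{t,x}(\xi)=0$ to kill the cross term. Your intermediate rewriting $\mathcal{X}[\mu_t](x,\xi)=\partial_t y(t,x)+S(x)(y(t,x)-\xi)$ via Proposition~\ref{prop1} is a convenient way to organize the algebra, though one can also argue directly from $\mathcal{X}[\mu_t](x,\xi)=u(t,x)-S(x)\xi$ without invoking $\partial_t y$; both lead to the same two-term splitting.
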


\begin{proof}
In the Hegselmann--Krause model (see \eqref{HK_meanfield}), we have $\mathcal{X}[\mu_t](t,x,\xi) = \int_\Omega\int_{\R^d} \sigma(x,x')(\xi'-\xi)\, d\mu_{t,x'}(\xi')\, d\nu(x')$, which expands as
$$
\mathcal{X}[\mu_t](t,x,\xi) 
= -S(x)(\xi-y(t,x)) + \int_\Omega \sigma(x,x')(y(t,x')-y(t,x))\, d\nu(x') .
$$
The second term is independent of $\xi$, hence has zero average against the centered measure $(\xi-y(t,x))\,d\mu_{t,x}(\xi)$. Combined with $\int_{\R^d} (\xi-y(t,x))\, d\mu_{t,x}(\xi)=0$ and \eqref{dtTtx}, the result follows.
\end{proof}

\begin{remark}\label{rem_consensus}
We will see in Remark \ref{rem_HK_moment_k} in Section \ref{sec_coupled} that, in the Hegselmann--Krause model, all moments of order $\geq 2$ satisfy the same differential equation, and thus, decrease exponentially to $0$ as $t\rightarrow+\infty$ as soon as $S(x)>0$ for $\nu$-almost every $x\in\Omega$. 
This shows that, under the latter assumption, the solution $t\mapsto\mu(t)$ of the Vlasov equation \eqref{vlasov} is such that $\mu_{t,x}$ is exponentially close (in Wasserstein distance) to the Dirac measure $\delta_{y(t,x)}$ as $t\rightarrow+\infty$. 

In \cite{BoudinSalvaraniTrelat_SIMA2022}, convergence to consensus is proved for the Euler equation under the assumptions that $d\nu(x)=dx$, that $S(x)\geq\delta>0$ for almost every $x\in\Omega$ and that the (infinite-dimensional) graph associated with $\sigma$ be strongly connected.
This remark shows that the result of \cite{BoudinSalvaraniTrelat_SIMA2022} can be generalized by relaxing the assumption on $S$ to: $S(x)>0$ for $\nu$-almost every $x\in\Omega$.
\end{remark}

For general mappings $G$, the question of whether or not $T$ is the solution of some ``closed" equation is open.

In the $\nu$-monokinetic case, i.e., assuming that $\mu$ is of the form \eqref{def_monokinetic_measure} and is a solution of \eqref{vlasov}, we have $T(t,x) = 0$. This is expected since $T(t,x)$ is the variance and thus measures the distance to the average $y(t,x)$.

\subsection{Generalization: coupled equations of moments}\label{sec_coupled}
More generally, assuming $d=1$ to simplify the notation, let us perform a formal expansion of $G$ around the first moment $y(t,\cdot)$. Setting $G_0(t,x,x') = G(t,x,x',y(t,x),y(t,x'))$, we have
$$
G(t,x,x',\xi,\xi') = G_0(t,x,x') + \sum_{i+j\geq 1} g_{ij}(t,x,x') (\xi-y(t,x))^i (\xi'-y(t,x'))^j
$$
where $y(t,x) = \int_{\R} \xi \, d\mu_{t,x}(\xi)$ is the moment of order $1$ of $\mu_{t,x}$ (recall that the moment of order $0$ is $y_0(t,x) = \int_{\R} d\mu_{t,x}(\xi) = 1$). Defining the central moment of order $i$ by
$$
y_i(t,x) = \int_{\R} (\xi-y(t,x))^i\, d\mu_{t,x}(\xi) \qquad \forall i\in\N
$$
(note that $y_0(t,x)=1$ and $y_1(t,x)=0$), 
we have
\begin{equation*}
\begin{split}
\mathcal{X}[\mu_t](t,x,\xi) &= \int_{\Omega\times\R} G(t,x,x',\xi,\xi') \, d\mu_t(x',\xi') \\
&= \int_{\Omega} G_0(t,x,x')\, d\nu(x') \\
&\qquad\qquad + \sum_{i+j\geq 1} (\xi-y(t,x))^i \int_{\Omega} g_{ij}(t,x,x') y_j(t,x') \, d\nu(x') 
\end{split}
\end{equation*}
and thus, using \eqref{def_A_general}, 
$$
\boxed{
\mathcal{X}[\mu_t](x,\xi) = A(t,y(t))(x) + \sum_{i+j\geq 1} (\xi-y(t,x))^i \int_{\Omega} g_{ij}(t,x,x') y_j(t,x') \, d\nu(x')
}
$$
It is interesting to see that, in the above formal expansion of $\mathcal{X}[\mu_t](x,\xi)$ using the centered moments, the first term is $A(t,y(t))(x)$.

Therefore, we have
\begin{equation*}
\begin{split}
\partial_t y(t,x) &= \int_{\R} \mathcal{X}[\mu_t](x,\xi)\, d\mu_{t,x}(\xi) \\
&= A(t,y(t))(x) + \sum_{i+j\geq 1} \left( \int_{\Omega} g_{ij}(t,x,x') y_j(t,x') \, d\nu(x') \right) y_i(t,x) 
\end{split}
\end{equation*}
(since $y_1=0$ and $y_1(t,x')=0$, the sum can be taken over all pairs $(i,j)$ with $i+j\geq 2$)
and, for every $k\in\N\setminus\{0,1\}$,
\begin{equation*}
\begin{split}
\partial_t y_k(t,x) &= \langle \mu_{t,x}, L_{\mathcal{X}[\mu_t]}.(\xi\mapsto(\xi-y(t,x))^k)\rangle -  \langle \mu_{t,x}, k(\xi-y(t,x))^{k-1} \partial_t y(t,x) \rangle \\
&= k \int_\R (\xi-y(t,x))^{k-1} \left( \mathcal{X}[\mu_t](x,\xi) - \partial_t y(t,x) \right) d\mu_{t,x}(\xi) \\
&= k \int_\R (\xi-y(t,x))^{k-1} \left( \mathcal{X}[\mu_t](x,\xi) - \int_{\R} \mathcal{X}[\mu_t](x,\xi')\, d\mu_{t,x}(\xi') \right) d\mu_{t,x}(\xi) \\
&= k \sum_{i+j\geq 1} \left( \int_{\Omega} g_{ij}(t,x,x') y_j(t,x')\, d\nu(x') \right) \int_\R (\xi-y(t,x))^{k-1} \left( (\xi-y(t,x))^i - y_i(t,x) \right) d\mu_{t,x}(\xi) \\
&= k \sum_{i+j\geq 1} \left( \int_{\Omega} g_{ij}(t,x,x') y_j(t,x')\, d\nu(x') \right) \Big( y_{k-1+i}(t,x) - y_{k-1}(t,x) y_i(t,x) \Big) 
\end{split}
\end{equation*}
(since $y_1=0$, the pair $(i,j)=(0,1)$ contributes zero).
In general, the equations for all moments are coupled and the system is not closed.

Closing the hierarchy of equations for all moments $y_i(t,x)$, $i\in\N^*$, by adding a small parameter $\varepsilon$ is an open question.

\begin{remark}\label{rem_HK_moment_k}
In the Hegselmann--Krause model \eqref{HK_particle}, we have $G(t,x,x',\xi,\xi')=\sigma(x,x')(\xi'-\xi)$ and thus $g_{ij}=0$ if $i+j\geq 2$ and $g_{01}=-g_{10}=\sigma$. We recover the facts that the equation in $y$ is closed and that $\partial_t y_2(t,x) = -2 S(x) y_2(t,x)$.
Moreover, a straightforward computation shows that
$$
\frac{1}{k} \partial_t y_k(t,x) = -S(x) y_k(t,x) \qquad \forall k\in\N\setminus\{0,1\},
$$
thus generalizing the case $k=2$ studied in Proposition \ref{prop_op_consensus}. Therefore, $y_k(t,x) = y_k(0,x) e^{-ktS(x)}$.
\end{remark}

\subsection{Directly from Liouville to CGL}\label{sec_liouville_to_euler}
In this section we record a direct Liouville-to-CGL statement at the level of first moments. The question is the following: if the Liouville equation contains the full $N$-particle probability distribution, which moment of this distribution converges, as $N\rightarrow +\infty$, to the first moment of the Vlasov equation, and hence to the CGL solution when a closure mechanism is available?

The answer is obtained by combining the marginal convergence of Section \ref{sec_liouville_to_vlasov} with the moment estimate of Lemma \ref{lem_moment}. Rather than introducing a new limiting procedure, we identify the Liouville-level observable whose limit is the macroscopic field.

Let $\rho^N(t)$ be a solution of the Liouville equation with one of the initializations used in Theorem \ref{thm_CV_liouville_empirical} or Theorem \ref{thm_CV_liouville}. For $i\in\{1,\ldots,N\}$ we define the first-moment marginal $\mathcal{M}_1^i[\rho^N(t)]$, a vector-valued measure on $\Omega^N\times\R^{d(N-1)}$, by duality: for every scalar test function $\varphi$ on $\Omega^N\times\R^{d(N-1)}$,
$$
\left\langle \mathcal{M}_1^i[\rho^N(t)], \varphi \right\rangle
= \int_{\Omega^N\times\R^{dN}} \varphi(X,\Xi_{-i})\xi_i\, d\rho^N(t)(X,\Xi),
$$
where $\Xi_{-i}$ denotes the vector $\Xi$ with the $i^\textrm{th}$ component removed. Given a solution $\mu(t)$ of the Vlasov equation, we also set
$$
y(t,x) = \int_{\R^d}\xi\, d\mu_{t,x}(\xi)
$$
whenever the first moment is finite.

\begin{proposition}\label{prop_liouville_to_cgl_moment}
Under the assumptions of Corollary \ref{cor_CV_liouville}, the first-moment marginal of the symmetrized Liouville solution converges to the Vlasov first moment in the bounded Lipschitz dual norm \eqref{def_BL_norm}. More precisely, uniformly with respect to $i\in\{1,\ldots,N\}$ and with respect to $t$ on compact subintervals of the common interval of existence,
$$
\Vert (\mathcal{M}_1^i[\rho^N(t)])^s_{N:1} - y(t,\cdot)\nu \Vert_{\mathrm{BL}^*}
\leq R_N(t) ,
$$
where $R_N(t)$ is controlled by the right-hand side of \eqref{cor_estim_W1_k} with $k=p=q=1$ (up to a multiplicative constant depending only on $\sup_t\diam_{\R^d}(\supp(\bar\mu(t)))$). In particular, $R_N(t)\rightarrow 0$ as $N\rightarrow+\infty$.
\end{proposition}

The $\mathrm{BL}^*$ norm used above is defined by \eqref{def_BL_norm} in Appendix \ref{app_lem_moment}.

\begin{corollary}[Monokinetic and linear closed cases]
If $\mu(t)=\nu\otimes\delta_{y(t,\cdot)}$, then $y$ is the solution of the nonlinear CGL equation \eqref{Euler_general}. Hence the observable in Proposition \ref{prop_liouville_to_cgl_moment} converges to the CGL solution. The same conclusion holds for the Hegselmann--Krause model, and more generally for kernels linear in $(\xi,\xi')$, because the first moment is then closed.
\end{corollary}

\begin{proof}
Symmetrization commutes with taking first-order marginals. Therefore the first-moment marginal above is the first moment of $\rho^N(t)^s_{N:1}$. The result follows from Corollary \ref{cor_CV_liouville}, applied with $k=1$, and from Lemma \ref{lem_moment}. The corollary follows from Proposition \ref{prop_monokinetic} and Proposition \ref{prop1}.
\end{proof}

\subsection{Hydrodynamic limit in the second-order case}\label{sec_second_order_hydrodynamics}

For second-order particle systems \eqref{second-order_particle} where $q$ is interpreted as a position and $p$ as a speed (or a momentum), in the classical kinetic literature where particles are assumed to be indistinguishable, the hydrodynamic quantities that are most often considered are the three first moments of $\mu$ integrated with respect to $p$ and kept as functions of $q$.

Recall that the ``hydrodynamics" replaces a scalar equation on ``phase space" (the one of $q$ and $p$), the Vlasov one, by a system of equations on the ``configuration space" (the one of $q$ only). The interest of this approach is twofold: firstly at the conceptual level, as it casts the dynamics on a physical, directly observable space --- a feature particularly important for situations not naturally embedded in the classical physics paradigm, e.g., biology, economy, social sciences --- and secondly from a numerical viewpoint, since the increased number of variables in phase-space PDEs is very costly (see \cite{npm} for a comparison between numerics for Vlasov and Euler). 

More precisely, in the second-order case, any solution $t\mapsto\mu(t)$ of the Vlasov equation (not depending on $x$) is such that $\mu(t)\in\mathcal{P}_c(\R^r\times\R^r)$, where $d=2r$ and $\xi=(q,p)$, and, assuming that $\frac{d\mu_t(q,p)}{dq\, dp}=f(t,q,p)$, the three first (marginal) moments of $\mu(t)$ under consideration are:
\smallskip
\begin{itemize}
\item the mass $m(q) = \int_{\R^r} f(t,q,p)\, dp$,
\item the momentum $m(q)v(q) = \int_{\R^r} p\, f(t,q,p)\, dp$,
\item the energy (or temperature) $m(q)E(q) = \int_{\R^r} \Vert p-v(q)\Vert^2\, f(t,q,p)\, dp$,
\end{itemize}
as defined in \cite{Spohn_book1991}.
This is different from what we did in Section \ref{sec_ave}.

These quantities are introduced for example in \cite{HaTadmor_KRM2008} in the Cucker--Smale model.
In general, they provide alternative objects of investigation in the specific case of second-order models. Note however that they do not satisfy a closed system of equations; the equations for the mass and the momentum close under a monokinetic ansatz on the Vlasov solution.

We assume that $d=2r$, $\xi=(q,p)\in\R^r\times\R^r$, and that the system of particles is an indistinguishable second-order system of the form
$$
\dot q_i(t) = p_i(t),\qquad
\dot p_i(t) = \frac{1}{N}\sum_{j=1}^N K(t, q_i(t),p_i(t),q_j(t),p_j(t))
$$
for some continuous mapping $K$ of class $C^1$ with respect to its four last variables. 
Here, $q_i(t)$ is the position and $p_i(t)$ is the velocity of agent $i$.
Assumption \ref{G} in Section \ref{sec_micro} is satisfied with 
$G(t,x,x',\xi,\xi') = ( p, K(t,\xi,\xi') )$.
Since $G$ does not depend on $(x,x')$, particles are indistinguishable. When considering the limit equations 
(Euler or Vlasov), we can however choose to distinguish them, as already discussed.

\paragraph{CGL equation.}
Choosing a set $\Omega$ of labels and a probability measure $\nu$ on $\Omega$, the CGL equation \eqref{Euler_general} is
\begin{equation}\label{euler_2ndorder}
\begin{split}
\partial_t y_1(t,x) &= y_2(t,x) \\
\partial_t y_2(t,x) &= \int_\Omega K(t,y_1(t,x),y_2(t,x),y_1(t,x'),y_2(t,x'))\, d\nu(x')
\end{split}
\end{equation}
where it is understood that $y_1(t,x_i)\simeq q_i(t)$ and $y_2(t,x_i)\simeq p_i(t)$
(see Section \ref{sec_estimates_graph_limit}).


\paragraph{Vlasov equation.}
As we have seen before in \eqref{meanfield_indistinguishable} and \eqref{vlasov_without_x}, since in the present case the mean field $\mathcal{X}[\mu]$ does not depend on the variable $x$, we have $\mathcal{X}[\mu](t,x,\xi)=\bar{\mathcal{X}}[\bar\mu](t,\xi)$ where the mean field $\bar{\mathcal{X}}[\bar\mu]$ is given by
$$
\bar{\mathcal{X}}[\bar\mu](t,\xi) = \begin{pmatrix} p\\ \int_{\R^r\times\R^r} K(t,\xi,\xi')\, d\bar\mu(\xi') \end{pmatrix}
$$
and the Vlasov equation is $\partial_t\bar\mu + \div(\bar{\mathcal{X}}[\bar\mu]\bar\mu) = 0$.

\paragraph{Hydrodynamic moments and the standard Euler equation.}
We now consider the hydrodynamic variables in the sense usually considered in the literature: moments of the measure $\bar\mu$ where the integration is performed with respect to $p$, but keeping $q$ as a parameter.

\medskip
\textbf{Approach by disintegration.}
If we proceed by disintegration, the resulting moments are not particularly informative.
Given any probability measure $\bar\mu$ on $\R^r\times\R^r$ (where the variables are $(q,p)$), let $\theta$ be the marginal of $\bar\mu$ on the first copy of $\R^r$ (where the variable is $q$). The disintegration of $\bar\mu$ with respect to $\theta$ is $\bar\mu = \int_{\R^r}\mu_q\, d\theta(q)$ where the measures $\mu_q$ are probability measures.
The moment of order $0$ is then $\rho(q) = \int_{\R^r} d\mu_q(p) = 1$ and is therefore uninformative; we could consider the moments of order $1$ and $2$, but we proceed instead in the way usually done in the literature.

\medskip
\textbf{The usual approach.}
We assume that, for every $t$, $\bar\mu(t)$ (solution of Vlasov) is absolutely continuous, i.e., $d\bar\mu_t(q,p) = f(t,q,p)\, dq\, dp$, and we define
\begin{itemize}
\item the moment of order $0$: $\rho(t,q) = \int_{\R^r} f(t,q,p)\, dp$;
\item the moment of order $1$: $\rho(t,q) u(t,q) = \int_{\R^r} p\, f(t,q,p)\, dp$.
\end{itemize}
This is different from the disintegration approach above, because now $\rho(t,q)$ is not constant.
Actually, comparing with the disintegration of $\bar\mu_t$, we have 
$\frac{d\bar\mu_{t,q}}{dp}(p) = f(t,q,p) / \int_{\R^r} f(t,q,p')\, dp'$. 
So, here, the way we consider the hydrodynamic variables is different. 

We recall how to compute the equations satisfied by $\rho$ and $u$ (this is well known in the existing literature).
The Vlasov equation for $f$ is
$$
\partial_t f + \langle p,\nabla_q f\rangle + \mathrm{div}_p(\mathcal{X}_K[f]f) = 0 
$$
where $\mathcal{X}_K[f](t,q,p) = \int_{\R^r}\int_{\R^r} K(t,q,p,q',p') f(t,q',p')\, dq'\, dp'$.
Multiplying by $\varphi_1(q)\varphi_2(p)$ and integrating, we get
\begin{multline*}
\int_{\R^r}\int_{\R^r} \varphi_1(q)\varphi_2(p) \partial_t f(t,q,p)\, dq\, dp \\
= \int_{\R^r}\int_{\R^r} \Big( \varphi_2(p)\, d\varphi_1(q).p + \varphi_1(q)\, d\varphi_2(p).\mathcal{X}_K[f](t,q,p) \Big) \, f(t,q,p)\, dq\, dp
\end{multline*}
First, taking $\varphi_2(p)=1$ and integrating by parts in the equation above, the resulting identity holds for every test function $\varphi_1$; localizing in $q$ yields
$$
\boxed{
\partial_t\rho + \mathrm{div}_q(\rho u)=0
}
$$
which is the classical continuity equation. It is obtained without any specific assumption, in contrast to the next one.

Taking $\varphi_2=p_i$ for any $i\in\{1,\ldots,r\}$ (where $p=(p_1,\ldots,p_r)$), replacing above and integrating by parts does not suffice in producing a ``closed" equation. As done usually, we assume that the velocity distribution is monokinetic: $f(t,q,p) = \rho(t,q)\delta(p-u(t,q))$. In this way we obtain the equation
$$
\boxed{
\partial_t (\rho u) + \mathrm{div}_q(\rho u\otimes u) = \int_{\R^r} K(t,q,u(t,q),q',u(t,q'))\, \rho(t,q)\, \rho(t,q')\, dq'  
}
$$
which is the pressureless Euler equation, as obtained by \cite{CarrilloFornasierToscaniVecil_2010, FigalliKang_APDE2019} and \cite{NataliniPaul_SIMA2023} for more general systems including chemiotaxis.
Although we have used the same name, this Euler equation (and the way it has been obtained) is of course completely different from the one given in \eqref{euler_2ndorder}.

\section{Further comments and perspectives}\label{sec_further_comments}
In this paper, we have studied various ways to pass to the limit in finite systems of (possibly distinguishable) particles, and described precise relationships between the various limits: Vlasov, CGL, hydrodynamic Euler and Liouville equations. 
This has been done under the standing assumption \ref{G} on the interaction mapping $G$ modeling the particle dynamics. 

As already said, we have restricted our study to regular mappings $G$, because our objective was to highlight in the simplest possible way the basic relationships between the microscopic, mesoscopic and macroscopic scales, with the most possible general viewpoint. The study of singular kernels is much more challenging and requires the development of other techniques. 

Related to this issue, we give in the section hereafter a surprising consequence of our results.

\subsection{Approximation of PDEs by finite particle systems}\label{sec_approx_PDE}

We end this article with a perspective showing that the CGL viewpoint can also be read in the reverse direction: starting from a (linear or quasilinear) PDE on a bounded domain, one can build a family of finite particle systems whose pointwise Riemann-sum graph limit approximates the PDE. Such reverse derivations have been investigated in specific contexts, in particular for scalar conservation laws obtained as macroscopic limits of follow-the-leader traffic models \cite{dif0, dif2}.
The full theory, with sharp algebraic rates and a careful treatment of boundary conditions, would require different discretization tools than the pointwise Riemann sums used throughout this article. 
The purpose of the present subsection is more modest: we explain, within the framework of pointwise Riemann sums developed in this paper, how a regularized PDE is naturally approximated by a graph-limit particle system, and what convergence rate one can extract directly from the theory of Section~\ref{sec_graphlimit}. This reveals both the power and the intrinsic limitations of pointwise Riemann-sum discretizations of singular Schwartz kernels.

\paragraph{Setting and notation.}
Let $\Omega$ be the closure of a bounded open subset of $\R^n$, of Lebesgue measure $1$, let $\nu$ be the Lebesgue measure on $\Omega$, and let $(\mathcal A^N,X^N)_{N\in\N^*}$ be a family of tagged partitions of $\Omega$ with cells $\Omega_i^N$ and tags $x_i^N$, satisfying
\begin{equation*}
h_N = \max_{1\leq i\leq N} \diam_\Omega(\Omega_i^N) \leq \frac{C_\Omega}{N^{1/n}}.
\end{equation*}
We consider an evolution equation of the form
\begin{equation}\label{PDE_eq}
\partial_t y(t,x) = \sum_{\vert\alpha\vert\leq p} a_\alpha(t,x,y(t,x))\, D^\alpha y(t,x),
\qquad y(0,\cdot)=y^0,
\end{equation}
of order $p\geq 1$, with $\R^d$-valued unknown $y$ and with $d$-by-$d$ matrix coefficients $a_\alpha$ that we assume sufficiently smooth. Appropriate boundary conditions are understood and, for clarity of exposition, we leave them implicit.
For brevity we keep the same notation in the linear case, where the coefficients $a_\alpha=a_\alpha(t,x)$ do not depend on $y$.
We assume that \eqref{PDE_eq} has a sufficiently regular solution on $[0,T]$.

\paragraph{Kernel regularization.}
Equation \eqref{PDE_eq} cannot be put in the form of the CGL equation \eqref{Euler_general} directly, because the differential operator on its right-hand side is unbounded and corresponds to a distributional Schwartz kernel.
The strategy is to first \emph{kernelize} this operator at scale $\varepsilon>0$ and then apply the graph-limit theory of Section~\ref{sec_graphlimit} to the resulting regularized equation.

Let $\eta\in\mathscr{C}^\infty_c(\R^n)$ be a smooth mollifier with $\int_{\R^n}\eta=1$, and let $\eta_\varepsilon(x)=\varepsilon^{-n}\eta(x/\varepsilon)$. A model regularization of the operator in \eqref{PDE_eq} is given by the smooth kernel
\begin{equation}\label{PDE_sigma_model}
\sigma_\varepsilon(t,x,x',\xi) = \sum_{\vert\alpha\vert\leq p} a_\alpha(t,x,\xi)\, D_x^\alpha\eta_\varepsilon(x-x').
\end{equation}
The formula \eqref{PDE_sigma_model} is only a local model, or a model without boundary. On a bounded domain with boundary conditions, a boundary-compatible regularization (such as Burenkov's variable-step mollifiers) would be required; this is deliberately not developed here.
The corresponding regularized equation
\begin{equation}\label{PDE_regularized}
\partial_t y_\varepsilon(t,x) = \int_\Omega \sigma_\varepsilon(t,x,x',y_\varepsilon(t,x))\, y_\varepsilon(t,x')\, dx',
\qquad y_\varepsilon(0,\cdot)=y^0,
\end{equation}
is exactly the CGL equation \eqref{Euler_general} associated with the interaction mapping
\begin{equation}\label{PDE_Geps}
G_\varepsilon(t,x,x',\xi,\xi') = \sigma_\varepsilon(t,x,x',\xi)\,\xi'.
\end{equation}
We assume that the regularization is consistent with the original PDE in the sense that, for some $\alpha>0$ (typically $\alpha=1$ for first-order mollification of sufficiently regular solutions),
\begin{equation}\label{PDE_regularization_error}
\Vert y-y_\varepsilon\Vert_{\mathscr{C}^0([0,T],X)} \leq C_T\,\varepsilon^\alpha,
\end{equation}
where $X=L^\infty(\Omega,\R^d)$ or $L^2(\Omega,\R^d)$.

\begin{example}[1D transport equation]\label{ex_transport_PDE}
The simplest illustration is the 1D linear transport equation $\partial_t y(t,x)=-\partial_x y(t,x)$ on $\Omega=[0,1]$, for which $p=1$, $n=1$, and the only non-zero coefficient is $a_1\equiv -1$. The smooth kernel \eqref{PDE_sigma_model} reduces to
\begin{equation*}
\sigma_\varepsilon(x,x') = -\,\partial_x\eta_\varepsilon(x-x') = -\,\frac{1}{\varepsilon^2}\,\eta'\left(\frac{x-x'}{\varepsilon}\right),
\end{equation*}
which (for an even mollifier $\eta$) is antisymmetric in $x-x'$ and is a smooth approximation of the distributional kernel of $-\partial_x$. Convolution with this kernel gives the regularized transport operator $A_\varepsilon f(x)=\int_\Omega \sigma_\varepsilon(x,x')f(x')\,dx'$. The associated particle system \eqref{PDE_particle} below then reads
$$
\dot\xi_{\varepsilon,i}^N(t) = -\,\frac{1}{N}\sum_{j=1}^N \frac{1}{\varepsilon^2}\,\eta' \left(\frac{x_i^N-x_j^N}{\varepsilon}\right) \xi_{\varepsilon,j}^N(t),
$$
i.e., a finite-difference-type interacting system in which each agent's velocity is a weighted antisymmetric finite difference of the values of the other agents.
\end{example}

For the model kernel \eqref{PDE_sigma_model}, on bounded ranges of $\xi$, the pointwise sizes of $\sigma_\varepsilon$ scale like
\begin{equation}\label{PDE_kernel_bounds}
\Vert\sigma_\varepsilon\Vert_{L^\infty} \leq \frac{C}{\varepsilon^{n+p}},
\qquad
\Lip_{(x,x')}(\sigma_\varepsilon) \leq \frac{C}{\varepsilon^{n+p+1}},
\qquad
\Lip_\xi(\sigma_\varepsilon) \leq \frac{C}{\varepsilon^{n+p}}.
\end{equation}
Crucially however, after integration in $x'$, both the kernel and its $\xi$-derivative satisfy the much milder \emph{row-sum bounds}
\begin{equation}\label{PDE_rowsum_continuous}
\sup_{t,x,\xi}\int_\Omega \Vert\sigma_\varepsilon(t,x,x',\xi)\Vert\, dx' \leq \frac{C}{\varepsilon^p},
\qquad
\sup_{t,x,\xi}\int_\Omega \Vert\partial_\xi\sigma_\varepsilon(t,x,x',\xi)\Vert\, dx' \leq \frac{C}{\varepsilon^p}.
\end{equation}
This reflects the natural scaling of an operator of order $p$: only $p$ derivatives of the mollifier survive integration in $x'$, while $n+p$ pointwise derivatives are needed to localize the kernel. The contrast between \eqref{PDE_kernel_bounds} and \eqref{PDE_rowsum_continuous} is what allows one to gain an exponential factor in the estimates below, turning a double-exponential into a single exponential in $\varepsilon$.

\paragraph{The associated particle system.}
Applying the graph-limit construction of Section~\ref{sec_graphlimit} to the CGL equation~\eqref{PDE_regularized}, with kernel $G_\varepsilon$ given by~\eqref{PDE_Geps}, leads to the family of particle systems
\begin{equation}\label{PDE_particle}
\boxed{
\dot\xi_{\varepsilon,i}^N(t) = \frac{1}{N}\sum_{j=1}^N \sigma_\varepsilon\big(t,x_i^N,x_j^N,\xi_{\varepsilon,i}^N(t)\big)\, \xi_{\varepsilon,j}^N(t),
\qquad \xi_{\varepsilon,i}^N(0)=y^0(x_i^N),
\qquad 1\leq i\leq N,
}
\end{equation}
indexed by both scales $\varepsilon$ and $N$. According to \eqref{def_y_Xi}, the reconstructed piecewise constant function is
\begin{equation*}
y_\varepsilon^N(t,x) = \sum_{i=1}^N \xi_{\varepsilon,i}^N(t)\,\mathds{1}_{\Omega_i^N}(x).
\end{equation*}
The question is now to estimate $\Vert y-y_\varepsilon^N\Vert_X$ as a function of $\varepsilon$ and $N$, and to balance the two scales.

\paragraph{Convergence estimate.}
For every fixed $\varepsilon>0$, the kernel $G_\varepsilon$ in \eqref{PDE_Geps} satisfies Assumption~\ref{G} and is locally Lipschitz, so Theorem~\ref{thm_estim_graph} applies. However, applying it as a black box to $G_\varepsilon$ is far from optimal: the full pointwise Lipschitz constant of $G_\varepsilon$, which is of order $\varepsilon^{-(n+p+1)}$ by \eqref{PDE_kernel_bounds}, enters the exponent of the Gronwall factor, and a rough $L^\infty$ estimate of the regularized trajectories produces a further exponential, which yields the safe but pessimistic bound
\begin{equation}\label{PDE_loglog_estimate}
\Vert y(t)-y_\varepsilon^N(t)\Vert_X
\leq
C_T\left(
\varepsilon^\alpha + \frac{1}{N^{1/n}}\,\exp\left(\frac{C_T}{\varepsilon^{n+p+1}}\,\exp\left(\frac{C_T}{\varepsilon^p}\right)\right)\right).
\end{equation}
Optimizing in $\varepsilon$ gives at best a double-logarithmic rate (of the form $(\ln\ln N)^{-\alpha/p}$ up to lower-order logarithmic corrections, since the inner exponential $\exp(C_T/\varepsilon^p)$ is the dominant term to balance against $1/N^{1/n}$).
As we now show, both exponentials in \eqref{PDE_loglog_estimate} can be tamed at once by comparing the discrete and the regularized continuous dynamics directly, instead of invoking Theorem~\ref{thm_estim_graph} as a black box. This will lead to a logarithmic rate.

Set $e_i(t) = \xi_{\varepsilon,i}^N(t) - y_\varepsilon(t,x_i^N)$.
Subtracting the equation satisfied by $y_\varepsilon(\cdot,x_i^N)$ from \eqref{PDE_particle}, we obtain
\begin{multline}\label{PDE_error_eq}
\dot e_i(t) = \frac{1}{N}\sum_{j=1}^N \sigma_\varepsilon\big(t,x_i^N,x_j^N,\xi_{\varepsilon,i}^N(t)\big)\, e_j(t)
\\
+ \underbrace{\frac{1}{N}\sum_{j=1}^N \Big(\sigma_\varepsilon(t,x_i^N,x_j^N,\xi_{\varepsilon,i}^N(t))-\sigma_\varepsilon(t,x_i^N,x_j^N,y_\varepsilon(t,x_i^N))\Big) y_\varepsilon(t,x_j^N)}_{\text{linearization defect}}
+ r_i^N(t),
\end{multline}
where the Riemann residual is
\begin{equation}\label{PDE_residual}
r_i^N(t) = \frac{1}{N}\sum_{j=1}^N \sigma_\varepsilon(t,x_i^N,x_j^N,y_\varepsilon(t,x_i^N))\, y_\varepsilon(t,x_j^N) \;-\; \int_\Omega \sigma_\varepsilon(t,x_i^N,x',y_\varepsilon(t,x_i^N))\, y_\varepsilon(t,x')\, dx'.
\end{equation}

Let us bound the three terms in \eqref{PDE_error_eq}.
For the linearization defect, the mean-value theorem in $\xi$ gives, with $\xi_\theta^i(t)=y_\varepsilon(t,x_i^N) + \theta\, e_i(t)$,
$$
\sigma_\varepsilon(t,x_i^N,x_j^N,\xi_{\varepsilon,i}^N(t)) - \sigma_\varepsilon(t,x_i^N,x_j^N,y_\varepsilon(t,x_i^N))
= \left(\int_0^1 \partial_\xi\sigma_\varepsilon\big(t,x_i^N,x_j^N,\xi_\theta^i(t)\big)\,d\theta\right) e_i(t).
$$
Taking the norm and summing in $j$, the linearization defect is bounded by
$$
\Vert y_\varepsilon\Vert_{L^\infty}\,\Vert e_i(t)\Vert\,\sup_{\xi}\frac{1}{N}\sum_{j=1}^N \big\Vert\partial_\xi\sigma_\varepsilon(t,x_i^N,x_j^N,\xi)\big\Vert.
$$
The same row-sum quantity also bounds the contribution of the first term in \eqref{PDE_error_eq}, hence both are controlled by the discrete analogue of \eqref{PDE_rowsum_continuous},
\begin{equation}\label{PDE_rowsum}
\max_{1\leq i\leq N}\, \frac{1}{N}\sum_{j=1}^N \Big( \Vert\sigma_\varepsilon(t,x_i^N,x_j^N,\xi)\Vert + \Vert\partial_\xi\sigma_\varepsilon(t,x_i^N,x_j^N,\xi)\Vert \Big) \leq \frac{C_T}{\varepsilon^p},
\end{equation}
which holds for $N$ large enough relative to $\varepsilon$ along the scales chosen below (it follows from \eqref{PDE_rowsum_continuous} by the elementary Riemann argument as soon as $h_N \varepsilon^{-(n+p+1)}\leq C\varepsilon^{-p}$).
The crucial point is that, in the Gronwall step below, $\Vert e_j(t)\Vert$ is multiplied only by the row sum \eqref{PDE_rowsum} of order $\varepsilon^{-p}$, not by the pointwise Lipschitz constant $\varepsilon^{-(n+p+1)}$ used in the black-box estimate \eqref{PDE_loglog_estimate}.

The Riemann residual~\eqref{PDE_residual} is a quadrature error for the function $x'\mapsto \sigma_\varepsilon(t,x_i^N,x',y_\varepsilon(t,x_i^N))\, y_\varepsilon(t,x')$, whose spatial Lipschitz norm is controlled by $\Lip_{x'}(\sigma_\varepsilon)\,\Vert y_\varepsilon\Vert_{L^\infty} + \Vert\sigma_\varepsilon\Vert_{L^\infty}\,\Lip(y_\varepsilon)$, so the elementary Riemann estimate gives
\begin{equation}\label{PDE_residual_bound}
\max_{1\leq i\leq N} \Vert r_i^N(t)\Vert \leq C_T\,h_N\,\frac{1}{\varepsilon^{n+p+1}}\,\big(\Vert y_\varepsilon\Vert_{L^\infty}+\Lip(y_\varepsilon)\big),
\end{equation}
where $\Lip(y_\varepsilon)$ denotes the spatial Lipschitz constant of $y_\varepsilon(t,\cdot)$ on $[0,T]\times\Omega$.

Setting $E(t)=\max_i \Vert e_i(t)\Vert$ and combining the three bounds above with \eqref{PDE_error_eq}, we obtain the differential inequality
$$
\dot E(t) \leq \frac{C_T}{\varepsilon^p}\,E(t) + C_T\,h_N\,\frac{\Vert y_\varepsilon\Vert_{L^\infty}+\Lip(y_\varepsilon)}{\varepsilon^{n+p+1}}.
$$
Since $E(0)=0$, Gronwall's lemma yields
\begin{equation*}
E(t) \leq C_T\,h_N\,\frac{\Vert y_\varepsilon\Vert_{L^\infty}+\Lip(y_\varepsilon)}{\varepsilon^{n+p+1}}\,\exp\Big(\frac{C_T}{\varepsilon^p}\Big).
\end{equation*}
Since $y_\varepsilon^N$ is piecewise constant on the cells $\Omega_i^N$ of diameter at most $h_N$, the spatial Lipschitz bound on $y_\varepsilon$ gives, for any $x\in\Omega_i^N$,
$$
\Vert y_\varepsilon^N(t,x)-y_\varepsilon(t,x)\Vert \leq \Vert\xi_{\varepsilon,i}^N(t)-y_\varepsilon(t,x_i^N)\Vert + h_N\,\Lip(y_\varepsilon) \leq E(t) + h_N\,\Lip(y_\varepsilon),
$$
where the second term is absorbed in the first up to a multiplicative constant. We conclude that
\begin{equation}\label{PDE_reconstruction_error}
\Vert y_\varepsilon^N(t)-y_\varepsilon(t)\Vert_{L^\infty(\Omega,\R^d)} \leq C_T\,h_N\,\frac{\Vert y_\varepsilon\Vert_{L^\infty}+\Lip(y_\varepsilon)}{\varepsilon^{n+p+1}}\,\exp \left(\frac{C_T}{\varepsilon^p}\right).
\end{equation}
To make \eqref{PDE_reconstruction_error} explicit in $\varepsilon$, suppose that, on $[0,T]$, the regularized solution satisfies the spatial Lipschitz bound
\begin{equation}\label{PDE_yeps_Lip_bound}
\Vert y_\varepsilon\Vert_{L^\infty}+\Lip(y_\varepsilon) \leq C_T\,\exp\left(\frac{C_T}{\varepsilon^\lambda}\right)
\end{equation}
for some $\lambda\geq 0$. Plugging \eqref{PDE_yeps_Lip_bound} into \eqref{PDE_reconstruction_error} and combining with \eqref{PDE_regularization_error} via the triangle inequality, we obtain
\begin{equation}\label{PDE_main_estimate}
\boxed{
\Vert y_\varepsilon^N(t)-y(t)\Vert_X \leq C_T\left( \varepsilon^\alpha + \frac{1}{N^{1/n}}\,\frac{1}{\varepsilon^{n+p+1}}\,\exp\left(\frac{C_T}{\varepsilon^\kappa}\right)\right)
\qquad\forall t\in[0,T],
}
\end{equation}
where $\kappa=\max(p,\lambda)$. Compared with the black-box estimate \eqref{PDE_loglog_estimate}, the inner exponential of the double-exponential has been collapsed into the polynomial prefactor; what remains is a single exponential in a negative power of $\varepsilon$.

Optimizing \eqref{PDE_main_estimate} now leads to a logarithmic, rather than double-logarithmic, balance. Choosing $\varepsilon_N = (C/\ln N)^{1/\kappa}$ with $C$ large enough that $\exp(C_T/\varepsilon_N^\kappa)\leq N^{1/(n+1)}$, the polynomial prefactor $\varepsilon^{-(n+p+1)}$ contributes only a power of $\ln N$ and the second term is dominated by the first; we obtain the rate
\begin{equation}\label{PDE_log_rate}
\boxed{
\Vert y_{\varepsilon_N}^N(t)-y(t)\Vert_X \leq \frac{C_T}{(\ln N)^{\alpha/\kappa}}
\qquad\forall t\in[0,T].
}
\end{equation}
The exponent $\kappa=\max(p,\lambda)$ has a clear interpretation:
\begin{itemize}[leftmargin=*]
\item \emph{Linear, uniformly stable case.} If \eqref{PDE_eq} is linear and the regularized linear flow $y_\varepsilon$ is uniformly bounded in spatial Lipschitz norm on $[0,T]$ (i.e., \eqref{PDE_yeps_Lip_bound} holds with $\lambda=0$), then $\kappa=p$ and the rate is $C_T(\ln N)^{-\alpha/p}$. For Example~\ref{ex_transport_PDE} and $\alpha=1$, this is $\mathrm{O}((\ln N)^{-1})$.
\item \emph{Generic linear or quasilinear case.} Without uniform stability, differentiating \eqref{PDE_regularized} in space generically yields \eqref{PDE_yeps_Lip_bound} with $\lambda=p+1$, hence $\kappa=p+1$ and the rate is $C_T(\ln N)^{-\alpha/(p+1)}$. This applies in particular to quasilinear equations \eqref{PDE_eq} with $\xi$-dependent coefficients $a_\alpha(t,x,\xi)$.
\end{itemize}
The improvement over the naive $(\ln\ln N)^{-\alpha/(n+p)}$ rate is purely a matter of proof: the same particle system \eqref{PDE_particle} is used, but its discrepancy with the PDE solution is estimated by a direct comparison rather than by invoking the general graph-limit theorem as a black box.

\begin{remark}[Pointwise Riemann sums versus cell averages]
The estimate \eqref{PDE_log_rate} cannot in general be improved to an algebraic rate within the pointwise Riemann-sum framework of this paper. The reason is structural: the residual~\eqref{PDE_residual} is a pointwise quadrature error for the regularized kernel, and therefore inherits the Lipschitz constant $\Lip_{x'}(\sigma_\varepsilon)\sim\varepsilon^{-(n+p+1)}$. Since $\sigma_\varepsilon$ approximates a derivative of order $p$ of a Dirac mass, this Lipschitz constant must blow up as $\varepsilon\to 0$.

By contrast, cell-average or projection-reconstruction discretizations estimate the regularized drift as a whole, rather than the pointwise quadrature of the regularized singular kernel. This avoids differentiating the kernel in the quadrature error and may lead to algebraic rates; this approach lies beyond the scope of the present article.
%
%
\end{remark}

\subsection{Some open questions}
We provide hereafter several further comments and open issues.

\paragraph{Closing the hierarchy of moments.}
In Section \ref{sec_hydro}, we have defined the moments of the measure $\mu$ solution of the Vlasov equation. We have seen that the hierarchy of moments is closed if $G$ is linear with respect to $(\xi,\xi')$ but is not closed in general otherwise. Proposition \ref{prop1} is elementary but conceptually useful. A natural open problem is to characterize all mappings $G$ for which the hierarchy closes at a finite level. 

In cases where the hierarchy is not closed (like for fluid equations), we wonder whether it is possible to add a small parameter $\varepsilon$ which would be used to close the hierarchy by taking adequate limits (see \cite{Golse_2016} for similar comments).

\paragraph{Convergence to consensus.}
We have shown in Section \ref{sec_moment_of_order_2} that, surprisingly, the ``temperature" always decreases exponentially, pointwisely, as soon as $S(x)>0$, for the Hegselmann--Krause model. This fact allows one to easily recover (and improve) some known results on convergence to consensus. 
We do not know to what extent this observation may be generalized but we think that it can be used to derive consensus results under weaker assumptions.

More generally, for nonlinear systems enjoying consensus properties (like nonlinear Hegselmann--Krause models) or synchronization properties (like the Kuramoto model), following Section \ref{sec_nu-monokinetic}, we expect that, under appropriate assumptions, any solution $t\mapsto\mu(t)$ of the Vlasov equation \eqref{vlasov} is asymptotically of the form $\mu^\nu_{y(t,\cdot)}$ where $t\mapsto y(t,\cdot)$ is a solution of the CGL equation \eqref{Euler_general}. 

In any case, establishing a consensus result at the level of the Vlasov equation is interesting because it should a priori imply consensus at the level of the (macroscopic) CGL equation and for the (microscopic) particle system.

\paragraph{Improving error estimates.}
In our results, we establish error estimates between solutions of the particle system and a limit equation (CGL or Vlasov) on compact intervals of time, and the error grows exponentially in time, essentially due to a Gronwall argument. Such errors can certainly be much improved for some classes of mappings $G$, maybe under consensus convergence properties, in order to obtain uniform in time estimates.

\paragraph{Use of the measure $\nu$.}
In the existing literature, the measure $\nu$ used in the definition \eqref{def_A_general} of the operator $A$ of the CGL equation \eqref{Euler_general} (graph limit) is always the Lebesgue measure. We have shown in this paper the interest of considering other measures, in particular empirical measures, to obtain (trivial) relationships with the particle system. But more generally, it is certainly of interest to use other measures $\nu$, depending on the context. For example, in social sciences, each agent could have a probability of decision, but several agents could have the same probability of opinion.

\paragraph{Weakly regular (but not singular) mapping $G$.}
It is likely that, in our results and in particular in Theorem \ref{thm_vlasov}, we can weaken the continuity assumption on $G$, provided we consider the solutions of the Vlasov equation in a weaker sense. This is what is done in \cite{JabinPoyatoSoler} for some classes of opinion propagation models: the authors do not assume that there exists a limit mapping $G$ (as we do in \ref{G}) but to take the mean field limit, in a weaker sense, they make another assumption of uniform boundedness on their dynamics. However, at the limit they lose the distinguishability of the particles.

It can be noted that when $G$ is weakly regular (for instance $L^\infty$), we do not have, a priori, an existence and uniqueness result for the particle system. But, following \cite{JabinWang_JFA2016}, we can study the Liouville equation \eqref{liouville}, for which we can have existence (but not uniqueness) for rough vector fields, and then derive the Vlasov equation by taking marginals.

\paragraph{Multiple-wise interactions.}
As mentioned at the end of Section \ref{sec_ex_particle}, we have considered particle systems with pairwise interactions only. The case of \emph{multiple-wise} interactions, in which a group of $m\geq 2$ agents jointly generates a force on any given agent, is the subject of our recent work \cite{PaulRossiTrelat}, where propagation of chaos, the mesoscopic Vlasov equation and the macroscopic limit are studied for fixed $m$, and a further limit $m\to+\infty$ is investigated. To illustrate the framework, we briefly indicate here the case of ``triple-wise'' interactions:
$$
\dot\xi_i(t) = \frac{1}{N^2} \sum_{j,k=1}^N G(t,x_i,x_j,x_k,\xi_i(t),\xi_j(t),\xi_k(t)) .
$$
For such dynamics, the mean field is then formally obtained as
$$
\mathcal{X}[\mu](t,x,\xi) = \int_{\Omega\times\R^d} \int_{\Omega\times\R^d} G(t,x,x',x'',\xi,\xi',\xi'')\, d\mu(x',\xi')\, d\mu(x'',\xi'')  
$$
and the Vlasov equation \eqref{vlasov} remains formally the same.
Note that the above mean field $\mathcal{X}[\mu]$ is now quadratic in $\mu$, which complicates significantly the analysis. A detailed treatment of these dynamics, including the existence and uniqueness of solutions to the Vlasov equation and the analysis of the macroscopic limit, is carried out in \cite{PaulRossiTrelat}.

\paragraph{Similar models.}
There exist in the literature some interesting models that are not covered by our analysis but are nevertheless close to it.
A first example is the more general Hegselmann--Krause model 
$$
\dot\xi^N_i(t) = \frac{1}{N}\sum_{j=1}^N \frac{\phi(\Vert\xi^N_i(t)-\xi^N_j(t)\Vert)}{\displaystyle\frac{1}{N}\sum_{k=1}^N\phi(\Vert\xi^N_i(t)-\xi^N_k(t)\Vert)} (\xi^N_j(t)-\xi^N_i(t)),\qquad i\in\{1,\ldots,N\},
$$
studied in \cite{MotschTadmor_SIREV2014}.
Its graph limit is the CGL equation
$$
\partial_t y(t,x) = \int_{\R^d} \frac{\phi(\Vert y(t,x)-y(t,x')\Vert)}{\displaystyle\int_{\R^d}\phi(\Vert y(t,x)-y(t,x'')\Vert)\, dx''} (y(t,x')-y(t,x))\, dx' 
$$
and the mean field is
$$
\mathcal{X}[\mu](\xi) = \int_{\R^d} \frac{\phi(\Vert \xi-\xi'\Vert)}{\displaystyle\int_{\R^d}\phi(\Vert \xi-\xi''\Vert)\, d\mu(\xi'')} (\xi'-\xi)\, d\mu(\xi') .
$$
The corresponding Vlasov equation is studied in \cite{MotschTadmor_SIREV2014}.

A second example is the Transformers model studied in \cite{GeshkovskiLetrouitPolyanskiyRigollet}
$$
\dot\xi^N_i(t) = \frac{1}{N}\sum_{j=1}^N \frac{\exp\langle Q\xi^N_i(t),K\xi^N_j(t)\rangle}{\displaystyle\frac{1}{N}\sum_{k=1}^N\exp\langle Q\xi^N_i(t),K\xi^N_k(t)\rangle} V\xi^N_j(t),\qquad i\in\{1,\ldots,N\}, 
$$
where $Q$, $K$ and $V$ are matrices. This interacting particle model seems to be particularly relevant in artificial intelligence.
Although the above dynamics cannot be written in the form of the particle system \eqref{particle_system}, it is quite evident that all the theory developed in this paper extends to such cases and that its graph limit is the CGL equation
$$
\partial_t y(t,x) = \int_{\R^d} \frac{\exp\langle Qy(t,x),Ky(t,x')\rangle}{\displaystyle\int_{\R^d}\exp\langle Qy(t,x),Ky(t,x'')\rangle\, dx''} Vy(t,x')\, dx' 
$$
and that the Vlasov equation (studied in \cite[Section 6.3]{GeshkovskiLetrouitPolyanskiyRigollet}) is \eqref{vlasov} with the mean field
$$
\mathcal{X}[\mu](\xi) = \int_{\R^d} \frac{\exp\langle Q\xi,K\xi'\rangle}{\displaystyle\int_{\R^d}\exp\langle Q\xi,K\xi''\rangle\, d\mu(\xi'')} V\xi'\, d\mu(\xi')  .
$$
There exist also other variants of particle systems, involving some delays, or some coupling with other equations (like in the Keller-Segel model). We think that many of them can be covered by slight extensions of the analysis done in this paper.

\paragraph{Stochastic particle systems.}
Throughout this article, we have focused on \emph{deterministic} finite systems of particles. Since many stochastic systems of interacting particles, involving noise, can be relevant in modeling collective behavior, it is of interest to extend the results of this paper to the stochastic context. For example, using Ito calculus, it is proved in \cite{BolleyCanizoCarrillo_M3AS2011} that taking the stochastic mean field limit in a kinetic McKean-Vlasov type finite particle system, involving some Brownian motion, leads to a kinetic Fokker-Planck equation. At a different level, deterministic many-body systems can themselves generate stochastic limit equations: in \cite{BodineauGallagherSaint-Raymond_IM2016}, Brownian motion is rigorously obtained as the macroscopic limit of a deterministic system of hard-spheres. 
The general picture drawn on Figure \ref{fig_embeddings} remains to be investigated in the stochastic setting.

\paragraph{Control at the various scales.} 
From the control theory viewpoint, it is natural to add a control term in the particle system \eqref{particle_system} and, accordingly, in the limit CGL equation \eqref{Euler_general} and in the Vlasov equation \eqref{vlasov}. This was done in \cite{AlbiChoiFornasierKalise_AMO2017, Aydogdu2017, CaponigroPiccoliRossiTrelat_M3AS2017, FornasierSolombrino_COCV2014, PiccoliPouradierDuteilTrelat_SICON2019, PiccoliRossiTrelat_SIMA2015} (just to cite a few). The main objective is then to obtain ``commutative diagrams" in the following sense: if $u$ is a control for a limit equation then one wants that there is an explicit sequence of controls $u^N$ for the particle system, converging to $u$ (this is the easy part); conversely, and much more difficultly, one wants to design controls $u^N$ for the family of particle systems, indexed by $N$, converging to a control $u$ for the limit equation.
This question can be settled in various contexts: exact control, optimal control, stabilization. This is a major challenge.

\paragraph{Numerical consequences.}
All convergence results and error estimates established in this paper show that solutions of particle systems provide good approximations of solutions of the CGL or Vlasov equation.
In numerical analysis, particle methods, or particle-in-cell methods, have been used extensively, in particular to approximate solutions of fluid equations (see, e.g., \cite{Cottet, Raviart}). Of course, these equations involve unbounded operators. But the results announced in Section \ref{sec_approx_PDE} open a new perspective regarding numerical issues, to be explored.

\appendix
\section{Appendix}\label{sec_appendix}

Let $E$ be a Polish space, endowed with a distance $\mathrm{d}_E$.

\subsection{Some general facts on the Wasserstein distance}\label{app_useful_lemmas}

\paragraph{Choice of a distance on $E^k$.}
Let $q\in[1,+\infty]$ be arbitrarily fixed.
Given any $k\in\N^*$, we endow $E^k$ with the $\ell^q$ distance based on $d_E$, defined by
\begin{equation}\label{def_distq_app}
\mathrm{d}^{[q]}_{E^k}(y,y')
= \left\Vert ( \mathrm{d}_E(y_1,y'_1), \ldots, \mathrm{d}_E(y_k,y'_k) ) \right\Vert_{\ell^q}
= \left\{ \begin{array}{ll}
\displaystyle\bigg( \sum_{i=1}^k \mathrm{d}_E(y_i,y'_i)^q \bigg)^{1/q} & \textrm{if}\ 1\leq q<+\infty \\[4mm]
\displaystyle\ \max_{1\leq i\leq k} \mathrm{d}_E(y_i,y'_i) & \textrm{if}\ q=+\infty
\end{array}\right.
\end{equation}
for all $y=(y_1,\ldots,y_k)$ and $y'=(y'_1,\ldots,y'_k)$ in $E^k$. 

Fixing such a choice has an impact on the computation of the Wasserstein distance $W_p$ between two probability measures on $E^k$. Indeed, this means that the distance \eqref{def_distq_app} is used in the definition \eqref{def_Wp} of $W_p$, and that, in the definition \eqref{def_W1} of $W_1$, the Lipschitz constants must be computed with the distance \eqref{def_distq_app}. 
The lemma below is thus important to compute Lipschitz constants.

\begin{lemma}\label{lem_lipp}
Let $f\in\Lip(E^k)$. Then, for any $y_2,\ldots,y_k\in E$, the mapping $y_1\mapsto f(y_1,y_2,\ldots,y_k)$ is Lipschitz, of Lipschitz constant less than $\Lip(f)$. We set $\Lip_{y_1}(f) = \max\{ \Lip(f(\cdot,y_2,\ldots,y_k))\ \mid\ y_2,\ldots,y_k\in E\}$. All other $\Lip_{y_i}(f)$ are defined similarly, for $i=2,\ldots,k$.
We have
\begin{equation*}
\Lip(f) = 
\Vert ( \Lip_{y_1}(f), \ldots, \Lip_{y_k}(f) ) \Vert_{\ell^{q'}}
= \left\{ \begin{array}{ll}
\displaystyle\bigg( \sum_{i=1}^k \Lip_{y_i}(f)^{q'} \bigg)^{1/q'} & \textrm{if}\ q'<+\infty \\[4mm]
\displaystyle\ \max_{1\leq i\leq k} \Lip_{y_i}(f) & \textrm{if}\ q'=+\infty
\end{array}\right.
\end{equation*}
where $q'\in[1,+\infty]$ is defined by $\frac{1}{q}+\frac{1}{q'}=1$. 
\end{lemma}

\begin{proof}
It suffices to write
\begin{equation*}
\begin{split}
& \vert f(y_1,\ldots,y_k)-f(y'_1,\ldots,y'_k)\vert \\
\leq\ & \vert f(y_1,y_2\ldots,y_k)-f(y'_1,y_2,\ldots,y_k)\vert + \cdots + \vert f(y'_1,\ldots,y'_{k-1},y_k)-f(y'_1,\ldots,y'_{k-1},y'_k)\vert \\
\leq\ & \sum_{i=1}^k \Lip_{y_i}(f) \, \mathrm{d}_E(y_i,y'_i)
\end{split}
\end{equation*}
and to use the H\"older inequality.
\end{proof}

\begin{remark}\label{rem_choicedist}
The choice of a distance $\mathrm{d}^{[q]}_{E^k}$ on the tensor product $E^k$ (i.e., the choice of $q\in[1,+\infty]$) is far from being insignificant because, although all norms are equivalent in $E^k$, comparing them gives constants depending on $k$. 
The choice thus becomes particularly meaningful when $k$ is large.

Another remark is that the definition \eqref{def_distq_app} is based on the usual $\ell^q$ norm, for $q\in[1,+\infty]$. Other choices are possible, but in order to preserve many of the statements that follow, the convexity of the norm is important.
\end{remark}

\paragraph{Notation $W_p^{[q]}$.}
For all $p,q\in[1,+\infty]$, following Remark \ref{rem_choicedist}, hereafter we denote by $W_p^{[q]}$ the Wasserstein distance $W_p$ on $\mathcal{P}(E^k)$ (defined by \eqref{def_Wp}) with respect to the distance $\mathrm{d}^{[q]}_{E^k}$ on $E^k$.

It follows from the usual inequalities for $\ell^q$ norms in $\R^k$ that $q\mapsto \mathrm{d}^{[q]}_{E^k}$ is decreasing and
\begin{equation}\label{classical_ellq}
1\leq q_1\leq q_2\leq +\infty \ \Rightarrow\ \mathrm{d}^{[q_2]}_{E^k} \leq \mathrm{d}^{[q_1]}_{E^k} \leq k^{\frac{1}{q_1}-\frac{1}{q_2}} \, \mathrm{d}^{[q_2]}_{E^k} 
\end{equation}
and thus
\begin{equation}\label{inegWpq}
1\leq q_1\leq q_2\leq +\infty \ \Rightarrow\ W_p^{[q_2]} \leq W_p^{[q_1]} \leq k^{\frac{1}{q_1}-\frac{1}{q_2}} \, W_p^{[q_2]}  
\end{equation}
for any $p\in[1,+\infty]$.
These inequalities complement \eqref{inegWp1Wp2}.
For $p$ fixed, in the family of distances $W_p^{[q]}$, for $q\in[1,+\infty]$, the $\ell^1$ distance $W_p^{[1]}$ is the weakest one. 
This is an important point because, in the existing literature, the $\ell^2$ distance $W_p^{[2]}$ is most often used, whereas in this work, the use of $q=1$ is crucial in several places.

\medskip

In all subsections hereafter, we fix an arbitrary $p\in[1,+\infty)$. The case $p=+\infty$ is obtained by taking the limit when it makes sense. We also fix an arbitrary $q\in[1,+\infty]$.

\subsubsection{Convexity}\label{app_conv}

\begin{lemma}[$(W_p)^p$ is convex]\label{lem_Wp_convex}
Given any $\mu_1,\mu_2,\mu_1',\mu_2' \in\mathcal{P}(E)$ and any $\lambda\in[0,1]$, we have
$$
W_p(\lambda\mu_1+(1-\lambda)\mu_2,\lambda\mu_1'+(1-\lambda)\mu_2')^p \leq \lambda W_p(\mu_1,\mu_1')^p + (1-\lambda) W_p(\mu_2,\mu_2')^p.
$$
\end{lemma}

\begin{proof}
This result is a particular case of \cite[Part I, Chapter 4, Theorem 4.8]{Villani_2009}. 
Let $\Pi_i$ be an optimal coupling between $\mu_i$ and $\mu_i'$, for $i=1,2$. Then $\Pi=\lambda\Pi_1+(1-\lambda)\Pi_2$ couples $\lambda\mu_1+(1-\lambda)\mu_2$ and $\lambda\mu_1'+(1-\lambda)\mu_2'$ (maybe not optimally). Hence
\begin{multline*}
W_p(\lambda\mu_1+(1-\lambda)\mu_2,\lambda\mu_1'+(1-\lambda)\mu_2')^p \leq \int_E \mathrm{d}_E(x,x')^p\, d\Pi(x,x') \\
= \lambda \int_E \mathrm{d}_E(x,x')^p\, d\Pi_1(x,x') + (1-\lambda) \int_E \mathrm{d}_E(x,x')^p\, d\Pi_2(x,x')
= \lambda W_p(\mu_1,\mu_1')^p + (1-\lambda) W_p(\mu_2,\mu_2')^p
\end{multline*}
and the lemma follows.
\end{proof}

\begin{lemma}\label{lem_Wp_eps}
Let $\mu_1,\mu_2,\beta \in\mathcal{P}(E)$ and let $\varepsilon\in(0,1]$ be such that $\mu_1 = (1+\varepsilon) \mu_2 - \varepsilon \beta$. Then
$$
W_p(\mu_1,\mu_2) \leq \varepsilon^{1/p} W_p(\mu_1,\beta)
$$
and, assuming that $\varepsilon<1$, 
$$
W_p(\mu_1,\mu_2) \leq \frac{\varepsilon^{1/p}}{1-\varepsilon^{1/p}} W_p(\mu_2,\beta) .
$$
In the particular case $p=1$, we have $W_1(\mu_1,\mu_2) = \varepsilon W_1(\mu_2,\beta)$.
\end{lemma}

\begin{proof}
We have $\mu_2=\frac{1}{1+\varepsilon}\mu_1+\frac{\varepsilon}{1+\varepsilon}\beta$ (convex combination), and applying Lemma \ref{lem_Wp_convex} we get $W_p(\mu_1,\mu_2)^p \leq \frac{\varepsilon}{1+\varepsilon} W_p(\mu_1,\beta)^p \leq \varepsilon W_p(\mu_1,\beta)^p$, and the first inequality follows.
The second inequality is obtained by using the triangular inequality $W_p(\mu_1,\beta) \leq W_p(\mu_1,\mu_2) + W_p(\mu_2,\beta)$.
When $p=1$, given any $f\in \mathscr{C}^0_c(E)$, we have $\int_{E} f\, d(\mu_1-\mu_2) = \varepsilon \int_{E} f\, d(\mu_2-\beta)$, and taking (in two steps) the supremum over all $f$ such that $\Lip(f)\leq 1$, 
we get $W_1(\mu_1,\mu_2) = \varepsilon W_1(\mu_2,\beta)$.
\end{proof}

\subsubsection{Symmetrization}\label{app_symm}
Let $N\in\N^*$ be arbitrary. 
Given any $\mu\in\mathcal{P}(E^N)$, the measure $\mu^s\in\mathcal{P}(E^N)$, called the symmetrization under permutations of $\mu$, is defined by
\begin{equation}\label{def_compacte_symmetrization_E}
\mu^s=\frac{1}{N!}\sum_{\sigma\in\mathfrak{S}_N}\sigma_*\mu
\end{equation}
where the measure $\sigma_*\mu$ is defined by 
$\langle\sigma_*\mu,f\rangle=\langle\mu,\sigma^*f\rangle$ and $(\sigma^*f)(y)=f(\sigma\cdot y)$, with $\sigma\cdot y=(y_{\sigma(1)},\ldots,y_{\sigma(N)})$ for every $y\in E^N$ and for every $\sigma\in\mathfrak{S}_N$, where $\mathfrak{S}_N$ is the group of permutations of $N$ elements. Here, $\langle\ ,\ \rangle$ is the duality bracket.
Equivalently,
$$
\int_{E^N} f(y)\, d\mu^s(y) = 
\frac{1}{N!} \sum_{\sigma\in\mathfrak{S}_N} \int_{E^N} f(\sigma\cdot y)\, d\mu(y)
\qquad \forall f\in \mathscr{C}^0_c(E^N) .
$$

\begin{lemma}\label{lem_Wp_s}
Given any $\mu_1,\mu_2 \in \mathcal{P}(E^N)$, 
we have
$$
W_p^{[q]}(\mu_1^s,\mu_2^s)\leq W_p^{[q]}(\mu_1,\mu_2) .
$$
\end{lemma}

In this lemma, the Wasserstein distance $W_p$ is computed with respect to the $\ell^q$ distance $\mathrm{d}^{[q]}_{E^N}$.

\begin{proof}
This follows from Lemma \ref{lem_Wp_convex}, since $\mu^s$ is written as the convex combination \eqref{def_compacte_symmetrization_E}, noting that $W_p^{[q]}(\sigma_*\mu_1,\sigma_*\mu_2)=W_p^{[q]}(\mu_1,\mu_2)$ for any $\sigma\in\mathfrak{S}_N$ because the distance $\mathrm{d}^{[q]}_{E^N}$ defined by \eqref{def_distq_app} is itself symmetric and because, for any $\Pi$ coupling $\mu_1$ and $\mu_2$ and for any $\sigma\in\mathfrak{S}_N$, $(\sigma\otimes\sigma)_*\Pi$ couples $\sigma_*\mu_1$ and $\sigma_*\mu_2$.
\end{proof}

\subsubsection{Marginals}\label{app_marginals}
Let $N\in\N^*$ be arbitrary. 
Given any $\mu\in\mathcal{P}(E^N)$ and any $k\in\{1,\ldots,N\}$, the $k^\textrm{th}$-order marginal $\mu_{N:k}\in\mathcal{P}(E^k)$ of $\mu$ is the image of $\mu$ under the canonical projection $\pi_k:E^N=E^k\times E^{N-k}\rightarrow E^k$.

\begin{lemma}\label{lem_Wp_marginal}
Given any $\mu_1,\mu_2 \in \mathcal{P}(E^N)$ and any $k\in\{1,\ldots,N\}$, we have
\begin{equation}\label{lem_Wp_marginal_ineq}
W_p^{[q]}((\mu_1)_{N:k},(\mu_2)_{N:k}) \leq W_p^{[q]}(\mu_1,\mu_2)  .
\end{equation}
\end{lemma}

The Wasserstein distance at the left-hand (resp., right-hand) side of \eqref{lem_Wp_marginal_ineq} is computed with respect to the $\ell^q$ distance $\mathrm{d}^{[q]}_{E^k}$ (resp., $\mathrm{d}^{[q]}_{E^N}$).
When $p\leq q$ and $\mu_1$ and $\mu_2$ are symmetric, a stronger estimate is given in Lemma \ref{lem_Wp_marginal_symm} (Appendix \ref{app_marginal_symm}).

\begin{proof}
Let $\Pi$ be an optimal coupling between $\mu_1$ and $\mu_2$. Then, obviously, $(\pi_k\otimes\pi_k)_*\Pi$ couples (maybe not optimally) $(\pi_k)_*\mu_1=(\mu_1)_{N:k}$ and $(\pi_k)_*\mu_2=(\mu_2)_{N:k}$. Therefore
\begin{equation*}
\begin{split}
W_p^{[q]}((\mu_1)_{N:k},(\mu_2)_{N:k})^p &\leq \int_{E^k} \mathrm{d}^{[q]}_{E^k}((y_1,\ldots,y_k),(y'_1,\ldots,y'_k))^p\, d(\pi_k\otimes\pi_k)_*\Pi((y_1,\ldots,y_k),(y'_1,\ldots,y'_k)) \\
&\leq \int_{E^N} \mathrm{d}^{[q]}_{E^k}(\pi_k(y),\pi_k(y'))^p\, d\Pi(y,y') \\
& \leq \int_{E^N} \mathrm{d}^{[q]}_{E^N}(y,y')^p\, d\Pi(y,y') = W_p^{[q]}(\mu_1,\mu_2)^p
\end{split}
\end{equation*}
where we have used that $ \mathrm{d}^{[q]}_{E^k}(\pi_k(y),\pi_k(y')) \leq  \mathrm{d}^{[q]}_{E^N}(y,y')$.
\end{proof}

\subsubsection{Tensor product}\label{app_tensor}

Let $k\in\N^*$. For every $i\in\{1,\ldots,k\}$, let $E_i$ be a Polish space, endowed with a distance $\mathrm{d}_{E_i}$. We endow the product space $E_1\times\cdots\times E_k$ with the distance
\begin{equation}\label{def_dist_E1Ek}
\mathrm{d}^{[q]}_{E_1\times\cdots\times E_k}(y,y') = \left\Vert ( \mathrm{d}_{E_1}(y_1,y'_1), \ldots, \mathrm{d}_{E_k}(y_k,y'_k) ) \right\Vert_{\ell^q}
= \left\{ \begin{array}{ll}
\displaystyle\bigg( \sum_{i=1}^k \mathrm{d}_{E_i}(y_i,y'_i)^q \bigg)^{1/q} & \textrm{if}\ 1\leq q<+\infty \\[4mm]
\displaystyle\ \max_{1\leq i\leq k} \mathrm{d}_{E_i}(y_i,y'_i) & \textrm{if}\ q=+\infty
\end{array}\right.
\end{equation}
for all $y=(y_1,\ldots,y_k)$, $y'=(y'_1,\ldots,y'_k)\in E_1\times\cdots\times E_k$.

\begin{lemma}\label{lem_Wp_tensor} 
Given any $\mu_1,\mu'_1\in\mathcal{P}(E_1)$, $\ldots$, $\mu_k,\mu'_k\in\mathcal{P}(E_k)$, we have, for every $j\in\{1,\ldots,k\}$,
\begin{equation}\label{Wp_tensor_k}
W_p(\mu_j,\mu'_j) \leq W_p^{[q]}\left( \overset{k}{\underset{i=1}{\otimes}} \mu_i \, ,  \overset{k}{\underset{i=1}{\otimes}} \mu'_i \right) \leq \max\big( k^{\frac{1}{q}-\frac{1}{p}}, 1 \big)  \bigg( \sum_{i=1}^k W_p(\mu_i,\mu'_i)^p \bigg)^{1/p} 
\end{equation}
and the right-hand side inequality in \eqref{Wp_tensor_k} is an equality if $p=q$.

Taking $E_i=E$, $\mathrm{d}_{E_i}=\mathrm{d}_E$, $\mu_i=\mu$ and $\mu'_i=\mu'$ for every $i\in\{1,\ldots,k\}$, we have the slightly stronger inequality
\begin{equation}\label{lem_Wp_tensor_ineq}
W_p^{[q]}(\mu^{\otimes k},(\mu')^{\otimes k}) \leq 
k^{1/q} \, W_p(\mu,\mu') 
\end{equation}
and the inequality is an equality if $p=q$.
\end{lemma}

See \cite{MariucciReiss_EJS2018} for Lemma \ref{lem_Wp_tensor}.

The Wasserstein distance $W_p$ at the left-hand side of \eqref{Wp_tensor_k} is computed with respect to the distance $d_{E_j}$.
The Wasserstein distance $W_p^{[q]}$ in the middle of \eqref{Wp_tensor_k} is computed with respect to the distance $\mathrm{d}^{[q]}_{E_1\times\cdots\times E_k}$ defined by \eqref{def_dist_E1Ek}.

The Wasserstein distance $W_p^{[q]}$ at the left-hand side of \eqref{lem_Wp_tensor_ineq} is computed with respect to the distance $\mathrm{d}^{[q]}_{E^k}$ defined by \eqref{def_distq_app}. Recall that $q\in[1,+\infty]$ has been chosen arbitrarily to define this distance. At the right-hand side of \eqref{lem_Wp_tensor_ineq}, if $q=+\infty$ then $k^{1/q}=1$.

\begin{remark}\label{rem_Wp_tensor}
As a particular case of \eqref{Wp_tensor_k}, taking $k=2$ and $\mu_2=\mu'_2=\mu$, we have 
$$
W_p(\mu_1,\mu'_1)\leq W_p^{[q]}(\mu_1\otimes\mu,\mu'_1\otimes\mu)=W_p^{[q]}(\mu\otimes\mu_1,\mu\otimes\mu'_1) \leq \max\big( 2^{\frac{1}{q}-\frac{1}{p}}, 1 \big) W_p(\mu_1,\mu'_1) .
$$
In particular, if $p\leq q$ then $W_p(\mu_1,\mu'_1)=W_p^{[q]}(\mu_1\otimes\mu,\mu'_1\otimes\mu)=W_p^{[q]}(\mu\otimes\mu_1,\mu\otimes\mu'_1)$.
\end{remark}

\begin{proof}
We have $W_p(\mu_j,\mu'_j) \leq W_p^{[q]}\Big( \overset{k}{\underset{i=1}{\otimes}} \mu_i \, ,  \overset{k}{\underset{i=1}{\otimes}} \mu'_i\Big)$ for every $i\in\{1,\ldots,k\}$: this is proved like in Lemma \ref{lem_Wp_marginal} because $\mu_j$ is the marginal on $E_j$ of the measure $\overset{k}{\underset{i=1}{\otimes}} \mu_i$ on $E$, and similarly for $\mu'_j$. Therefore the left-hand side inequality in \eqref{Wp_tensor_k} follows.

Let us now establish the right-hand side inequality in \eqref{Wp_tensor_k}, for $q<+\infty$.
For every $i\in\{1,\ldots,k\}$, let $\Pi_i$ be an optimal coupling between $\mu_i$ and $\mu'_i$. Then, obviously, $\Pi = \overset{k}{\underset{i=1}{\otimes}} \Pi_i$ couples (maybe not optimally) $\overset{k}{\underset{i=1}{\otimes}} \mu_i$ and $\overset{k}{\underset{i=1}{\otimes}} \mu'_i$. Therefore
$$
W_p^{[q]}\left( \overset{k}{\underset{i=1}{\otimes}} \mu_i \, ,  \overset{k}{\underset{i=1}{\otimes}} \mu'_i \right)^p 
\leq \int_{E_1\times E_1} \cdots \int_{E_k\times E_k} \bigg( \sum_{i=1}^k \mathrm{d}_{E_i}(y_i,y'_i)^q \bigg)^{p/q}\, d\Pi_k(y_k,y'_k)\, \cdots\, d\Pi_1(y_1,y'_1) .
$$
If $p\geq q$, using the convexity inequality $(\vert a_1\vert+\cdots+\vert a_k\vert)^r \leq k^{r-1}\left( \vert a_1\vert^r+\cdots+\vert a_k\vert^r \right)$ for $r\geq 1$ (with equality for $r=1$), coming from \eqref{classical_ellq}, we obtain
$$
W_p^{[q]}\left( \overset{k}{\underset{i=1}{\otimes}} \mu_i \, ,   \overset{k}{\underset{i=1}{\otimes}} \mu'_i \right)^p 
\leq k^{\frac{p}{q}-1} \sum_{i=1}^k W_p(\mu_i,\mu'_i)^p  
$$
and the inequality is an equality if $p=q$ because in this case $\Pi$ is an optimal coupling. 
If $p\leq q$, using the inequality $(\vert a_1\vert+\cdots+\vert a_k\vert)^{1/r} \leq \vert a_1\vert^{1/r}+\cdots+\vert a_k\vert^{1/r}$ for $r\geq 1$ (coming from \eqref{classical_ellq}), we obtain
$$
W_p^{[q]}\left( \overset{k}{\underset{i=1}{\otimes}} \mu_i \, ,   \overset{k}{\underset{i=1}{\otimes}} \mu'_i \right)^p 
\leq \sum_{i=1}^k W_p(\mu_i,\mu'_i)^p  .
$$
All in all, we have established \eqref{Wp_tensor_k}.

To prove \eqref{lem_Wp_tensor_ineq}, using the definition \eqref{def_Wp_randomlaws} of $W_p$, we note that
$$
W_p^{[q]}(\mu^{\otimes k},(\mu')^{\otimes k})^p
\leq \mathbb{E} \bigg( \sum_{i=1}^k \mathrm{d}_E(Y,Y')^q \bigg)^{p/q} %
= k^{p/q} \, \mathbb{E} \mathrm{d}_E(Y,Y')^p 
= k^{p/q}\, W_p(\mu,\mu')^p
$$
where $Y$ and $Y'$ are random variables (with values in $E$) of laws $\mu$ and $\mu'$, such that $W_p(\mu,\mu')^p = \mathbb{E} \mathrm{d}_E(Y,Y')^p$. 
\end{proof}

\subsubsection{Diameter of the support}\label{app_supp}


\begin{lemma}\label{lem_Wp_supp}
Given any $\mu_1,\mu_2\in\mathcal{P}_c(E)$, we have
$$ 
W_p(\mu_1,\mu_2) \leq \diam_E(\supp(\mu_1)\cup\supp(\mu_2)) 
= \max\{ \mathrm{d}_E(y,y')\ \mid\ y,y'\in \supp(\mu_1)\cup\supp(\mu_2) \} .
$$
\end{lemma}

\begin{proof}
By \eqref{def_Wp}, since $W_p(\mu_1,\mu_2)^p$ is the infimum of $\int_{E^2} \mathrm{d}_E(y,y')^p \, d\Pi(y,y')$ over all probability measures $\Pi$ on $E^2$ coupling $\mu_1$ and $\mu_2$, 
we have 
$W_p(\mu_1,\mu_2) \leq \max \{ \mathrm{d}_E(y_1,y_2)\ \mid\ y_1\in\supp(\mu_1), y_2\in\supp(\mu_2) \}$, and the result follows.
\end{proof}

\subsubsection{Propagation}\label{app_propag}
In this section, we assume that $E$ is a Banach space, endowed with a norm $\Vert\cdot\Vert_E$.
Let also $\Lambda$ (space of parameters) be a Polish space, endowed with a distance $\mathrm{d}_\Lambda$.
The space $\Lambda\times E$ is endowed with the distance $\mathrm{d}_{\Lambda\times E}=\mathrm{d}_\Lambda+\mathrm{d}_E$, where $\mathrm{d}_E$ is the distance on $E$ induced by the norm $\Vert\cdot\Vert_E$.

\begin{lemma}\label{lem_propagflow}
For $i=1,2$, let $Y^i(t,\lambda,\cdot)$ be a continuous time-varying vector field on $E$, depending on the parameter $\lambda\in\Lambda$, locally Lipschitz with respect to $(\lambda,y)\in\Lambda\times E$ uniformly with respect to $t$ on any compact interval, generating a flow $(\Phi^i(t,t_0,\lambda,\cdot))_{t\in\R}$ (assumed to be well defined for every $t\in\R$) for any $t_0\in\R$, that is,
\begin{equation*}
\begin{split}
\partial_t\Phi^i(t,t_0,\lambda,y) &= Y^i(t,\lambda,\Phi^i(t,t_0,\lambda,y)) \\
\Phi^i(t_0,t_0,\lambda,y) &= y
\end{split}
\end{equation*}
for all $t,t_0\in\R$, $y\in E$ and $\lambda\in\Lambda$.
Given any $t_0\in\R$ and any $\mu^1(t_0),\mu^2(t_0) \in\mathcal{P}_c(\Lambda\times E)$, we set $\mu^i_t=\mu^i(t)=\Phi^i(t,t_0)_*\mu^i(t_0)$ for every $t\geq t_0$, for $i=1,2$;
this notation means, denoting by $\nu^i$ the (constant in time) marginal of $\mu^i(t)$ on $\Lambda$ and disintegrating $\mu^i_t = \int_\Lambda \mu^i_{t,\lambda}\, d\nu^i(\lambda)$, that $\mu^i_{t,\lambda} = \Phi^i(t,t_0,\lambda,\cdot)_*\mu^i(t_0)$ for $\nu^i$-almost every $\lambda\in\Lambda$.
For every $p\in[1,+\infty)$, we have
\begin{equation}\label{propagflow1} 
W_p(\mu^1(t), \mu^2(t)) 
\leq e^{(t-t_0)L([t_0,t])} W_p(\mu^1(t_0),\mu^2(t_0)) + M([t_0,t]) \frac{e^{(t-t_0)L([t_0,t])}-1}{L([t_0,t])}  
\end{equation}
for every $t\geq t_0$,
where\footnote{Note that $S(t)$ is compact and that $\Phi^i(t,t_0,\supp(\mu^i(t_0)))=\supp(\mu^i(t))$.}
\begin{equation}\label{defL}
L([t_0,t]) = \max_{t_0\leq\tau\leq t} \Lip\left(Y^1(\tau,\cdot,\cdot)_{\vert S(\tau)}\right) ,
\end{equation}
$$
S(t) = (\supp(\nu^1)\cup\supp(\nu^2)) \times \Phi^1(t,t_0,\supp(\mu^1(t_0))\cup\supp(\mu^2(t_0))) \ \cup\ \supp(\mu^2(t)) ,
$$
\begin{equation}\label{defMmax}
M([t_0,t]) = \max \{ \Vert Y^1(\tau,\lambda,y)-Y^2(\tau,\lambda,y)\Vert_E \ \mid\ t_0\leq\tau\leq t,\ (\lambda,y)\in\supp(\mu^2(\tau)) \} .
\end{equation}
Alternatively, the second term at the right-hand side of \eqref{propagflow1} can be replaced by
\begin{equation}\label{propagflow2} 
M_p([t_0,t]) (t-t_0)^{1/p} \left( \frac{e^{(t-t_0)p' L([t_0,t])}-1}{p' L([t_0,t])} \right)^{1/p'}
\end{equation}
where $\frac{1}{p}+\frac{1}{p'}=1$ and
\begin{equation}\label{defMp}
M_p([t_0,t]) = \max_{t_0\leq\tau\leq t} \left( \int_{\Lambda\times E} \Vert Y^1(\tau,\lambda,y)-Y^2(\tau,\lambda,y)\Vert_E^p \, d\mu^2_\tau(\lambda,y) \right)^{1/p}  .
\end{equation}
\end{lemma}

\noindent Some remarks are in order:
\begin{itemize}[label=-,leftmargin=3.5mm,parsep=0cm,itemsep=0cm,topsep=0cm]
\item In \eqref{propagflow1} (and in \eqref{propagflow2}), it is understood that if $L([t_0,t])=0$ then $\frac{e^{(t-t_0)L([t_0,t])}-1}{L([t_0,t])}$ is replaced by $t-t_0$.
Lemma \ref{lem_propagflow} extends \cite[Proposition 4]{PiccoliRossi_ARMA2016} to the case with parameters and to the local Lipschitz case; also, 
the alternative (not usual) estimate with \eqref{propagflow2} is useful to derive some results of this paper.

\item If $Y^1=Y^2$ then $M(\cdot)=0$.

\item When $t_0=0$, we denote $\Phi^i(t,\lambda,y)=\Phi^i(t,0,\lambda,y)$, $L(t)=L([0,t])$ and $M(t)=M([0,t])$.

\item
Note also that, in Lemma \ref{lem_propagflow}, only the first vector field $Y^1$ needs to be locally Lipschitz; for $Y^2$, it suffices that \eqref{defMp} is well defined and that the flow $\Phi^2$ is well defined.
\end{itemize}

\begin{proof}
Given any $(\lambda_1,y_1)\in\supp(\mu^1(t_0))$ and $(\lambda_2,y_2) \in\supp(\mu^2(t_0))$, using \eqref{defL} we have 
\begin{equation*}
\begin{split}
& \partial_t \left\Vert \Phi^1(t,t_0,\lambda_1,y_1) - \Phi^1(t,t_0,\lambda_2,y_2) \right\Vert_E \\
\leq\ & \Vert Y^1(t, \lambda_1,\Phi^1(t,t_0,\lambda_1,y_1)) - Y^1(t, \lambda_2,\Phi^1(t,t_0,\lambda_2,y_2)) \Vert_E \\
\leq\ & L([t_0,t]) \left( \mathrm{d}_\Lambda(\lambda_1,\lambda_2) + \left\Vert \Phi^1(t,t_0,\lambda_1,y_1) - \Phi^1(t,t_0,\lambda_2,y_2) \right\Vert_E \right)
\end{split}
\end{equation*}
because $\Phi^1(t,t_0,\lambda_1,y_1)\in\Phi^1(t,t_0,\supp(\mu^1(t_0)))$ and $\Phi^1(t,t_0,\lambda_2,y_2))\in\Phi^1(t,t_0,\supp(\mu^2(t_0)))$ (this motivates the definition of $S(t)$),
and by integration we get that 
\begin{equation}\label{Phii_Lip}
\mathrm{d}_\Lambda(\lambda_1,\lambda_2) + \left\Vert \Phi^1(t,t_0,\lambda_1,y_1) - \Phi^1(t,t_0,\lambda_2,y_2) \right\Vert_E
\leq e^{(t-t_0)L([t_0,t])} \left( \mathrm{d}_\Lambda(\lambda_1,\lambda_2) + \Vert y_1-y_2\Vert_E \right)
\end{equation}
for every $t\geq t_0$, by monotonicity of $t\mapsto L([t_0,t])$. 

Taking an optimal coupling $\Pi_{t_0}\in\mathcal{P}((\Lambda\times E)^2)$ between $\mu^1(t_0)$ and $\mu^2(t_0)$, the probability measure $\Pi_t = (\Phi^1(t,t_0)\otimes\Phi^2(t,t_0))_* \Pi_{t_0}$ couples (maybe not optimally) $\mu^1(t)$ with $\mu^2(t)$.\footnote{Indeed, denoting by $\pi_i$ the projection of $(\Lambda\times E)^2$ onto the $i^\textrm{th}$-copy of $\Lambda\times E$, we have $\pi_i \circ (\Phi^1\otimes\Phi^2) = \Phi^i\circ\pi_i$.}
Therefore, using the definition \eqref{def_Wp} of $W_p$, 
\begin{equation*}
\begin{split}
&\ W_p(\mu^1(t), \mu^2(t))^p \\
\leq&\  \int_{(\Lambda\times E)^2} \left( \mathrm{d}_\Lambda(\lambda_1,\lambda_2) + \Vert y_1-y_2\Vert_E \right)^{p} d\Pi_t(\lambda_1,y_1,\lambda_2,y_2) \\
=&\  \int_{(\Lambda\times E)^2} \left( \mathrm{d}_\Lambda(\lambda_1,\lambda_2) + \Vert \Phi^1(t,t_0,\lambda_1,y_1)-\Phi^2(t,t_0,\lambda_2,y_2)\Vert_E \right)^p  d\Pi_{t_0}(\lambda_1,y_1,\lambda_2,y_2) \\
\leq&\  \int_{(\Lambda\times E)^2} \Big( \mathrm{d}_\Lambda(\lambda_1,\lambda_2) 
+ \Vert \Phi^1(t,t_0,\lambda_1,y_1)-\Phi^1(t,t_0,\lambda_2,y_2)\Vert_E \\
& \qquad\qquad\qquad\qquad\qquad
+ \Vert \Phi^1(t,t_0,\lambda_2,y_2)-\Phi^2(t,t_0,\lambda_2,y_2)\Vert_E
\Big)^p d\Pi_{t_0}(\lambda_1,y_1,\lambda_2,y_2) 
\end{split}
\end{equation*}
and thus, and using the triangular inequality in $L^p$, we get
\begin{equation}\label{ineg_coupl}
\begin{split}
& W_p(\mu^1(t), \mu^2(t)) \\
\leq& \ \bigg( \int_{(\Lambda\times E)^2} \left( \mathrm{d}_\Lambda(\lambda_1,\lambda_2) + \Vert \Phi^1(t,t_0,\lambda_1,y_1)-\Phi^1(t,t_0,\lambda_2,y_2)\Vert_E \right)^p d\Pi_{t_0}(\lambda_1,y_1,\lambda_2,y_2) \bigg)^{1/p} \\
&\qquad + \bigg( \int_{(\Lambda\times E)^2} \Vert \Phi^1(t,t_0,\lambda_2,y_2)-\Phi^2(t,t_0,\lambda_2,y_2)\Vert_E^p \, d\Pi_{t_0}(\lambda_1,y_1,\lambda_2,y_2) \bigg)^{1/p}
\end{split}
\end{equation}
Using \eqref{Phii_Lip}, the first term of the sum at the right-hand side of \eqref{ineg_coupl} is less than or equal to
\begin{multline*}
e^{(t-t_0)L([t_0,t])} \bigg( \int_{(\Lambda\times E)^2} \left( \mathrm{d}_\Lambda(\lambda_1,\lambda_2) + \Vert y_1-y_2\Vert_E \right)^p d\Pi_{t_0}(\lambda_1,y_1,\lambda_2,y_2) \bigg)^{1/p} \\
= e^{(t-t_0)L([t_0,t])} W_p(\mu^1(t_0),\mu^2(t_0)) ,
\end{multline*}
the latter equality being because $\Pi_{t_0}$ is an optimal coupling between $\mu^1(t_0)$ and $\mu^2(t_0)$. 

To treat the second term, we first observe that, for $(\lambda,y)\in \supp(\mu^2(t_0))$,
\begin{equation*}
\begin{split}
\partial_t \Vert \Phi^1(t,t_0,\lambda,y)-\Phi^2(t,t_0,\lambda,y) \Vert_E 
\leq\ & \Vert Y^1(t,\lambda,\Phi^1(t,t_0,\lambda,y)) - Y^1(t,\lambda,\Phi^2(t,t_0,\lambda,y)) \Vert_E \\
& + \Vert Y^1(t,\lambda,\Phi^2(t,t_0,\lambda,y)) - Y^2(t,\lambda,\Phi^2(t,t_0,\lambda,y)) \Vert_E \\
\leq\ & L([t_0,t]) \Vert \Phi^1(t,t_0,\lambda,y)-\Phi^2(t,t_0,\lambda,y) \Vert_E \\
& + \Vert Y^1(t,\lambda,\Phi^2(t,t_0,\lambda,y)) - Y^2(t,\lambda,\Phi^2(t,t_0,\lambda,y)) \Vert_E 
\end{split}
\end{equation*}
where we have used \eqref{defL}, noting that $(\lambda,\Phi^1(t,t_0,\lambda,y))\in S(t)$ and $(\lambda,\Phi^2(t,t_0,\lambda,y))\in S(t)$,
and thus, using the Gronwall lemma and the fact that $\tau\mapsto L([t_0,\tau])$ is nondecreasing,
\begin{multline}\label{ox_18:44}
\Vert \Phi^1(t,t_0,\lambda,y)-\Phi^2(t,t_0,\lambda,y) \Vert_E \\
\leq \int_{t_0}^t e^{(t-\tau) L([t_0,t])} \Vert Y^1(\tau,\lambda,\Phi^2(\tau,t_0,\lambda,y)) - Y^2(\tau,\lambda,\Phi^2(\tau,t_0,\lambda,y)) \Vert_E \, d\tau  .
\end{multline}
Using the definition \eqref{defMmax} of $M([t_0,t])$ and the fact that $\Phi^2(\tau,t_0,\supp(\mu^2(t_0))) = \supp(\mu^2(\tau))$, we get
$$
\Vert \Phi^1(t,t_0,\lambda,y)-\Phi^2(t,t_0,\lambda,y) \Vert_E 
\leq M([t_0,t]) \frac{e^{(t-t_0)L([t_0,t])}-1}{L([t_0,t])} .
$$
%
Therefore, 
the second term of the sum at the right-hand side of \eqref{ineg_coupl} is 
estimated by
\begin{multline*}
\bigg( \int_{(\Lambda\times E)^2} \Vert \Phi^1(t,t_0,\lambda_2,y_2)-\Phi^2(t,t_0,\lambda_2,y_2)\Vert_E^p \, d\Pi_{t_0}(\lambda_1,y_1,\lambda_2,y_2) \bigg)^{1/p} \\
= \bigg( \int_{\Lambda\times E} \Vert \Phi^1(t,t_0,\lambda,y)-\Phi^2(t,t_0,\lambda,y)\Vert_E^p \, d\mu^2_{t_0}(\lambda,y) \bigg)^{1/p}  
\leq M([t_0,t]) \ \frac{e^{(t-t_0)L([t_0,t])}-1}{L([t_0,t])}
\end{multline*}
where we have used that the second marginal of $\Pi_{t_0}$ is $\mu^2_{t_0}=\mu^2(t_0)$. 
The estimate \eqref{propagflow1} follows.

To obtain the alternative estimate with the term \eqref{propagflow2}, we apply the H\"older inequality to the right-hand side of \eqref{ox_18:44}, obtaining
\begin{equation*}
\begin{split}
& \Vert \Phi^1(t,t_0,\lambda,y)-\Phi^2(t,t_0,\lambda,y) \Vert_E \\
\leq\ & \bigg( \frac{e^{p' (t-t_0)L([t_0,t])}-1}{p' L([t_0,t])} \bigg)^{1/p'}  \bigg( \int_{t_0}^t \Vert Y^1(\tau,\lambda,\Phi^2(\tau,t_0,\lambda,y)) - Y^2(\tau,\lambda,\Phi^2(\tau,t_0,\lambda,y)) \Vert_E^p \, d\tau \bigg)^{1/p} .
\end{split}
\end{equation*}
Therefore, 
the second term of the sum at the right-hand side of \eqref{ineg_coupl} is 
estimated by
\begin{equation*}
\begin{split}
& \bigg( \int_{(\Lambda\times E)^2} \Vert \Phi^1(t,t_0,\lambda_2,y_2)-\Phi^2(t,t_0,\lambda_2,y_2)\Vert_E^p \, d\Pi_{t_0}(\lambda_1,y_1,\lambda_2,y_2) \bigg)^{1/p} \\
=\ & \bigg( \int_{\Lambda\times E} \Vert \Phi^1(t,t_0,\lambda,y)-\Phi^2(t,t_0,\lambda,y)\Vert_E^p \, d\mu^2_{t_0}(\lambda,y) \bigg)^{1/p}  \\
\leq\ & \left( \frac{e^{p' (t-t_0)L([t_0,t])}-1}{p' L([t_0,t])} \right)^{1/p'} \bigg( \int_{t_0}^t  \int_{\Lambda\times E} \Vert Y^1(\tau,\lambda,\Phi^2(\tau,t_0,\lambda,y)) \\
& \qquad\qquad\qquad\qquad\qquad\qquad\qquad\qquad\qquad\qquad   - Y^2(\tau,\lambda,\Phi^2(\tau,t_0,\lambda,y)) \Vert_E^p \, d\mu^2_{t_0}(\lambda,y) \, d\tau  \bigg)^{1/p} \\
\leq\ & \left( \frac{e^{p' (t-t_0)L([t_0,t])}-1}{p' L([t_0,t])} \right)^{1/p'} \bigg( \int_{t_0}^t  \int_{\Lambda\times E} \Vert Y^1(\tau,\lambda,y) - Y^2(\tau,\lambda,y) \Vert_E^p \, d\mu^2_\tau(\lambda,y) \, d\tau \bigg)^{1/p} \\
\leq\ & \left( \frac{e^{p' (t-t_0)L([t_0,t])}-1}{p' L([t_0,t])} \right)^{1/p'}  (t-t_0)^{1/p}  M_p([t_0,t]) 
\end{split}
\end{equation*}
The lemma is proved.
\end{proof}

\begin{lemma}\label{lem_flowid}
Let $Y(t,\lambda,\cdot)$ be a continuous time-varying vector field on $E$, depending on the parameter $\lambda\in\Lambda$, locally Lipschitz with respect to $y\in E$ uniformly with respect to $(t,\lambda)$ on any compact, generating a flow $(\Phi(t,t_0,\lambda,\cdot))_{t\in\R}$ (assumed to be well defined for every $t\in\R$) for any $t_0\in\R$ (as in Lemma \ref{lem_propagflow}).
Given any $t_0\in\R$ and any $\mu_{t_0} \in\mathcal{P}_c(\Lambda\times E)$, we set $\mu(t)=\Phi(t,t_0)_*\mu_{t_0}$ for every $t\geq t_0$.
For every $p\in[1,+\infty)$, we have
$$
W_p(\mu(t), \mu(t_0)) \leq M([t_0,t]) \vert t-t_0\vert \qquad \forall t\geq t_0
$$
where
$M([t_0,t]) = \max \left\{ \Vert Y(\tau,\lambda,y)\Vert \ \mid\ t_0\leq\tau\leq t,\ (\lambda,y)\in\supp(\mu(\tau)) \right\}$.
\end{lemma}

\begin{proof}
One could apply Lemma \ref{lem_propagflow} with $Y^1=0$ and $Y^2=Y$ provided $Y$ were also Lipschitz with respect to $\lambda$; we give instead a direct proof that does not require this assumption.
We first establish the following general result.

\begin{lemma}\label{lemphistar}
Let $F$ be a Polish space, endowed with a distance $\mathrm{d}_F$, let $\mu\in\mathcal{P}_c(F)$ and let $\phi:F\rightarrow F$ be a measurable mapping. For every $p\in[1,+\infty)$, we have
$$
W_p(\phi_*\mu,\mu) \leq \left( \int_F \mathrm{d}_F(y,\phi(y))^p\, d\mu(y) \right)^{1/p}
$$
\end{lemma}

\begin{proof}[Proof of Lemma \ref{lemphistar}.]
We define $\Pi\in\mathcal{P}(F\times F)$ as the pushforward of $\mu$ under the mapping $y\mapsto(y,\phi(y))$. Then $\Pi$ couples $\mu$ and $\phi_*\mu$, and
$W_p(\phi_*\mu,\mu)^p \leq \int_{F^2} \mathrm{d}_F(y,y')^p\, d\Pi(y,y') = \int_F \mathrm{d}_F(y,\phi(y))^p\, d\mu(y)$.
\end{proof}

Applying Lemma \ref{lemphistar} with $F=\Lambda\times E$, $\mu=\mu_{t_0}$ and $\phi=\Phi(t,t_0)$, we have
$$
W_p(\mu(t), \mu(t_0))^p \leq \int_{\Lambda\times E} \mathrm{d}_{\Lambda\times E}((\lambda,y),(\lambda,\Phi(t,t_0,\lambda,y)))^p\, d\mu_{t_0}(\lambda,y) 
$$
and we note that $\mathrm{d}_{\Lambda\times E}((\lambda,y),(\lambda,\Phi(t,t_0,\lambda,y))) = \Vert\Phi(t,t_0,\lambda,y)-y\Vert$. Now, since $\Phi(t,t_0,\lambda,y) = y + \int_{t_0}^t Y(\tau,\lambda,\Phi(\tau,t_0,\lambda,y))\, d\tau$, we have $\Vert \Phi(t,t_0,\lambda,y)-y\Vert_E \leq (t-t_0)\,M([t_0,t])$ for $(\lambda,y)\in\supp(\mu_{t_0})$, since $\supp(\mu(\tau))=\Phi(\tau,t_0,\supp(\mu_{t_0}))$. Lemma \ref{lem_flowid} follows.
\end{proof}

\subsubsection{Moment of order one}\label{app_lem_moment}
Let $\Omega$ be a Polish space and let $d\in\N^*$.

For any vector-valued bounded Borel measure $m$ on $\Omega$ taking values in $\R^d$, we define its \emph{bounded Lipschitz dual norm} by
\begin{equation}\label{def_BL_norm}
\Vert m\Vert_{\mathrm{BL}^*} 
= \sup\bigg\{ \bigg\vert \int_\Omega \langle\varphi(x),dm(x)\rangle \bigg\vert  : \varphi\in\Lip(\Omega,\R^d),\ \Lip(\varphi)\leq 1,\ \Vert\varphi\Vert_\infty\leq 1 \bigg\}.
\end{equation}
For scalar signed measures (case $d=1$), this reduces to the standard bounded Lipschitz distance, which is equivalent to $W_1$ on probability measures.

\begin{lemma}\label{lem_moment}
For $i=1,2$, let $\mu_i\in\mathcal{P}_c(\Omega\times\R^d)$, disintegrated as $\mu_i = \int_\Omega (\mu_i)_x\, d\nu_i(x)$ with respect to its marginal $\nu_i$ on $\Omega$, and let $y_i$ be the moment of order one of $\mu_i$, defined by $y_i(x) = \int_{\R^d}\xi\, d(\mu_i)_x(\xi)$ for $\nu_i$-almost every $x\in\Omega$. Let $R>0$ be such that $\supp(\mu_i)\subset\Omega\times\bar B_R$ for $i=1,2$. Then $y_1\nu_1$ and $y_2\nu_2$ are bounded Borel measures on $\Omega$ taking values in $\R^d$, and
\begin{equation*}
\Vert y_1\nu_1 - y_2\nu_2\Vert_{\mathrm{BL}^*}\leq \max(R,1)\, W_1(\mu_1,\mu_2).
\end{equation*}
\end{lemma}

\begin{proof}
For any $\varphi\in\Lip(\Omega,\R^d)$ with $\Lip(\varphi)\leq 1$ and $\Vert\varphi\Vert_\infty\leq 1$, we have, by Fubini, 
$$
\bigg\vert \int_\Omega \langle\varphi,d(y_1\nu_1-y_2\nu_2)\rangle \bigg\vert
= \bigg\vert \int_{\Omega\times\R^d} F\, d(\mu_1-\mu_2) \bigg\vert
$$
where $F(x,\xi)=\langle\varphi(x),\xi\rangle$. On $\Omega\times\bar B_R$, $F$ is Lipschitz with $\Lip(F)\leq\max(R,1)$ (using $\vert\xi\vert\leq R$, $\Lip(\varphi)\leq 1$ and $\Vert\varphi\Vert_\infty\leq 1$). The Kantorovich--Rubinstein duality \eqref{def_W1} yields the result.
\end{proof}

\subsection{More precise facts on the marginals of a symmetrization}\label{app_marginals_symmetric}
Let $N\in\N^*$ be arbitrary.
Recall that the symmetrization of a measure is defined by \eqref{def_compacte_symmetrization_E} (see Appendix \ref{app_symm}).
\subsubsection{First marginal of the symmetrization}\label{app_first_marginal_symmetrization}
For every $i\in\{1,\ldots,N\}$, we denote by $p^i$ the projection of $E^N$ onto the $i^\textrm{th}$ copy of $E$, i.e., in coordinates, $p^i(y)=y_i$. 

\begin{lemma}\label{first_marginal_symmetrization}
Let $\mu\in\mathcal{P}(E^N)$ be arbitrary. 
\begin{itemize}
\item The first marginal $\mu^s_{N:1} = p^1_*\mu^s$ of the symmetrization $\mu^s$ of $\mu$ is given by
$$
\mu^s_{N:1} = \frac{1}{N}\sum_{i=1}^N p^i_*\mu
$$
where $p^i_*\mu$ is the image of $\mu$ under the projection $p^i$. In other words, $\mu^s_{N:1}$ is the average of the marginals of $\mu$ on the copies of $E$.
\item We have $\displaystyle p^i_*\mu^s = \frac{1}{N}\sum_{j=1}^N p^j_*\mu^s$ for every $i\in\{1,\ldots,N\}$ and thus $p^i_*\mu^s$ does not depend on $i$. In other words, the marginals of a symmetric measure on the copies of $E$ are all equal; the same is true for the marginals of higher order. 
\end{itemize}
\end{lemma}

\begin{proof}
Given any $f\in \mathscr{C}^0_c(E)$, we have
\begin{multline*}
\langle\mu^s_{N:1},f\rangle = \langle p^1_*\mu^s, f\rangle
= \langle \mu^s, (p^1)^*f\rangle
= \frac{1}{N!} \sum_{\sigma\in\mathfrak{S}_N} \langle \sigma_*\mu,(p^1)^*f\rangle
= \frac{1}{N!} \sum_{\sigma\in\mathfrak{S}_N} \langle \mu,\sigma^*(p^1)^*f\rangle \\
= \frac{1}{N!} \sum_{\sigma\in\mathfrak{S}_N} \int_{E^N} f\circ p^1(\sigma\cdot y)\, d\mu(y)
= \frac{1}{N!} \sum_{\sigma\in\mathfrak{S}_N} \int_{E^N} f(y_{\sigma(1)})\, d\mu(y)
\end{multline*}
When designing a permutation $\sigma\in\mathfrak{S}_N$, we have $N$ choices for $\sigma(1)$, among $\{1,\ldots,N\}$, and the rest is a permutation of $N-1$ elements. Since $\card(\mathfrak{S}_{N-1})=(N-1)!$, we get 
that
$$
\langle\mu^s_{N:1},f\rangle = \frac{1}{N} \sum_{i=1}^N \int_{E^N} f(y_i)\, d\mu(y)
= \frac{1}{N} \sum_{i=1}^N \int_{E^N} f\circ p^i(y)\, d\mu(y) 
= \frac{1}{N} \sum_{i=1}^N \langle p^i_*\mu, f\rangle
$$
whence the first item.

The second item is proved in the same way, replacing $\mu$ by $\mu^s$.
\end{proof}

\subsubsection{Marginals of symmetric measures}\label{app_marginal_symm}
We have seen in Lemma \ref{lem_Wp_marginal} (Appendix \ref{app_marginals}) that 
$W_p^{[q]}((\mu_1)_{N:k},(\mu_2)_{N:k}) \leq W_p^{[q]}(\mu_1,\mu_2)$, for every $k\in\{1,\ldots,N\}$, for any $\mu_1,\mu_2\in\mathcal{P}(E^N)$.
When $\mu_1$ and $\mu_2$ are symmetric (i.e., $\mu_j=\mu_j^s$ for $j=1,2$), a stronger estimate holds.

\begin{lemma}\label{lem_Wp_marginal_symm}
Let $\mu_1,\mu_2\in\mathcal{P}(E^N)$ be symmetric measures. 
Then, for any $p,q\in[1,+\infty]$ such that $p\leq q$,
\begin{equation}\label{Wp_marginal_symm}
W_p^{[q]}((\mu_1)_{N:k},(\mu_2)_{N:k}) \leq \left( \frac{k}{N} \right)^{1/q} W_p^{[q]}(\mu_1,\mu_2)
\qquad \forall k\in\{1,\ldots,N\}.
\end{equation}
\end{lemma}

In \eqref{Wp_marginal_symm}, the $W_p^{[q]}$ distances are computed with respect to the $\ell^q$ distances $\mathrm{d}^{[q]}_{E^k}$ and $\mathrm{d}^{[q]}_{E^N}$ defined by \eqref{def_distq_app}.

\begin{proof}
Assume that $q<+\infty$ (for $q=+\infty$, it suffices to take limits).
Using the definition \eqref{def_Wp} of $W_p^{[q]}$, we have
\begin{equation}\label{wpqp}
W_p^{[q]}(\mu_1,\mu_2)^p
= \int_{(E^N)^2} \mathrm{d}_{E^N}^{[q]}(y^1,y^2)^p \, d\Pi(y^1,y^2) 
 = \int_{(E^N)^2} \bigg( \sum_{i=1}^N \mathrm{d}_E(y^1_i,y^2_i)^q \bigg)^{p/q}  d\Pi(y^1,y^2) 
\end{equation}
where $\Pi\in\mathcal{P}((E^N)^2)$ is an optimal coupling between $\mu_1$ and $\mu_2$ for the $W_p^{[q]}$ distance, and $y^j=(y^j_1,\ldots,y^j_N)$ for $j\in\{1,2\}$. 

Since $\mu_j$ is symmetric, by Lemma \ref{first_marginal_symmetrization} in Appendix \ref{app_first_marginal_symmetrization}, all marginals of $\mu_j$ on the copies of $E$ are equal, for $j\in\{1,2\}$; the same holds for marginals of higher order. 
For any $k\in\{1,\ldots,N\}$, $\Pi_{N:k}$ couples (maybe not optimally) $(\mu_1)_{N:k}$ and $(\mu_2)_{N:k}$, and this, for any choice of a $k$-tuple $(j_1,\ldots,j_k)$ of distinct elements of $\{1,\ldots,N\}$ with respect to which we take the marginals (because all marginals are equal). Therefore
\begin{equation}\label{kupletpq}
\begin{split}
\int_{(E^N)^2} \bigg( \sum_{i\in\{j_1,\ldots,j_k\}} \mathrm{d}_E(y^1_i,y^2_i)^q \bigg)^{p/q}  d\Pi(y^1,y^2) 
&= \int_{(E^k)^2} \bigg( \sum_{i=1}^k \mathrm{d}_E(y^1_i,y^2_i)^q \bigg)^{p/q}  d\Pi_{N:k}(y^1,y^2) \\
&\geq W_p^{[q]}((\mu_1)_{N:k},(\mu_2)_{N:k})^p 
\end{split}
\end{equation}
for any $(j_1,\ldots,j_k)\in\mathcal{J}_k$, where $\mathcal{J}_k$ is the set of all $k$-tuples $(j_1,\ldots,j_k)$ of distinct elements of $\{1,\ldots,N\}$. Noting that $\card(\mathcal{J}_k) = \binom{N}{k} = \frac{N!}{k!(N-k)!}$, it follows from \eqref{kupletpq} that
\begin{equation}\label{j1j2jk}
W_p^{[q]}((\mu_1)_{N:k},(\mu_2)_{N:k})^p 
\leq \frac{k!(N-k)!}{N!} \sum_{(j_1,\ldots,j_k)\in\mathcal{J}_k} \int_{(E^N)^2} \bigg( \sum_{i\in\{j_1,\ldots,j_k\}} \mathrm{d}_E(y^1_i,y^2_i)^q \bigg)^{p/q}  d\Pi(y^1,y^2)
\end{equation}

Now, \eqref{Wp_marginal_symm} follows from \eqref{wpqp}, \eqref{j1j2jk} and from the following general inequality:
\begin{multline}\label{combineqalpha}
\frac{k!(N-k)!}{N!} \sum_{(j_1,\ldots,j_k)\in\mathcal{J}_k}  \bigg( \frac{1}{k} \sum_{i\in\{j_1,\ldots,j_k\}} z_i \bigg)^\alpha \leq \bigg( \frac{1}{N} \sum_{i=1}^N z_i \bigg)^\alpha \qquad
\forall z_1,\ldots,z_N\geq 0\quad \forall \alpha\in(0,1],
\end{multline}
applied with $z_i = \mathrm{d}_E(y^1_i,y^2_i)^q$ for every $i\in\{1,\ldots,N\}$, and that we establish hereafter. 
Since $\card(\mathcal{J}_k) = \binom{N}{k} = \frac{N!}{k!(N-k)!}$, by concavity of the function $s\mapsto s^\alpha$, we have
\begin{equation}\label{doublesumk}
\frac{k!(N-k)!}{N!} \sum_{(j_1,\ldots,j_k)\in\mathcal{J}_k}  \bigg( \frac{1}{k} \sum_{i\in\{j_1,\ldots,j_k\}} z_i \bigg)^\alpha \leq
\left( \frac{k!(N-k)!}{N!} \frac{1}{k} \sum_{(j_1,\ldots,j_k)\in\mathcal{J}_k}\sum_{i\in\{j_1,\ldots,j_k\}} z_i \right)^\alpha .
\end{equation}
Now, in the double sum appearing at the right-hand side of \eqref{doublesumk}, given any fixed $i\in\{1,\ldots,N\}$, the term $z_i$ appears $\binom{N-1}{k-1} = \frac{(N-1)!}{(k-1)!(N-k)!}$ times in this double sum, because there are $\binom{N-1}{k-1}$ $k$-tuples of $\mathcal{J}_k$ containing $i$. Therefore
\begin{equation}\label{doublesumk_simplifie}
\sum_{(j_1,\ldots,j_k)\in\mathcal{J}_k}\sum_{i\in\{j_1,\ldots,j_k\}} z_i = \frac{(N-1)!}{(k-1)!(N-k)!} \sum_{i=1}^N z_i,
\end{equation}
and \eqref{combineqalpha} follows from \eqref{doublesumk} and \eqref{doublesumk_simplifie}.

\end{proof}

\subsubsection{A combinatorial lemma towards propagation of chaos}\label{app_technical_lemma}

\begin{lemma}[Marginals of a symmetrized tensor product]\label{technical_lemma}
Let $\mu_1,\ldots,\mu_N \in \mathcal{P}(E)$, and let $\rho\in\mathcal{P}(E^N)$ be defined by
$$
\rho = \mu_1\otimes\cdots\otimes\mu_N . 
$$
The symmetrization of $\rho$ is given by
\begin{equation}\label{rhos}
\rho^s = \frac{1}{N!} \sum_{\sigma\in\mathfrak{S}_N} \mu_{\sigma(1)} \otimes\cdots\otimes \mu_{\sigma(N)} . 
\end{equation}
The first marginal $\rho^s_{N:1}\in\mathcal{P}(E)$ of $\rho^s$ is
\begin{equation}\label{first_marginal_rhos}
\rho^s_{N:1} = \frac{1}{N} \sum_{i=1}^N \mu_i 
\end{equation}
and, for every $k\in\{2,\ldots,N\}$, its $k^\textrm{th}$-order marginal $\rho^s_{N:k}\in\mathcal{P}(E^k)$ is
\begin{equation}
\rho^s_{N:k} = (1+\varepsilon_k) \left( \rho^s_{N:1} \right)^{\otimes k} - \varepsilon_k \beta_k \label{prhosN:k_eps}
\end{equation}
where
\begin{equation}\label{def_epsilon_k}
\varepsilon_k = \frac{N^k(N-k)!}{N!} - 1 \in \left[ 0, e^{\frac{k^2}{N}}-1 \right]
\end{equation}
(the latter upper bound being valid if $k\leq\frac{N}{2}$)
and 
\begin{equation}\label{def_betak}
\beta_k = \frac{1}{\varepsilon_k} \frac{(N-k)!}{N!} \sum \mu_{i_1} \otimes\cdots\otimes \mu_{i_k} 
\in\mathcal{P}(E^k) 
\end{equation}
where the sum in \eqref{def_betak} is taken over all $k$-tuples $(i_1,\ldots,i_k)\in \{1,\ldots,N\}^k$ for which at least two elements are equal.
For every $p\in[1,+\infty)$, for every $k\in\N^*$ such that 
$k^2\leq N\ln\big(1+\frac{1}{2^p}\big)$, we have
\begin{equation}\label{dist_almost_tensor}
W_p^{[q]} \left( \rho^s_{N:k} , ( \rho^s_{N:1} )^{\otimes k} \right) 
\leq 
3 \left( \frac{k^2}{N} \right)^{1/p} \, 
W_p^{[q]} \left( ( \rho^s_{N:1} )^{\otimes k},\beta_k \right) 
\end{equation}
and therefore, assuming moreover that $\mu_1,\ldots,\mu_N\in\mathcal{P}_c(E)$, 
\begin{equation}\label{dist_almost_tensor_2}
W_p^{[q]} \left( \rho^s_{N:k} , ( \rho^s_{N:1} )^{\otimes k} \right) 
\leq 
3 k^{1/q} \left( \frac{k^2}{N} \right)^{1/p} \diam_E\left(\bigcup_{i=1}^N\supp(\mu_i)\right)  .
\end{equation}
\end{lemma}

In \eqref{dist_almost_tensor} and \eqref{dist_almost_tensor_2}, the Wasserstein distance $W_p^{[q]}$ is computed with respect to the $\ell^q$ distance $\mathrm{d}^{[q]}_{E^k}$.
The estimate \eqref{dist_almost_tensor_2} is instrumental in our main proofs: it quantifies how close the $k^\textrm{th}$-order marginal $\rho^s_{N:k}$ of the symmetric measure $\rho^s$ is to the tensor power $(\rho^s_{N:1})^{\otimes k}$ for $N$ large.

The first part of the lemma --- in particular the formulas \eqref{first_marginal_rhos} and \eqref{prhosN:k_eps} --- is well known to experts; see, e.g., \cite[Section 3]{PulvirentiSimonella} for the case of Dirac measures.

\begin{proof}
The formula \eqref{rhos} follows directly from \eqref{def_compacte_symmetrization_E}, and the formula \eqref{first_marginal_rhos} follows from Lemma \ref{first_marginal_symmetrization} in Appendix \ref{app_first_marginal_symmetrization} because $p^i_*\rho=\mu_i$.

Let us now compute the $k^\textrm{th}$-order marginal $\rho^s_{N:k}$ of $\rho^s$, for every $k\in\{2,\ldots,N\}$. Let $I^N_k$ be the set of all $k$-tuples $(i_1,\ldots,i_k)$ consisting of distinct integers chosen in $\{1,\ldots,N\}$. We have $\card(I^N_k)=\frac{N!}{(N-k)!}$. Denoting by $\mathfrak{S}_N^{i_1,\ldots,i_k}$ the set of all $\sigma\in\mathfrak{S}_N$ such that $(\sigma(1),\ldots,\sigma(k)) = (i_1,\ldots,i_k)$, we have $\card(\mathfrak{S}_N^{i_1,\ldots,i_k})=(N-k)!$. Now, since
$$
\sum_{\sigma\in\mathfrak{S}_N} \mu_{\sigma(1)} \otimes\cdots\otimes \mu_{\sigma(N)}
= \sum_{(i_1,\ldots,i_k)\in I^N_k} \mu_{i_1} \otimes\cdots\otimes \mu_{i_k} \otimes \sum_{\sigma\in\mathfrak{S}_N^{i_1,\ldots,i_k}} \mu_{\sigma(k+1)} \otimes\cdots\otimes \mu_{\sigma(N)}
$$
we infer that
\begin{equation}\label{rhosN:n}
\rho^s_{N:k} = \frac{(N-k)!}{N!} \sum_{(i_1,\ldots,i_k)\in I^N_k} \mu_{i_1} \otimes\cdots\otimes \mu_{i_k} . 
\end{equation}
Now, writing $I^N_k = \{1,\ldots,N\}^k \setminus \left( \{1,\ldots,N\}^k \setminus I^N_k \right)$, we write the sum in \eqref{rhosN:n} as a sum over $\{1,\ldots,N\}^k$ minus a sum over $\{1,\ldots,N\}^k \setminus I^N_k$ (where at least two of the indices are equal).
For the first sum, we have
\begin{equation}\label{muEXXiotimesN}
\sum_{(i_1,\ldots,i_k)\in \{1,\ldots,N\}^k} \mu_{i_1} \otimes\cdots\otimes \mu_{i_k} 
= \bigg( \sum_{i=1}^N \mu_i \bigg)^{\otimes k}
= N^k \left( \rho^s_{N:1} \right)^{\otimes k} .
\end{equation}
We infer from \eqref{rhosN:n} and \eqref{muEXXiotimesN} that
$$
\rho^s_{N:k} = \frac{N^k(N-k)!}{N!} \left( \rho^s_{N:1} \right)^{\otimes k} - \frac{(N-k)!}{N!} \beta
$$
where 
$$
\beta = \sum_{(i_1,\ldots,i_k)\in \{1,\ldots,N\}^k \setminus I^N_k} \mu_{i_1} \otimes\cdots\otimes \mu_{i_k} 
$$
is a nonnegative Radon measure of total mass $\vert\beta\vert = \card(\{1,\ldots,N\}^k \setminus I^N_k) = N^k-\frac{N!}{(N-k)!}$.
Besides, we have
$$
1 \leq \frac{N^k(N-k)!}{N!} = \frac{N^k}{N(N-1)\cdots(N-k+1)} 
= \frac{1}{\prod_{i=1}^{k-1}\left(1-\frac{i}{N}\right)} \leq e^{\frac{k^2}{N}}
$$
if $k\leq\frac{N}{2}$, where we have used the inequalities $\ln(1-s)\geq -2s$ for $0\leq s\leq\frac{1}{2}$ and
$$
\ln \prod_{i=1}^{k-1}\left(1-\frac{i}{N}\right) = \sum_{i=1}^{k-1} \ln\left( 1-\frac{i}{N}\right) \geq -\frac{2}{N}\sum_{i=1}^{k-1}i = - \frac{(k-1)k}{N} \geq -\frac{k^2}{N} .
$$
Therefore, defining $\varepsilon_k$ by \eqref{def_epsilon_k} and
$$
\beta_k = \frac{1}{\varepsilon_k} \frac{(N-k)!}{N!} \beta \in\mathcal{P}(\R^{dk}),
$$
we obtain $\rho^s_{N:k} = (1+\varepsilon_k) \left( \rho^s_{N:1} \right)^{\otimes k} - \varepsilon_k \beta_k$, which is \eqref{prhosN:k_eps}.
Then, applying Lemma \ref{lem_Wp_eps} in Appendix \ref{app_conv}, using that $\varepsilon_k<1$ if $e^{k^2/N}-1<1$, or equivalently, $k^2<N\ln(2)$, we obtain that
\begin{equation}\label{dist_almost_tensor_0}
W_p^{[q]}( ( \rho^s_{N:1} )^{\otimes k},\rho^s_{N:k} ) \leq \frac{\varepsilon_k^{1/p}}{1-\varepsilon_k^{1/p}}  W_p^{[q]} ( ( \rho^s_{N:1} )^{\otimes k},\beta_k ) \qquad \textrm{if}\ \ k^2<N\ln(2) .
\end{equation}
The estimate \eqref{dist_almost_tensor} is now inferred from \eqref{dist_almost_tensor_0} as follows: if $k^2\leq N\ln(1+\frac{1}{2^p})$ then $e^{k^2/N}-1\leq\frac{1}{2^p}$, hence $\varepsilon_k^{1/p}\leq\frac{1}{2}$ (using \eqref{def_epsilon_k}) and thus $\frac{\varepsilon_k^{1/p}}{1-\varepsilon_k^{1/p}} \leq 2 \varepsilon_k^{1/p}$, and it follows from \eqref{dist_almost_tensor_0} that
$$
W_p^{[q]}( ( \rho^s_{N:1} )^{\otimes k},\rho^s_{N:k} ) \leq 2 \big( e^{\frac{k^2}{N}} - 1\big)^{1/p} \, 
W_p^{[q]} ( ( \rho^s_{N:1} )^{\otimes k},\beta_k ) .
$$
Using that $\frac{e^x-1}{x}\leq 1/(2^p\ln(1+\frac{1}{2^p}))$ for every $x\in(0,\ln(1+\frac{1}{2^p})]$ and $\ln(\frac{3}{2})\geq \frac{1}{3}$, we obtain \eqref{dist_almost_tensor}.

Let us finally establish \eqref{dist_almost_tensor_2}.
Using \eqref{muEXXiotimesN} and \eqref{def_betak}, which express $( \rho^s_{N:1} )^{\otimes k}$ and $\beta_k$ as linear combinations,
applying two times Lemma \ref{lem_Wp_convex} in Appendix \ref{app_conv} and then Lemma \ref{lem_Wp_supp} in Appendix \ref{app_supp}, we infer that
$$
W_p^{[q]} ( ( \rho^s_{N:1} )^{\otimes k},\beta_k )
\leq \max \left( \sum_{i=1}^k \mathrm{d}_E(y_i,y'_i)^q \right)^{1/q} 
\leq k^{1/q}\, \diam_E\left(\bigcup_{i=1}^N\supp(\mu_i)\right)
$$
where, above, the maximum has been taken over all possible $y_i,y'_i\in\supp(\mu_i)$, for $i\in\{1,\ldots,k\}$. 
Then, \eqref{dist_almost_tensor_2} follows from \eqref{dist_almost_tensor} combined with the above inequality.
\end{proof}

\subsection{Density of empirical measures in the set of probability measures}\label{app_kreinmilman}
Let $E$ be a Polish space, endowed with a distance $\mathrm{d}_E$. 
For every $N\in\N^*$, let $Y^N=(y^N_1,\ldots,y^N_N)\in E^N$, and define the \emph{empirical measure} $\mu^e_{Y^N}\in\mathcal{P}(E)$ by
$$
\mu^e_{Y^N} = \frac{1}{N} \sum_{i=1}^N \delta_{y^N_i} .
$$
The points $y^N_i$ are not required to be distinct, so that the empirical measure $\mu^e_{Y^N}$ is equivalently a \emph{convex combination of Dirac masses with rational coefficients}.
Note that
$$
\int_{E} f\, d\mu^e_{Y^N} = \frac{1}{N} \sum_{i=1}^N f(y^N_i)  \qquad \forall f\in \mathscr{C}^0(E).
$$

A sequence $(\mu_j)_{j\in\N^*}$ of $\mathcal{P}(E)$ converges weakly to $\mu\in\mathcal{P}(E)$ if $\int_{E} f\, d\mu_j \rightarrow \int_{E} f\, d\mu$ as $j\rightarrow+\infty$ for any 
$f\in \mathscr{C}_b(E)$ (narrow convergence), where $\mathscr{C}_b(E)$ is the Banach space of bounded functions on $E$.

\begin{lemma}\label{lem_kreinmilman}
When $E$ is compact, the set $\{\mu^e_{Y^N}\ \mid\ N\in\N^*,\ Y^N\in E^N\}$ is weakly dense in $\mathcal{P}(E)$. In other words, any probability measure on $E$ is the weak limit of a sequence of empirical measures. 
\end{lemma}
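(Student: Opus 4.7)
The plan is to approximate $\mu$ in three successive steps: tail truncation, spatial discretization, and rational approximation of the weights. I will argue against the bounded Lipschitz metric, i.e. functionals $f$ with $\Vert f\Vert_\infty\leq 1$ and $\Lip(f)\leq 1$, which is well known to metrize the weak topology on $\mathcal{P}(\R^p)$.

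First, since $\mu$ is a Radon probability measure on the Polish space $\R^p$, it is tight: for every $\varepsilon>0$ there exists a compact set $K_\varepsilon\subset\R^p$ with $\mu(\R^p\setminus K_\varepsilon)\leq\varepsilon$. Next, I would cover $K_\varepsilon$ by a finite Borel partition $K_\varepsilon=\bigsqcup_{i=1}^{m_\varepsilon} A_i$ with $\diam(A_i)\leq\varepsilon$, choose representatives $y_i\in A_i$, and define the finitely-supported discretization
$$
\tilde\mu_\varepsilon \;=\; \sum_{i=1}^{m_\varepsilon}\mu(A_i)\,\delta_{y_i}\,+\,\bigl(1-\mu(K_\varepsilon)\bigr)\,\delta_{y_1}.
$$
A direct estimate shows that for any $f$ with $\Vert f\Vert_\infty,\Lip(f)\leq 1$,
$$
\biggl|\int f\,d\mu-\int f\,d\tilde\mu_\varepsilon\biggr|\;\leq\;\sum_{i=1}^{m_\varepsilon}\int_{A_i}|f(x)-f(y_i)|\,d\mu(x)+2\,\mu(\R^p\setminus K_\varepsilon)\;\leq\;3\varepsilon.
$$

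Then, for every $N\in\N^*$, I would round the weights: set $k_i^N=\lfloor N\mu(A_i)\rfloor$, so that $\sum_{i=1}^{m_\varepsilon} k_i^N\leq N$, and absorb the residue $k_0^N=N-\sum_{i=1}^{m_\varepsilon} k_i^N\in\{0,\dots,m_\varepsilon\}$ at the point $y_1$. Taking $Y\in(\R^p)^N$ as the tuple containing $y_i$ with multiplicity $k_i^N$ (and $y_1$ with additional multiplicity $k_0^N$), the measure
$$
\mu^E_Y \;=\;\frac{1}{N}\sum_{i=1}^{m_\varepsilon}k_i^N\,\delta_{y_i}\,+\,\frac{k_0^N}{N}\,\delta_{y_1}
$$
is by definition an empirical measure, and $|\mu(A_i)-k_i^N/N|\leq 1/N$ together with $k_0^N/N\leq m_\varepsilon/N$ yields, for $f$ as above,
$$
\biggl|\int f\,d\tilde\mu_\varepsilon-\int f\,d\mu^E_Y\biggr|\;\leq\;\frac{2m_\varepsilon}{N}.
$$

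Finally, a diagonal extraction choosing $\varepsilon_k\to 0$ and $N_k\geq m_{\varepsilon_k}/\varepsilon_k$ produces a sequence of empirical measures weakly converging to $\mu$, proving the lemma. No step presents a real obstacle; the only mild care needed is to control the three errors (tail truncation, discretization error, rounding of weights) simultaneously, which the diagonal extraction handles directly.
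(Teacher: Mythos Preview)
Your proof is correct and takes a genuinely different route from the paper's. The paper invokes the Krein--Milman theorem: it observes that $\mathcal{P}(\R^p)$, viewed inside the unit ball of $(C_0(\R^p))'$, has the Dirac measures as extreme points, so every $\mu$ is a weak-star limit of finite convex combinations $\sum_i\lambda_i\delta_{y_i}$; it then approximates the $\lambda_i$ by rationals. Your argument is instead a direct, constructive quantization: truncate by tightness, discretize the compact piece onto a finite $\varepsilon$-net, and round the weights to multiples of $1/N$. The Krein--Milman argument is shorter and more conceptual but non-constructive and requires identifying the extreme points; your argument is more elementary, avoids any functional-analytic machinery, and yields explicit rates in the bounded Lipschitz metric, which is useful if one later wants quantitative statements.

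One small slip: the claim $k_0^N\in\{0,\dots,m_\varepsilon\}$ is not correct as stated, since $\sum_i\mu(A_i)=\mu(K_\varepsilon)$ may be strictly less than $1$, so $k_0^N$ can be as large as $N(1-\mu(K_\varepsilon))+m_\varepsilon$. This does not damage your final bound, however: writing $(1-\mu(K_\varepsilon))-k_0^N/N=\sum_i\bigl(k_i^N/N-\mu(A_i)\bigr)$, one still gets the second term bounded by $m_\varepsilon/N$, and hence the total discrepancy $\leq 2m_\varepsilon/N$ as you claim. With this cosmetic fix the argument is complete.
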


\begin{proof}
This is a well-known consequence of the Krein-Milman theorem (see, e.g., \cite[Lemma 7]{McCann_DMJ1995}); we recall the proof for completeness. The set $\mathcal{P}(E)$ is convex and weak star compact, and its extreme points are Dirac masses. The Krein-Milman theorem implies that any $\mu\in\mathcal{P}(E)$ is the limit of a finite convex combination $\sum_i \lambda_i \delta_{y_i}$ of Dirac masses. By density of rationals, without loss of generality we can moreover assume that $\lambda_i\in\mathbb{Q}$. The statement follows.
\end{proof}

Recall that, for any $p\in[1,+\infty)$, the Wasserstein distance $W_p$ metrizes the weak convergence in $\mathcal{P}_p(E)$ together with convergence of moments of order $p$.
We have then the following variant of the above lemma (see \cite[Theorem 6.18]{Villani_2009}).

\begin{lemma}\label{lem_villani}
The set $\{\mu^e_{Y^N}\ \mid\ N\in\N^*,\ Y^N\in E^N\}$ is dense in $\mathcal{P}_p(E)$ for the Wasserstein distance $W_p$. In other words, any $\mu\in\mathcal{P}_p(E)$ is the limit of a sequence of empirical measures for the Wasserstein distance $W_p$.
\end{lemma}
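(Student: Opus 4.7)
The plan is to proceed in two steps: first truncate the measure using the finite first moment to reduce to a compactly supported measure, and then invoke Lemma \ref{lem_kreinmilman} together with the fact that on a compact set, weak convergence of probability measures is equivalent to $W_1$ convergence.

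First I would fix $\mu \in \mathcal{P}_1(\R^p)$ and $\varepsilon > 0$. Using that $\int_{\R^p} \Vert y\Vert\, d\mu(y) < +\infty$, the dominated convergence theorem yields $R > 0$ such that $\int_{\Vert y\Vert > R} \Vert y\Vert\, d\mu(y) < \varepsilon/2$. Define the radial truncation $\pi_R : \R^p \to \overline{B(0,R)}$ by $\pi_R(y) = y$ if $\Vert y\Vert \leq R$ and $\pi_R(y) = Ry/\Vert y\Vert$ otherwise, and set $\mu_R = (\pi_R)_* \mu \in \mathcal{P}_c(\R^p)$. The product coupling $(\mathrm{id}\times\pi_R)_*\mu \in \mathcal{P}(\R^{2p})$ has marginals $\mu$ and $\mu_R$, so from \eqref{def_W1_kantorovich},
\begin{equation*}
W_1(\mu,\mu_R) \leq \int_{\R^p} \Vert y - \pi_R(y)\Vert\, d\mu(y) = \int_{\Vert y\Vert > R} (\Vert y\Vert - R)\, d\mu(y) < \varepsilon/2 .
\end{equation*}

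Next I would approximate $\mu_R$ by an empirical measure in $W_1$. Since $\mu_R$ is supported in the compact ball $K = \overline{B(0,R)}$, and since the set $\mathcal{P}(K)$ of probability measures on $K$ is weak star compact and convex in $\mathscr{C}^0(K)'$ with extreme points exactly the Dirac masses $\{\delta_y : y \in K\}$, the same Krein-Milman argument underlying Lemma \ref{lem_kreinmilman} provides a sequence of empirical measures $\mu^E_{Y_k}$ supported in $K$ converging weakly to $\mu_R$. The key observation is now that on the compact set $K$, weak convergence coincides with $W_1$ convergence: using the duality formula \eqref{def_W1}, any $1$-Lipschitz function $f$ on $K$ may be replaced by $f - f(0)$ without changing $\int f\, d(\mu^E_{Y_k} - \mu_R)$, and the resulting family of normalized $1$-Lipschitz functions is uniformly bounded (by $2R$) and equicontinuous, hence relatively compact in $\mathscr{C}^0(K)$ by Arzelà-Ascoli. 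A standard $3\varepsilon$-argument then upgrades the pointwise vanishing of $\int f\, d(\mu^E_{Y_k} - \mu_R)$ to the uniform bound $W_1(\mu^E_{Y_k},\mu_R) \to 0$. Choosing $k_0$ large enough so that $W_1(\mu_R, \mu^E_{Y_{k_0}}) < \varepsilon/2$, the triangle inequality yields $W_1(\mu, \mu^E_{Y_{k_0}}) < \varepsilon$.

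The main technical obstacle is the Arzelà-Ascoli step converting weak convergence into $W_1$ convergence on the compact ball $K$; it is standard but requires the base-point normalization to keep the Lipschitz test functions bounded. As an alternative more direct route (closer to the strategy of \cite[Theorem 6.18]{Villani_2009}), one could invoke the law of large numbers: if $(X_i)_{i \in \N^*}$ are i.i.d.\ with common law $\mu$, then Varadarajan's theorem combined with uniform integrability of $(\Vert X_i\Vert)$ implies $\frac{1}{N}\sum_{i=1}^N \delta_{X_i} \to \mu$ almost surely in $W_1$, and selecting any realization yields the desired approximating sequence of empirical measures.
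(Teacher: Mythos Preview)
Your proof is correct and follows essentially the same strategy as the paper's own (very brief) argument: truncate to a compact ball using the finite first moment, then invoke Lemma~\ref{lem_kreinmilman} on that ball and use that weak convergence coincides with $W_1$ convergence there. Your version is considerably more detailed --- you make the truncation explicit via the radial projection $\pi_R$ with a concrete coupling bound, and you justify the weak-to-$W_1$ upgrade on $K$ via Arzel\`a--Ascoli --- whereas the paper simply asserts that ``the argument can be performed in the compact set $\overline{B}(0,R)$'' and refers to \cite[Chap.~5]{Santambrogio_2015}; note also that the weak-to-$W_1$ step could alternatively be read off directly from the metrization property of $W_1$ recalled in Section~\ref{sec_general_notations}, since on a compact set the moment condition $\int\Vert x\Vert\,d\mu_k\to\int\Vert x\Vert\,d\mu$ is automatic.
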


\begin{proof}
It suffices to consider $R>0$ sufficiently large such that $\int_{E\setminus B(y_0,R)} \mathrm{d}_E(y_0,y)^p\, d\mu(y)<\varepsilon$, for $\varepsilon>0$ small enough, so that the argument can be performed in the compact set $\overline{B}(y_0,R)$, and the statement readily follows (see also \cite[Chap. 5]{Santambrogio_2015}).
\end{proof}

Several results in the literature quantify the convergence of empirical measures $\mu^e_{Y^N}$ to $\mu\in\mathcal{P}(E)$, mostly in a probabilistic context (see, e.g., \cite{FournierGuillin_PTRF2015}, where $Y$ consists of $N$ i.i.d. random variables of distribution $\mu$). In the lemma below, $Y$ is deterministic and the rate is that obtained from Riemann integration.

\begin{lemma}\label{lem_rate_CV_empirical}
Let $\mu\in\mathcal{P}_c(E)$ and let $N\in\N^*$.
We assume that there exists a family of tagged partitions of $\supp(\mu)$ associated with $\mu$ (see \eqref{def_tagged}), i.e., for every $N\in\N^*$ there exists a partition of $\supp(\mu) = \cup_{i=1}^N F^N_i$ such that all subsets $F^N_i$ are $\mu$-measurable, pairwise disjoint, $\mu(F^N_i)=\frac{1}{N}$, and satisfy $\diam_E(F^N_i)\leq\frac{C_E}{N^r}$ for some $C_E>0$ not depending on $N$, and a $N$-tuple $Y^N=(y^N_1,\ldots,y^N_N)\in E^N$ such that $y^N_i\in F^N_i$ for every $i\in\{1,\ldots,N\}$. Then
$$
W_1(\mu^e_{Y^N},\mu) \leq \frac{C_E}{N^r} 
$$
and thus also, using \eqref{inegWp1Wp2}, 
$$
W_p(\mu^e_{Y^N},\mu) \leq \diam_E(\supp(\mu))^{1-1/p}\frac{C_E^{1/p}}{N^{r/p}}
\qquad \forall p\in[1,+\infty) .
$$
\end{lemma}

Note that, when $E$ is a finite-dimensional manifold, $r=1/\dim(E)$.

When one wants the assumption on the tagged partition to hold for every $N\in\N^*$, this requires the mass of $\mu$ to be sufficiently uniformly distributed; for instance, it is satisfied if $\mu$ is absolutely continuous with respect to a Lebesgue measure with a density bounded above and below on $\supp(\mu)$. This is an elementary deterministic statement, and is unrelated to the much deeper probabilistic results of \cite{FournierGuillin_PTRF2015}.

\begin{proof}
For every $i\in\{1,\ldots,N\}$, we have $\int_{F_i} f(y^N_i)\, d\mu(y) = f(y^N_i) \mu(F^N_i) = \frac{1}{N} f(y^N_i)$ because $\mu(F^N_i)=\frac{1}{N}$ and thus, for every $f\in\Lip(E)$ such that $\Lip(f)\leq 1$,
\begin{multline*}
\left\vert \int_{E} f\, d(\mu-\mu^e_{Y^N}) \right\vert
= \left\vert \sum_{i=1}^N \int_{F^N_i} f(y)\, d\mu(y) - \frac{1}{N}\sum_{i=1}^N f(y^N_i) \right\vert
= \left\vert \sum_{i=1}^N \int_{F^N_i} ( f(y)-f(y^N_i) )\, d\mu(y) \right\vert \\
\leq \sum_{i=1}^N \int_{F^N_i} \vert f(y)-f(y^N_i) \vert\, d\mu(y)
\leq \sum_{i=1}^N \int_{F^N_i} \mathrm{d}_E(y,y^N_i) \, d\mu(y)
\leq \sum_{i=1}^N \mu(F^N_i) \, \diam_E(F^N_i)
\leq \frac{C_E}{N^r} 
\end{multline*}
and the conclusion follows by taking the supremum over all $f$.
\end{proof}

\subsection{Convergence of empirical and semi-empirical measures}\label{app_cv_emp_semiemp}
Let $(\Omega,\mathrm{d}_\Omega)$ be a complete metric space and let $\nu\in\mathcal{P}_c(\Omega)$.
We assume that there exists a family of tagged partitions $(\mathcal{A}^N,X^N)$ of $\supp(\nu)$ associated with $\nu$ satisfying \eqref{def_tagged} (see Section \ref{sec_general_notations}), with $\mathcal{A}^N=(\Omega^N_1,\ldots,\Omega^N_N)$ and $X^N=(x^N_1,\ldots,x^N_N)$. 
We define the \emph{empirical measure} $\nu^e_{X^N}\in\mathcal{P}(\Omega)$ by
$$
\nu^e_{X^N} = \frac{1}{N} \sum_{i=1}^N \delta_{x^N_i} .
$$

\subsubsection{Convergence of empirical measures on $\Omega$}

\begin{lemma}\label{lem_CV_empirical}
\begin{itemize}
\item Let $f$ be a bounded and $\nu$-almost everywhere continuous (i.e., $\nu$-Riemann integrable) function on $\Omega$, of compact support. Then
\begin{equation}\label{CV_sum_riemann}
\int_{\Omega} f\, d(\nu-\nu^e_{X^N}) = \int_{\Omega} f\, d\nu - \frac{1}{N} \sum_{i=1}^N f(x^N_i) = \mathrm{o}(1)
\end{equation}
as $N\rightarrow+\infty$.
As a consequence, $\nu^e_{X^N}$ converges weakly to $\nu$ as $N\rightarrow+\infty$; equivalently, $W_p \left( \nu^e_{X^N},\nu \right) = \mathrm{o}(1)$ as $N\rightarrow+\infty$. 
\item Given any $\alpha\in(0,1]$ and any $N\in\N^*$, we have
\begin{equation}\label{lem_CV_empirical_1}
\left\vert \int_{\Omega} f\, d\nu - \frac{1}{N} \sum_{i=1}^N f(x^N_i) \right\vert \leq \frac{C_\Omega^\alpha}{N^{r\alpha}} \Hol_\alpha(f) 
\end{equation}
for every $f\in \mathscr{C}^{0,\alpha}_c(\Omega)$. As a consequence of \eqref{lem_CV_empirical_1} for $\alpha=1$, we have
\begin{equation}\label{lem_CV_empirical_2}
W_1 \left( \nu^e_{X^N},\nu \right) \leq \frac{C_\Omega}{N^r} 
\end{equation}
and thus also, using \eqref{inegWp1Wp2}, $W_p(\nu^e_{X^N},\nu) \leq \diam_\Omega(\supp(\nu))^{1-1/p}\frac{C_\Omega^{1/p}}{N^{r/p}}$, for any $p\in[1,+\infty)$.
\end{itemize}
\end{lemma}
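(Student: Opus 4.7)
\textbf{Proof plan for Lemma \ref{lem_CV_empirical}.}

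My strategy will be to extract both statements from a single elementary identity and then to specialize. Since $\Omega = \bigcup_{i=1}^N \Omega_i$ (disjoint union) with $\nu(\Omega_i) = 1/N$, the constant function equal to $f(x_i)$ on $\Omega_i$ integrates to $\frac{1}{N}f(x_i)$ against $\nu$, so I will start from the key identity
$$
\int_{\Omega} f\, d\nu - \frac{1}{N}\sum_{i=1}^N f(x_i) = \sum_{i=1}^N \int_{\Omega_i}\bigl(f(x)-f(x_i)\bigr)\, d\nu(x). \qquad (\star)
$$

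For the first bullet, I would apply the triangle inequality inside $(\star)$ and invoke the $\nu$-Riemann-integrability criterion \eqref{CV_Riemann_sum_1} recalled in Section \ref{sec_general_notations}: for a bounded, $\nu$-almost everywhere continuous function of compact support one has $\sum_i \int_{\Omega_i}|f(x)-f(x_i)|\, d\nu(x) = \mathrm{o}(1)$ along any family of tagged partitions satisfying \eqref{def_tagged}. This gives \eqref{CV_sum_riemann} directly. For the weak-convergence consequence, I note that every $f \in \mathscr{C}^0_c(\Omega)$ is bounded, continuous and of compact support, hence $\nu$-Riemann integrable, so \eqref{CV_sum_riemann} yields $\int f\, d\nu_X^E \to \int f\, d\nu$ for every such $f$; since $\nu_X^E$ and $\nu$ are both probability measures, this vague convergence upgrades to weak convergence by density of $\mathscr{C}^0_c(\Omega)$ in $C_0(\Omega)$ together with tightness. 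When $\Omega$ is of compact closure, all first moments are uniformly bounded, so weak convergence is equivalent to $W_1$-convergence, which yields the $\mathrm{o}(1)$ statement for $W_1$.

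For the second bullet, I would bound the right-hand side of $(\star)$ pointwise using H\"older continuity and \eqref{def_tagged}: for every $x \in \Omega_i$,
$$
|f(x) - f(x_i)| \leq \Hol_\alpha(f)\, d_\Omega(x,x_i)^\alpha \leq \Hol_\alpha(f)\, \diam(\Omega_i)^\alpha \leq \Hol_\alpha(f)\, \bigl(C_\Omega/N\bigr)^\alpha.
$$
Integrating against $\nu$ on $\Omega_i$, summing over $i$, and using $\sum_i \nu(\Omega_i) = 1$ yield \eqref{lem_CV_empirical_1}. Then \eqref{lem_CV_empirical_2} is an immediate consequence of the Kantorovich--Rubinstein dual formula \eqref{def_W1}: since the estimate \eqref{lem_CV_empirical_1} with $\alpha=1$ is uniform over all $f$ with $\Lip(f) \leq 1$, taking the supremum of $\int f\, d(\nu - \nu_X^E)$ over such $f$ gives $W_1(\nu_X^E,\nu) \leq C_\Omega/N$.

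No step here presents a genuine obstacle: the lemma is essentially the Riemann-sum interpretation of the empirical measure combined with Kantorovich--Rubinstein duality. The only point requiring minor care, for exposition rather than substance, is the passage from pointwise convergence of integrals against compactly supported test functions to weak convergence of probability measures when $\Omega$ is non-compact, which I would handle via the density of $\mathscr{C}^0_c(\Omega)$ in $C_0(\Omega)$ and the tightness of the probability Radon measure $\nu$.
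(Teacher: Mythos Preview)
Your proposal is correct and follows essentially the same approach as the paper's proof: both start from the identity $(\star)$, invoke the Riemann-sum criterion \eqref{CV_Riemann_sum_1} for the first item, and bound $|f(x)-f(x_i)|$ by $\Hol_\alpha(f)\,\diam(\Omega_i)^\alpha$ followed by Kantorovich--Rubinstein duality for the second. The only cosmetic difference is that the paper appeals to the Portmanteau theorem for the weak-convergence statement, whereas you spell out the density and tightness argument, but the substance is identical.
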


\begin{proof}
In the first item, \eqref{CV_sum_riemann} follows from the theorem of convergence of Riemann sums, as already recalled in \eqref{CV_Riemann_sum_2}. Interpreted in terms of the empirical measure $\nu^e_{X^N}$, this means that $\nu^e_{X^N}$ converges weakly to $\nu$ as $N\rightarrow+\infty$. In accordance with the Portmanteau theorem (see, e.g., \cite[Chapter 1, Section 2, Theorem 2.1]{Billingsley}), since $W_p$ metrizes the weak convergence, we have $W_p \left( \nu^e_{X^N},\nu \right) = \mathrm{o}(1)$ as $N\rightarrow+\infty$ since $\supp(\nu)$ is compact. 

Writing $\int_{\Omega} f\, d\nu = \sum_{i=1}^N \int_{\Omega^N_i}f\, d\nu$ and using that $\nu(\Omega^N_i)=\frac{1}{N}$ (thus $\frac{1}{N} f(x^N_i) = \int_{\Omega^N_i} f(x^N_i)\, d\nu(x)$) and that $\diam_\Omega(\Omega^N_i)\leq\frac{C_\Omega}{N^r}$ (see \eqref{def_tagged}), we have
\begin{multline*}
\left\vert \int_{\Omega} f\, d\nu - \frac{1}{N} \sum_{i=1}^N f(x^N_i) \right\vert 
=\left\vert \sum_{i=1}^N \int_{\Omega^N_i} ( f(x) - f(x^N_i) ) \, d\nu(x) \right\vert 
\leq \sum_{i=1}^N \int_{\Omega^N_i} \vert f(x)-f(x^N_i)\vert\, d\nu(x) \\
\leq \Hol_\alpha(f) \sum_{i=1}^N \int_{\Omega^N_i} \mathrm{d}_\Omega(x,x^N_i)^\alpha\, d\nu(x) 
\leq \Hol_\alpha(f) \sum_{i=1}^N \nu(\Omega^N_i) \, \diam_\Omega(\Omega^N_i)^\alpha
\leq \frac{C_\Omega^\alpha}{N^{r\alpha}} \Hol_\alpha(f)
\end{multline*}
which gives \eqref{lem_CV_empirical_1}. Taking $\alpha=1$, \eqref{lem_CV_empirical_2} follows by the definition \eqref{def_W1} of $W_1$. 
\end{proof}

\subsubsection{Convergence of semi-empirical measures}\label{app_semiempirical}
Let $d\in\N^*$.
Let $\mu\in\mathcal{P}_c(\Omega\times\R^d)$, disintegrated as $\mu = \int_{\Omega} \mu_x\, d\nu(x)$ with respect to its marginal $\nu=\pi_*\mu$ on $\Omega$.
We define the \emph{semi-empirical measure} $\mu^{se}_{X^N}\in\mathcal{P}(\Omega\times\R^d)$ by
$$
\mu^{se}_{X^N} = \frac{1}{N} \sum_{i=1}^N \delta_{x^N_i} \otimes \mu_{x^N_i} = \int_{\Omega} \mu_x\, d\nu^e_{X^N}(x) .
$$
Its marginal on $\Omega$ is the empirical measure $\nu^e_{X^N}$.
In other words, the disintegration of $\mu^{se}_{X^N}$ with respect to $\nu^e_{X^N}$ is the family of probability measures given by $\mu_{x^N_i}$ when $x=x^N_i$ for some $i\in\{1,\ldots,N\}$ and $0$ otherwise.


\begin{lemma}\label{lem_CV_semiempirical}
\ 
\begin{itemize}
\item We assume that $x\mapsto\mu_x$ is $\nu$-almost everywhere continuous for the Wasserstein distance $W_1$ (equivalently, $W_p$). Let $f$ be a bounded and $\mu$-almost everywhere continuous (i.e., $\mu$-Riemann integrable) function on $\Omega\times\R^d$, of compact support, Lipschitz with respect to $\xi\in\R^d$ with a Lipschitz constant that is uniform with respect to $x\in\Omega$. Then
\begin{equation}\label{lem_CV_sum_riemann_SE}
\int_{\Omega\times\R^d} f\, d(\mu-\mu^{se}_{X^N}) = \mathrm{o}(1)
\end{equation}
as $N\rightarrow+\infty$.
As a consequence, $\mu^{se}_{X^N}$ converges weakly to $\mu$; 
equivalently, $W_p(\mu^{se}_{X^N},\mu) = \mathrm{o}(1)$ as $N\rightarrow+\infty$.

\item
We assume that $x\mapsto\mu_x$ is Lipschitz for the Wasserstein distance $W_1$, i.e., that there exists $L>0$ such that $W_1(\mu_x,\mu_y) \leq L\, \mathrm{d}_\Omega(x,y)$ for $\nu$-almost all $x,y\in\Omega$. Then, given any $N\in\N^*$,
\begin{equation}\label{lem_CV_semiempirical_1}
\left\vert \int_{\Omega\times\R^d} f\, d(\mu-\mu^{se}_{X^N}) \right\vert  \leq   \frac{(L+1)C_\Omega}{N^r}\Lip(f) 
\end{equation}
for every $f\in \mathscr{C}^0_0(\Omega\times\R^d)\cap\Lip(\Omega\times\R^d)$.
As a consequence,
\begin{equation}\label{lem_CV_semiempirical_2}
W_1 \left( \mu^{se}_{X^N},\mu \right) \leq  \frac{(L+1)C_\Omega}{N^r} ,
\end{equation}
and thus also, using \eqref{inegWp1Wp2}, 
$$
W_p(\mu^{se}_{X^N},\mu) \leq \diam_{\Omega\times\R^d}(\supp(\mu))^{1-1/p}\frac{((L+1)C_\Omega)^{1/p}}{N^{r/p}}
\qquad \forall p\in[1,+\infty) .
$$
\end{itemize}
\end{lemma}

\begin{proof}
Let $f:\Omega\times\R^d\rightarrow\R$ be a bounded and $\mu$-almost everywhere continuous function, of compact support, Lipschitz with respect to $\xi\in\R^d$.
The function $F$ defined by $F(x) = \int_{\R^d} f(x,\xi)\, d\mu_x(\xi)$ is bounded on $\Omega$, and
\begin{equation}\label{Fxy}
\begin{split}
\vert F(x)-F(x')\vert &\leq \int_{\R^d} \vert f(x,\xi)-f(x',\xi)\vert\, d\mu_x(\xi) + \left\vert \int_{\R^d} f(x',\xi)\, d(\mu_x-\mu_{x'})(\xi) \right\vert \\
&\leq \int_{\R^d} \vert f(x,\xi)-f(x',\xi)\vert\, d\mu_x(\xi) + W_1(\mu_x,\mu_{x'}) \Lip(f(x',\cdot))  
\end{split}
\end{equation}
for all $x,x'\in\Omega$. Now:
\begin{itemize}
\item First, if moreover $x'\mapsto\Lip(f(x',\cdot))$ is bounded on $\Omega$ and if $x\mapsto\mu_x$ is $\nu$-almost everywhere continuous for the Wasserstein distance $W_1$, then we infer from \eqref{Fxy} that $F$ is $\nu$-almost everywhere continuous. Therefore
$$
\int_{\Omega\times\R^d} f\, d(\mu-\mu^{se}_{X^N}) = \int_{\Omega} F\, d(\nu-\nu^e_{X^N}) = \int_{\Omega} F\, d\nu - \frac{1}{N} \sum_{i=1}^N F(x^N_i) = \mathrm{o}(1)
$$
as $N\rightarrow+\infty$ by convergence of Riemann sums ($f$ and thus $F$ being fixed), which gives \eqref{lem_CV_sum_riemann_SE}. 
%
\item Second, if $f\in\Lip(\Omega\times\R^d)$ and if $x\mapsto\mu_x$ is $L$-Lipschitz for the Wasserstein distance $W_1$ then we infer from \eqref{Fxy} that
$$
\vert F(x)-F(x')\vert \leq \Lip(f)\, \mathrm{d}_\Omega(x,x') + W_1(\mu_x,\mu_{x'}) \Lip(f) 
\leq \Lip(f) (1+L)\, \mathrm{d}_\Omega(x,x')
$$
and thus, using \eqref{lem_CV_empirical_1} with $\alpha=1$, that 
$$
\left\vert \int_\Omega F\, d(\nu-\nu^e_{X^N}) \right\vert \leq \frac{C_\Omega}{N^r}\Lip(F) \leq \frac{(L+1)C_\Omega}{N^r}\Lip(f),
$$
whence \eqref{lem_CV_semiempirical_1}; the estimate \eqref{lem_CV_semiempirical_2} then follows by the definition \eqref{def_W1} of $W_1$.
\end{itemize}
\end{proof}

\begin{remark}
In the first item of Lemma \ref{lem_CV_semiempirical}, the boundedness assumption on $f$ can be slightly weakened to: $x\mapsto f(x,0)$ bounded and $\mu\in\mathcal{P}_1(\Omega\times\R^d)$. Indeed, writing $\vert f(x,\xi)\vert\leq \vert f(x,0)\vert + \Lip(f(x,\cdot))\vert\xi\vert$, we infer that $F$ is bounded. The rest of the proof is the same.
\end{remark}


\subsection{Discrepancy between empirical and $\nu$-monokinetic measures}\label{app_discrepancy_empirical_monokinetic}
Recall that:
\begin{itemize}
\item given any $X^N=(x^N_1,\ldots,x^N_N)\in\Omega^N$ and any $\Xi^N=(\xi^N_1,\dots,\xi^N_N)\in\R^{dN}$, the empirical measure $\mu^e_{(X^N,\Xi^N)}$ on $\Omega\times\R^d$ 
is defined by \eqref{def_empirical_measure};
\item given any $\nu\in\mathcal{P}(\Omega)$ and any measurable function $y:\Omega\rightarrow\R^d$, the $\nu$-monokinetic measure $\mu^\nu_y$ on $\Omega\times\R^d$ is defined by \eqref{def_monokinetic_measure}.
\end{itemize}

\begin{lemma}\label{lem_discrepancy}
Let $\nu\in\mathcal{P}(\Omega)$ and let $(\mathcal{A}^N,X^N)_{N\in\N^*}$ be a family of tagged partitions associated with $\nu$ (see \eqref{def_tagged}), with $\mathcal{A}^N = (\Omega^N_1,\ldots,\Omega^N_N)$ and $X^N=(x^N_1,\ldots,x^N_N)$.
\begin{enumerate}[label=(\roman*)]
\item\label{lemi} Let $y\in \Lip(\Omega,\R^d)$. For every $N\in\N^*$, taking $\Xi^N=(\xi^N_1,\ldots,\xi^N_N)$ with $\xi^N_i=y(x^N_i)$ for every $i\in\{1,\ldots,N\}$, we have
$$
\left\vert \left\langle \mu_y^\nu - \mu^e_{(X^N,\Xi^N)}, f \right\rangle \right\vert \leq \frac{C_\Omega}{N^r} \Lip\left( x\mapsto f(x,y(x)) \right)
\qquad\forall f\in \Lip_c(\Omega\times\R^d) .
$$
\item\label{lemii} For every $N\in\N^*$, let $\Xi^N=(\xi^N_1,\ldots,\xi^N_N)\in\R^{dN}$. Defining the piecewise continuous function 
$$
y^N(x) = \sum_{i=1}^N \xi^N_i \, \mathds{1}_{\Omega^N_i}(x)\qquad \forall x\in\Omega ,
$$
so that $y^N(x^N_i)=\xi^N_i$ for every $i\in\{1,\ldots,N\}$, we have
$$
\left\vert \left\langle \mu_{y^N}^\nu - \mu^e_{(X^N,\Xi^N)}, f \right\rangle \right\vert \leq \frac{C_\Omega}{N^r} \max_{1\leq i\leq N} \Lip(f(\cdot,\xi^N_i))
\qquad\forall f\in \Lip_c(\Omega\times\R^d) .
$$
\end{enumerate}
\end{lemma}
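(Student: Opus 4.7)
The two statements are Riemann-sum discrepancy estimates: in both cases the left-hand side is the difference between an integral of a Lipschitz function against $\nu$ and the average of its values at the tagged points $x_i$, so the proof reduces to using the defining properties \eqref{def_tagged} of the tagged partition.

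For part \ref{lemi}, I would first unfold both measures. Since $\mu_y^\nu=\nu\otimes\delta_{y(\cdot)}$, one has $\langle\mu_y^\nu,f\rangle=\int_\Omega g(x)\,d\nu(x)$, where $g(x)=f(x,y(x))$ is Lipschitz on $\Omega$ with $\Lip(g)\leq\Lip(x\mapsto f(x,y(x)))$. Since $\mu^E_{(X,\Xi)}=\frac{1}{N}\sum_i\delta_{x_i}\otimes\delta_{\xi_i}$ and $\xi_i=y(x_i)$, one has $\langle\mu^E_{(X,\Xi)},f\rangle=\frac{1}{N}\sum_{i=1}^N g(x_i)$. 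Now, using $\nu(\Omega_i)=1/N$ from \eqref{def_tagged}, I would write both quantities as a sum over the partition:
\begin{equation*}
\langle\mu_y^\nu-\mu^E_{(X,\Xi)},f\rangle=\sum_{i=1}^N\int_{\Omega_i}\bigl(g(x)-g(x_i)\bigr)\,d\nu(x).
\end{equation*}
Bounding $|g(x)-g(x_i)|\leq\Lip(g)\,d_\Omega(x,x_i)\leq\Lip(g)\,\diam(\Omega_i)\leq\Lip(g)\,C_\Omega/N$ on $\Omega_i$ and summing over $i$ (using once more that $\sum_i\nu(\Omega_i)=1$) gives the claimed bound $\frac{C_\Omega}{N}\Lip(g)$.

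For part \ref{lemii}, the idea is the same but the roles of $x$ and $\xi$ are slightly interchanged. Since $y_\Xi(x)=\xi_i$ for $x\in\Omega_i$, one has $\langle\mu_{y_\Xi}^\nu,f\rangle=\sum_{i=1}^N\int_{\Omega_i}f(x,\xi_i)\,d\nu(x)$, whereas $\langle\mu^E_{(X,\Xi)},f\rangle=\sum_{i=1}^N\int_{\Omega_i}f(x_i,\xi_i)\,d\nu(x)$ using $\nu(\Omega_i)=1/N$. Subtracting and using $|f(x,\xi_i)-f(x_i,\xi_i)|\leq\Lip(x\mapsto f(x,\xi_i))\,d_\Omega(x,x_i)\leq\max_{j}\Lip(x\mapsto f(x,\xi_j))\,C_\Omega/N$ on $\Omega_i$ yields the stated estimate after summation.

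\textbf{Main obstacle.} There is no real obstacle here: the argument is purely a Riemann-sum error bound, and the hypothesis \eqref{def_tagged} — equidistribution ($\nu(\Omega_i)=1/N$) together with the diameter control $\diam(\Omega_i)\leq C_\Omega/N$ — is exactly what makes the estimate immediate. The only mild care needed is the clean reduction to a one-variable Lipschitz function: in \ref{lemi} one absorbs $y$ into $f$ by forming $g(x)=f(x,y(x))$ and relies on the chain/composition to keep the Lipschitz constant manageable, while in \ref{lemii} one keeps the second argument frozen on each cell $\Omega_i$ so that only the $x$-Lipschitz constant of $f$ enters, uniformized over $i$ by the maximum.
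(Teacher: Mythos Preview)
Your proposal is correct and follows essentially the same approach as the paper: both proofs unfold the two measures as sums over the cells $\Omega_i$, use $\nu(\Omega_i)=1/N$ to write the empirical average as an integral, and then bound the resulting Riemann-sum discrepancy via the Lipschitz constant and the diameter bound $\diam(\Omega_i)\leq C_\Omega/N$.
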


\begin{proof}
Let us prove \ref{lemi}. We have
$
\left\langle \mu_y^\nu, f \right\rangle = \int_\Omega f(x,y(x))\, d\nu(x) = \sum_{i=1}^N \int_{\Omega^N_i}f(x,y(x))\, d\nu(x)
$
and (using that $\nu(\Omega^N_i)=\frac{1}{N}$)
$$
\left\langle \mu^e_{(X^N,\Xi^N)}, f \right\rangle = \frac{1}{N} \sum_{i=1}^N  f(x^N_i,y(x^N_i)) = \sum_{i=1}^N \int_{\Omega^N_i} f(x^N_i,y(x^N_i))\, d\nu(x)
$$
hence
\begin{equation*}
\begin{split}
\left\vert \left\langle \mu_y^\nu - \mu^e_{(X^N,\Xi^N)}, f \right\rangle \right\vert 
&\leq \sum_{i=1}^N \int_{\Omega^N_i}  \left\vert  f(x,y(x)) - f(x^N_i,y(x^N_i)) \right\vert d\nu(x) \\
&\leq \Lip\left( x\mapsto f(x,y(x)) \right) \sum_{i=1}^N \int_{\Omega^N_i} \mathrm{d}_\Omega(x,x^N_i)\, d\nu(x)
\end{split}
\end{equation*}
and \ref{lemi} follows because $\int_{\Omega^N_i} \mathrm{d}_\Omega(x,x^N_i)\, d\nu(x) \leq \nu(\Omega^N_i) \,  \diam_\Omega(\Omega^N_i) \leq \frac{C_\Omega}{N^{1+r}}$ (using \eqref{def_tagged}).

The estimate of \ref{lemii} is proved similarly: we have
$
\left\langle \mu_{y^N}^\nu, f \right\rangle = \sum_{i=1}^N \int_{\Omega^N_i} f(x,\xi^N_i)\, d\nu(x)
$
and thus
\begin{multline*}
\left\vert \left\langle \mu_{y^N}^\nu - \mu^e_{(X^N,\Xi^N)}, f \right\rangle \right\vert 
\leq \sum_{i=1}^N \int_{\Omega^N_i} \left\vert f(x,\xi^N_i) - f(x^N_i,\xi^N_i) \right\vert d\nu(x)  \\
\leq \sum_{i=1}^N \Lip(f(\cdot,\xi^N_i)) \int_{\Omega^N_i} \mathrm{d}_\Omega(x,x^N_i)\, d\nu(x) 
\end{multline*}
and \ref{lemii} follows.
\end{proof}

\begin{remark}\label{remLip}
The proof shows that the estimates remain valid if we only require the relevant functions to be Lipschitz on each $\Omega^N_i$ separately; in particular, they may be discontinuous across the boundaries between cells. Under this weaker assumption, item \ref{lemii} becomes a consequence of \ref{lemi}.
\end{remark}

\subsection{Mean field and variance}\label{app_mean_field}
Let $\mu\in\mathcal{P}_c(\Omega\times\R^d)$ be arbitrary.
Recall that the mean field $\mathcal{X}[\mu](t,x,\xi)$ is defined by \eqref{def_mean_field}, which is the expectation of $G(t,x,\cdot,\xi,\cdot)$ for the measure $\mu$, performed with respect to $(x',\xi')\in\Omega\times\R^d$:
$$
\mathcal{X}[\mu](t,x,\xi) = \int_{\Omega\times\R^d} G(t,x,x',\xi,\xi') \, d\mu(x',\xi')  
= \mathbb{E}^\mu G(t,x,\cdot,\xi,\cdot)
$$
Given any $t\in\R$, any $x,x'\in\Omega$ and any $\xi,\xi'\in\R^d$, we set
$$
e_t[\mu](x,x',\xi,\xi') = G(t,x,x',\xi,\xi') - \mathcal{X}[\mu](t,x,\xi) .
$$
Of course, we have $\mathbb{E}^\mu e_t[\mu](x,\cdot,\xi,\cdot) = 0$ and thus also
$$
\mathbb{E}^{\mu\otimes\mu} e_t[\mu] = 0 .
$$
This naturally leads to consider the variance of $e_t[\mu]$ with respect to $\mu\otimes\mu$:
$$
\mathrm{Var}(e_t[\mu]) = \mathbb{E}^{\mu\otimes\mu} \Vert e_t\Vert^2
= \int_{\Omega^2\times\R^{2d}} \Vert G(t,x,x',\xi,\xi') - \mathcal{X}[\mu](t,x,\xi)\Vert^2\, d\mu(x',\xi')\, d\mu(x,\xi) .
$$
Note that
\begin{equation}\label{estimVar}
\mathrm{Var}(e_t[\mu]) \leq 4\Vert G(t,\cdot,\cdot,\cdot,\cdot)_{\vert\supp(\mu)^2}\Vert_{\mathscr{C}^0}^2 .
\end{equation}

Let $N\in\N^*$ be fixed.
Recall that the particle (time-dependent) vector field $Y^N=(Y^N_1,\ldots,Y^N_N)$ is defined by \eqref{def_Y} with $Y^N_i$ defined by \eqref{def_Yi}, i.e., $Y^N_i(t,X,\Xi) = \frac{1}{N} \sum_{j=1}^N G(t,x_i,x_j,\xi_i,\xi_j)$, where we use the notations $X=(x_1,\ldots,x_N)\in\Omega^N$ and $\Xi=(\xi_1,\ldots,\xi_N)\in(\R^d)^N$.

\begin{lemma}\label{lem_variance_Y}
We assume that the norm $\Vert\cdot\Vert$ on $\R^d$ is induced by a scalar product $\langle\ ,\ \rangle$ on $\R^d$.
For every $i\in\{1,\ldots,N\}$ we have
$$
\int_{\Omega^N\times\R^{dN}} \Vert Y^N_i(t,X,\Xi)-\mathcal{X}[\mu](t,x_i,\xi_i)\Vert^2 \, d\mu^{\otimes N}(X,\Xi)
= \frac{1}{N} \mathrm{Var}(e_t[\mu])
\leq \frac{4}{N}\Vert G(t,\cdot,\cdot,\cdot,\cdot)_{\vert\supp(\mu)^2}\Vert_{\mathscr{C}^0}^2 .
$$
\end{lemma}

\begin{proof}[Proof of Lemma \ref{lem_variance_Y}.]
By definition of $e_t$, we have, for every $i\in\{1,\ldots,N\}$,
$$
Y^N_i(t,X,\Xi)-\mathcal{X}[\mu](t,x_i,\xi_i) = \frac{1}{N}\sum_{j=1}^N e_t[\mu](x_i,x_j,\xi_i,\xi_j) .
$$
Therefore,
\begin{equation}\label{expansion_var}
\begin{split}
& \int_{\Omega^N\times\R^{dN}} \Vert Y^N_i(t,X,\Xi)-\mathcal{X}[\mu](t,x_i,\xi_i)\Vert^2 \, d\mu^{\otimes N}(X,\Xi) \\
=&\ \frac{1}{N^2} \int_{\Omega^N\times\R^{dN}} \sum_{j=1}^N \Vert e_t[\mu](x_i,x_j,\xi_i,\xi_j) \Vert^2 \, d\mu^{\otimes N}(X,\Xi)  \\
& \qquad\qquad + \frac{1}{N^2} \int_{\Omega^N\times\R^{dN}} \sum_{\substack{j,k=1 \\ j\neq k}}^N \langle e_t[\mu](x_i,x_j,\xi_i,\xi_j), e_t[\mu](x_i,x_k,\xi_i,\xi_k)\rangle \, d\mu^{\otimes N}(X,\Xi) .
\end{split}
\end{equation}
The first term at the right-hand side of \eqref{expansion_var} is equal to
\begin{equation}\label{var10:35}
\frac{1}{N}\int_{\Omega^2\times\R^{2d}} \Vert e_t[\mu](x,x',\xi,\xi') \Vert^2 \, d\mu(x',\xi') \, d\mu(x,\xi) = \frac{1}{N} \mathrm{Var}(e_t[\mu]) .
\end{equation}
The second term at the right-hand side of \eqref{expansion_var} is equal to 
\begin{multline}\label{var10:36}
\frac{N^2-N}{N^2} \int_{\Omega^3\times\R^{3d}} \langle e_t[\mu](x,x',\xi,\xi'), e_t[\mu](x,x'',\xi,\xi'')\rangle \, d\mu(x,\xi)\, d\mu(x',\xi')\, d\mu(x'',\xi'') \\
= \frac{N^2-N}{N^2} \int_{\Omega\times\R^d} \left\Vert \int_{\Omega\times\R^d} e_t[\mu](x,x',\xi,\xi') \, d\mu(x',\xi') \right\Vert^2 \, d\mu(x,\xi)
= 0
\end{multline}
because the expectation of $e_t[\mu](x_i,\cdot,\xi_i,\cdot)$ is equal to $0$.
The lemma is proved, using \eqref{estimVar}.
\end{proof}

Although Lemma \ref{lem_variance_Y} is not used directly in this article, we have included it because of its independent interest. In the proof of Theorem \ref{thm_CV_liouville} (Appendix \ref{app_proof_thm_CV_liouville}), we shall need a more technical variant, given below.

\begin{lemma}\label{lem_variance_Y_2}
As in Lemma \ref{lem_variance_Y}, we assume that the norm $\Vert\cdot\Vert$ on $\R^d$ is induced by a scalar product $\langle\ ,\ \rangle$ on $\R^d$.
Let $\bar X=(\bar x_1,\ldots,\bar x_N)\in\Omega^N$ be arbitrary, and let
$$
\rho = \delta_{\bar x_1}\otimes\cdots\otimes\delta_{\bar x_N} \otimes \mu_{\bar x_1}\otimes\cdots\otimes\mu_{\bar x_N} .
$$
Then
\begin{multline}\label{varianceell1}
M(t) = \left( \int_{\Omega^N\times\R^{dN}} \bigg( \sum_{i=1}^N \Vert Y^N_i(t,X,\Xi)-\mathcal{X}[\mu](t,x_i,\xi_i)\Vert \bigg)^2 \, d\rho(X,\Xi) \right)^{1/2} \\
\leq 2 \Vert G(t,\cdot,\cdot,\cdot,\cdot)_{\vert\supp(\mu)^2}\Vert_{\mathscr{C}^{0,1}} \left( 
\sqrt{N}\sqrt{1+70\,\diam_{\Omega\times\R^d} ( \supp(\mu) )}  + N \sqrt{5\, W_1\left( \mu, \mu^{se}_{\bar X} \right) } \right) 
\end{multline}
where $\mu^{se}_{\bar X} = \frac{1}{N}\sum_{i=1}^N \delta_{\bar x_i}\otimes\mu_{\bar x_i} = \rho^s_{N:1}$
(semi-empirical measure).
\end{lemma}

\begin{proof}
As a first remark, we note that, since the function inside the integral at the left-hand side of the inequality \eqref{varianceell1} is symmetric, we can replace $\rho$ by the symmetrization $\rho^s$ in the integral (indeed, when $F:\Omega^N\times\R^{dN}\rightarrow\R$ is symmetric, we have $\int F\, d\rho=\int F\, d\rho^s$). As a second remark, since $M(t)$ is defined as the $L^2$ norm of a sum, we infer from the triangular inequality that
\begin{equation*}
\begin{split}
M(t) &\leq \sum_{i=1}^N \left( \int_{\Omega^N\times\R^{dN}} \Vert Y^N_i(t,X,\Xi)-\mathcal{X}[\mu](t,x_i,\xi_i)\Vert^2 \, d\rho^s(X,\Xi) \right)^{1/2} \\
&\leq N \max_{1\leq i\leq N} \sqrt{I_i(t)}
\qquad\textrm{with}\qquad
I_i(t) = \int_{\Omega^N\times\R^{dN}} \Vert Y^N_i(t,X,\Xi)-\mathcal{X}[\mu](t,x_i,\xi_i)\Vert^2 \, d\rho^s(X,\Xi)
\end{split}
\end{equation*}
Note that it is essential to symmetrize $\rho$ \emph{before} applying the triangular inequality: without symmetrization, the integrals $I_i(t)$ would depend on $i$, and the bound $N\max_i \sqrt{I_i(t)}$ would be much weaker.
Let us now estimate $I_i(t)$, for any fixed $i\in\{1,\ldots,N\}$. We cannot apply directly Lemma \ref{lem_variance_Y} because in the integral $I_i(t)$ the integration is performed with respect to $\rho^s$, and not with respect to $\mu^{\otimes N}$. However, following the proof of Lemma \ref{lem_variance_Y}, we expand $I_i(t)$ similarly as in \eqref{expansion_var}; replacing $\mu^{\otimes N}$ by $\rho^s$ and thus the second-order marginal $\rho^s_{N:2}$ and third-order marginal $\rho^s_{N:3}$ appear. Note that, by Lemma \ref{first_marginal_symmetrization} in Appendix \ref{app_marginals_symmetric}, since $\rho^s$ is symmetric, all its second-order (resp., third-order) marginals on the various copies of $(\Omega\times\R^d)^2$ (resp., of $(\Omega\times\R^d)^3$) are equal. We obtain
\begin{equation}\label{def_I_i}
I_i(t) = \frac{1}{N}  \int_{\Omega^2\times\R^{2d}} \Vert e_t[\mu] \Vert^2 \, d\rho^s_{N:2} 
+ \frac{N^2-N}{N^2} \int_{\Omega^3\times\R^{3d}} F_t[\mu] \, d\rho^s_{N:3}
\end{equation}
with
$$
F_t[\mu](x,x',x'',\xi,\xi',\xi'') = \langle e_t[\mu](x,x',\xi,\xi'), e_t[\mu](x,x'',\xi,\xi'')\rangle
$$
To estimate the first term at the right-hand side of \eqref{def_I_i}, we observe that (using the definition \eqref{def_W1} of the Wasserstein distance $W_1$)
\begin{equation*}
\begin{split}
\int_{\Omega^2\times\R^{2d}} \Vert e_t[\mu] \Vert^2 \, d(\rho^s_{N:2}-\mu^{\otimes 2})
&\leq \Lip(\Vert e_t[\mu]_{\vert\supp(\mu)^2} \Vert^2) W_1^{[1]}(\rho^s_{N:2},\mu^{\otimes 2}) \\
&\leq 4\Vert G(t,\cdot,\cdot,\cdot,\cdot)_{\vert\supp(\mu)^2} \Vert_{\mathscr{C}^{0,1}}^2 W_1^{[1]}(\rho^s_{N:2},\mu^{\otimes 2})
\end{split}
\end{equation*}
(the choice of $q=1$, above, has little importance; other choices would change the constant $4$, see Lemma \ref{lem_lipp})
and that, using \eqref{estimVar} and \eqref{var10:35},
$$
\int_{\Omega^2\times\R^{2d}} \Vert e_t[\mu] \Vert^2 \, d\mu^{\otimes 2} = \mathrm{Var}(e_t[\mu]) \leq 4\Vert G(t,\cdot,\cdot,\cdot,\cdot)_{\vert\supp(\mu)^2}\Vert_{\mathscr{C}^0}^2 \leq 4\Vert G(t,\cdot,\cdot,\cdot,\cdot)_{\vert\supp(\mu)^2}\Vert_{\mathscr{C}^{0,1}}^2 .
$$
To estimate the second term at the right-hand side of \eqref{def_I_i}, similarly, we observe that
\begin{equation*}
\begin{split}
\int_{\Omega^3\times\R^{3d}} F_t[\mu] \, d(\rho^s_{N:3}-\mu^{\otimes 3})
&\leq \Lip(F_t[\mu]_{\vert\supp(\mu)^2}) W_1^{[1]}(\rho^s_{N:3},\mu^{\otimes 3}) \\
&\leq 4\Vert G(t,\cdot,\cdot,\cdot,\cdot)_{\vert\supp(\mu)^2} \Vert_{\mathscr{C}^{0,1}}^2 W_1^{[1]}(\rho^s_{N:3},\mu^{\otimes 3})
\end{split}
\end{equation*}
and that, as in \eqref{var10:36},
$$
\int_{\Omega^3\times\R^{3d}} F_t[\mu] \, d\mu^{\otimes 3} = 0 .
$$
It follows that
$$
I_i(t) \leq 4 \Vert G(t,\cdot,\cdot,\cdot,\cdot)_{\vert\supp(\mu)^2}\Vert_{\mathscr{C}^{0,1}}^2 \left( \frac{1}{N}
+ W_1^{[1]} \left( \rho^s_{N:2},\mu^{\otimes 2} \right) + W_1^{[1]} \left(\rho^s_{N:3},\mu^{\otimes 3} \right) \right) .
$$
Now, applying Lemma \ref{technical_lemma} (Appendix \ref{app_technical_lemma}), 
we infer from \eqref{first_marginal_rhos} that
$\rho^s_{N:1} = \frac{1}{N}\sum_{i=1}^N \delta_{\bar x_i}\otimes\mu_{\bar x_i} = \mu^{se}_{\bar X}$
(semi-empirical measure) and from \eqref{dist_almost_tensor_2} (taking $k=2,3$) that
$$
W_1^{[1]}\left( \rho^s_{N:2}, (\mu^{se}_{\bar X})^{\otimes 2} \right) + W_1^{[1]}\left( \rho^s_{N:3}, (\mu^{se}_{\bar X})^{\otimes 3} \right)  \leq \frac{70}{N} \diam_{\Omega\times\R^d} ( \supp(\mu) )
$$
and thus, using the triangular inequality,
$$
\sum_{k=2,3} W_1^{[1]} \left( \rho^s_{N:k},\mu^{\otimes k} \right) 
\leq 
\frac{70}{N} \diam_{\Omega\times\R^d} ( \supp(\mu) ) + \sum_{k=2,3} W_1^{[1]}\left( \mu^{\otimes k}, (\mu^{se}_{\bar X})^{\otimes k} \right) .
$$
Applying Lemma \ref{lem_Wp_tensor} in Appendix \ref{app_tensor}, we have 
$$
W_1^{[1]}\left( \mu^{\otimes k}, (\mu^{se}_{\bar X})^{\otimes k} \right) 
\leq k\, W_1\left( \mu, \mu^{se}_{\bar X} \right) .
$$
Finally,
$$
I_i(t) \leq 4 \Vert G(t,\cdot,\cdot,\cdot,\cdot)_{\vert\supp(\mu)^2}\Vert_{\mathscr{C}^{0,1}}^2 \left( \frac{1}{N}
+ \frac{70}{N} \diam_{\Omega\times\R^d} ( \supp(\mu) )  + 5\, W_1\left( \mu, \mu^{se}_{\bar X} \right)  \right) 
$$
and the estimate \eqref{varianceell1} follows.
\end{proof}

Related considerations can be found in \cite{GolseMouhotPaul_CMP2016, NataliniPaul_DCDSB_2022, NataliniPaul_SIMA2023}.

\section{Proofs}\label{app_proofs}

\subsection{Proof of Theorem \ref{thm_estim_graph}}\label{app_proof_thm_estim_graph}
We start by proving the second item of Theorem \ref{thm_estim_graph}.
Hence, we assume that $G$ is locally $\alpha$-H\"older continuous with respect to $(x,x',\xi,\xi')$ (uniformly with respect to $t$ on any compact).

\begin{lemma}\label{lem_xxprime}
Let $x,x'\in\Omega$ be arbitrary. We have
\begin{equation}\label{estimyy0}
\Vert y(t,x)-y(t,x') \Vert \leq e^{tL_y(t)}\left( \Vert y^0(x)-y^0(x') \Vert + \mathrm{d}_\Omega(x,x')^\alpha \right) 
\qquad\forall t\geq 0 .
\end{equation}
\end{lemma}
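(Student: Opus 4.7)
The plan is to derive a scalar differential inequality for $u(t) := \Vert y(t,x) - y(t,x')\Vert$ and close it by Gronwall. First I would differentiate using the Euler equation \eqref{Euler_general}, which yields
$$
\partial_t (y(t,x) - y(t,x')) = \int_\Omega \bigl[ G(t,x,x'',y(t,x),y(t,x'')) - G(t,x',x'',y(t,x'),y(t,x'')) \bigr] \, d\nu(x'') .
$$
Then I would insert the intermediate quantity $G(t,x',x'',y(t,x),y(t,x''))$ and apply the triangle inequality, splitting the integrand into a piece where only the ``space'' slot of $G$ varies (from $x$ to $x'$) and a piece where only the ``velocity'' slot varies (from $y(t,x)$ to $y(t,x')$).

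For the first piece, I would use that $G$ is locally $\alpha$-Hölder continuous with respect to $(x,x',\xi,\xi')$; applied on $S(s)\times S(s)$, which by definition contains every $(x,y(s,x))$ for $x\in\Omega$, this gives a bound $L(t)\, d_\Omega(x,x')^\alpha$ pointwise in $x''$. For the second piece, I would use the Lipschitz continuity of $G$ in $(\xi,\xi')$ uniformly in $(x,x')$, giving a bound $L(t)\,\Vert y(t,x) - y(t,x')\Vert$. Since both constants are controlled by $L(t)$ in \eqref{hypGL1} and $\nu$ is a probability measure, integrating yields the scalar inequality
$$
u'(t) \leq L(t) \bigl( d_\Omega(x,x')^\alpha + u(t) \bigr) .
$$

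The main step is then Gronwall: writing $c = d_\Omega(x,x')^\alpha$, the integrating-factor form gives $u(t) \leq e^{\int_0^t L(s)\,ds}(u(0)+c)$, and since $s\mapsto L(s)$ is nondecreasing (being a supremum over $[0,s]$), one has $\int_0^t L(s)\,ds \leq tL(t)$, yielding the claimed bound \eqref{estimyy0}. The only subtle point will be making sure that the Hölder/Lipschitz bounds on $G$ are applied on the correct compact set: throughout the argument the test points are $(x,y(s,x)),(x',y(s,x')),(x'',y(s,x''))$, all of which lie in $S(s)$ by construction, and $x,x'\in\supp(y(s,\cdot))\subseteq\Omega$, so the definition of $L(t)$ in \eqref{hypGL1} is exactly what is needed. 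I do not expect a genuine obstacle: the argument is essentially a one-variable Gronwall computation, and the only care required is the bookkeeping of the compact supports on which the regularity of $G$ is invoked.
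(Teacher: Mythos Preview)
Your proposal is correct and follows essentially the same approach as the paper: differentiate via the Euler equation, insert the intermediate term $G(t,x',x'',y(t,x),y(t,x''))$ to split into a H\"older-in-$x$ piece and a Lipschitz-in-$\xi$ piece bounded via \eqref{hypGL1}, obtain the scalar inequality $\Vert\partial_t(y(t,x)-y(t,x'))\Vert\leq L(t)(d_\Omega(x,x')^\alpha+u(t))$, and conclude by Gronwall using that $s\mapsto L(s)$ is nondecreasing.
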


\begin{proof}[Proof of Lemma \ref{lem_xxprime}.]
By definition, we have
$\partial_t y(t,z) = \int_\Omega G(t,z,x'',y(t,z),y(t,x''))\, d\nu(x'')$ for every $z\in\Omega$, 
hence
\begin{multline}\label{diff_yxx'}
\partial_t y(t,x) - \partial_t y(t,x')
= \int_\Omega G(t,x,x'',y(t,x),y(t,x''))\, d\nu(x'') - \int_\Omega G(t,x',x'',y(t,x),y(t,x''))\, d\nu(x'') \\
+ \int_\Omega G(t,x',x'',y(t,x),y(t,x''))\, d\nu(x'') - \int_\Omega G(t,x',x'',y(t,x'),y(t,x''))\, d\nu(x'')
\end{multline}
and using the definition of $L_y(t)$ we obtain
\begin{equation*}
\Vert \partial_t ( y(t,x)-y(t,x') ) \Vert \leq L_y(t) \left( \mathrm{d}_\Omega(x,x')^\alpha + \Vert y(t,x)-y(t,x') \Vert \right)
\end{equation*}
and \eqref{estimyy0} follows by integration (noting that $\tau\mapsto L_y(\tau)$ is nondecreasing).
\end{proof}

\begin{remark}\label{rem_lem_xxprime}
If $G$ does not depend on $(x,x')$ then one can remove the term $\mathrm{d}_\Omega(x,x')^\alpha$ in \eqref{estimyy0}.
\end{remark}

By assumption, $\Vert y^0(x)-y^0(x') \Vert \leq \Hol_\alpha(y^0)\, \mathrm{d}_\Omega(x,x')^\alpha$ for all $x,x'\in\Omega$. Combined with \eqref{estimyy0} in Lemma \ref{lem_xxprime}, this shows that $y(t,\cdot)$ is $\alpha$-H\"older continuous and yields \eqref{holalpha}.

Let us establish \eqref{estim_graph_2}. 
We set $r_i^N(t) = y(t,x_i^N)-\xi_i^N(t)$, for $i=1,\ldots,N$. By definition, we have
\begin{equation}\label{defridot}
\dot r_i^N(t)
= \frac{1}{N} \sum_{j=1}^N \left( G(t,x_i^N,x_j^N,y(t,x_i^N),y(t,x_j^N)) - G(t,x_i^N,x_j^N,\xi_i^N(t),\xi_j^N(t)) \right)  + \epsilon_i^N(t)
\end{equation}
where
\begin{equation}\label{defepsiloni}
\epsilon_i^N(t) = \int_{\Omega} G(t,x_i^N,x',y(t,x_i^N),y(t,x'))\, d\nu(x') - \frac{1}{N} \sum_{j=1}^N G(t,x_i^N,x_j^N,y(t,x_i^N),y(t,x_j^N))
\end{equation}
with $r_i^N(0)=0$, for every $i\in\{1,\ldots,N\}$. 
On the one hand, we have
\begin{equation}\label{diffGLij}
\left\Vert G(t,x_i^N,x_j^N,y(t,x_i^N),y(t,x_j^N)) - G(t,x_i^N,x_j^N,\xi_i^N(t),\xi_j^N(t)) \right\Vert
\leq L_y^N(t) ( \Vert r_i^N(t)\Vert + \Vert r_j^N(t)\Vert )
\end{equation}
where $L_y^N(t)$ is defined by \eqref{hypGL1}.
On the other hand, using \eqref{lem_CV_empirical_1} in Lemma \ref{lem_CV_empirical} (see Appendix \ref{app_cv_emp_semiemp}), we have
\begin{equation}\label{estimepsi}
\Vert\epsilon_i^N(t)\Vert \leq \frac{C_\Omega^\alpha}{N^{r\alpha}} \Hol_\alpha ( x'\mapsto G(t,x_i^N,x',y(t,x_i^N),y(t,x')) )
\end{equation}
and we claim that
\begin{equation}\label{HolGy}
\Hol_\alpha ( x'\mapsto G(t,x_i^N,x',y(t,x_i^N),y(t,x')) )
\leq L_y(t)(1+e^{tL_y(t)}(\Hol_\alpha(y^0)+1)) .
\end{equation}
Indeed, writing for short $g(x',y(t,x')) = G(t,x_i^N,x',y(t,x_i^N),y(t,x'))$, we have
\begin{equation*}
\begin{split}
& \Vert g(x'_1,y(t,x'_1))-g(x'_2,y(t,x'_2))\Vert \\
\leq\ & \Vert g(x'_1,y(t,x'_1)) - g(x'_2,y(t,x'_1))\Vert + \Vert g(x'_2,y(t,x'_1)) - g(x'_2,y(t,x'_2)) \Vert \\
\leq\ & L_y(t) \mathrm{d}_\Omega(x'_1,x'_2)^\alpha + L_y(t) \Vert y(t,x'_1)-y(t,x'_2)\Vert \\
\leq\ & L_y(t) \mathrm{d}_\Omega(x'_1,x'_2)^\alpha + L_y(t) \Hol_\alpha(y(t,\cdot)) \mathrm{d}_\Omega(x'_1,x'_2)^\alpha \\
\end{split}
\end{equation*}
and \eqref{HolGy} follows by using \eqref{holalpha}.
Finally, setting $R^N(t)=(r_1^N(t),\ldots,r_N^N(t))$, noting that $L_y(t)\leq L_y^N(t)$, we infer from \eqref{defridot}, \eqref{diffGLij}, \eqref{estimepsi} and \eqref{HolGy} that 
\begin{equation*}
\frac{d}{dt} \Vert R^N(t)\Vert_\infty \leq \Vert \dot R^N(t)\Vert_\infty \leq L_y^N(t) \left( 2\Vert R^N(t)\Vert_\infty + \frac{C_\Omega^\alpha}{N^{r\alpha}}  (1+e^{tL_y^N(t)}(\Hol_\alpha(y^0)+1))  \right)
\end{equation*}
and, noting that $\tau\mapsto L_y^N(\tau)$ (defined by \eqref{hypGL1}) is nondecreasing and by integration, we obtain \eqref{estim_graph_2}.

Let us establish \eqref{estim_graph_3}. For every $x\in\Omega$ there exists $i\in\{1,\ldots,N\}$ such that $x\in\Omega_i^N$, and thus $\mathrm{d}_\Omega(x,x_i^N)\leq \diam_\Omega(\Omega_i^N)\leq\frac{C_\Omega}{N^r}$ (by \eqref{def_tagged}). It follows from \eqref{holalpha} that
$$
\Vert y(t,x)-y(t,x_i^N) \Vert \leq \Hol_\alpha(y(t,\cdot)) \mathrm{d}_\Omega(x,x_i^N)^\alpha
\leq \frac{C_\Omega^\alpha}{N^{r\alpha}} e^{tL_y^N(t)}\left( \Hol_\alpha(y(0,\cdot))+1 \right)
$$
and, noting that $y^N(t,x)=\xi_i^N(t)$, \eqref{estim_graph_3} follows by the triangular inequality, using \eqref{estim_graph_2}.

\medskip

Let us now prove the first item of Theorem \ref{thm_estim_graph}.
Starting as in the proof of Lemma \ref{lem_xxprime}, by continuity of $G$, we infer from \eqref{diff_yxx'} that, for any $\varepsilon>0$, if $x$ and $x'$ are sufficiently close then
$$
\Vert \partial_t ( y(t,x)-y(t,x') ) \Vert \leq L_y(t) \left( \varepsilon + \Vert y(t,x)-y(t,x') \Vert \right)
$$
and by integration we obtain
\begin{equation}\label{diff_yxx'_eps}
\Vert y(t,x)-y(t,x') \Vert \leq e^{tL_y(t)}\left( \Vert y^0(x)-y^0(x') \Vert + \varepsilon \right) .
\end{equation}
By assumption, $y^0$ is continuous $\nu$-almost everywhere on $\Omega$. It follows from \eqref{diff_yxx'_eps} that, for every $t\geq 0$, $y(t,\cdot)$ is continuous $\nu$-almost everywhere on $\Omega$ with the same continuity set as $y^0$ (thus, not depending on $t$). 

Let us finally establish \eqref{estim_graph_1}.
By the Riemann integration theorem (see \eqref{CV_Riemann_sum_1}) and the assumed continuity properties of $G$, we have $\max_{1\leq i\leq N}\Vert\varepsilon_i^N(t)\Vert = \mathrm{o}(1)$ (where $\varepsilon_i^N(t)$ is defined by \eqref{defepsiloni}) as $N\rightarrow+\infty$, uniformly with respect to $t$ on every compact.
Besides, we still have the inequality \eqref{diffGLij}, but with $L_y^N(t)$ now defined by
$$
L_y^N(t) = \max_{\substack{x,x'\in\Omega \\ 0\leq\tau\leq t}} \Lip\big(G(\tau,x,x',\cdot,\cdot)_{\vert S_y^N(\tau)^2}\big) ,
$$
i.e., like in \eqref{hypGL1} but without the first term involving $\Hol_\alpha(G)$.
With this substitution, we obtain
$$
\frac{d}{dt} \Vert R^N(t)\Vert_\infty \leq \Vert \dot R^N(t)\Vert_\infty \leq L_y^N(t) \left( 2\Vert R^N(t)\Vert_\infty + \mathrm{o}(1) \right)
$$
and integrating we get $\Vert R^N(t)\Vert_\infty \leq e^{2tL_y^N(t)} \mathrm{o}(1)$, which yields \eqref{estim_graph_1}, noting that $L_y^N(t)$ is uniformly bounded with respect to $t\in[0,T]$ and to $N\in\N^*$ (as a consequence of Lemma \ref{lem_Tmax}).
Then, \eqref{estim_graph_1gen} follows by the triangular inequality, using the $\nu$-almost everywhere continuity of $y(t,\cdot)$.

\subsection{Proof of Theorem \ref{thm_estim_graph_2}}\label{app_proof_thm_estim_graph_2}
The proof is a slight adaptation of that of Theorem \ref{thm_estim_graph}.
We start by establishing \eqref{estim_graph_2_2}, assuming that $G$ is locally $\alpha$-H\"older continuous with respect to $(x,x',\xi,\xi')$ (uniformly with respect to $t$ on any compact).

\begin{lemma}\label{lem_xxprime_2}
Let $i\in\{1,\ldots,N\}$ and $x,x'\in\Omega_i^N$ be arbitrary. We have
\begin{equation}\label{estimyy0_2}
\Vert y_N(t,x)-y_N(t,x') \Vert \leq e^{tL_{y_N}(t)} \mathrm{d}_\Omega(x,x')^\alpha 
\end{equation}
where $L_{y_N}(t)$ is defined as $L_y(t)$ in Theorem \ref{thm_estim_graph} with $y$ replaced by $y_N$.
\end{lemma}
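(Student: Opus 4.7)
The plan is to mimic the proof of Lemma \ref{lem_xxprime} verbatim, exploiting the fact that by construction $y^N(0,\cdot)$ is piecewise constant on the tagged partition, so the initial discrepancy $\Vert y^N(0,x)-y^N(0,x')\Vert$ vanishes whenever $x,x'$ lie in the same cell $\Omega_i$.

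First I would write down, for any $x,x'\in\Omega_i$, the identity
\begin{multline*}
\partial_t\bigl(y^N(t,x)-y^N(t,x')\bigr)
= \int_\Omega \bigl(G(t,x,x'',y^N(t,x),y^N(t,x''))-G(t,x',x'',y^N(t,x),y^N(t,x''))\bigr)\,d\nu(x'') \\
+ \int_\Omega \bigl(G(t,x',x'',y^N(t,x),y^N(t,x''))-G(t,x',x'',y^N(t,x'),y^N(t,x''))\bigr)\,d\nu(x'')
\end{multline*}
obtained by adding and subtracting an intermediate term, exactly as in \eqref{diff_yxx'}. Using the $\alpha$-Hölder continuity of $G$ in $(x,x',\xi,\xi')$, together with the definition \eqref{hypGL1} of $L(t)$ (noting that all relevant points $(x'',y^N(s,x''))$ lie in the set $S(s)$ appearing in \eqref{hypGL1}), I would bound the first integrand by $L(t)\,d_\Omega(x,x')^\alpha$ and the second by $L(t)\,\Vert y^N(t,x)-y^N(t,x')\Vert$, yielding
\begin{equation*}
\Vert\partial_t(y^N(t,x)-y^N(t,x'))\Vert
\leq L(t)\bigl(d_\Omega(x,x')^\alpha + \Vert y^N(t,x)-y^N(t,x')\Vert\bigr).
\end{equation*}

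The key input specific to this lemma is the initial condition: since $x,x'\in\Omega_i$, the definition \eqref{defyepsN} (or more precisely the definition $y^N(0,\cdot)=y_{\Xi_0}(\cdot)$ in the statement of Theorem \ref{thm_estim_graph_2}) gives $y^N(0,x)=y^N(0,x')=\xi_i(0)$, so $\Vert y^N(0,x)-y^N(0,x')\Vert=0$. Applying Grönwall's inequality to the differential inequality above, with zero initial value and with $s\mapsto L(s)$ nondecreasing so that $L(s)\leq L(t)$ for $s\in[0,t]$, then yields
\begin{equation*}
\Vert y^N(t,x)-y^N(t,x')\Vert \leq d_\Omega(x,x')^\alpha\bigl(e^{tL(t)}-1\bigr)\leq e^{tL(t)}\,d_\Omega(x,x')^\alpha,
\end{equation*}
which is \eqref{estimyy0_2}.

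There is essentially no obstacle here; the only point that requires a moment of care is to check that $L(t)$ as defined in \eqref{hypGL1} (with $S(s)$ now built from $(x_i,\xi_i(s))$ and $(x,y^N(s,x))$) genuinely controls both Hölder and Lipschitz contributions uniformly along the trajectory, which is immediate from the continuity of the flow and the compactness of $\Omega$. Once Lemma \ref{lem_xxprime_2} is proved, the bound \eqref{estim_graph_2_2} in Theorem \ref{thm_estim_graph_2} follows by combining this with an estimate on $\max_i\Vert y^N(t,x_i)-\xi_i(t)\Vert$ obtained exactly as in \eqref{estim_graph_2}, using the triangular inequality on each cell $\Omega_i$.
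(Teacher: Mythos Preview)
Your proof is correct and follows essentially the same approach as the paper's own proof, which simply says ``following the proof of Lemma \ref{lem_xxprime}'' to derive the same differential inequality and then integrates, using that $y^N(0,x)-y^N(0,x')=0$ for $x,x'\in\Omega_i$. Your write-up is in fact slightly more detailed than the paper's (you make the intermediate decomposition and the Gr\"onwall step explicit, and even note the sharper bound $e^{tL(t)}-1$), but the argument is identical.
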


\begin{proof}
Following the proof of Lemma \ref{lem_xxprime}, we arrive at
\begin{equation*}
\Vert \partial_t ( y_N(t,x)-y_N(t,x') ) \Vert \leq L_{y_N}(t) \left( \mathrm{d}_\Omega(x,x')^\alpha + \Vert y_N(t,x)-y_N(t,x') \Vert \right)
\end{equation*}
and \eqref{estimyy0_2} follows by integration, noting that $y_N(0,x)-y_N(0,x')=0$ if $x,x'\in\Omega_i^N$.
\end{proof}

It follows from Lemma \ref{lem_xxprime_2} that $y_N(t,\cdot)$ is $\alpha$-H\"older continuous on each $\Omega_i^N$, with H\"older constant $e^{tL_{y_N}(t)}$.

We set $r^N(t,x) = y_N(t,x)-y^N(t,x)$ for every $x\in\Omega$. Given any $x\in\Omega$, denote by $i\in\{1,\ldots,N\}$ the (a.e.\ unique) index such that $x\in\Omega_i^N$; then $y^N(t,x)=\xi_i^N(t)$ and
\begin{equation*}
\partial_t r^N(t,x)
= \frac{1}{N} \sum_{j=1}^N \left( G(t,x,x_j^N,y_N(t,x),y_N(t,x_j^N)) - G(t,x_i^N,x_j^N,\xi_i^N(t),\xi_j^N(t)) \right)  + \epsilon^N(t,x)
\end{equation*}
where
\begin{equation*}
\epsilon^N(t,x) = \int_{\Omega} G(t,x,x',y_N(t,x),y_N(t,x'))\, d\nu(x') - \frac{1}{N} \sum_{j=1}^N G(t,x,x_j^N,y_N(t,x),y_N(t,x_j^N))
\end{equation*}
with $r^N(0,x)=0$. 
We have, on the one hand,
\begin{multline*}
\left\Vert G(t,x,x_j^N,y_N(t,x),y_N(t,x_j^N)) - G(t,x_i^N,x_j^N,\xi_i^N(t),\xi_j^N(t)) \right\Vert \\
\leq L_{y_N}^N(t) ( \mathrm{d}_\Omega(x,x_i^N)^\alpha + \Vert r^N(t,x)\Vert + \Vert r^N(t,x_j^N)\Vert ) \qquad \forall x\in\Omega_i^N 
\end{multline*}
and, on the other hand, proceeding as in the proof of Theorem \ref{thm_estim_graph} (see Appendix \ref{app_proof_thm_estim_graph}), 
$$
\Vert\epsilon^N(t,x)\Vert \leq \frac{C_\Omega^\alpha}{N^{r\alpha}} L_{y_N}^N(t) ( 1+e^{tL_{y_N}^N(t)} ) \qquad \forall x\in\Omega_i^N .
$$
Using that $\mathrm{d}_\Omega(x,x_i^N)\leq\diam_\Omega(\Omega_i^N)\leq\frac{C_\Omega}{N^r}$ (see \eqref{def_tagged}), we finally obtain
$$
\frac{d}{dt} \Vert r^N(t,\cdot)\Vert_{L^\infty(\Omega)} \leq \Vert \partial_t r^N(t,\cdot)\Vert_{L^\infty(\Omega)} \leq  L_{y_N}^N(t) \left( 2\Vert r^N(t,\cdot)\Vert_{L^\infty(\Omega)} + \frac{C_\Omega^\alpha}{N^{r\alpha}} (2+e^{tL_{y_N}^N(t)}) \right) 
$$
and by integration, noting that $\tau\mapsto L_{y_N}^N(\tau)$ is nondecreasing, \eqref{estim_graph_2_2} follows. 

Finally, \eqref{estim_graph_1gen_2} is established as in the proof of Theorem \ref{thm_estim_graph}.

\subsection{Proof of Theorem \ref{thm_vlasov}}\label{app_proof_thm_vlasov}
Let $T>0$ be arbitrary.
Let $F$ be either equal to $\Omega\times\R^d$, or a compact subset of $\Omega\times\R^d$ that is the closure of an open set. Let $\mathscr{C}_0(F)$ be the Banach space of continuous functions on $F$ vanishing at infinity (when $F$ is compact we have $\mathscr{C}_0(F)=\mathscr{C}^0(F)$), and let $\mathcal{M}^1(F) = \mathscr{C}^0(F)'$ be the Banach space of Radon measures on $F$, endowed with the total variation norm $\Vert\ \Vert_{TV}$ (which is the dual norm). We have $\mathcal{P}_c(F) \subset \mathcal{M}^1(F)$ and $\mathscr{C}^0([0,T],\mathcal{P}_c(F)) \subset L^\infty([0,T],\mathcal{M}^1(F))$.

The Banach space $L^\infty([0,T],\mathcal{M}^1(F))$ is endowed with its strong topology, induced by the $L^\infty$ norm in time of the total variation in space, but can also be endowed with a weak star topology, as follows.
Recall the general fact of Bochner integral theory that $L^1([0,T],E)' = L^\infty([0,T],E')$ (isometric isomorphism) for any separable Banach space $E$, where the prime denotes the topological dual. Applying this fact to $E = \mathscr{C}_0(F)$ (which is separable, since $F$ is Polish locally compact), we identify $L^\infty([0,T],\mathcal{M}^1(F))$ with the dual of $L^1([0,T],\mathscr{C}_0(F))$, and we endow it with the corresponding weak star topology.

We have the following preliminary lemma.

\begin{lemma}\label{lem_prelim_thm_vlasov}
Let $K$ be a compact subset of $\Omega\times\R^d$, let
$\mu_0\in\mathcal{P}_c(K)$ 
and let $T>0$ be arbitrary. 
Assume that there exists a sequence of measures $\mu^k\in\mathscr{C}^0([0,T],\mathcal{P}_c(K))$ solutions of the Vlasov equation $\partial_t\mu^k+L_{\mathcal{X}[\mu^k]}\mu^k=0$ in the sense \eqref{vlasov_meaning}, such that:
\begin{itemize}
\item $\mu^k(0)$ converges weakly to $\mu_0$ in $\mathcal{P}_c(K)$,
\item $\mu^k$ converges to $\mu\in L^\infty([0,T],\mathcal{M}^1(\Omega\times\R^d))$ for the weak star topology,
\end{itemize}
as $k\rightarrow+\infty$. Then 
$\mu\in \mathscr{C}^0([0,T],\mathcal{P}_c(K))$ and $t\mapsto\mu(t)$ is Lipschitz continuous in $W_p$ distance (for any $p\in[1,+\infty)$) and is a solution of the Vlasov equation $\partial_t\mu+L_{\mathcal{X}[\mu]}\mu=0$ (in the sense \eqref{vlasov_meaning}) such that $\mu(0)=\mu_0$.
Moreover, $\mu^k(t)$ converges weakly to $\mu(t)$ as $k\rightarrow+\infty$ (equivalently, $W_p(\mu^k(t),\mu(t))\rightarrow 0$), uniformly with respect to $t\in[0,T]$.
\end{lemma}

\begin{proof}
For every $k\in\N^*$, since $\mu^k\in\mathscr{C}^0([0,T],\mathcal{P}_c(K))$ is a solution of the Vlasov equation in the sense \eqref{vlasov_meaning} and thus in the distributional sense, we have, for every $f\in \mathscr{C}^\infty_c([0,T)\times K)$, after integration by parts,
\begin{multline}\label{distrib_form_muk}
\int_K f(0,x,\xi)\, d\mu^k(0,x,\xi) + \int_0^T \int_K \partial_t f(t,x,\xi)\, d\mu^k(t,x,\xi)\, dt \\
+ \int_0^T \int_{K^2} \langle \nabla_\xi f(t,x,\xi), G(t,x,x',\xi,\xi') \rangle\, d\mu^k(t,x',\xi')\, d\mu^k(t,x,\xi)\, dt = 0 .
\end{multline}
\smallskip

\emph{Step 1: support and uniform Lipschitz continuity in $W_p$.}
By assumption, $\supp(\mu^k)\subset[0,T]\times K$ for every $k\in\N^*$. By the Prokhorov theorem, a subsequence of $\mu^k$ converges weakly in $\mathscr{C}^0([0,T]\times K)'$ to some measure on $[0,T]\times K$, which must coincide with $\mu$. The support of $\mu$ is then contained in the Kuratowski liminf of $\supp(\mu^k)$ (see, e.g., \cite[Proposition~5.1.8]{AmbrosioGigliSavare}), and hence $\supp(\mu)\subset[0,T]\times K$. Since $\mu\in L^\infty([0,T],\mathcal{M}^1(\Omega\times\R^d))$, we have $\supp(\mu(t))\subset K$ for almost every $t\in[0,T]$.

For every $k\in\N^*$, consider on $[0,T]\times K$ the continuous time-dependent vector field $v^k(t,x,\xi)=\mathcal{X}[\mu^k(t)](t,x,\xi)$, which is Lipschitz with respect to $\xi$ thanks to Assumption~\ref{G}. Since $\supp(\mu^k)\subset[0,T]\times K$, we have $\Vert v^k\Vert_{\mathscr{C}^0([0,T]\times K)}\leq C$ for some $C>0$ not depending on $k$. Let $(\Phi_{v^k}(t))_{t\in[0,T]}$ be the flow on $[0,T]\times K$ generated by $v^k$. Since $\mu^k$ solves the transport equation $\partial_t\mu^k + L_{v^k}\mu^k=0$, the standard existence and uniqueness theorem for linear transport equations (see, e.g., \cite[Theorem~5.34]{Villani_2003}) yields $\mu^k(t)=\Phi_{v^k}(t)_*\mu^k(0)$ for every $t\in[0,T]$. By Lemma~\ref{lem_flowid} (Appendix~\ref{app_propag}), 
$$
W_p(\mu^k(t_1),\mu^k(t_2)) \leq C\,\vert t_1-t_2\vert \qquad\forall t_1,t_2\in[0,T] ,
$$
i.e., $t\mapsto\mu^k(t)$ is $C$-Lipschitz on $[0,T]$ in the $W_p$ distance, uniformly in $k$.

\smallskip

\emph{Step 2: uniform convergence in $W_p$ via Ascoli.}
The sequence $(\mu^k(\cdot))_{k\in\N^*}$ is equicontinuous on $[0,T]$ for the $W_p$ distance (by the Lipschitz bound) and pointwise relatively compact (since $\bigcup_k\supp(\mu^k(t))\subset K$ is compact, by the Prokhorov theorem). By the Ascoli theorem applied to maps from $[0,T]$ to the metric space $(\mathcal{P}_c(K),W_p)$, a subsequence of $\mu^k(\cdot)$ converges in $\mathscr{C}^0([0,T],(\mathcal{P}_c(K),W_p))$ to some $\tilde\mu(\cdot)$. By the Portmanteau theorem, the corresponding pointwise weak convergence implies weak-star convergence in $L^\infty([0,T],\mathcal{M}^1(\Omega\times\R^d))$, so $\tilde\mu=\mu$. Hence $\mu\in\mathscr{C}^0([0,T],\mathcal{P}_c(K))$, $t\mapsto\mu(t)$ is $C$-Lipschitz in $W_p$, and 
\begin{equation}\label{CV_unif_W_p_muk}
\sup_{t\in[0,T]} W_p(\mu^k(t),\mu(t)) \xrightarrow[k\to+\infty]{}\, 0 .
\end{equation}

\smallskip

\emph{Step 3: passage to the limit in the Vlasov equation.}
We pass to the limit as $k\to+\infty$ in \eqref{distrib_form_muk}. The convergence \eqref{CV_unif_W_p_muk} (which metrizes weak convergence on $\mathcal{P}_c(K)$) implies $\mu^k(0)\rightharpoonup\mu_0$ and, for the time-integral term, that $\int_K \partial_t f\, d\mu^k(t)\to\int_K\partial_t f\, d\mu(t)$ pointwise in $t\in[0,T]$, with a uniform bound by $\Vert\partial_t f\Vert_\infty$; dominated convergence gives the convergence of the second term in \eqref{distrib_form_muk}.

For the bilinear term, observe that \eqref{CV_unif_W_p_muk} implies $\mu^k(t)\otimes\mu^k(t)\rightharpoonup\mu(t)\otimes\mu(t)$ on $K^2$ for every $t\in[0,T]$; since $f$ and $G$ are continuous and bounded on $[0,T]\times K$ and $[0,T]\times K^2$, respectively, we have, pointwise in $t$,
$$
\int_{K^2} \langle\nabla_\xi f(t,x,\xi),G(t,x,x',\xi,\xi')\rangle\, d(\mu^k(t)\otimes\mu^k(t))(x,\xi,x',\xi')
\to \int_{K^2}\langle\nabla_\xi f,G\rangle\, d(\mu(t)\otimes\mu(t)),
$$
with a uniform bound by $\Vert\nabla_\xi f\Vert_\infty\Vert G\Vert_{L^\infty([0,T]\times K^2)}$; dominated convergence concludes.

We thus obtain that $\mu$ satisfies \eqref{distrib_form_muk} with $\mu^k$ replaced by $\mu$, for every $f\in\mathscr{C}^\infty_c([0,T)\times K)$. Hence $\mu$ is a solution of the Vlasov equation $\partial_t\mu+L_{\mathcal{X}[\mu]}\mu=0$ in the distributional sense, with $\mu(0)=\mu_0$. Since $t\mapsto\mu(t)$ is continuous in $W_p$ distance, $\mu$ is in fact a solution in the sense \eqref{vlasov_meaning}. Lemma~\ref{lem_prelim_thm_vlasov} is proved.
\end{proof}

In view of establishing Item \ref{A}, let us first prove the existence of a solution of the Vlasov equation. Given $\mu_0\in\mathcal{P}_c(\Omega\times\R^d)$, we consider a sequence of empirical measures $\mu^e_{(X^N,\Xi^N_0)} = \frac{1}{N}\sum_{i=1}^N \delta_{x^N_i}\otimes\delta_{\xi^N_{0,i}}$ converging weakly to $\mu_0$ as $N\rightarrow+\infty$. 
Setting $X^N=(x^N_1,\ldots,x^N_N)\in\Omega^N$ and $\Xi^N_0=(\xi^N_{0,1},\ldots,\xi^N_{0,N})\in(\R^d)^N$, let $t\mapsto\Xi^N(t)=(\xi^N_1(t),\ldots,\xi^N_N(t))$ be the unique solution of the particle system \eqref{system_Y} with parameter $X^N$ such that $\Xi^N(0)=\Xi^N_0$. It is well defined on $[0,T]$ for any $T\in(0,T_{\max}(\supp(\mu_0)))$ thanks to Assumption \ref{G} and Lemma \ref{lem_Tmax}.
Using the first part of Proposition \ref{prop_empirical_x}, which does not use anything from Theorem \ref{thm_vlasov} (see its proof), $t\mapsto \mu^e_{(X^N,\Xi^N(t))} = \frac{1}{N}\sum_{i=1}^N \delta_{x^N_i}\otimes\delta_{\xi^N_i(t)}$ is a solution of the Vlasov equation \eqref{vlasov} in the sense \eqref{vlasov_meaning}. 

Without loss of generality, we can assume that $(X^N,\Xi^N_0)\in(\supp(\mu_0))^N$ for every $N\in\N^*$, where we recall that $\supp(\mu_0)$ is compact.
Since $\mu^e_{(X^N,\Xi^N(t))}$ is supported on the corresponding solutions of the particle system, it follows from Lemma \ref{lem_Tmax} that there exists a compact subset $K\subset\Omega\times\R^d$ such that $\supp(\mu^e_{(X^N,\Xi^N(t))})\subset K$ for every $t\in[0,T]$ and for every $N\in\N^*$, i.e., the measures $\mu^e_{(X^N,\Xi^N(\cdot))}$ are equi-compactly supported on $[0,T]$, uniformly with respect to $N$.

Besides, since $\mu^e_{(X^N,\Xi^N(t))}$ is a probability measure, we have $\Vert\mu^e_{(X^N,\Xi^N(t))}\Vert_{TV} = 1 <+\infty$ for every $t\in[0,T]$, and thus the sequence $(\mu^e_{(X^N,\Xi^N(\cdot))})_{N\in\N^*}$ is bounded in $L^\infty([0,T],\mathcal{M}^1(\Omega\times\R^d))$ for the strong topology, i.e., in $(L^1([0,T],\mathscr{C}_0(\Omega\times\R^d)))'$ for the strong (dual norm) topology. By the Banach-Alaoglu theorem, there exists a subsequence of $(\mu^e_{(X^N,\Xi^N(\cdot))})_{N\in\N^*}$ converging to some $\mu\in L^\infty([0,T],\mathcal{M}^1(\Omega\times\R^d))$ for the weak star topology.

Therefore, a subsequence of the sequence of measures $\mu^e_{(X^N,\Xi^N(\cdot))}\in\mathscr{C}^0_{\mathrm{comp}}([0,T],\mathcal{P}_c(K))$ satisfies all assumptions of Lemma \ref{lem_prelim_thm_vlasov}.
It follows from that lemma that $\mu\in\mathscr{C}^0([0,T],\mathcal{P}_c(K))$ 
and that $\mu$ is a solution on $[0,T]$ of the Vlasov equation $\partial_t\mu+L_{\mathcal{X}[\mu]}\mu=0$ (in the sense \eqref{vlasov_meaning}) such that $\mu(0)=\mu_0$, and is Lipschitz continuous with respect to $t$ in $W_p$ distance. 

At this stage we have obtained existence of solutions in $\mathscr{C}^0_{\mathrm{comp}}([0,T_{\max}(\supp(\mu_0))),\mathcal{P}_c(\Omega\times\R^d))$ (uniqueness will be proved below). 

\begin{remark}
In \cite{PiccoliRossi_ACAP2013, PiccoliRossi_ARMA2014, PiccoliRossi_ARMA2016, PiccoliRossiTrelat_SIMA2015}, existence is established by constructing a sequence of piecewise constant measures converging to a solution, under the stronger assumption that $G$ be globally Lipschitz continuous. The proof given above relies on approximation by empirical measures and propagation of their supports, in the spirit of \cite{HaLiu_CMS2009} (see also \cite{Neunzert} and \cite[Part~I, Theorem~5.1]{Spohn_book1991}); this approach makes Lemma \ref{lem_Tmax} more readily exploitable.
For the Cucker--Smale model, the proof in \cite{CanizoCarrilloRosado_M3AS2011} relies on a fixed-point argument in the metric space of solutions, with a careful estimate of the propagation of supports.
\end{remark}

We record the following observation, used repeatedly below. When $G$ is locally Lipschitz with respect to all variables $(x,x',\xi,\xi')$, we have, for all $\mu^1,\mu^2\in\mathcal{P}_c(\Omega\times\R^d)$ and every $(t,x,\xi)\in[0,T]\times\Omega\times\R^d$,
\begin{multline}\label{Xmu1mu2_0}
\left\Vert \mathcal{X}[\mu^1](t,x,\xi)-\mathcal{X}[\mu^2](t,x,\xi) \right\Vert = \left\Vert \int_{\Omega\times\R^d} G(t,x,x',\xi,\xi')\, d(\mu^1(x',\xi')-\mu^2(x',\xi')) \right\Vert \\
\leq \Lip(G(t,x,\cdot,\xi,\cdot)_{\vert S})\, W_1(\mu^1,\mu^2) 
\leq \Lip(G(t,x,\cdot,\xi,\cdot)_{\vert S})\, W_p(\mu^1,\mu^2) ,
\end{multline}
where $S=\supp(\mu^1)\cup\supp(\mu^2)$ (a compact set), and we have used that $W_1\leq W_p$.

In case \ref{A}, however, $G$ is locally Lipschitz only with respect to $(\xi,\xi')$ under Assumption~\ref{G}, so the classical Wasserstein distance $W_1$ cannot be used as above. The main difference comes from the following observation: given any $\mu^1,\mu^2\in\mathcal{P}_c(\Omega\times\R^d)$ \emph{having the same marginal} $\nu\in\mathcal{P}_c(\Omega)$ on $\Omega$, we have, by disintegration,
$$
\mathcal{X}[\mu^1](t,x,\xi)-\mathcal{X}[\mu^2](t,x,\xi) = \int_{\Omega} \int_{\R^d} G(t,x,x',\xi,\xi')\, d(\mu^1_{x'}(\xi')-\mu^2_{x'}(\xi'))\, d\nu(x')
$$
and thus
\begin{equation}\label{Xmu1mu2}
\begin{split}
\left\Vert \mathcal{X}[\mu^1](t,x,\xi)-\mathcal{X}[\mu^2](t,x,\xi) \right\Vert  
& \leq 
\max_{x'\in\supp(\nu)} \Lip(G(t,x,x',\xi,\cdot)_{\vert S_{x'}}) \ L^1_\nu W_1(\mu^1,\mu^2)   \\
& \leq \max_{x'\in\supp(\nu)} \Lip(G(t,x,x',\xi,\cdot)_{\vert S_{x'}}) \ L^1_\nu W_p(\mu^1,\mu^2) 
\end{split}
\end{equation}
where 
$S_{x'}=\supp(\mu^1_{x'})\cup\supp(\mu^2_{x'})$ (compact) 
and $L^1_\nu W_p(\mu^1,\mu^2) = \int_{\Omega} W_p(\mu^1_{x'},\mu^2_{x'})\, d\nu(x')$ is defined by \eqref{def_L1Wp}.

Let us now establish \eqref{L1Wpmu1mu2} in item \ref{A2} (which also entails uniqueness).
Let $\mu^1,\mu^2\in \mathscr{C}^0_{\mathrm{comp}}([0,T],\mathcal{P}_c(\Omega\times\R^d))$ be two solutions of the Vlasov equation for some $T>0$, having the same (constant in time) marginal $\nu=\pi_*\mu^i\in\mathcal{P}_c(\Omega)$ on $\Omega$. Let $K\subset\Omega\times\R^d$ be a compact subset containing $\supp(\mu^i(t))$ for $i=1,2$ and for every $t\in[0,T]$.

For $i=1,2$, we consider on $[0,T]\times K$ the continuous time-dependent vector field $v^i(t,x,\xi) = \mathcal{X}[\mu^i_t](t,x,\xi)$ (which is $\mathscr{C}^1$ with respect to $\xi$), so that $\mu^i$ is a solution of the transport equation $\partial_t\mu^i+L_{v^i}\mu^i=0$.
Since we shall apply Lemma \ref{lem_propagflow} (in Appendix \ref{app_propag}) at varying initial times, for every $t_0\in[0,T]$ we consider the flow $(\Phi_{v^i}(t,t_0))_{t\in[0,T]}$ on $[0,T]\times K$ generated by $v^i$, i.e., defined as the unique solution of $\partial_t\Phi_{v^i}(t,t_0,x,\cdot) = v^i(t,x,\cdot)\circ\Phi_{v^i}(t,t_0,x,\cdot)$ such that $\Phi_{v^i}(t_0,t_0,x,\cdot)=\mathrm{id}_{\R^d}$ for $\nu$-almost every $x\in\supp(\nu)$.
Then, we have $\mu^i(t)=\Phi_{v^i}(t,t_0)_*\mu^i(t_0)$ for every $t\in[0,T]$.
This means, disintegrating $\mu^i_t=\mu^i(t)=\int_\Omega\mu^i_{t,x}\, d\nu(x)$, that $\mu^i_{t,x}=\Phi_{v^i}(t,t_0,x,\cdot)_*\mu^i_{t_0,x}$ for $\nu$-almost every $x\in\supp(\nu)$, for every $t\in[0,T]$.

It follows from Lemma \ref{lem_propagflow} (in Appendix \ref{app_propag}), applied with $\Lambda=\emptyset$ and $E=\R^d$ to the vector fields $v^i(t,x,\cdot)$ for any fixed $x\in\supp(\nu)$, that
$$
W_p(\mu^1_{t,x},\mu^2_{t,x}) \leq e^{(t-t_0)L([t_0,t])} W_p(\mu^1_{t_0,x},\mu^2_{t_0,x}) + M([t_0,t]) \frac{e^{(t-t_0)L([t_0,t])}-1}{L([t_0,t])} \qquad \forall t\in[t_0,T]
$$
where, setting 
$S(t) = \supp(\nu)\times\Phi_{v^1}(t,t_0,\supp(\mu^1_{t_0})\cup\supp(\mu^2_{t_0})) \cup \supp(\mu^2(t))$ (compact),
$$
L([t_0,t]) = \mathrm{ess\,sup} \left\{ \Vert (\partial_\xi G, \partial_{\xi'}G) (\tau,x,x',\xi,\xi') \Vert  \ \mid\ t_0\leq\tau\leq t,\ (x,\xi), (x',\xi')\in S(\tau) \right\}  ,
$$
$$
M([t_0,t]) = \max \left\{ \Vert \mathcal{X}[\mu^1_\tau](\tau,x,\xi) - \mathcal{X}[\mu^2_\tau](\tau,x,\xi) \Vert \ \mid\ t_0\leq\tau\leq t,\ (x,\xi)\in\supp(\mu^2_\tau) \right\} .
$$
Since $\mu^1_\tau$ and $\mu^2_\tau$ have the same marginal $\nu$ on $\Omega$, it follows from \eqref{Xmu1mu2} and from the above definition of $L([t_0,t])$ and of $S(t)$ that 
$$
M([t_0,t]) \leq L([t_0,t]) \displaystyle\max_{t_0\leq\tau\leq t} L^1_\nu W_p(\mu^1_\tau,\mu^2_\tau) .
$$
Therefore
$$
W_p(\mu^1_{t,x},\mu^2_{t,x}) \leq e^{(t-t_0)L([t_0,t])} W_p(\mu^1_{t_0,x},\mu^2_{t_0,x}) \\
+ \big( e^{(t-t_0)L([t_0,t])}-1 \big) \max_{t_0\leq\tau\leq t} L^1_\nu W_p(\mu^1_\tau,\mu^2_\tau) .
$$
Integrating with respect to $x\in\Omega$ for the measure $\nu$, we obtain
\begin{multline}\label{L1W1tt0}
L^1_\nu W_p(\mu^1(t),\mu^2(t)) \leq e^{(t-t_0)L([t_0,t])} L^1_\nu W_p(\mu^1(t_0),\mu^2(t_0)) \\
+ \big( e^{(t-t_0)L([t_0,t])}-1 \big) \max_{t_0\leq\tau\leq t} L^1_\nu W_p(\mu^1(\tau),\mu^2(\tau)) .
\end{multline}
We have the following general lemma.

\begin{lemma}\label{lem_gronwall_max}
For every $t_0\in\R$, let $a_{t_0}:[t_0,+\infty)\rightarrow[0,+\infty)$ be a nondecreasing function, continuous at $t_0$, depending continuously on $t_0$.
Let $h:\R\rightarrow[0,+\infty)$ be an absolutely continuous function such that 
$$
h(t)\leq e^{(t-t_0)a_{t_0}(t)} h(t_0) + \big( e^{(t-t_0)a_{t_0}(t)}-1 \big) \max_{t_0\leq\tau\leq t} h(\tau) 
\qquad \forall t\geq t_0\qquad \forall t_0\in\R .
$$
Then
$$
h(t) \leq h(0) \exp \left( 2\int_0^t a_\tau(\tau) \, d\tau \right) \qquad \forall t\in\R.
$$
\end{lemma}

\begin{proof}
Taking $t_0<t<t_1$, writing
$$
\frac{h(t)-h(t_0)}{t-t_0} \leq \frac{e^{(t-t_0)a_{t_0}(t_1)}-1}{t-t_0} h(t_0) + \frac{e^{(t-t_0)a_{t_0}(t_1)}-1}{t-t_0} \max_{t_0\leq\tau\leq t} h(\tau) 
$$
and taking the limit as $t\rightarrow t_0$, since $t_1$ is arbitrary, we obtain $h'(t_0)\leq 2 h(t_0) a_{t_0}(t_0)$, for almost every $t_0\in\R$. The lemma follows by integration.
\end{proof}

Applying Lemma \ref{lem_gronwall_max} to $h(t) = L^1_\nu W_p(\mu^1(t),\mu^2(t))$ and $a_{t_0}(t)=L([t_0,t])$, and using \eqref{L1W1tt0}, we obtain \eqref{L1Wpmu1mu2}.
In particular, the uniqueness statement follows.

At this step, we have proved existence and uniqueness of solutions of the Vlasov equation in the space $\mathscr{C}^0_{\mathrm{comp}}([0,T],\mathcal{P}_c(\Omega\times\R^d))$.
We can thus now define the Vlasov flow by \eqref{vlasov_flow}, and we obtain \eqref{mu_image_varphi} by uniqueness.

The estimate \eqref{W1mu1mu2} in item \ref{B} is established along the same lines, by applying Lemma \ref{lem_propagflow} (Appendix~\ref{app_propag}) with $\Lambda=\Omega$ and $E=\R^d$, and using \eqref{Xmu1mu2_0} instead of \eqref{Xmu1mu2}. We omit the details.

\medskip

It remains to establish the item \ref{A1}. For $K\subset\Omega\times\R^d$ compact and $T\in(0,T_{\max}(K))$, we consider a sequence of measures $\mu^k\in \mathscr{C}^0([0,T],\mathcal{P}_c(K))$ solutions of the Vlasov equation such that $\mu^k_0=\mu^k(0)$ converges weakly to $\mu_0=\mu(0)$ as $k\rightarrow+\infty$. Our objective is to prove that $\mu^k(t)$ converges weakly to $\mu(t)$, uniformly with respect to $t\in[0,T]$.

Since $\mu^k(t)$ is a probability measure, we have $\Vert\mu^k(t)\Vert_{TV} = 1 <+\infty$ for every $t\in[0,T]$, and thus the sequence $(\mu^k(\cdot))_{k\in\N^*}$ is bounded in $L^\infty([0,T],\mathcal{M}^1(\Omega\times\R^d))$ (for the strong topology), i.e., in $(L^1([0,T],\mathscr{C}_0(\Omega\times\R^d)))'$ for the strong (dual norm) topology. By the Banach-Alaoglu theorem, a subsequence of $(\mu^k(\cdot))_{k\in\N^*}$ converges to some $\tilde\mu(\cdot)\in L^\infty([0,T],\mathcal{M}^1(\Omega\times\R^d))$ for the weak star topology.

It follows from Lemma \ref{lem_prelim_thm_vlasov} that $\tilde\mu\in\mathscr{C}^0([0,T],\mathcal{P}_c(K))$, that $\tilde\mu$ is a solution of the Vlasov equation with $\tilde\mu(0)=\mu_0$, and that $\mu^k(t)$ converges weakly to $\tilde\mu(t)$ uniformly in $t\in[0,T]$. By uniqueness, $\tilde\mu=\mu$. Since every weak-star limit point of $(\mu^k)$ coincides with $\mu$, the whole sequence $\mu^k(t)$ converges weakly to $\mu(t)$, uniformly in $t\in[0,T]$. This concludes the proof of the theorem.

\subsection{Proof of Theorem \ref{thm_CV_liouville_empirical}}\label{app_proof_thm_CV_liouville_empirical}
We have $\rho^N(t)=\delta_{X^N}\otimes\delta_{\Xi^N(t)}$. 
By \eqref{first_marginal_rhos} in Lemma~\ref{technical_lemma} (Appendix~\ref{app_technical_lemma}), applied with $\mu_i=\delta_{x^N_i}\otimes\delta_{\xi^N_i(t)}$, we have
$$
\rho^N(t)^s_{N:1} = \frac{1}{N} \sum_{i=1}^N \delta_{x^N_i} \otimes \delta_{\xi^N_i(t)}  = \mu^e_{(X^N,\Xi^N(t))} ,
$$
which yields the preliminary observation stated before Theorem \ref{thm_CV_liouville_empirical}.
For $k=1$, statement \ref{thm_CV_liouville_empirical_A} follows from item \ref{A1} of Theorem \ref{thm_vlasov}, and the estimate \eqref{thm_CV_liouville_empirical_1} from item \ref{B}.

For any $k\in\{2,\ldots,N\}$, the $k^\textrm{th}$-order marginal $\rho^N(t)^s_{N:k}$ is given by \eqref{prhosN:k_eps} in Appendix \ref{app_technical_lemma} (applied with $\mu_i=\delta_{x^N_i}\otimes\delta_{\xi^N_i(t)}$).
By the triangular inequality, we have
\begin{equation}\label{w1_1755}
W_p^{[q]}\left( \rho^N(t)^s_{N:k} , \mu(t)^{\otimes k} \right)
\leq 
W_p^{[q]}\left( \rho^N(t)^s_{N:k} , (\mu^e_{(X^N,\Xi^N(t))})^{\otimes k} \right) 
+ W_p^{[q]} \left( (\mu^e_{(X^N,\Xi^N(t))})^{\otimes k}, \mu(t)^{\otimes k} \right)
\end{equation}
For the first term in the right-hand side of \eqref{w1_1755}, since $(X^N,\Xi^N_0)\in(\supp(\mu_0))^N$, we have
$$
\diam_{\Omega\times\R^d}\bigg( \bigcup_{i=1}^N \supp(\delta_{x^N_i}\otimes\delta_{\xi^N_i(t)}) \bigg)
\leq \diam_\Omega(\supp(\nu)) + \diam_{\R^d}(\Xi^N(t)) ;
$$
applying \eqref{dist_almost_tensor_2} of Lemma \ref{technical_lemma} (Appendix~\ref{app_technical_lemma}) with $\mu^e_{(X^N,\Xi^N(t))} = \rho^N(t)^s_{N:1}$, we obtain
\begin{equation}\label{w1_1756}
W_p^{[q]}\left( \rho^N(t)^s_{N:k} , (\mu^e_{(X^N,\Xi^N(t))})^{\otimes k} \right) 
\leq 3 k^{1/q} \left( \frac{k^2}{N} \right)^{1/p} \left(  \diam_\Omega(\supp(\nu)) + \diam_{\R^d}(\Xi^N(t))  \right) .
\end{equation}
For the second term in the right-hand side of \eqref{w1_1755}, applying first the estimate \eqref{lem_Wp_tensor_ineq} of Lemma \ref{lem_Wp_tensor} (Appendix~\ref{app_tensor}) and then Theorem \ref{thm_vlasov}, we have
\begin{multline}\label{w1_1757}
W_p^{[q]} \left( (\mu^e_{(X^N,\Xi^N(t))})^{\otimes k}, \mu(t)^{\otimes k} \right)
\leq k^{1/q}\, W_p \left( \mu^e_{(X^N,\Xi^N(t))} ,  \mu(t)  \right)  \\
\leq k^{1/q} \, C_\mu^N(t) \, W_p \left( \mu^e_{(X^N,\Xi^N_0)} ,  \mu(0)  \right) 
\end{multline}
where the constant $C_\mu^N(t)$ is defined by \eqref{defCmuN} (or equivalently by $C_{\mu,\mu^e_{(X^N,\Xi^N)}}(t)$, with the notation used in \eqref{def_C} in Theorem \ref{thm_vlasov}). 
Therefore, \eqref{thm_CV_liouville_empirical_n} follows from \eqref{w1_1755}, \eqref{w1_1756} and \eqref{w1_1757}. Note that, for $k=1$, the first term in the right-hand side of \eqref{w1_1755} vanishes, which gives \eqref{thm_CV_liouville_empirical_1} again.

The convergence statement \ref{thm_CV_liouville_empirical_A} for any $k\in\N^*$ is then obtained by combining \eqref{w1_1755}, \eqref{w1_1756} and the qualitative convergence given by item~\ref{A} of Theorem \ref{thm_vlasov} (in place of the quantitative estimate used above).

\subsection{Proof of Theorem \ref{thm_CV_liouville}}\label{app_proof_thm_CV_liouville}
Since $\rho^N_0=\delta_{X^N}\otimes\rho^N_{0,X^N}$ with $\delta_{X^N} = \delta_{x^N_1}\otimes\cdots\otimes\delta_{x^N_N}$ and $\rho^N_{0,X^N} = \mu_{0,x^N_1}\otimes\cdots\otimes\mu_{0,x^N_N}$, formula \eqref{first_marginal_rhos} of Lemma~\ref{technical_lemma} (Appendix~\ref{app_technical_lemma}), applied with $\mu_i=\delta_{x^N_i}\otimes\mu_{0,x^N_i}$, yields $(\rho^N_0)^s_{N:1} = \frac{1}{N}\sum_{i=1}^N\delta_{x^N_i}\otimes\mu_{0,x^N_i} = (\mu_0)^{se}_{X^N}$ (the semi-empirical measure), which is \eqref{rhoN0}. The weak convergence to $\mu_0$ stated in item~\ref{thm_CV_liouville_A} for $k=1$ then follows from Lemma~\ref{lem_CV_semiempirical} (Appendix~\ref{app_semiempirical}); this proves the preliminary observation stated before the theorem. 

Recall that $\rho^N(t) = \Phi^N(t)_*\rho^N_0$ and $\mu(t) = \varphi_{\mu_0}(t)_*\mu_0$.
Setting
\begin{equation*}
\tilde\rho^N(t) = \varphi_{\mu_0}(t)^{\otimes N}_* \rho^N_0 
= \delta_{x^N_1}\otimes\cdots\otimes\delta_{x^N_N} \otimes \mu_{t,x^N_1}\otimes\cdots\otimes\mu_{t,x^N_N}
\end{equation*}
(the latter equality is because $\varphi_{\mu_0}(t,x_i,\cdot)_*\mu_{0,x^N_i} = \mu_{t,x^N_i}$),
we note that $\tilde\rho^N(t)^s = \varphi_{\mu_0}(t)^{\otimes N}_* (\rho^N_0)^s$ and that
$$
\tilde\rho^N(t)^s_{N:k} = \varphi_{\mu_0}(t)^{\otimes k}_* (\rho^N_0)^s_{N:k}  \qquad \forall k\in\{1,\ldots,N\} .
$$
Indeed, this follows from the following obvious lemma.

\begin{lemma}\label{lem_margsym}
Let $E$ be a measure space, $\varphi:E\rightarrow E$ be a measurable mapping, $N\in\N^*$ and $\rho\in\mathcal{P}(E^N)$. Then
$$
\left(\varphi^{\otimes N}_*\rho\right)_{N:k} = \varphi_*(\rho_{N:k})
\qquad\forall k\in\{1,\ldots,N\}.
$$
\end{lemma}

\begin{proof}[Proof of Lemma \ref{lem_margsym}.]
Denoting by $\pi_k:E^N=E^k\times E^{N-k}\rightarrow E^k$ the canonical projection, the lemma follows directly from the identity $\pi_k\circ\varphi^{\otimes N} = \varphi^{\otimes k}\circ\pi_k$.
\end{proof}

In particular, we have
$$
\tilde\rho^N(t)^s_{N:1} = \varphi_{\mu_0}(t)_* (\rho^N_0)^s_{N:1} = \varphi_{\mu_0}(t)_* (\mu_0)^{se}_{X^N} = \frac{1}{N}\sum_{i=1}^N\delta_{x^N_i}\otimes\mu_{t,x^N_i} = \mu(t)^{se}_{X^N} .
$$
In order to establish 
\eqref{estim_W1_k}, we start by applying the triangular inequality:
\begin{equation}\label{estim_Wp_k=1_triangineq}
W_p^{[q]}\left( \rho^N(t)^s_{N:k} , \mu(t)^{\otimes k} \right) 
\leq 
W_p^{[q]}\left( \rho^N(t)^s_{N:k} , \tilde\rho^N(t)^s_{N:k} \right) 
+ W_p^{[q]}\left( \tilde\rho^N(t)^s_{N:k} , \mu(t)^{\otimes k} \right) ,
\end{equation}
and we next show how to estimate each of the two terms of the sum at the right-hand side of \eqref{estim_Wp_k=1_triangineq}. 

\paragraph{First term.}
Applying successively Lemma \ref{lem_Wp_marginal_symm} in Appendix \ref{app_marginal_symm} and Lemma \ref{lem_Wp_s} in Appendix \ref{app_symm}, and using that $W_p^{[q]}\leq W_2^{[q]}\leq W_2^{[1]}$ (see \eqref{inegWp1Wp2} and \eqref{inegWpq}) because $p\leq 2$, we have
\begin{equation}\label{firstterm_1}
W_p^{[q]}\left( \rho^N(t)^s_{N:k} , \tilde\rho^N(t)^s_{N:k} \right)
\leq \Big(\frac{k}{N}\Big)^{1/q} W_2^{[1]}\left( \rho^N(t) , \tilde\rho^N(t)  \right) 
\end{equation}
note that the Wasserstein distance $W_2$ above is computed with respect to the distance $\mathrm{d}^{[1]}_{(\Omega\times\R^d)^N}$ defined by \eqref{def_distq}; the choice $q=1$ is essential.
Recall that $\rho^N(t) = \Phi^N(t)_*\rho^N_0$ and $\tilde\rho^N(t) = \varphi_{\mu_0}(t)^{\otimes N}_* \rho^N_0$. To estimate the right-hand side of \eqref{firstterm_1}, we apply Lemma \ref{lem_propagflow} (Appendix~\ref{app_propag}) in the space $\Omega^N\times\R^{dN}$ endowed with the distance $\mathrm{d}^{[1]}_{(\Omega\times\R^d)^N}$, with $\Lambda=\Omega^N$, $E=\R^{dN}$, and the flows $\Phi^N(t)$ and $\varphi_{\mu_0}(t)^{\otimes N}$ generated, respectively, by the time-dependent vector fields $Y^N$ defined in \eqref{def_Y} and $\mathcal{X}[\mu_t]^{\otimes N}$ (where $\mathcal{X}[\mu_t]$ is defined in \eqref{def_mean_field}). The alternative estimate of that lemma, applied with $p=2$, yields 
\begin{equation}\label{W21N}
W_2^{[1]}\left( \rho^N(t) , \tilde\rho^N(t)  \right)   \leq 
M_2(t) \sqrt{t} \bigg( \frac{e^{tL_2(t)}-1}{L_2(t)} \bigg)^{1/2}
\end{equation}
where
$$
L_2(t) = \max_{0\leq\tau\leq t} \Lip(Y^N(\tau,\cdot,\cdot)_{\vert \supp(\rho^N(\tau)) \cup \supp(\tilde\rho^N(\tau))})
$$
and, using \eqref{defMp},
\begin{equation*}
\begin{split}
M_2(t) &= \max_{0\leq\tau\leq t} \left( \int_{\Omega^N\times\R^{dN}} \Vert Y^N(\tau,\cdot,\cdot)-\mathcal{X}[\mu_\tau](\tau,\cdot,\cdot)^{\otimes N}\Vert_{\ell^1}^2 \, d\tilde\rho^N_\tau \right)^{1/2} \\
&= \max_{0\leq\tau\leq t} \left( \int_{\Omega^N\times\R^{dN}} \bigg( \sum_{i=1}^N \Vert Y^N_i(\tau,X,\Xi)-\mathcal{X}[\mu_\tau](\tau,x_i,\xi_i)\Vert \bigg)^2 d\tilde\rho^N_\tau(X,\Xi) \right)^{1/2} \\
\end{split}
\end{equation*}
where we recall that $\Vert\Xi\Vert_{\ell^1} = \sum_{i=1}^N \Vert\xi_i\Vert$ for any $\Xi=(\xi_1,\ldots,\xi_N)\in(\R^d)^N$. 
Let us estimate $L_2(t)$ and $M_2(t)$.

Since the $\ell^1$ distance $\mathrm{d}^{[1]}_{(\Omega\times\R^d)^N}$ has been used, according to Lemma \ref{lem_lipp} in Section \ref{app_useful_lemmas} we have, using the definition \eqref{def_Yi} of $Y_i$,
\begin{multline*}
L_2(t) = \max_{0\leq\tau\leq t} \max_{1\leq i\leq N} \Lip(Y^N_i(\tau,\cdot,\cdot)_{\vert \supp(\rho^N(t)) \cup \supp(\tilde\rho^N(t))})
= \max_{0\leq\tau\leq t} \Lip(G(\tau,\cdot,\cdot,\cdot,\cdot)_{\vert S_\mu^N(\tau)^2} ) \\
\leq \max_{0\leq\tau\leq t} \Vert G(\tau,\cdot,\cdot,\cdot,\cdot)_{\vert S_\mu^N(\tau)^2} \Vert_{\mathscr{C}^{0,1}} = L(t) 
\end{multline*}
where $S_\mu^N(\tau)$ is defined by \eqref{def_Smu}. 
The choice $q=1$ is crucial: for $q>1$, the exponent of $N$ in \eqref{W21N} would be positive, which would prevent convergence as $N\to+\infty$.

Besides, by Lemma~\ref{lem_variance_Y_2} (Appendix~\ref{app_mean_field})---the choice $p=2$ allows us to apply this lemma---we have
$$
M_2(t) \leq 2 L(t) \left( \sqrt{N}\sqrt{1+70\max_{0\leq\tau\leq t}\diam_{\Omega\times\R^d} ( \supp(\mu(\tau)) )}  + N \sqrt{5\, W_1\left( \mu(t)^{se}_{X^N}, \mu(t) \right) } \right) .
$$
Since the map $s\mapsto\frac{e^{ts}-1}{s}$ is increasing for $s>0$, 
and since $\sqrt{y(e^y-1)}\leq e^y$ for every $y\geq 0$,
we infer from \eqref{firstterm_1} and \eqref{W21N} that
$$
W_p^{[q]}\left( \rho^N(t)^s_{N:k} , \tilde\rho^N(t)^s_{N:k} \right)
\leq 2 \Big(\frac{k}{N}\Big)^{1/q} 
\left( \sqrt{N} C'_\mu(t)  + N \sqrt{5\, W_1\left( \mu(t)^{se}_{X^N}, \mu(t) \right) } \right)  e^{tL(t)}    .
$$
where $C'_\mu(t) = (1+70\max_{0\leq\tau\leq t}\diam_{\Omega\times\R^d} ( \supp(\mu(\tau)) ))^{1/2}$, for every $t\geq 0$.
Applying Lemma \ref{lem_propagflow} (in Appendix \ref{app_propag}) with $\Lambda=\Omega$ and $E=\R^d$ to the Vlasov flow $\varphi_{\mu_0}(t)$ in $\Omega\times\R^d$ generated by the vector field $\mathcal{X}[\mu_t](t,\cdot,\cdot)$, we obtain 
$$
W_1\left( \mu(t)^{se}_{X^N}, \mu(t) \right) \leq e^{tL(t)} W_1\left( (\mu_0)^{se}_{X^N} , \mu_0 \right) . 
$$
Finally,
\begin{equation}\label{estim_firstterm}
W_p^{[q]}\left( \rho^N(t)^s_{N:k} , \tilde\rho^N(t)^s_{N:k} \right)
\leq 2 k^{1/q} \left(  \frac{C'_\mu(t)}{N^{\frac{1}{q}-\frac{1}{2}}} + N^{1-\frac{1}{q}} \sqrt{5\, W_1\left( (\mu_0)^{se}_{X^N} , \mu_0 \right)} \right) e^{2tL(t)} .
\end{equation}

\paragraph{Second term.}
Applying Lemma \ref{lem_propagflow} (in Appendix \ref{app_propag}) with $\Lambda=\Omega^k$ and $E=(\R^d)^k$ to the Vlasov flow $\varphi_{\mu_0}(t)^{\otimes k}$ in the space $\Omega^k\times(\R^d)^k$ endowed with the distance $\mathrm{d}^{[q]}_{(\Omega\times\R^d)^k}$ defined by \eqref{def_distq}, 
generated by the vector field $\mathcal{X}[\mu_t](t,\cdot,\cdot)^{\otimes k}$, we obtain 
$$
W_p^{[q]}\left( \tilde\rho^N(t)^s_{N:k} , \mu(t)^{\otimes k} \right) 
= W_p^{[q]}\left( \varphi_{\mu_0}(t)^{\otimes k}_* (\rho^N_0)^s_{N:k} , \varphi_{\mu_0}(t)^{\otimes k}_* \mu_0^{\otimes k} \right) \\
\leq e^{tL(t)} W_p^{[q]} \left( (\rho_0)^s_{N:k} ,\mu_0^{\otimes k} \right) 
$$
where $L(t)$ is defined as before.

As in the proof of Theorem~\ref{thm_CV_liouville_empirical} (Appendix~\ref{app_proof_thm_CV_liouville_empirical}), for any $k\in\{2,\ldots,N\}$ the measure $(\rho^N_0)^s_{N:k}$ is given by formula \eqref{prhosN:k_eps} of Lemma~\ref{technical_lemma} (Appendix~\ref{app_technical_lemma}), applied with $\mu_i=\delta_{x^N_i}\otimes\mu_{0,x^N_i}$ and $\beta_k$ given by \eqref{def_betak}.
It then follows from \eqref{dist_almost_tensor_2} in Lemma~\ref{technical_lemma} that, since $(\rho^N_0)^s_{N:1} = (\mu_0)^{se}_{X^N}$, whenever $k^2\leq N\ln\big(1+\frac{1}{2^p}\big)$,
$$
W_p^{[q]} \left( (\rho^N_0)^s_{N:k} , ((\mu_0)^{se}_{X^N})^{\otimes k} \right)
\leq 3 k^{1/q} \bigg( \frac{k^2}{N} \bigg)^{1/p} \diam_{\Omega\times\R^d}(\supp(\mu_0)) 
$$
(the above term is zero and thus does not appear in the final estimate when $k=1$).
Therefore, by the triangular inequality and by \eqref{lem_Wp_tensor_ineq} in Lemma \ref{lem_Wp_tensor} (Appendix \ref{app_tensor}),
\begin{multline}\label{estim_secondterm}
W_p^{[q]}\left( \tilde\rho^N(t)^s_{N:k} , \mu(t)^{\otimes k} \right)
\leq \left( W_p^{[q]} \left( (\rho^N_0)^s_{N:k} , ((\mu_0)^{se}_{X^N})^{\otimes k} \right) + W_p^{[q]} \left( ((\mu_0)^{se}_{X^N})^{\otimes k} ,\mu_0^{\otimes k}  \right) \right) e^{tL(t)} \\
\leq k^{1/q} \bigg( 3 \bigg( \frac{k^2}{N} \bigg)^{1/p} \diam_{\Omega\times\R^d}(\supp(\mu_0)) + W_p \left( (\mu_0)^{se}_{X^N} ,\mu_0 \right) \bigg) e^{tL(t)} .
\end{multline}

\paragraph{Conclusion.}
From \eqref{estim_Wp_k=1_triangineq}, \eqref{estim_firstterm} and \eqref{estim_secondterm},
we conclude that, for every $t\geq 0$, 
\begin{multline*}
W_p^{[q]}\left( \rho^N(t)^s_{N:k} , \mu(t)^{\otimes k} \right) 
\leq 
2 k^{1/q}  \bigg( \bigg( \frac{k^2}{N} \bigg)^{1/p} C'_\mu(0) 
+ \frac{C'_\mu(t)}{N^{\frac{1}{q}-\frac{1}{2}}} \\
+ N^{1-\frac{1}{q}} \sqrt{5\, W_1\left( (\mu_0)^{se}_{X^N} , \mu_0 \right)} + W_p \left( (\mu_0)^{se}_{X^N} ,\mu_0 \right) \bigg) e^{2tL(t)} 
\end{multline*}
and
\eqref{estim_W1_k} finally follows.

To establish item~\ref{thm_CV_liouville_A} for any $k\in\N^*$, the above arguments must be adapted to the weaker setting where $G$ is locally Lipschitz only with respect to $(\xi,\xi')$. The adaptation follows the same lines as in the proof of item~\ref{A} of Theorem~\ref{thm_vlasov} (Appendix~\ref{app_proof_thm_vlasov}), to which we refer; we omit the details.

\section*{Acknowledgment}
We are indebted to Claude Bardos, Julien Barr\'e, Arnaud Debussche, Nicolas Fournier, Isabelle Gallagher, Thierry Gallay, Fran\c{c}ois Golse, Alain Joye, Beno\^{\i}t Perthame, David Poyato, Mario Pulvirenti, Laure Saint-Raymond, Alain-Sol Sznitman and Eitan Tadmor for useful discussions.

\end{document}